\theoremstyle{plain}
\newtheorem{theorem}{Theorem}[section]
\newtheorem{lemma}[theorem]{Lemma}
\newtheorem{prop}[theorem]{Proposition}
\newtheorem{fact}[theorem]{Fact}
\newtheorem{cor}[theorem]{Corollary}
\newtheorem{obs}[theorem]{Observation}
\newtheorem{question}{Question}
\theoremstyle{definition}
\newtheorem{definition}[theorem]{Definition}
\newtheorem{example}[theorem]{Example}
\newtheorem*{ack}{Acknowledgements}
\newtheorem*{notation}{Notation}
\newtheorem*{discussion}{Disscussion}
\newtheorem*{claim}{Claim}
\newtheorem*{remark}{Remark}
\numberwithin{equation}{section}
\newcommand{\om}{\omega}
\newcommand{\upto}{\upharpoonright}
\newcommand{\tto}{\rightrightarrows}
\newcommand{\N}{\mathbb{N}}
\newcommand{\U}{\mathcal{U}}
\newcommand{\V}{\mathcal{V}}
\newcommand{\ep}{\varepsilon}
\newcommand{\arr}{\rightarrowtriangle}
\newcommand{\iffarr}{\leftrightarrowtriangle}
\newcommand{\mono}{\rightarrowtail}
\newcommand{\embed}{\hookrightarrow}
\newcommand{\Me}{{\tt Merlin} }
\newcommand{\Mer}{{\tt Merlin}}
\newcommand{\Ar}{{\tt Arthur} }
\newcommand{\Art}{{\tt Arthur}}
\newcommand{\Ni}{{\tt Nimue} }
\newcommand{\Nim}{{\tt Nimue}}
\newcommand{\pcolon}{\colon\!\!\!\subseteq}
\newcommand{\str}{\varsigma}
\newcommand\tboldsymbol[1]{%
\protect\raisebox{0pt}[0pt][0pt]{%
$\underset{\widetilde{}}{\mathbf{#1}}$}\mbox{\hskip 1pt}}
\newcommand{\eval}[1]{\llbracket #1 \rrbracket}
\newcommand{\tpbf}[1]{\tboldsymbol{#1}}
\newcommand{\tpbfxy}[1]{\underset{\sim}{\mathbf{#1}}}
\newcommand{\tplf}[1]{{\sf #1}}
\title[Rethinking the notion of oracle]{Rethinking the notion of oracle:\\A prequel to Lawvere-Tierney topologies for computability theorists}
\author{Takayuki Kihara}
\begin{document}
\maketitle

\begin{abstract}
We present three different perspectives of oracle.
First, an oracle is a blackbox; second, an oracle is a tool to change the way we access mathematical objects; and third, an oracle is a factor that causes a change in truth values.
Formally, the second perspective advocates that an oracle is an endofunctor on the category of coded sets (preserving underlying sets) -- we associate it with a universal closure operator.
The third perspective advocates that an oracle is an operation on the object of truth values -- we associate it with a Lawvere-Tierney topology.
These three perspectives create a link between the three fields, computability theory, synthetic descriptive set theory, and effective topos theory.
\end{abstract}

\setcounter{tocdepth}{1}
\tableofcontents
\section{Introduction}

As the subtitle suggests, this article is closely tied to an earlier article by the author, ``Lawvere-Tierney topologies for computability theorists \cite{Kih21}'', but it is not a sequel and does not assume knowledge of that work.
Rather, this article lays the foundation for the previous work and is intended to be read prior to \cite{Kih21}.

\subsection{Three perspectives of oracle}\label{sec:intro-oracle}

``{\it Oracle}'' is a fundamental notion in the theory of computation/computability, originating in Turing's notion of O-machine \cite{Tur39}.
In this article, let us reconsider what an oracle is.
At least three different perspectives of oracle can be presented.

\subsubsection*{Perspective I}
The first perspective is the most standard one, which is to think of an oracle as a ``{\it blackbox}'', represented as a set, a function, an infinite string, etc.
If we think of a blackbox as just a container to store an input data  (whose data type is stream), as some people say, an oracle is merely an input stream.
Such a view is also quite standard nowadays.


\subsubsection*{Perspective II}
The second perspective is based on a recent approach concerning ``{\em change of coding}''.
In modern computation theory, one can discuss computability for various mathematical objects, including those of continuous datatypes.
The basic idea is to access an abstract mathematical object via a coding system (e.g., a numbering \cite{Er99}, a represented space \cite{Handbook-CCA}, an assembly \cite{vOBook}, etc.), and then reduce the discussion on computability regarding mathematical objects to computability regarding codes.
In this setting, in general, an element $x$ in a set may not have a computable name (with respect to a given coding system), but such an element can still have a computable name relative to some oracle $\alpha$.
This means that by going through the oracle $\alpha$, we have access to that element $x$.
In this way, an oracle $\alpha$ is considered to be a  functor that allows us to {\em change the way we access mathematical objects}.

In computability analysis, this idea was first introduced as the notion of ``{jump of representation} \cite{Zie07,dB14}'' and later used by de Brecht and Pauly \cite{PaBr15} to develop synthetic descriptive set theory.
As explained above, an oracle induces a ``change of coding (jump of representation)'', which gives an arrow from one coding system to another.
To be precise, an oracle yields a (set-preserving) endofunctor on the category of coded sets (which could be numbered sets, represented spaces, or assemblies); see Section \ref{sec:rep-sp-main} for the details.
This is also related to Longley's notion of applicative morphism \cite{LoPhD95}, which yields a set-preserving regular functor on coding systems (assemblies); see Section \ref{sec:applicative-morphism}.

\subsubsection*{Perspective III}
The third perspective of oracle is the one that we promote in this article.
In this third perspective, we consider an oracle to be  an ``{\it operation on truth-values}'' that may cause a transformation of one world into another:
A mathematical statement $\varphi$ may be false in computable mathematics, but $\varphi$ can be true in computable mathematics relative to an oracle $\alpha$.
This means that the oracle $\alpha$ caused a change in the truth value of the statement $\varphi$, and also caused a change from the computable world to the $\alpha$-relative computable world.

For a better understanding of this idea, let us consider two major semantics of intuitionistic logic, the Kripke semantics and Kleene's realizability interpretation (the semantics in computable mathematics).
The factor that causes the change in Kripke semantics is the notion of Grothendieck coverage (on a poset) or equivalently, nucleus  \cite{GuHo19} (see also Section \ref{sec:modest-modality}), while the factor that causes the change in Kleene realizability is the notion of oracle, as described above.
Remarkably, as we give a detailed analysis in this article, the two factors are essentially identical, differing only in the underlying toposes on which they act.
The former is the topos of presheaves on a poset, and the latter is the effective topos.
In both cases, the factor that causes the change from some topos to another topos is Lawvere-Tierney topology (which is formally some kind of closure operator on truth values).

It was Hyland \cite{Hey82} that linked a Turing oracle to a Lawvere-Tierney topology, and this connection was subsequently investigated in several articles, including \cite{FavO14}.
However, Turing oracles occupy only a small part of the topologies on the effective topos.
The analysis of the overall structure of the topologies was carried out by Lee and van Oosten \cite{LvO}, and later by the author \cite{Kih21}, who presented a novel notion of oracle that fully correspond to the Lawvere-Tierney topologies, giving a complete third perspective on oracle.

One might say that this third perspective is based on the idea that there is a correspondence between ``{\em computations using oracles}'' and ``{\em proofs using non-constructive axioms}''.
A related idea is partially used as a very standard technique in, for example, classical reverse mathematics \cite{SOSOA:Simpson}:
It is common practice to extend a model consisting only of computable entities, by adding a solution to an algorithmically undecidable problem as an oracle, to a model that satisfies a non-constructive axiom (see also \cite{Sho10}).

Our approach is similar, but with a newer perspective that deals more directly with operations on truth-values.
Using our approach (in combination with a result in \cite{HiJo16}), the above idea of $\om$-model construction in classical reverse mathematics can also be incorporated into the idea of using an operation on the truth-values.
The author's idea was derived in an attempt \cite{Kih20} to generalize Lifschitz realizability in order to separate non-constructive principles in constructive reverse mathematics (i.e., reverse mathematics based on intuitionistic logic; see \cite{Die18,Diener2021}).
Interestingly, the attempt to generalize Lifschitz realizability was also independently studied by Rathjen-Swan \cite{RaSw20} as a factor to decorate Kripke semantics.
See Section \ref{sec:realizability-relative-to-oracle} for a detailed discussion of the results outlined in this paragraph.

\subsubsection*{Summary}
In this article, we clarify the connection between these three perspectives of oracle, and in particular gives a detailed analysis of the third perspective described above.
In this way, we attempt to bridge the gap between computability theory, synthetic descriptive set theory, realizability theory and effective topos theory.

This article uses only elementary notions that require little prior knowledge (except for elementary computability theory) and does not make unnecessary generalizations (whenever possible).
The intended audience for this article is not only experts on realizability theory, but also a wide range of logicians, theoretical computer scientists, and mathematicians, including all computability theorists.
The most important thing is to lower the barriers to entry to our research.
Less prior knowledge is preferred over generality.
In the words of Hyland, quoted in the Preface to the book \cite{vOBook}, ``One good example is worth a host of generalities.''

\subsection{Computability-Theoretic Background}\label{sec:background}

\subsubsection{Oracle and Turing reducibility}
One of the most important subjects of study in computability theory is the structure of degrees of (algorithmic) unsolvability.
The notion of degrees of unsolvability is defined via the notion of relative computation with oracles.
Historically, it was Turing \cite{Tur39} who first introduced the concept of oracle computation.
From a modern perspective, an oracle computation is merely a computation that accepts data of type {\tt nat\,->\,nat},  or alternatively, {\tt stream} (i.e., a possibly infinite sequence of symbols) as input, and any such data is traditionally called an {\em oracle}.
Formally, an oracle machine is a Turing machine with an extra input tape for writing data of {\tt stream} type sequentially over time, or a standard computer program with variables of type {\tt nat\,->\,nat}, which is a finitistic device with a finite description:
At each step of the oracle computation, the input stream is only read up to a finite number of digits; therefore, each oracle computation is always performed using only a finite amount of information.

Let us now consider any program $\varphi({\tt f},{\tt n})$ that accepts data ${\tt f}$ of type {\tt nat\,->\,nat} (or {\tt stream}) and data ${\tt n}$ of type {\tt nat} (i.e., a natural number) as input.
Any function $g\colon\N\to\N$ can be input to this program as an oracle (stream data type).
Then, a function $h\colon\N\to\N$ is said to be {\em computable relative to oracle $g$} if
\[(\exists \varphi\mbox{ program})(\forall n\in\N)\;\varphi(g,n)=h(n).\]

In this case, it is also said that a function {\em $h\colon\N\to\N$ is Turing reducible to $g$} (written $f\leq_Tg$).
By currying, the type $\N^\N\times\N\to\N$ of $\varphi$ is often identified with $\N^\N\to\N^\N$; that is, one may consider an oracle program $\varphi$ as a partial function on $\N^\N$, where $\varphi(g)$ is defined as $\lambda n.\varphi(g,n)$.
Then $h\leq_Tg$ if and only if $\varphi(g)=h$ for some program $\varphi$.
This preorder $\leq_T$ can be thought of as providing a measure for comparing the computability-theoretic strength of functions on natural numbers. 
The preorder $\leq_T$ induces an equivalence relation $\equiv_T$, each equivalence class of which is called a {\em Turing degree}.
From Turing's time to the present, the structure of Turing degrees has been investigated to an extremely deep level.
As a result, a vast amount of research results are known (see e.g.~\cite{DHBook,Handbook,SoareBook} for the tip of the iceberg).

\subsubsection{Mass problems}\label{sec:intro-mass-problems}
In the theory of computation, two-valued functions on $\N$ (or subsets of $\N$) are often called {\em decision problems}.
There are other types of problems in computability theory whose algorithmic unsolvability is explored in depth, one of which is called a {\em mass problem}.
This is a problem with possibly many solutions, and we usually discuss the difficulty of finding any one of them.
A mass problem is identified with the set of its solutions; for example, an empty set is considered a mass problem without a solution.
In some setting, we consider a mass problem whose solutions are functions on $\N$, and thus a mass problem is introduced as a set of functions, $P\subseteq\N^\N$.
For mass problems $P,Q\subseteq\N^\N$, {\em $P$ is Medvedev reducible to $Q$} (written $P\leq_MQ$) if 
\[(\exists\varphi\mbox{ program})(\forall g\in Q)\;\varphi(g)\in P.\]

Historically, Medvedev \cite{Medvedev} used this notion to give an interpretation of intuitionistic propositional calculus (IPC), and later Kuyper \cite{Kuy15} extended Medvedev's interpretation to first-order predicate logic (IQC) using Lawvere's notion of hyperdoctrine.
The interpretation of IPC/IQC using mass problems is also important as one of the rigorous implementations of Kolmogorov's ``calculus of problems'' \cite{Kol32} (see e.g.~\cite{Kuy15,BaSi16} for detailed discussion).
Another type of reducibility for mass problems is Muchnik reducibility, a non-uniform version of Medvedev reducibility (which induces the domination preorder on the opposite of the Turing preoreder).
Simpson (e.g.~\cite{Sim11}) pointed out the connection between this reducibility and the study of $\om$-models in classical reverse mathematics  \cite{SOSOA:Simpson}, leading to a revival of the study of mass problems from a contemporary perspective.
Simpson et al.~\cite{Si15,BaSi16} also discuss the foundational importance of Medvedev and Muchnik degrees.

\subsubsection{Search problems}\label{sec:intro-search-problems}
A mass problem is considered to be a special kind of search problem.
A more general search problem is given by a multi-valued function on $\tpbf{N}$ (i.e., a $\mathcal{P}(\tpbf{N})$-valued function; see also Section \ref{sec:notations}), where $\tpbf{N}$ is either $\N$ or $\N^\N$.
Each multi-valued function $f$ represents a collection of problems, where $f(x)$ is the solution set for the $x$-th problem.
There are various known methods for comparing algorithmic unsolvability of search problems (multi-valued functions), so let us start with the simplest ones.
For this purpose, we first recall the definition of many-one reducibility for decision problems.
For decision problems, i.e., two-valued functions $f,g\colon\tpbf{N}\to 2$, $f$ is many-one reducible to $g$ if there exists a program $\varphi$ such that $g\circ\varphi=f$ holds.

The notion of many-one reducibility for multivalued functions is defined by replacing the equality relation in the usual many-one reducibility with the refinement relation:
For multi-valued functions $f$ and $g$ on $\tpbf{N}$, we say that {\em $g$ refines $f$} (written $g\preceq f$) if for any $x$, $x\in{\rm dom}(g)$ implies $x\in{\rm dom}(f)$ and $g(x)\subseteq f(x)$.
Then we say that $f$ is {\em many-one reducible to $g$} ({written $f\leq_mg$}) if there exists a program $\varphi$ such that $g\circ\varphi\preceq f$.
In other words, for any $x$ and $y$,
\[\mbox{$x\in{\rm dom}(f)\implies\varphi(x)\in{\rm dom}(g)$;\quad and\quad  $y\in g(\varphi(x))\implies y\in f(x)$}\]

While a many-one reduction only transforms the input data, there is a reducibility notion that also transforms the output data:
$f$ is {\em strong Weihrauch reducible to $g$} ({written $f\leq_{sW}g$}) if there exist programs $\varphi_-$ and $\varphi_+$ such that for any $x$ and $y$,
\[\mbox{$x\in{\rm dom}(f)\implies\varphi_-(x)\in{\rm dom}(g)$;\quad  and\quad  $y\in g(\varphi_-(x))\implies \varphi_+(y)\in f(x)$}\]

A similar notion is called a generalized Galois-Tukey connection in the study of cardinal invariants (see e.g.~Blass \cite[Section 4]{Bl10}).

The most important reducibility in this paper is the following:
$f$ is {\em Weihrauch reducible to $g$} ({written $f\leq_{W}g$}) if there exist programs $\varphi_-$ and $\varphi_+$ such that for any $x$ and $y$,
\[\mbox{$x\in{\rm dom}(f)\implies\varphi_-(x)\in{\rm dom}(g)$;\quad  and\quad  $y\in g(\varphi_-(x))\implies \varphi_+(x,y)\in f(x)$}\]

Although seemingly technical, it is in fact a quite natural concept, corresponding to a relative computation that makes exactly one query to the oracle.
This reducibility notion has become one of the most important research topics in modern computable analysis and related areas.
For the origin of Weihrauch reducibility, see Brattka \cite{Bra22}.
Also, see the survey \cite{CCA} for the basic results on Weihrauch degrees.
A similar notion has been studied in Hirsch \cite{Hir90} in a more abstract setting.

\subsubsection{Reverse mathematics and logic}
Often, partial multi-valued functions are identified with $\forall\exists$-statements (see also Section \ref{sec:notations}), and through this identification, the classification of partial multi-valued functions by Weihrauch reducibility is sometimes regarded as a handy analogue of reverse mathematics; see e.g.~\cite{BrGh11,GM09,KMP20,CCA}.
Here, reverse mathematics \cite{SOSOA:Simpson} is a branch of mathematical logic that aims to classify mathematical theorems in algebra, geometry, analysis, combinatorics, etc.~by measuring their logical strength, while Weihrauch degrees can classify mathematical theorems (of the $\forall\exists$-forms) by measuring their computational strength.
For example, some researchers have established the following Weihrauch-style classification of mathematical theorems:
\begin{multline*}
\mbox{``The intermediate value theorem''}<_W\mbox{``the Brouwer fixed point theorem''}\\
<_W\mbox{``the Radon-Nikodym theorem''}<_W\mbox{``the Bolzano-Weierstra{\ss} theorem''}
\end{multline*}
under appropriate formalizations, where, for example, the fixed point theorem is formalized as ``the problem of searching for a fixed point''; see \cite{CCA}.

Weihrauch-style reverse mathematics is sometimes considered to be a refinement of one aspect of ordinary reverse mathematics (or constructive reverse mathematics \cite{Die18,Diener2021}), specifically, it is different from ordinary reverse mathematics in that it is {\em resource-sensitive}.
It should be noted, however, that this is only a refinement of one aspect of reverse mathematics, and many important aspects (such as measuring consistency strength) have been ignored.
For detailed discussions of the relationship between Weihrauch reducibility and (constructive) reverse mathematics, see e.g.~\cite{Fuj21,Uft21}.

Another logical aspect of the Weihrauch lattice is its relation to linear logic, which is also frequently discussed due to its resource-sensitive nature; see e.g.~\cite{BrPa18,BrGh20,Uft21}.
In a related vein, the similarity between Weihrauch reducibility and the Dialectica interpretation is also frequently pointed out; see e.g.~\cite{CCA,Uft21}.
These relationships are implicit in Blass \cite{Bl95}, albeit in a slightly different context.
Like Medvedev and Muchnik reducibility, the notion of Weihrauch reducibility is also closely related to Kolmogorov's ``calculus of problems''; therefore, Weihrauch reducibility is also of at least as foundational importance as Medvedev and Muchnik reducibilities.
For the relevance of Kolmogorov's ``calculus of problems'', the Dialectica interpretation and so on, see also de Paiva-da Silva \cite{dPdS21}.

\subsubsection{Turing-like reducibility for search problems}
Weihrauch reducibility is defined in terms of a resource-sensitive oracle-computation for search problems, but there is also a reducibility notion for search problems that is not resource-sensitive (like Turing reduction).
This notion is usually referred to as {\em generalized Weihrauch reducibility} \cite{HiJo16}.
However, this should be the most standard reducibility for search problems (multi-valued functions), and a complex name like this is not desirable.
The definition of this reducibility is exactly the same as that of Turing reducibility, except that a partial multi-valued function $g$ is used as an oracle.
A finitistic device called an oracle machine is exactly the same as in Turing reducibility, except that the behavior when $g(z)$ is accessed during the computation is different:
If $g(z)$ is undefined (i.e., $z\not\in{\rm dom}(g)$), the computation will never terminate\footnote{In the case where $g$ is a partial {\em single-valued} function, essentially the same model of relative computation has been studied, e.g.~by Sasso \cite{Sas75}, Goodman \cite{Goo78}, and van Oosten \cite[Section 1.4.5]{vOBook}. Note that this relative computation model performs ``serial'' computations (i.e., no parallel computations are allowed when accessing the oracle $g$).}.
If $g(z)$ has more than one value, the computation will branch into several, resulting in a non-deterministic computation.

Roughly speaking, this notion is a fusion of Weihrauch reducibility and Turing reducibility, so this article will refer to this reducibility as Turing-Weihrauch reducibility instead of generalized Weihrauch reducibility.
To be more precise, we say that $f$ is {\em Turing-Weihrauch reducible to $g$} (written $f\leq_{TW}g$)\footnote{If $f$ and $g$ are partial single-valued functions on $\N$, note that $f$ is Turing-Weihrauch reducible to $g$ if and only if $f$ can be extended to a partial function which is Turing reducible to $g$ in the sense of Sasso \cite{Sas75}. It might be better to call this ``$f$ is Turing sub-reducible to $g$'', as suggested by Madore \cite{Mad12}. Essentially the same reducibility notion (on any partial combinatory algebra) has also been studied, e.g.~in van Oosten \cite{vO06}.} if, in a $g$-relative computation $\varphi^g$ as described above, if $x\in{\rm dom}(f)$ then, along any path of the nondeterministic computation $\varphi^g$, the computation halts and outputs a solution for $f(x)$.
For the details of the definition, see e.g.~\cite{HiJo16,Kih21} and Definition \ref{def:game-Turing-Weihrauch-reducibility}.
The importance of this concept will be discussed later.

\subsubsection{Related notions in other areas}
Although we have discussed reducibility notions in computability theory, similar reducibility notions have of course been studied in computational complexity theory, for example, polynomial-time Turing reduction is known as {\em Cook reduction} and polynomial-time many-one reduction as {\em Karp reduction}.
Reducibility for search problems has also been studied in computational complexity theory; for instance, one of the conditions required for {\em Levin reduction} is precisely polynomial-time Weihrauch reduction.
For reducibilities in complexity theory, see e.g.~\cite{Goldreich}.
Kawamura-Cook \cite{KaCo10} also introduced polynomial-time Weihrauch reducibility on continuous objects, which has since been widely used in the study of computational complexity in analysis.

This article also focuses on oracle relativization and reducibility in descriptive set theory.
The parallels between descriptive set theory and computability theory have long been noted.
For instance, it was in the 1950s that Addison \cite{Add55} pointed out the now well-known similarity between the (hyper-)arithmetical hierarchy and the Borel hierarchy.
Later in this field, Wadge \cite{Wad83} introduced a continuous version of many-one reducibility, known as {\em Wadge reducibility}, which led to a surprisingly deep theory, unimaginable given the brevity of the definition; see e.g.~\cite{Cabal12}.
This reducibility is defined in the definition of many-one reducibility above as a reduction $\varphi$ being a continuous function rather than a program, where $\tpbf{N}=\N^\N$ (endowed with the standard Baire topology).
Similarly, in computable analysis, strong Weihrauch reducibility and Weihrauch reducibility with continuous functions $\varphi_-$ and $\varphi_+$ have also been studied.
These reducibility notions are also related to Pauly-de Brecht's synthetic descriptive set theory \cite{PaBr15}, as we will see later.
Synthetic descriptive set theory can be interpreted as a framework for treating several notions of descriptive set theory as oracle relativization, and since this is one of the subjects of this article, it will be carefully explained step by step in the later sections.


\subsubsection{Topos theory}
One of the aims of this article is to unify these notions using a topos-theoretic notion called Lawvere-Tierney topology, which can be viewed as a generalization of Grothendieck topology \cite{SGL}.
Various toposes associated with computability theory have been studied, including the effective topos \cite{Hey82}, realizability toposes \cite{vOBook}, the recursive topos \cite{Mul82}, the Muchnik topos \cite{BaSi16}, and sheaf toposes for realizability \cite{AwBa08}.
The first topos, the effective topos, is often considered to be the world of computable mathematics, where every function is computable, and every property is computably witnessed.
The second topos is a generalization of the first topos, and this article also mentions a further generalization, relative realizability toposes (e.g., the Kleene-Vesley topos \cite{vOBook}).
The first two (and its relatives) are quite natural toposes based on Kleene's realizability interpretation (i.e., an interpretation of intuitionistic arithmetic using computable functions; see e.g.~\cite{Tro98,vO02}) and are the most deeply studied of these; see also \cite{vOBook}.

All others are Grothendieck toposes, the third \cite{Mul82} is the sheaf topos over the canonical Grothendieck topology on the monoid of total computable functions on $\N$, the fourth \cite{BaSi16} is on the poset of Muchnik degrees (the frame of Alexandrov open sets on the poset of Turing degrees), and the fifth  \cite[Definition 2.13]{AwBa08} (see also \cite[Section 4.3]{Bau00}) can be seen as a good combination of the third and fourth ones.
So, these are easy to define, but it seems difficult to precisely relate Grothendieck topologies to computability-theoretic notions like oracles.
Nevertheless, it is possible to outline our idea in terms of Grothendieck topology, and it may be useful to do so:

Each Grothendieck topology $J$ specifies a notion of covering, and the first key point is to notice the similarity between the properties of the statements ``{\em a sieve $\mathcal{U}$ is a $J$-cover of an object $S$}'' and ``{\em one can solve a problem $\mathcal{U}$ using an algorithm $S$ with the help of an oracle $J$}''\footnote{One attractive interpretation of the statement ``{\em $\mathcal{U}$ $J$-covers $S$}'' in the context of logic could be ``{\em under a theory (or logic) $J$, one can prove a formula $S$ using an additional set $\mathcal{U}$ of assumptions}''. It is then tempting to think that $\mathcal{U}$ corresponds to an algorithm and $S$ corresponds to a problem to be solved. However, the Kripke-Joyal semantics leads us to a somewhat dual interpretation ``{\em under $J$, every sentence provable in any $\varphi\in\mathcal{U}$ is also provable in $S$}'' (e.g., \cite{CSSS00}). The latter seems more compatible with our point of view.}.
For these statements, it is not possible to give an exact correspondence in terms of Grothendieck topology, but we will see that it is possible in a sense to give an exact correspondence in terms of Lawvere-Tierney topology on the effective topos or its generalizations.

\subsubsection{Degree theory encounters with topos theory}
There are several precedents for the study of the relationship between oracles and Lawvere-Tierney topologies on the effective topos.
In fact, in the paper \cite{Hey82} in which Hyland first introduced the effective topos, it has already been shown that the poset of the Turing degrees can be embedded in the poset of the Lawvere-Tierney topologies on effective topos; see also \cite{Pit85,Ph89}.
In a related vein, computability relative to partial Turing oracles in the context of realizability has been studied, e.g.~in \cite{vO06,FavO14}.

However, these results are only relativization to partial Turing oracles, and as we mentioned above, there are various other reducibility notions in computability theory.
As an example of an encounter with another reducibility notion in realizability theory, Bauer and Yoshimura analyzed in detail a typical derivation method of implication in constructive reverse mathematics, and found that an extension of Weihrauch reducibility emerges when it is analyzed with a relative realizability topos such as the effective topos and the Kleene-Vesley topos.
Bauer \cite{Bau21} formalized it as {\em extended Weihrauch reducibility}, and Kihara \cite{Kih21} pointed out that this notion can be understood as incorporating non-uniform computations into Weihrauch reductions, like advice strings in computational complexity theory, and presented a computation with a random oracle as such an example.
For the details of extended Weihrauch reducibility, see also Section \ref{sec:secret-input}.

The relationship between this reducibility notion and Lawvere-Tierney topology is not obvious, but a concrete presentation of a Lawvere-Tierney topology on the effective topos by Lee-van Oosten \cite{LvO} serves as a tool to bridge them.
Using this concrete presentation, Kihara \cite{Kih21} introduced the notion of LT-reducibility, a common extension of Turing-Weihrauch reducibility and extended Weihrauch reducibility, and in this sense revealed that there is a one-to-one correspondence between extended oracles and Lawvere-Tierney topologies on the effective topos.

One benefit of this correspondence is that the topos-theoretic aspect of Lawvere-Tiernery topology, that is, interpreting intuitionistic higher-order logic, has led to a more direct connection between Weihrauch degree theory (Weihrauch-style reverse mathematics) and constructive reverse mathematics.
Another (and most) important point is that this correspondence has led to novel concrete objects of study and results in classical computability theory on the natural numbers \cite{Kih21}.
One of the goals of this article is to explore the details of this one-to-one correspondence and to better clarify the meaning of this correspondence.

\subsection{Overview}


In Section \ref{sec:synthetic-dst}, we give an abstract definition of the notion of oracle computation, based on our second perspective of oracle (inspired by synthetic descriptive set theory).
Formally, we analyze various properties on multivalued functions, e.g., being 
\begin{center}
{\em computably transparent}, {\em inflationary}, and {\em idempotent},
\end{center}
and link these properties to the way we access oracles.
In computability theory, there are various notions of oracle computation, each of which is determined by fixing a reducibility notion that specifies how to access oracles.
Well-known reducibility notions include Turing reducibility and many-one reducibillity \cite{Cooper,RogBook,SoareBook}, but in this article we also deal with Medvedev reducibility \cite{Medvedev}, Weihrauch reducibility \cite{CCA}, etc.
Then, for example, we show that being computably transparent, inflationary, and idempotent correspond to the notions of universal oracle computation with exactly one query (Weihrauch reducibility), at most one query (pointed Weihrauch reducibility), and finitely many queries (Turing-Weihrauch reducibility), respectively.

In Section \ref{sec:realizability}, we revisit this idea in the context of operations on the object $\Omega$ of truth-values.
There are various additional properties of such operations, such as being
\begin{center}
{\em monotone}, {\em inflationary}, and {\em idempotent},
\end{center}
and various other preservation properties.
Monotone, inflationary, and idempotent operations correspond to what is known as Lawvere-Tierney topologies (also known as local operators, geometric modalities, and internal nuclei) \cite{elephant,SGL,LvO}.
We link these properties to the above notions of multivalued functions.
Through that link, we show, for example, the following correspondences:
\begin{itemize}
\item Many-one degrees (for partial single-valued functions) correspond to $\bigcup,\bigcap$-preserving operations on $\Omega$ in the effective topos.
\item Turing degrees (for partial functions) correspond to $\bigcup,\bigcap$-preserving Lawvere Tierney topologies on the effective topos.
\item Medvedev degrees correspond to open Lawvere-Tierney topologies on the Kleene-Vesley topos.
\item Weihrauch degrees correspond to $\bigcap$-preserving monotone operations on $\Omega$ in the Kleene-Vesley topos.
\item Pointed Weihrauch degrees correspond to $\bigcap$-preserving monotone, inflationary, operations on $\Omega$ in the Kleene-Vesley topos.
\item Turing-Weihrauch degrees correspond to $\bigcap$-preserving Lawvere-Tierney topologies on the Kleene-Vesley topos.
\end{itemize}

Here, both the effective topos and the Kleene-Vesley topos can be treated as worlds of computable mathematics, but the former uses coding by natural numbers, while the latter uses coding by streams (potentially infinite sequences of symbols in a fixed alphabet); see e.g.~\cite{vOBook}.
Also, if we consider the realizability topos obtained from Kleene's second algebra (i.e., the world of continuous mathematics), Wadge degrees (for functions) correspond to $\bigcup,\bigcap$-preserving operations on $\Omega$, continuous Weihrauch degrees correspond to $\bigcap$-preserving monotone operations, and so on.

In summary, various notions of oracle computation (or reducibility) which have been studied in depth in computability theory and related areas can thus be characterized surprisingly neatly in the context of operations on truth-values.

By recasting the oracle-computability notions in terms of operations on the truth values, it now becomes clear that there are hidden oracle-computability notions corresponding to operations that do not preserve $\bigcap$, which have not been treated in classical computability theory.
These hidden oracle-computability notions were first dealt with in Bauer \cite{Bau21} and Kihara \cite{Kih21}, and this paper explains them more closely to the traditional computability theory setting.

To do so, Section \ref{sec:rep-sp-main} introduces the notion of (multi-)represented space, which plays a central role in computable analysis and realizability theory.
Section \ref{sec:rep-sp-main} also provides a systematic study of ``change of coding (jump of representation)'' and links this to the notion of universal closure operator (a notion related to Lawvere-Tierney topology).

In Section \ref{sec:mmmap}, we show that reducibility notions on multi-represented spaces allow us to correspond oracle-computability notions to operations on truth values precisely.
For example, the following correspondences are shown.
\begin{itemize}
\item Weihrauch degrees on multi-represented spaces correspond to monotone operations on $\Omega$ in the Kleene-Vesley topos.
\item Pointed Weihrauch degrees on multi-represented spaces correspond to monotone, inflationary, operations on $\Omega$ in the Kleene-Vesley topos.
\item Turing-Weihrauch degrees on multi-represented spaces correspond to Lawvere-Tierney topologies on the Kleene-Vesley topos.
\end{itemize}


In Section \ref{sec:DST}, we give results suggesting the existence of a wealth of concrete examples.
In particular, we present various natural examples of (type-two) oracles and relative partial combinatory algebras (PCAs) from descriptive set theory.

\subsection{Preliminaries}\label{sec:notations}

In this article, we assume that the reader is familiar with elementary facts about computability theory.
We refer the reader to \cite{Cooper,OdiBook,RogBook,SoareBook} for the basics of computability theory; \cite{MosBook} for descriptive set theory; \cite{Handbook-CCA} for computable analysis; \cite{vOBook} for realizability theory; and \cite{Bau00} for the relationship between computable analysis and reallizability.

We denote a partial function from $X$ to $Y$ as $f\pcolon X\to Y$.
The image of $A\subseteq X$ under $f$ is written as $f[A]$ whenever ${\rm dom}(f)\subseteq A$. 
We use the symbol $\mathcal{P}(Y)$ to denote the power set of a set $Y$.
In this article, a partial function $f\pcolon X\to\mathcal{P}(Y)$ is often called a {\em partial multi-valued function} (abbreviated as a {\em multifunction} or a {\em multimap}), and written as $f\pcolon X\tto Y$.
In computable mathematics, we often view a $\forall\exists$-formula $S$ as a partial multifunction.
Informally speaking, a (possibly false) statement $S\equiv\forall x^X[Q(x)\rightarrow\exists y^YP(x,y)]$ is transformed into a partial multifunction $f_S\pcolon X\rightrightarrows Y$ such that ${\rm dom}(f_S)=\{x\in X:Q(x)\}$ and $f_S(x)=\{y\in Y:P(x,y)\}$.
Here, we consider formulas as partial multifunctions rather than relations in order to distinguish a {\em hardest} instance $f_S(x)=\emptyset$ (corresponding to a {\em false sentence}) and an {\em easiest} instance $x\in X\setminus{\rm dom}(f_S)$ (corresponding to a {\em vacuous truth}).
In this sense, a relation does not correspond to a partial multifunction.
More to the point, the category of partial multifunctions and that of relations have different morphism compositions \cite{Pau17}.
For partial multifunctions $f\pcolon X\tto Y$ and $g\pcolon Y\tto Z$, the composition $g\circ f\pcolon X\tto Z$ is defined as follows:
$x\in{\rm dom}(g\circ f)$ if $x\in{\rm dom}(f)$ and $y\in{\rm dom}(g)$ for any $y\in f(x)$; and then $g\circ f(x)=\bigcup\{g(y):y\in f(x)\}$.

\section{Abstract characterization of universal oracle computation}\label{sec:synthetic-dst}

\begin{notation}
We use the symbol $\tpbf{N}$ to denote either $\N$ or $\N^\N$.
To be precise, we consider one of the following systems:
\begin{enumerate}
\item First system ${\sf K}_1$: The basic object is $\tpbf{N}=\N$, and
\begin{itemize}
\item computable functions on $\tpbf{N}$ $=$ computable functions on $\N$.
\item continuous functions on $\tpbf{N}$ $=$ computable functions on $\N$.
\end{itemize}
\item Second system ${\sf K}^{\sf eff}_2$: The basic object is $\tpbf{N}=\N^\N$, and
\begin{itemize}
\item computable functions on $\tpbf{N}$ $=$ computable functions on $\N^\N$.
\item continuous functions on $\tpbf{N}$ $=$ topologically continuous functions on $\N^\N$.
\end{itemize}
\end{enumerate}

If the reader is only interested in computability on natural numbers, just consider the system ${\sf K}_1$; that is, the reader may proceed by replacing all $\tpbf{N}$ in the following with $\N$ and all ``continuous'' with ``computable'' (no distinction in made between ``continuous'' and ``computable'' in this case).

Modern computability theorists would also like to consider computations on the type {\tt nat\,->\,nat}.
In this case, consider the system ${\sf K}_2^{\sf eff}$:
The set $\N^\N$ of all functions on the natural numbers is equipped with the usual Baire topology, i.e., the product of the discrete topology on $\N$.
A partial continuous function $f$ on $\N^\N$ is determined by a monotone function on $\N^\ast$ (the set of finite strings of natural numbers) and any function on $\N^\ast$ can be coded as an element $\alpha_f$ in $\tpbf{N}=\N^\N$ (since a finite string can be coded as a natural number).

We call such an $\alpha_f$ a code of $f$, and let $\varphi_\alpha$ denote the partial continuous function on $\tpbf{N}$ coded by $\alpha\in\tpbf{N}$.
\end{notation}

\subsection{Universal machine}\label{sec:sdst-notation}

Let us begin by reinterpreting de Brecht-Pauly's approach \cite{dB14,PaBr15} as an abstraction of oracle computation.
In order to do so, we need to clarify what a universal machine is.
Let $U^\alpha$ be a universal machine relative to an oracle $\alpha$.
Universality means that the $\alpha$-relative computation of any machine can be simulated by taking its code as input; that is, for any $\alpha$-computable function $f^\alpha$ there exists a code $e_f$ such that $U^\alpha(e_f,n)\simeq f^\alpha(n)$ for all $n$.
Here, note that $\alpha$-computability is equivalent to computability relative to $U^\alpha$, so $f^\alpha$ can be replaced with $f\circ U^\alpha$ for a computable function $f$.
By putting $F(n)=(e_f,n)$, the last equality becomes {$U^\alpha\circ F=f\circ U^\alpha$}.
In conclusion:
\begin{itemize}
\item[] if $U$ is a {\em universal oracle computation} then for any computable function $f\pcolon\N\to\N$ one can find a computable function $F\colon\N\to\N$ such that $U\circ F=f\circ U$.
\end{itemize}

In the case of a universal machine constructed in the standard manner, $f\mapsto F$ is in fact computable (i.e., given a code of $f$ one can effectively find a code of $F$).

In \cite{dB14}, de Brecht introduced its topological version (i.e., a type-two version).
%
The following is a topological version of the above property of universal computation.

\begin{definition}[de Brecht \cite{dB14}]\label{def:basic-computably-transparent}
A nonempty partial map $U\pcolon\tpbf{N}\to\tpbf{N}$ is {\it transparent} if for any continuous function $f\pcolon\tpbf{N}\to\tpbf{N}$ there exists a continuous function $F\pcolon\tpbf{N}\to\tpbf{N}$ such that $U\circ F=f\circ U$.
If such $f\mapsto F$ is computable (i.e., given a code of $f$ one can effectively find a code of $F$), we say that $U$ is {\it computably transparent}.

For such a $U$, a partial map $g\pcolon \tpbf{N}\to \tpbf{N}$ is {\it $U$-continuous} if there exists a continuous function $G\pcolon \tpbf{N}\to \tpbf{N}$ such that $g=U\circ G$.
If such $G$ is computable, we say that $g$ is {\it $U$-computable}.
\begin{align*}
\xymatrix{
\tpbfxy{N}\ar[rr]^f & & \tpbfxy{N}\\
\tpbfxy{N}\ar[rr]_{F} \ar[u]^(.4)U & & \tpbfxy{N}\ar[u]_(.4)U
}
& &
\xymatrix{
\tpbfxy{N}\ar[rr]^g \ar[drr]_{G} & & \tpbfxy{N}\\
& & \tpbfxy{N} \ar[u]_(.4)U
}
\end{align*}
\end{definition}

Note that $g$ is $U$-continuous if and only if $g$ is Wadge reducible (i.e., continuously many-one reducible) to $U$.
Similarly, $g$ is $U$-computable if and only if $g$ is many-one reducible to $U$ (in the sense of Section \ref{sec:background}).

\begin{example}[when $\tpbf{N}=\N^\N$]
The limit operator $\lim\pcolon\tpbf{N}^\N\to\tpbf{N}$ is computably transparent (via the identification $\tpbf{N}^\N\simeq\tpbf{N}$).
Moreover, a map $g\pcolon\tpbf{N}\to\tpbf{N}$ is $\lim$-continuous if and only if $g$ is $\tpbf{\Sigma}^0_2$-measurable \cite{dB14}.
\end{example}

\begin{remark}
A transparent map is always surjective; consider a constant function $f$ that takes a given value.
\end{remark}

\begin{remark}
In the original terminology by de Brecht \cite{dB14}, a transparent map is referred to as a {\em jump operator}.
The term transparent was coined in \cite{BGM12}.
\end{remark}

So the notion of universal oracle computation can be neatly abstracted in this way, but the notion becomes more interesting when we consider multivalued oracles.
Transparency for multimaps is defined in the same way as for single-valued maps.
We use the refinement relation $\preceq$ introduced in Section \ref{sec:background}.

\begin{definition}\label{def:pmultimap-transparent}
A nonempty partial multimap $U\pcolon\tpbf{N}\tto\tpbf{N}$ is {\em transparent} if for any continuous function $f\pcolon\tpbf{N}\to\tpbf{N}$ there exists a continuous function $F\pcolon\tpbf{N}\to\tpbf{N}$ such that $U\circ F\preceq f\circ U$.
If such $f\mapsto F$ is computable, we say that $U$ is {\it computably transparent}.

For such a $U$, a partial multimap $g\pcolon \tpbf{N}\tto \tpbf{N}$ is {\it $U$-continuous} if there exists a continuous function $G\pcolon \tpbf{N}\to \tpbf{N}$ such that $U\circ G\preceq g$.
If such $G$ is computable, we say that $g$ is {\it $U$-computable}.
\end{definition}

Note that the only difference between these definitions for multivalued maps and those for single-valued maps is that the equality relation is replaced by the refinement relation $\preceq$.
To be explicit, $U$ is transparent iff for any continuous $f$ there exists a continuous $F$ such that $U(F(x))\subseteq f[U(x)]$ for any $x\in{\rm dom}(U)$, and $g$ is $U$-continuous iff there exists a continuous $G$ such that $G(x)\in{\rm dom}(U)$ and $U(G(x))\subseteq g(x)$ for any $x\in{\rm dom}(g)$.
As before, $g$ is $U$-computable if and only if $g$ is many-one reducible to $U$ (in the sense of Section \ref{sec:background}).

In computability theory, multifunctions are used to represent problems that can have many solutions.
An example of a tool that can be used to measure the complexity of a problem with many possible solutions is the notion of Medvedev reducibility \cite{Medvedev}; see also Section \ref{sec:intro-mass-problems}.
For $P,Q\subseteq\tpbf{N}$ (which are considered to be solution sets of some problems), recall that $P$ is Medvedev reducible to $Q$ (written $P\leq_MQ$) if there exists a computable function which, given a solution of $Q$, returns a solution of $P$; that is, there exists a partial computable function $\varphi$ on $\tpbf{N}$ such that $Q\subseteq{\rm dom}(\varphi)$ and $\varphi[Q]\subseteq P$.

\begin{example}[Medvedev oracle]\label{exa:oracle-Medvedev-reducibility}
For a set $Q\subseteq\tpbf{N}$, the universal computation relative to the Medvedev oracle $Q$, ${\tt Med}_Q\pcolon\tpbf{N}\tto\tpbf{N}$, is defined as follows:
\begin{align*}
{\rm dom}({\tt Med}_Q)&=\{\tau\in\tpbf{N}:Q\subseteq{\rm dom}(\varphi_\tau)\}\\
{\tt Med}_Q(\tau)&=\varphi_\tau[Q]
\end{align*}

Then, ${\tt Med}_Q$ is computably transparent.
To see this, note that, given codes $\sigma,\tau$ of continuous functions, one can effectively find a code $b_{\sigma,\tau}$ of the composition $\varphi_\sigma\circ\varphi_\tau$.
Then we have $\varphi_\sigma[{\tt Med}_Q(\tau)]=\varphi_\sigma\circ\varphi_\tau[Q]={\tt Med}_Q(b_{\sigma,\tau})$.

Moreover, the constant multimap $x\mapsto P\subseteq\tpbf{N}$ is ${\tt Med}_Q$-computable if and only if $P\leq_MQ$.
This is because $x\mapsto P$ is ${\tt Med}_Q$-computable if and only if ${\tt Med}_Q(\tau)\subseteq P$ for some computable $\tau\in\tplf{N}$ if and only if $P\leq_MQ$ via $\varphi_\tau$.
\end{example}

As a useful tool for comparing multifunctions, the most used in modern computable analysis is the notion of Weihrauch reducibility \cite{CCA}; see also Section \ref{sec:intro-search-problems}.
For partial multifunctions $f,g\pcolon\tpbf{N}\to\tpbf{N}$, recall that {$f$ is Weihrauch reducible to $g$} (written $f\leq_Wg$) if there exist computable functions $h,k\pcolon\tpbf{N}\to\tpbf{N}$ such that, for any $x\in{\rm dom}(f)$, $h(x)\in{\rm dom}(g)$ and for any solution $y\in g(h(x))$, $k(x,y)\in f(x)$.
Simply put, it states that, during a process solving $f(x)$, values of the oracle $g$ can be accessed only once.

\begin{example}[Weihrauch oracle]\label{exa:oracle-Weihrauch-reducibility}
For a partial multifunction $g\pcolon\tpbf{N}\tto\tpbf{N}$, the universal computation relative to the Weihrauch oracle $g$, ${\tt Weih}_g\pcolon\tpbf{N}\tto\tpbf{N}$, is defined as follows:
\begin{align*}
{\rm dom}({\tt Weih}_g)&=\{(h,k,x)\in\tpbf{N}:\varphi_h(x)\in{\rm dom}(g)\\
&\qquad\mbox{ and }\varphi_k(x,y)\downarrow\mbox{ for all $y\in g(\varphi_h(x))$}\}\\
{\tt Weih}_g(h,k,x)&=\{\varphi_k(x,y):y\in g(\varphi_h(x))\}
\end{align*}

Then, ${\tt Weih}_g$ is computably transparent.
This is because we have $\varphi_\ell[{\tt Weih}_g(h,k,x)]=\{\varphi_\ell\circ\varphi_k(x,y):y\in g(\varphi_h(x))\}={\tt Weih}_g(h,b_{\ell,k},x)$, where $b_{\ell,k}$ is a code of $\varphi_\ell\circ\varphi_k$.

Moreover, $f\pcolon\tpbf{N}\tto\tpbf{N}$ is ${\tt Weih}_g$-computable if and only if $f\leq_Wg$.
This is because $f\leq_Wg$ via $h,k$ if and only if, for any $x\in{\rm dom}(f)$, $(h,k,x)\in{\rm dom}({\tt Weih}_g)$ and ${\tt Weih}_g(h,k,x)\subseteq f(x)$ if and only if $f$ is ${\tt Weih}_g$-computable via $x\mapsto(h,k,x)$.
Conversely, if $f$ is ${\tt Weih}_g$-computable via $x\mapsto (h(x),k(x),z(x))$ then $f\leq_Wg$ via $x\mapsto\varphi_{h(x)}(z(x))$ and $(x,y)\mapsto\varphi_{k(x)}(z(x),y)$.
\end{example}

\begin{remark}
One might think that the universal computation relative to a strong Weihrauch oracle could be defined in a similar way.
However, for ${\tt SWeih}_g$ defined in a straightforward manner, it is not possible to show the equivalence of ${\tt SWeih}_g$-computability and strong Weihrauch reducibility to $g$.
The computational understanding of strong Weihrauch reduction (generalized Galois-Tukey connection) leads to the peculiar situation that certain information  (input data) once obtained is lost in the middle of the computation, and is not accessible at the stage of computing an output; so it seems difficult to treat it as a type of oracle computation in our framework.
\end{remark}

With respect to Weihrauch reducibility, in general, statements such as ${\rm id}\leq_Wg$ or $g\circ g\leq_Wg$ are not always true.
It is convenient to give special names to oracle computations that have these properties.

\begin{definition}\label{def:pmultimap-transparent2}
A transparent multimap $U\pcolon\tpbf{N}\tto\tpbf{N}$ is {\em inflationary} if the identity map is $U$-computable, and $U$ is {\em idempotent} if $U\circ U$ is $U$-computable.
\end{definition}

Observe that if $U$ is an inflationary computably transparent map, then every continuous function is $U$-continuous.
Moreover, if $U$ is idempotent, then the composition of two $U$-continuous functions is also $U$-continuous.
For instance, the limit operation, $\lim$, is inflationary, but not idempotent.
This is because $\lim\circ\lim$-continuity is known to be equivalent to $\tpbf{\Sigma}^0_3$-measurability \cite{dB14} while $\lim$-continuity is equivalent to $\tpbf{\Sigma}^0_2$-measurability as mentioned above.

How these notion are used in synthetic descriptive set theory and how they relate to the second perspective on oracles (i.e., ``oracles change the way we access spaces'') will be explained in Section \ref{sec:represented-space}.


\begin{remark}[Lifschitz realizability]
Aside from synthetic descriptive set theory, Kihara \cite{Kih20}, for example, also focuses on these three basic concepts (computable transparency, being inflationary, and idempotence) for multifunctions.
The reason for focusing on these concepts was to extract the conditions for why Lifschitz realizability (realizability with bounded $\Pi^0_1$ sets; \cite{Lif,vO,ChRa12}) works.
To be more precise, computable transparency corresponds to \cite[Lemma 4]{Lif}, \cite[Lemma 5.7]{vO}, and \cite[Lemma 4.4]{ChRa12}, being inflationary corresponds to \cite[Lemma 2]{Lif}, \cite[Lemma 5.4]{vO}, and \cite[Lemma 4.2]{ChRa12}, and idempotence corresponds to \cite[Lemma 3]{Lif}, \cite[Lemma 5.6]{vO}, and \cite[Lemma 4.5]{ChRa12}.
For Lifschitz realizability, see also Section \ref{sec:realizability-relative-to-oracle}.
\end{remark}

\subsection{Relative partial combinatory algebra}\label{sec:realizability-PCA}

Let us organize the current setup.
We have a basic object $\tpbf{N}$.
Moreover,  we have an indexed family $(\varphi_x:x\in\tpbf{N})$ of partial continuous functions on $\tpbf{N}$.
In such a case, one may define a partial binary operation $\ast\pcolon\tpbf{N}^2\to\tpbf{N}$ by $x\ast y=\varphi_x(y)$ for any $y\in{\rm dom}(\varphi_x)$.
For simplicity, we also use the notation $x\ast y\ast z$ to denote $(x\ast y)\ast z$, and $x\ast A$ to denote $\varphi_x[A]$ for $A\subseteq\tpbf{N}$.

Moreover, continuous functions are encoded by elements of $\tpbf{N}$, but computable functions may be encoded by only elements of a small part ${\sf N}\subseteq\tpbf{N}$.
For instance, one may consider the following three systems:
\begin{enumerate}
\item The system ${\sf K}_1$: The basic objects are $\tplf{N}=\tpbf{N}=\N$, and $(\varphi_e)_{e\in\N}$ is the standard indexing of all partial computable functions on $\N$.
\item The system ${\sf K}_2$: The basic objects are $\tplf{N}=\tpbf{N}=\N^\N$, and $(\varphi_e)_{e\in\N^\N}$ is the standard indexing of all partial continuous functions on $\N^\N$.
\item The system ${\sf K}_2^{\sf eff}$: The baldface object $\tpbf{N}$ is the set $\N^\N$, the lightface object $\tplf{N}$ is the set of all computable elements in $\N^\N$, and $(\varphi_x)_{x\in\N^\N}$ is the standard indexing of all partial continuous functions on the Baire space $\N^\N$.
In this case, $(\varphi_x)_{x\in{\sf N}}$ provides an indexing of all partial computable functions on $\N^\N$.
\end{enumerate}

From top to bottom, the triple $(\tplf{N},\tpbf{N},\ast)$ is known as the {\em Kleene first algebra}, the {\em Kleene second algebra}, and the {\em Kleene-Vesley algebra}.
Our definition can be generalized as follows:

\begin{definition}
Given a triple $(\tplf{N},\tpbf{N},\ast)$, one may define $\varphi_x\pcolon\tpbf{N}\to\tpbf{N}$ by $\varphi_x(y)=x\ast y$ if $x\ast y$ is defined (written $x\ast y\downarrow$).
A partial function $f\pcolon\tpbf{N}\to\tpbf{N}$ is said to be {\em continuous} if there exists $e\in\tpbf{N}$ such that $f(x)=\varphi_e(x)$ for any $x\in{\rm dom}(f)$; and $f$ is {\em computable} if such an $e$ can be chosen from the lightface part $\tplf{N}$.
\end{definition}

\begin{example}
In the Kleene first algebra, continuity and computability are equivalent, and both coincide with computability in the standard sense.
In the Kleene second algebra, continuity and computability are equivalent, and both coincide with topological continuity (on Baire space).
In the Kleene-Vesley algebra, continuity coincides with topological continuity, and computability coincides with computability in the standard sense.
\end{example}

Most of the results in this article hold if a triple $(\tplf{N},\tpbf{N},\ast)$ satisfies the condition known as a relative partial combinatory algebra (see \cite{vOBook} and also Definition \ref{def:partial-combinatory-algebra-int}).
However, the three algebras mentioned above are the main ones used in applications in this section (although many natural and important examples of relative PCAs are known; see e.g.~\cite{vOBook} and also Section \ref{sec:pointclass-partial-combinatory-algebra}).
To lower the barrier to entry into our research, we emphasize ease of understanding over generality.
Once understood, generalization is easy.
For now, let us proceed with the above three algebras in mind.

First, it is useful to rephrase Definitions \ref{def:pmultimap-transparent} and \ref{def:pmultimap-transparent2} using these terms.

\begin{obs}\label{obs:transparent-multmap}
Let $U\pcolon\tpbf{N}\tto\tpbf{N}$ be a partial multimap.
\begin{itemize}
\item $U$ is computably transparent if and only if
there exists ${\tt u}\in\tplf{N}$ such that for all ${\tt f},{\tt x}\in \tpbf{N}$, whenever ${\tt f}\ast U({\tt x})$ is defined (i.e., ${\tt x}\in{\rm dom}(U)$ and ${\tt f}\ast{\tt y}$ is defined for any ${\tt y}\in U({\tt x})$),
\[{\tt u\ast f\ast x}\in{\rm dom}(U)\mbox{ and }U({\tt u\ast f\ast x})\subseteq{\tt f}\ast U({\tt x}).\]

\item $U$ is {inflationary} if and only if
there exists $\eta\in\tplf{N}$ such that for all ${\tt x}\in \tpbf{N}$,
\[\eta\ast {\tt x}\in{\rm dom}(U)\mbox{ and }U(\eta\ast {\tt x})\subseteq\{{\tt x}\}.\]

\item $U$ is {idempotent} if and only if there exists $\mu\in\tplf{N}$ such that for all ${\tt x}\in{\rm dom}(U\circ U)$,
\[\mu\ast{\tt x}\in{\rm dom}(U)\mbox{ and }U(\mu\ast {\tt x})\subseteq U\circ U({\tt x}).\]
\end{itemize}
\end{obs}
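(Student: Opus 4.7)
The plan is to unpack each of the three bullet points by translating the definitions in Section \ref{sec:synthetic-dst} into the $\ast$-notation, via the dictionary: a partial continuous function on $\tpbf{N}$ is given by an index ${\tt e} \in \tpbf{N}$ acting as $\varphi_{\tt e}({\tt y}) = {\tt e} \ast {\tt y}$, and such a function is \emph{computable} precisely when ${\tt e}$ may be taken in $N$. Under this dictionary, each equivalence reduces to a standard currying argument.

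For the first bullet, I would begin with a computably transparent $U$. By Definition \ref{def:pmultimap-transparent}, there is a computable assignment sending any index ${\tt f}$ of a continuous function to an index $F_{\tt f}$ of a continuous function with $U(F_{\tt f}({\tt x})) \subseteq {\tt f} \ast U({\tt x})$ whenever ${\tt x} \in {\rm dom}(U)$ and ${\tt f}$ is defined on $U({\tt x})$. Since this meta-assignment is itself a computable function on $\tpbf{N}$, it has a code ${\tt u} \in N$ with ${\tt u} \ast {\tt f} = F_{\tt f}$. Evaluating, $F_{\tt f}({\tt x}) = ({\tt u} \ast {\tt f}) \ast {\tt x} = {\tt u} \ast {\tt f} \ast {\tt x}$, and the required inclusion becomes $U({\tt u} \ast {\tt f} \ast {\tt x}) \subseteq {\tt f} \ast U({\tt x})$, subject to the domain hypothesis that ${\tt f} \ast U({\tt x})$ be defined (which packages together ${\tt x} \in {\rm dom}(U)$ and the definedness of ${\tt f}$ on $U({\tt x})$). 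Conversely, given such a ${\tt u}$, the assignment ${\tt f} \mapsto {\tt u} \ast {\tt f}$ manifestly provides a computable $f \mapsto F$ witnessing computable transparency.

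For the second and third bullets, I would apply the definition of $U$-computability directly. $U$-computability of a partial multimap $g$ asserts the existence of a computable $G$ satisfying $G({\tt x}) \in {\rm dom}(U)$ and $U(G({\tt x})) \subseteq g({\tt x})$ for ${\tt x} \in {\rm dom}(g)$; computability of $G$ is exactly the existence of a code in $N$ (call it $\eta$ or $\mu$) with $G({\tt x}) = \eta \ast {\tt x}$ or $G({\tt x}) = \mu \ast {\tt x}$. Substituting $g = {\rm id}$, regarded as the single-valued multimap ${\tt x} \mapsto \{{\tt x}\}$, yields the second bullet; substituting $g = U \circ U$, whose domain is recalled from Section \ref{sec:notations}, yields the third.

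No serious obstacle is expected; the argument is pure bookkeeping between the two equivalent presentations of ``computable continuous function''. The one conceptual point worth emphasizing is that in the transparency clause one needs a single uniform code ${\tt u}\in N$, arising from the computability of the assignment $f \mapsto F$; this is exactly what separates ``computably transparent'' from the strictly weaker ``transparent''.
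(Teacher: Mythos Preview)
Your proposal is correct and matches the paper's treatment: the paper presents this observation as a direct rephrasing of Definitions~\ref{def:pmultimap-transparent} and~\ref{def:pmultimap-transparent2} in the $\ast$-notation and gives no separate proof, which is exactly the bookkeeping translation you carry out. The one point you explicitly flag---that a single uniform code ${\tt u}\in N$ arises precisely from the computability of the assignment $f\mapsto F$---is the only nontrivial step, and you handle it correctly.
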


\subsection{Reflective subcategory}\label{sec:left-adjoint}

From this point forward, we will discuss the exact relationship between computable transparency and universal computation.
For $f,g\pcolon\tpbf{N}\tto\tpbf{N}$, recall that $f$ is many-one reducible to $g$ (written $f\leq_mg$) if there exists a computable function $\varphi$ such that $g\circ\varphi$ refines $f$; that is, for any $x\in{\rm dom}(f)$, $\varphi(x)$ is defined and $g(\varphi(x))\subseteq f(x)$.

\begin{definition}
Let $\mathcal{MR}ed$ be the set of all partial multimaps on $\tpbf{N}$ preordered by many-one reducibility $\leq_m$.
The restriction of this preordered set to those that are computably transparent, inflationary, and idempotent, respectively, is expressed by decorating it with superscripts ${\rm ct}$, $\eta$, and $\mu$, respectively.
\end{definition}

Recall the notion of universal computation ${\tt Weih}_f$ relative to a Weihrauch oracle $f$ introduced in Example \ref{exa:oracle-Weihrauch-reducibility}.
As we have seen, the construction ${\tt Weih}\colon g\mapsto{\tt Weih}_g$ yields a computably transparent map from a given multimap.
Moreover, it is easy to see that $f\leq_mg$ implies ${\tt Weih}_f\leq_m{\tt Weih}_g$.
Hence, ${\tt Weih}$ can be viewed as an order-preserving map from $\mathcal{MR}ed$ to $\mathcal{MR}ed^{\rm ct}$.
The following guarantees that ${\tt Weih}(f)={\tt Weih}_f$ is the $\leq_m$-least computably transparent map which can compute $f$.

\begin{prop}\label{thm:computably-transparent-many-one-Weihx}
The order-preserving map ${\tt Weih}\colon\mathcal{MR}ed\to\mathcal{MR}ed^{\rm ct}$ is left adjoint to the inclusion $i\colon\mathcal{MR}ed^{\rm ct}\mono\mathcal{MR}ed$.
In other words, for any $f,g\pcolon\tpbf{N}\tto\tpbf{N}$, if $g$ is computably transparent, then $f\leq_mg$ if and only if ${\tt Weih}(f)\leq_mg$.
\end{prop}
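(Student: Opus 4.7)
The plan is to establish the two directions of the biconditional separately. The easy direction, ${\tt Weih}(f)\leq_m g\Rightarrow f\leq_m g$, does not use computable transparency at all and follows by transitivity from the general fact $f\leq_m{\tt Weih}(f)$. To see the latter, let ${\tt h}_{\rm id},{\tt k}_{\rm pr}\in N$ be codes of the identity on $\tpbf{N}$ and of $(x,y)\mapsto y$ respectively (produced via the $s$-$m$-$n$ theorem). Then the map $x\mapsto({\tt h}_{\rm id},{\tt k}_{\rm pr},x)$ is computable and witnesses $f\leq_m{\tt Weih}(f)$, since for $x\in{\rm dom}(f)$ one lands in ${\rm dom}({\tt Weih}(f))$ with ${\tt Weih}(f)({\tt h}_{\rm id},{\tt k}_{\rm pr},x)=\{\varphi_{{\tt k}_{\rm pr}}(x,y):y\in f(x)\}=f(x)$.

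The nontrivial direction assumes $f\leq_m g$ and $g$ computably transparent, and must produce a many-one reduction ${\tt Weih}(f)\leq_m g$. I fix a computable $\varphi_0$ witnessing $f\leq_m g$, so $\varphi_0(z)\in{\rm dom}(g)$ and $g(\varphi_0(z))\subseteq f(z)$ for $z\in{\rm dom}(f)$, and a transparency realizer ${\tt u}\in N$ for $g$ as in Observation \ref{obs:transparent-multmap}. Using $s$-$m$-$n$, from $(k,x)$ I compute a code ${\tt f}_{k,x}\in\tpbf{N}$ of the map $y\mapsto\varphi_k(x,y)$. The proposed reduction is
\[
\psi(h,k,x)\;=\;{\tt u}\ast{\tt f}_{k,x}\ast\varphi_0(\varphi_h(x)).
\]
To verify $g\circ\psi\preceq{\tt Weih}(f)$, I would chase definitions on $(h,k,x)\in{\rm dom}({\tt Weih}(f))$: then $\varphi_h(x)\in{\rm dom}(f)$, so $\varphi_0(\varphi_h(x))\in{\rm dom}(g)$ with $g(\varphi_0(\varphi_h(x)))\subseteq f(\varphi_h(x))$; the domain clause of ${\tt Weih}(f)$ makes ${\tt f}_{k,x}$ total on $f(\varphi_h(x))$, hence on the subset $g(\varphi_0(\varphi_h(x)))$; transparency of $g$ then yields $\psi(h,k,x)\in{\rm dom}(g)$ and
\[
g(\psi(h,k,x))\;\subseteq\;{\tt f}_{k,x}\ast g(\varphi_0(\varphi_h(x)))\;\subseteq\;\{\varphi_k(x,y):y\in f(\varphi_h(x))\}\;=\;{\tt Weih}(f)(h,k,x).
\]

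I expect the main obstacle to be purely bookkeeping: checking that the definedness hypothesis in the transparency inclusion $U({\tt u}\ast{\tt f}\ast{\tt x})\subseteq{\tt f}\ast U({\tt x})$ is legitimately met at the point where it is invoked. The crucial observation is that the second clause ``$\varphi_k(x,y){\downarrow}$ for all $y\in f(\varphi_h(x))$'' in the definition of ${\rm dom}({\tt Weih}(f))$ is tailored precisely to make ${\tt f}_{k,x}$ total on the set being handed to $g$, at which point ${\tt u}$ does the real work of transferring the post-composition by ${\tt f}_{k,x}$ past $g$. No further conceptual ingredient beyond the $s$-$m$-$n$ theorem and the definitions seems necessary.
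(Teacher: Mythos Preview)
Your proof is correct and uses essentially the same mechanism as the paper: both exploit computable transparency of $g$ to push the post-composition $y\mapsto\varphi_k(x,y)$ through $g$ via the realizer ${\tt u}$. The only organizational difference is that the paper factors the hard direction as ${\tt Weih}(f)\leq_m{\tt Weih}(g)\leq_m g$, first using monotonicity of ${\tt Weih}$ and then proving the counit ${\tt Weih}(g)\leq_m g$ separately, whereas you fuse these two steps into a single reduction $\psi(h,k,x)={\tt u}\ast{\tt f}_{k,x}\ast\varphi_0(\varphi_h(x))$; your $\varphi_0$ is precisely what carries the monotonicity step.
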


\begin{proof}
Obviously, ${\tt Weih}(f)\leq_mg$ implies $f\leq_mg$.
For the converse direction, if $f\leq_mg$ then ${\tt Weih}(f)\leq_m{\tt Weih}(g)$ by monotonicity; hence it suffices to show that ${\tt Weih}(g)\leq_mg$.
Given ${\tt h},{\tt k},{\tt x}\in \tpbf{N}$, note that ${\tt y}\in g({\tt h}\ast{\tt x})$ implies ${\tt k}\ast\langle{\tt x},{\tt y}\rangle\in {\tt Weih}_g({\tt h},{\tt k},{\tt x})$.
Put ${\tt k'}=\lambda xy.{\tt k}\ast\langle x,y\rangle$.
As $g$ is computably transparent, there exists ${\tt u}\in N$ such that $g({\tt u}\ast ({\tt k'\ast x})\ast ({\tt h\ast x}))\subseteq ({\tt k'\ast x})\ast g({\tt h\ast x})$.
Note that any element of $({\tt k'\ast x})\ast g({\tt h\ast x})$ is of the form ${\tt k'\ast x\ast y}$ for some ${\tt y}\in g({\tt h\ast x})$.
We also have ${\tt k'\ast x\ast y}={\tt k}\ast \langle{\tt x},{\tt y}\rangle\in {\tt Weih}_g({\tt h},{\tt k},{\tt x})$.
Hence, we get $g({\tt u}\ast ({\tt k'\ast x})\ast ({\tt h\ast x}))\subseteq {\tt Weih}_g({\tt h},{\tt k},{\tt x})$, so the term $\lambda hkx.{\tt u}\ast (({\tt a}\ast k)\ast x)\ast (h\ast x)\in\tplf{N}$ witnesses ${\tt Weih}_g\leq_mg$, where ${\tt a}\in\tplf{N}$ is a computable element such that ${\tt a}\ast{\tt k}={\tt k}'$.
\end{proof}

This implies that the notions of many-one reducibility and Weihrauch reducibility coincide on computably transparent multifunctions.


\begin{cor}\label{prop:computably-transparent-many-one-Weih}
The poset of many-one degrees of computably transparent, partial multimaps on $\tpbf{N}$ is isomorphic to the Weihrauch degrees.
\end{cor}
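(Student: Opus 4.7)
The plan is to exhibit an explicit order-isomorphism $\Phi\colon\mathcal{W}\to\mathcal{MR}ed^{\rm ct}/{\equiv_m}$ between the Weihrauch degrees and the many-one degrees of computably transparent partial multimaps, defined on representatives by $\Phi([f]_W)=[{\tt Weih}(f)]_m$. Most of the work has already been done in Proposition \ref{thm:computably-transparent-many-one-Weihx}; what remains is to package it formally.

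First, I would isolate the crucial reformulation that Example \ref{exa:oracle-Weihrauch-reducibility} gives for free: unwinding the definitions shows $f\leq_Wg$ if and only if $f\leq_m{\tt Weih}(g)$, since a Weihrauch reduction is precisely a many-one reduction to the universal ${\tt Weih}_g$. Combined with the adjunction ${\tt Weih}\dashv i$ from Proposition \ref{thm:computably-transparent-many-one-Weihx}, and using that $i({\tt Weih}(g))={\tt Weih}(g)$ is computably transparent, we obtain the chain
\[f\leq_Wg\iff f\leq_m{\tt Weih}(g)\iff {\tt Weih}(f)\leq_m{\tt Weih}(g).\]
This chain simultaneously shows that $\Phi$ is well-defined on $\equiv_W$-classes and is an order-embedding into $\mathcal{MR}ed^{\rm ct}/{\equiv_m}$.

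For surjectivity, I would show that every computably transparent $U$ lies in the image, specifically that ${\tt Weih}(U)\equiv_m U$. The direction $U\leq_m{\tt Weih}(U)$ is the unit of the adjunction, available for arbitrary multimaps (obtainable by instantiating the adjunction bijection at $g={\tt Weih}(U)$ and the identity ${\tt Weih}(U)\leq_m{\tt Weih}(U)$). The reverse direction ${\tt Weih}(U)\leq_m U$ is exactly the content of the last sentence of the proof of Proposition \ref{thm:computably-transparent-many-one-Weihx}, applied with $f=g=U$; it uses computable transparency of $U$ essentially.

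Combining the two steps, $\Phi$ is a well-defined order-embedding whose image is all of $\mathcal{MR}ed^{\rm ct}/{\equiv_m}$, hence an isomorphism of posets. I do not anticipate a genuine obstacle since the adjunction has already been established; the only care needed is to be sure to quote both the adjunction bijection (used in both directions) and the derived unit $f\leq_m{\tt Weih}(f)$, which is a purely formal consequence of ${\tt Weih}\dashv i$ and therefore requires no additional calculation.
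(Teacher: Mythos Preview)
Your proof is correct and uses the same ingredients as the paper (Example \ref{exa:oracle-Weihrauch-reducibility} and Proposition \ref{thm:computably-transparent-many-one-Weihx}), just packaged from the opposite direction: the paper takes the identity map $\mathcal{MR}ed^{\rm ct}/{\equiv_m}\to\mathcal{W}$ and shows that $\leq_m$ and $\leq_W$ coincide on computably transparent maps, whereas you take the inverse map $\Phi=[{\tt Weih}(-)]_m\colon\mathcal{W}\to\mathcal{MR}ed^{\rm ct}/{\equiv_m}$ and verify it is an order-isomorphism via the adjunction. Both arguments are equivalent in content; the paper's version is marginally shorter because it avoids naming $\Phi$ and the unit explicitly, but your formulation makes the role of the adjunction more transparent.
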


\begin{proof}
We first claim that if $g$ is computably transparent, then $f\leq_mg$ if and only if $f\leq_Wg$.
The forward direction is obvious.
For the backward direction, first note that $f\leq_Wg$ if and only if $f\leq_m{\tt Weih}_g$ as seen in Example \ref{exa:oracle-Weihrauch-reducibility}.
By Proposition \ref{thm:computably-transparent-many-one-Weihx}, ${\tt Weih}_g\leq_mg$ since $g$ is computably transparent.
Hence, $f\leq_mg$.
This claim ensures that the identity map is an embedding from the poset reflection of $\mathcal{MR}ed^{\rm ct}$ into the Weihrauch degrees.
For surjectivity, we have $f\equiv_W{\tt Weih}_f$ as seen in Example \ref{exa:oracle-Weihrauch-reducibility}, and ${\tt Weih}_f$ is computably transparent.
\end{proof}

\begin{remark}
By Proposition \ref{thm:computably-transparent-many-one-Weihx}, the notion of Weihrauch reducibility automatically appears when we have the notions of computable transparency and many-one reducibility, so in fact, there is no need to define Weihrauch reducibility.
More precisely, any left-adjoint ${\tt W}$ of the inclusion $i\colon\mathcal{MR}ed^{\rm ct}\mono\mathcal{MR}ed$ can be seen as a universal Weihrauch machine since ${\tt W}(g)\equiv_m{\tt Weih}(g)$, so $f\leq_Wg$ if and only if $f\leq_m{\tt W}(g)$.
The point is that ${\tt W}$ has no meaning a priori; nevertheless ${\tt W}$ somehow automatically recovers the notion of Weihrauch reducibility.
\end{remark}

The question then naturally arises as to what corresponds to being inflationary or idempotent.
First, let us consider the notion corresponding to being inflationary.
Recall that the notion of Weihrauch reduction requires that {\em exactly one} query to an oracle must be made during the computation.
Let us weaken this condition so that it is not necessary to make a query to an oracle.
That is, it is a relative computation with {\em at most one} query to an oracle.
When no query is made to an oracle, it is a bare computation without an oracle.
Formally, if an oracle $g$ is given, this relative computability notion would be equivalent to making exactly one query to the oracle ${\rm id}\sqcup g$.
Here,
\begin{align*}
{\rm dom}(f\sqcup g)&=\{\langle\underline{\tt 0},x\rangle:x\in{\rm dom}(f)\}\cup\{\langle\underline{\tt 1},x\rangle:x\in{\rm dom}(g)\}\\
(f\sqcup g)(i,x)&=
\begin{cases}
f(x)&\mbox{ if }i=\underline{\tt 0},\\
g(x)&\mbox{ if }i=\underline{\tt 1}.
\end{cases}
\end{align*}

For $f,g\pcolon\tpbf{N}\tto\tpbf{N}$, we say that $f$ is {\em pointed Weihrauch reducible to $g$} (written $f\leq_{pW}g$) if $f\leq_W{\rm id}\sqcup g$; see also \cite{HiPa}.
Moreover, the universal computation relative to the pointed Weihrauch oracle $g$ is defined by ${\tt pWeih}(g):={\tt Weih}({\rm id}\sqcup g)$.
Note that ${\tt pWeih}(g)$ is inflationary, so ${\tt pWeih}$ can be viewed as an order-preserving map from $\mathcal{MR}ed$ to $\mathcal{MR}ed^{{\rm ct},\eta}$.
The following guarantees that ${\tt pWeih}(f)$ is the $\leq_m$-least computably transparent, inflationary, map which can compute $f$.

\begin{prop}\label{thm:computably-transparent-many-one-pWeihx}
The order-preserving map ${\tt pWeih}\colon\mathcal{MR}ed\to\mathcal{MR}ed^{{\rm ct},\eta}$ is left adjoint to the inclusion $i\colon\mathcal{MR}ed^{{\rm ct},\eta}\mono\mathcal{MR}ed$.
In other words, for any $f,g\pcolon\tpbf{N}\tto\tpbf{N}$, if $g$ is computably transparent and inflationary, then $f\leq_mg$ if and only if ${\tt pWeih}(f)\leq_mg$.
\end{prop}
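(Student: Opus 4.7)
The strategy is to mirror the argument for Proposition \ref{thm:computably-transparent-many-one-Weihx}, reducing the adjunction to a single key reduction that isolates the role of the inflationary hypothesis.

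First I would dispose of the easy direction. Since $g \leq_W {\rm id}\sqcup g$ via the tagging $x\mapsto\langle\underline{\tt 1},x\rangle$, we have $g \leq_m {\tt Weih}({\rm id}\sqcup g)={\tt pWeih}(g)$, so by setting $g:=f$ the unit $f\leq_m{\tt pWeih}(f)$ holds unconditionally. Thus ${\tt pWeih}(f)\leq_m g$ immediately implies $f\leq_m g$ by transitivity.

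For the converse, suppose $f\leq_m g$ with $g$ computably transparent and inflationary. By monotonicity of ${\tt pWeih}$ we obtain ${\tt pWeih}(f)\leq_m{\tt pWeih}(g)$, so the entire content of the proposition reduces to showing ${\tt pWeih}(g)\leq_m g$ under the two structural hypotheses. Since ${\tt pWeih}(g)={\tt Weih}({\rm id}\sqcup g)$ and $g$ is computably transparent, Proposition \ref{thm:computably-transparent-many-one-Weihx} reduces this further to establishing
\[{\rm id}\sqcup g\leq_m g.\]
This is where the inflationary hypothesis is used, and it is the only genuinely new step in the argument.

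For this last reduction, I would invoke Observation \ref{obs:transparent-multmap} to obtain $\eta\in N$ with $\eta\ast{\tt x}\in{\rm dom}(g)$ and $g(\eta\ast{\tt x})\subseteq\{{\tt x}\}$ for all ${\tt x}\in\tpbf{N}$. Define a computable many-one reduction $\varphi\colon\tpbf{N}\to\tpbf{N}$ by case distinction on the tag: on input $\langle\underline{\tt 0},{\tt x}\rangle$ set $\varphi(\langle\underline{\tt 0},{\tt x}\rangle)=\eta\ast{\tt x}$, and on input $\langle\underline{\tt 1},{\tt x}\rangle$ set $\varphi(\langle\underline{\tt 1},{\tt x}\rangle)={\tt x}$. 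In the first branch, $g(\varphi(\langle\underline{\tt 0},{\tt x}\rangle))=g(\eta\ast{\tt x})\subseteq\{{\tt x}\}=({\rm id}\sqcup g)(\underline{\tt 0},{\tt x})$, while in the second, $g(\varphi(\langle\underline{\tt 1},{\tt x}\rangle))=g({\tt x})=({\rm id}\sqcup g)(\underline{\tt 1},{\tt x})$, so $g\circ\varphi\preceq{\rm id}\sqcup g$. Chaining, ${\tt pWeih}(f)\leq_m{\tt pWeih}(g)\leq_m g$, completing the proof.

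The only non-routine point is the reduction ${\rm id}\sqcup g\leq_m g$: it is precisely the combinator $\eta$ witnessing inflationarity that lets the $\underline{\tt 0}$-branch, which under pointed Weihrauch reducibility asks for a bare identity step without consulting the oracle, be uniformly simulated by a genuine query to $g$. Everything else is bookkeeping with the previous proposition and the definition ${\tt pWeih}(g)={\tt Weih}({\rm id}\sqcup g)$.
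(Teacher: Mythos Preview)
Your proof is correct and follows essentially the same route as the paper: reduce to ${\tt pWeih}(g)\leq_m g$ by monotonicity, unfold ${\tt pWeih}(g)={\tt Weih}({\rm id}\sqcup g)$, use Proposition~\ref{thm:computably-transparent-many-one-Weihx} to reduce to ${\rm id}\sqcup g\leq_m g$, and witness the latter by the case split sending $\langle\underline{\tt 0},x\rangle\mapsto\eta\ast x$ and $\langle\underline{\tt 1},x\rangle\mapsto x$. The only minor difference is that the paper chains through ${\tt Weih}({\rm id}\sqcup g)\leq_m{\rm id}\sqcup g$ (using that ${\rm id}\sqcup g$ inherits computable transparency from $g$), whereas you invoke the full adjunction of Proposition~\ref{thm:computably-transparent-many-one-Weihx} directly with $g$ on the right; your route is arguably the cleaner application of that proposition.
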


\begin{proof}
As in the proof of Proposition \ref{thm:computably-transparent-many-one-Weihx}, it suffices to show that ${\tt pWeih}(g)\leq_mg$ for any computably transparent, inflationary, multimap $g$.
This is equivalent to say that ${\tt Weih}({\rm id}\sqcup g)\leq_mg$.
By Proposition \ref{thm:computably-transparent-many-one-Weihx}, we have already shown that ${\tt Weih}({\rm id}\sqcup g)\leq_m{\rm id}\sqcup g$.
Thus, it suffices to show that ${\rm id}\sqcup g\leq_mg$.
The following process gives such a reduction:
Given $(i,x)$, if $i=\underline{\tt 0}$ then return $\eta\ast x$, where $\eta$ is a witness that $g$ is inflationary.
If $i=\underline{\tt 1}$ then return $x$.
This process is computable, and witnesses ${\rm id}\sqcup g\leq_mg$.
\end{proof}

\begin{cor}\label{prop:computably-transparent-many-one-pWeih}
The poset of many-one degrees of computably transparent, inflationary, partial multimaps on $\tpbf{N}$ is isomorphic to the pointed Weihrauch degrees.
\end{cor}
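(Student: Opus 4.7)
The plan is to imitate the proof of Corollary \ref{prop:computably-transparent-many-one-Weih} almost verbatim, using Proposition \ref{thm:computably-transparent-many-one-pWeihx} in place of Proposition \ref{thm:computably-transparent-many-one-Weihx}, and replacing Weihrauch reducibility by pointed Weihrauch reducibility throughout. Concretely, I would first establish the key claim: for a computably transparent, inflationary multimap $g$, the reducibilities $\leq_m$ and $\leq_{pW}$ to $g$ coincide. This claim then says that the identity map induces an embedding from the poset reflection of $\mathcal{MR}ed^{{\rm ct},\eta}$ into the pointed Weihrauch degrees, and the remaining task is surjectivity.

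For the key claim, the forward direction is immediate, since any many-one reduction is in particular a pointed Weihrauch reduction (use the ${\rm id}$-branch of ${\rm id}\sqcup g$ or just observe $\leq_m\;\Rightarrow\;\leq_W\;\Rightarrow\;\leq_{pW}$). For the backward direction, I would unfold the definition: $f\leq_{pW}g$ means $f\leq_W{\rm id}\sqcup g$, which by the computation in Example \ref{exa:oracle-Weihrauch-reducibility} is equivalent to $f\leq_m{\tt Weih}({\rm id}\sqcup g)={\tt pWeih}(g)$. Proposition \ref{thm:computably-transparent-many-one-pWeihx} then gives ${\tt pWeih}(g)\leq_m g$ whenever $g$ is computably transparent and inflationary, and chaining the reductions yields $f\leq_m g$.

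For surjectivity, given an arbitrary multimap $f$, I would exhibit ${\tt pWeih}(f)$ as a representative of the same pointed Weihrauch degree that additionally lies in $\mathcal{MR}ed^{{\rm ct},\eta}$. Computable transparency of ${\tt pWeih}(f)={\tt Weih}({\rm id}\sqcup f)$ is Example \ref{exa:oracle-Weihrauch-reducibility}, and being inflationary was observed just before Proposition \ref{thm:computably-transparent-many-one-pWeihx}. To check $f\equiv_{pW}{\tt pWeih}(f)$: the reduction $f\leq_W{\rm id}\sqcup f\equiv_W{\tt pWeih}(f)$ (using the second injection and Example \ref{exa:oracle-Weihrauch-reducibility}) yields $f\leq_{pW}{\tt pWeih}(f)$, while ${\tt pWeih}(f)\leq_W{\rm id}\sqcup f$ directly unfolds to ${\tt pWeih}(f)\leq_{pW}f$.

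I do not foresee a substantial obstacle: Proposition \ref{thm:computably-transparent-many-one-pWeihx} does all the real work, and the rest is bookkeeping with the two chains $\leq_m\;\Rightarrow\;\leq_W\;\Rightarrow\;\leq_{pW}$ and the equivalence of $f\leq_{pW}g$ with $f\leq_m{\tt pWeih}(g)$. The only mildly delicate step is the verification $f\equiv_{pW}{\tt pWeih}(f)$ for surjectivity, where one must distinguish between $\leq_W$ and $\leq_{pW}$ carefully; but both directions are one-line reductions once Example \ref{exa:oracle-Weihrauch-reducibility} is in hand.
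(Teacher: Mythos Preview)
Your proposal is correct and follows essentially the same route as the paper: establish that $\leq_m$ and $\leq_{pW}$ coincide on $\mathcal{MR}ed^{{\rm ct},\eta}$ using ${\tt pWeih}(g)\leq_m g$ from Proposition \ref{thm:computably-transparent-many-one-pWeihx}, then use ${\tt pWeih}(f)\in\mathcal{MR}ed^{{\rm ct},\eta}$ and ${\tt pWeih}(f)\equiv_{pW}f$ for surjectivity. The only cosmetic difference is that you invoke Example \ref{exa:oracle-Weihrauch-reducibility} for the equivalence $f\leq_W h\iff f\leq_m{\tt Weih}(h)$ where the paper cites Proposition \ref{thm:computably-transparent-many-one-Weihx}, and you spell out the verification of ${\tt pWeih}(f)\equiv_{pW}f$ that the paper simply asserts.
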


\begin{proof}
For $g\in\mathcal{MR}ed^{{\rm ct},\eta}$, we claim that $f\leq_{m}g$ if and only if $f\leq_{pW}g$.
This is because the latter is equivalent to $f\leq_{W}{\rm id}\sqcup g$, and by Proposition \ref{thm:computably-transparent-many-one-Weihx}, this is equivalent to $f\leq_m{\tt Weih}({\rm id}\sqcup g)={\tt pWeih}(g)\leq_mg$, where the last inequality follows from Proposition \ref{thm:computably-transparent-many-one-pWeihx} since $g$ is inflationary.
This claim ensures that the identity map is an embedding.
The surjectivity of the identity map follows from the fact that ${\tt pWeih}(g)$ is computably transparent and inflationary, and ${\tt pWeih}(g)\equiv_{pW}g$.
\end{proof}

Next, let us discuss the idempotent version of Weihrauch reducibility, which is a reducibility notion that, like Turing reducibility, allows multiple access to the oracle.
This is usually defined by what is called a {\em reduction game}.

\begin{definition}[see e.g.~{\cite[Definitions 4.1 and 4.3]{HiJo16}}]\label{def:game-Turing-Weihrauch-reducibility}
For multifunctions $f$ and $g$, let us consider the following perfect information two-player game $\mathfrak{G}(f,g)$:
\[
\begin{array}{|cc|}\hline 
 {\tt Oracle} & {\tt Computer} \\[0.5em]
z_0\in{\rm dom}(f)	& \\
				& {\tt Query}\colon \;x_0\in{\rm dom}(g)	\\
z_1\in g(x_0)	& \\	 
				& {\tt Query}\colon \;x_1\in{\rm dom}(g)	\\
z_2\in g(x_1)	& \\	 
			\vdots	& \vdots 	\\
				& {\tt Query}\colon \;x_n\in{\rm dom}(g)	\\
z_{n+1}\in g(x_n)	& \\	 
 		& {{\tt Halt}}\colon\; x_{n+1}\in f(z_0)  \\[0.5em] \hline
\end{array}
\]


\noindent
{\it Game rules:}
Here, the players need to obey the following rules.
\begin{itemize}
\item First, {\tt Oracle} chooses $z_0\in{\rm dom}(f)$.
\item At the $n$th round, {\tt Computer} reacts with $y_n=\langle {\tt A}_n,x_n\rangle$.
\begin{itemize}
\item The choice ${\tt A}_n={\tt Query}$ (coded by $\underline{\tt 0}$) indicates that {\tt Computer} makes a new query $x_n$ to $g$.
\item The choice ${\tt A}_n={\tt Halt}$ (coded by $\underline{\tt 1}$) indicates that {\tt Computer} declares termination of the game with $x_n$.
\end{itemize}
\item At the $(n+1)$st round, {\tt Oracle} responds to the query made by {\tt Computer} at the previous stage.
This means that $z_{n+1}\in g(x_n)$.
\end{itemize}

Then, {\em {\tt Computer} wins the game $\mathfrak{G}(f,g)$} if either {\tt Oracle} violates the rule before {\tt Computer} violates the rule, or {\tt Computer} obeys the rule and declares termination with $x_n\in f(z_0)$.

\medskip

\noindent
{\it Strategies:}
{\tt Computer}'s strategy is a partial map $\tau\pcolon \tpbf{N}^{<\om}\to 2\times\tpbf{N}$.
{\tt Computer}'s strategy is always computable or continuous, coded in the lightface part $\tplf{N}$ for the former case and in $\tpbf{N}$ for the latter.
On the other hand, {\tt Oracle}'s strategies are any partial functions (which are not necessarily computable).

If $\sigma$ and $\tau$ are strategies of {\tt Oracle} and {\tt Computer}, respectively, then the play that follow these strategies are defined as follows:
{\tt Oracle}'s first move is $z_0:=\sigma()$, and ($n+1$)th move is $z_{n+1}:=\sigma(x_0,\dots,x_n)$.
{\tt Computer}'s $n$th move is $\langle{\tt A}_n,x_n\rangle:=\tau(z_0,\dots,z_n)$.
Here, ``{\em undefined}'' counts as a rule violation.

An {\tt Computer}'s strategy $\tau$ is {\em winning} if, as long as {\tt Computer} follows the strategy $\tau$, {\tt Computer} wins the game, no matter what {\tt Oracle}'s strategy $\sigma$ is.
\end{definition}

\begin{definition}
Let $f$ and $g$ partial multifunctions.
We say that {\em $f$ is Turing-Weihrauch reducible to $g$} (written $f\leq_{TW} g$) if {\tt Computer} has a computable winning strategy for the game $\mathfrak{G}(f,g)$.
\end{definition}

\begin{remark}
Note that $f$ is Weihrauch reducible to $g$ if and only if {\tt Computer} has a computable ``exactly one-query'' strategy for the game $\mathfrak{G}(f,g)$; that is, ${\tt A}_1={\tt Halt}$.
Similarly, $f$ is pointed Weihrauch reducible to $g$ if and only if {\tt Computer} has a computable ``at most one-query'' strategy for the game $\mathfrak{G}(f,g)$; that is, ${\tt A}_i={\tt Halt}$ for some $i<2$.
\end{remark}

\begin{remark}
This idempotent version of Weihrauch reducibility is usually called generalized Weihrauch reducibility \cite{HiJo16}.
As explained in Section \ref{sec:background}, this reducibility can be viewed as a fusion of Weihrauch reducibility and Turing reducibility.
Therefore, from this perspective, this article will refer to this reducibility as {\em Turing-Weihrauch reducibility} (and use the symbol $\leq_{TW}$) rather than generalized Weihrauch reducibility.
\end{remark}

The idea of describing the process of Turing-like reduction using a game has some precedents, e.g., a {\em dialogue} \cite{vO06,FavO16}.
A notion essentially almost equivalent to Turing-Weihrauch reducibility has been studied by Lee-van Oosten \cite{LvO} (see also \cite[Remark 2.15]{Kih21}) in an intricate tree form (dealing with a more general setting) rather than in game form.
It was Hirschfeldt-Jockusch \cite[Definitions 4.1 and 4.3]{HiJo16} who gave a very clear and intuitive formulation of this notion in game form.
One of their aims was to give a clear computability-theoretic characterization of $\om$-model separation of $\Pi^1_2$ principles in second-order arithmetic \cite[Proposition 4.2]{HiJo16}.
Neumann-Pauly \cite{NePa18} gave another formulation using register machines.

How to introduce a {\em universal} computation for Turing-Weihrauch reducibility has been also studied in depth:
The universal computation relative to a partial Turing oracle (in a more general setting) is formulated as a dialogue \cite{vO06,FavO16}, and the universal computation relative to a Turing-Weihrauch oracle $g$ has a machine formulation \cite{NePa18,Wes21}, denoted $g^{\diamondsuit}$, and a game formulation \cite{Kih20,Kih21}, denoted $g^\Game$, which are essentially equivalent.

Note that the rule of the game $\mathfrak{G}(f,g)$ does not mention $f$ except for Player I's first move.
Hence, if we skip Player I's first move, we can judge if a given play follows the rule without specifying $f$.
Such a restricted game is denoted by $\mathfrak{G}(g)$.

\begin{definition}[{see e.g.~\cite[Definition 2.20]{Kih21}}]
Given a partial multifunction $h$, let us define the new partial multifunction $g^\Game$ as follows:
An input for $g^\Game$ is a {\tt Computer}'s continuous strategy $\tau$.
\begin{itemize}
\item $h^\Game(\tau)$ is defined only if, along any play in $\mathfrak{G}(g)$ following the strategy $\tau$, either {\tt Oracle} violates the rule before {\tt Computer} violates the rule, or {\tt Computer} obeys the rule and declares termination, whatever {\tt Oracle}'s strategy is.
\item $u\in g^\Game(\tau)$ if and only if there is a play in $\mathfrak{G}(g)$ that follows the strategy $\tau$ such that {\tt Computer} declares termination with $u$ at some round, where all players obey the rule.
\end{itemize}
\end{definition}

The idea of the definition of $g^\diamondsuit$ is essentially the same.
Note, however, that in Westrick's definition \cite{Wes21}, computable transparency is lost because all the information in a run of computation (or history of a play in the game) is taken as output.
Replacing $g^\diamondsuit$ with ${\tt Weih}(g^\diamondsuit)$ solves this problem.
Other definitions \cite{NePa18,Kih21} do not cause this problem.
In the following, we alway adopt a computably transparent version of $\diamondsuit$ (e.g., take $\diamondsuit:=\Game$ as above).

The following observations are only outlined here, as we will give a proof for the more general case in Section \ref{sec:universal-bimap}.
For the basic property of the diamond operator, the definition in \cite{NePa18} (or in \cite{Kih20,Kih21}) ensures that $f\leq_{TW}g$ if and only if $f\leq_mg^\diamondsuit$ (see \cite{Kih20}).
Moreover, as observed in \cite{Kih20}, $g^\diamondsuit$ is computably transparent, inflationary, and idempotent, and $g^\diamondsuit\equiv_{TW}g$ (this is a Turing-Weihrauch version of the property mentioned in Example \ref{exa:oracle-Weihrauch-reducibility}).
In particular, the diamond operator $\diamondsuit\colon g\mapsto g^\diamondsuit$ can be viewed as an order-preserving map from $\mathcal{MR}ed$ to $\mathcal{MR}ed^{{\rm ct},\eta,\mu}$.

Conversely, the result by Westrick \cite[Theorem 1]{Wes21} implies that if $g$ is computably transparent, inflationary, and idempotent, then $g^\diamondsuit\leq_Wg$.
By Proposition \ref{prop:computably-transparent-many-one-Weih}, this is equivalent to $g^\diamondsuit\leq_mg$.
Therefore:

\begin{fact}[essentially by Westrick \cite{Wes21}]\label{fact:Westrick}
The diamond operator $\diamondsuit\colon\mathcal{MR}ed\to\mathcal{MR}ed^{{\rm ct},\eta,\mu}$ is left adjoint to the inclusion $i\colon\mathcal{MR}ed^{{\rm ct},\eta,\mu}\mono\mathcal{MR}ed$.
\qed
\end{fact}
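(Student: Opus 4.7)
The plan is to verify the defining biconditional of the adjunction between the preordered sets: for every $f\pcolon\tpbf{N}\tto\tpbf{N}$ and every $g\in\mathcal{MR}ed^{{\rm ct},\eta,\mu}$, show that $f^\diamondsuit\leq_m g$ if and only if $f\leq_m g$. Two pieces of bookkeeping are already recorded in the paragraphs preceding the statement and will be used without further comment: that $\diamondsuit$ lands in $\mathcal{MR}ed^{{\rm ct},\eta,\mu}$ (so the codomain is correct), and that $\diamondsuit$ is order-preserving on $\mathcal{MR}ed$. The latter is a routine verification: a computable many-one reduction $\varphi$ witnessing $f\leq_m g$ induces a reduction $f^\diamondsuit\leq_m g^\diamondsuit$ by pre-composing each internal oracle call to $f$ with $\varphi$ so that it becomes a call to $g$.

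First I would dispatch the forward direction. Since $g^\diamondsuit\equiv_{TW}g$ always holds and $f\leq_{TW}g$ is equivalent to $f\leq_m g^\diamondsuit$, taking $f$ in place of $g$ gives the trivial reduction $f\leq_{TW}f$, hence $f\leq_m f^\diamondsuit$. Chaining this with the hypothesis $f^\diamondsuit\leq_m g$ yields $f\leq_m g$.

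For the backward direction, assume $f\leq_m g$. Monotonicity of $\diamondsuit$ gives $f^\diamondsuit\leq_m g^\diamondsuit$, so it remains to show $g^\diamondsuit\leq_m g$ for $g$ that is computably transparent, inflationary, and idempotent. Here I would invoke Westrick's theorem \cite{Wes21}, which supplies $g^\diamondsuit\leq_W g$ under precisely these hypotheses. Since $g$ is computably transparent, Corollary \ref{prop:computably-transparent-many-one-Weih} upgrades $\leq_W$ to $\leq_m$, yielding $g^\diamondsuit\leq_m g$. Transitivity finishes the proof: $f^\diamondsuit\leq_m g^\diamondsuit\leq_m g$.

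The main obstacle is the external ingredient, Westrick's reduction of $g^\diamondsuit$ to $g$ for ct-inflationary-idempotent $g$. The intuitive picture is that a finite dialogue with $g$ can be folded into one query: transparency lets us compose the continuations that process each answer into a single continuous function, idempotence collapses the resulting nested $g$-call into a single $g$-call, and the inflationary witness $\eta$ handles the base case of branches that require no query. Since this is already cited as a black-box fact in the statement, my plan is to apply it rather than reprove the folding scheme, so all that is genuinely new here is the routine two-line assembly via Corollary \ref{prop:computably-transparent-many-one-Weih}.
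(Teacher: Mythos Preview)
Your proposal is correct and mirrors the paper's own argument almost exactly: the paper also reduces the adjunction to showing $g^\diamondsuit\leq_m g$ for $g\in\mathcal{MR}ed^{{\rm ct},\eta,\mu}$, invokes Westrick's result to get $g^\diamondsuit\leq_W g$, and then upgrades to $\leq_m$ via computable transparency (the paper cites Proposition~\ref{thm:computably-transparent-many-one-Weihx} rather than Corollary~\ref{prop:computably-transparent-many-one-Weih}, but the content is the same). The forward direction and the monotonicity of $\diamondsuit$ are handled exactly as you outline.
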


\begin{cor}\label{prop:computably-transparent-many-one-TWeih}
The poset of many-one degrees of computably transparent, inflationary, idempotent, partial multimaps on $\tpbf{N}$ is isomorphic to the Turing-Weihrauch degrees.
\end{cor}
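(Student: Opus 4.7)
The plan is to follow precisely the template established by Corollaries \ref{prop:computably-transparent-many-one-Weih} and \ref{prop:computably-transparent-many-one-pWeih}, with the diamond operator $\diamondsuit$ playing the role that ${\tt Weih}$ and ${\tt pWeih}$ played there, and with Fact \ref{fact:Westrick} supplying the crucial ingredient that collapses $\leq_{TW}$ to $\leq_m$ on the subcategory $\mathcal{MR}ed^{{\rm ct},\eta,\mu}$.

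The key claim to establish first is that for every $g\in\mathcal{MR}ed^{{\rm ct},\eta,\mu}$, the equivalence
\[
f\leq_m g\iff f\leq_{TW} g
\]
holds for all partial multimaps $f\pcolon\tpbf{N}\tto\tpbf{N}$. The forward direction is immediate: a many-one reduction is a Turing-Weihrauch reduction that happens to make no queries to the oracle. For the reverse direction, I would invoke the defining characterization of the diamond operator recalled just before Fact \ref{fact:Westrick}, namely $f\leq_{TW}g\iff f\leq_m g^{\diamondsuit}$. Then Fact \ref{fact:Westrick} supplies $g^{\diamondsuit}\leq_m g$ whenever $g\in\mathcal{MR}ed^{{\rm ct},\eta,\mu}$, and composing many-one reductions yields $f\leq_m g$.

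With this claim in hand, the identity map descends to an order-embedding from the poset reflection of $(\mathcal{MR}ed^{{\rm ct},\eta,\mu},\leq_m)$ into the Turing-Weihrauch degrees. Surjectivity follows from the observation already recorded in the paragraph preceding Fact \ref{fact:Westrick}: for any $f\pcolon\tpbf{N}\tto\tpbf{N}$, the multimap $f^{\diamondsuit}$ is computably transparent, inflationary, and idempotent, and satisfies $f^{\diamondsuit}\equiv_{TW} f$, so every Turing-Weihrauch degree has a representative in $\mathcal{MR}ed^{{\rm ct},\eta,\mu}$.

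Since every ingredient of the argument is already in place, I do not anticipate any real obstacle; the corollary is essentially a bookkeeping exercise mirroring the previous two corollaries. The only point worth a sentence of care is that the forward implication $f\leq_m g\Rightarrow f\leq_{TW}g$ holds unconditionally, which is immediate from the computational description of Turing-Weihrauch reduction in \cite{HiJo16,Kih20}. The genuine mathematical content has been outsourced to Fact \ref{fact:Westrick}, and the corollary is then a direct consequence of that adjunction together with the $\equiv_{TW}$-idempotence of $\diamondsuit$.
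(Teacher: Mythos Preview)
Your proposal is correct and follows essentially the same approach as the paper's proof: establish the equivalence $f\leq_m g\iff f\leq_{TW}g$ for $g\in\mathcal{MR}ed^{{\rm ct},\eta,\mu}$ via the characterization $f\leq_{TW}g\iff f\leq_m g^{\diamondsuit}$ together with Fact~\ref{fact:Westrick}, then obtain surjectivity from $g^{\diamondsuit}\equiv_{TW}g$ and the fact that $g^{\diamondsuit}\in\mathcal{MR}ed^{{\rm ct},\eta,\mu}$. The paper's version is slightly more terse, but the structure and ingredients are identical.
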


\begin{proof}
For $g\in\mathcal{MR}ed^{{\rm ct},\eta,\mu}$, we claim that $f\leq_{m}g$ if and only if $f\leq_{TW}g$.
This is because we have $f\leq_{TW}g$ if and only if $f\leq_mg^{\diamondsuit}\leq_mg$ by Fact \ref{fact:Westrick}.
This claim ensures that the identity map is an embedding.
The surjectivity of the identity map follows from the fact that $g^\diamondsuit$ is computably transparent, inflationary, and idempotent, and $g^\diamondsuit\equiv_{TW}g$ as mentioned above.
\end{proof}

In this way, various oracle-computability notions (degree notions) can be recast as properties of multimaps.
\begin{align*}
\mathcal{MR}ed^{\rm ct}&\simeq\mbox{the Weihrauch degrees}\\
&\quad \mbox{(oracle computability with exactly one query)}\\
\mathcal{MR}ed^{{\rm ct},\eta}&\simeq\mbox{the pointed Weihrauch degrees}\\
&\quad \mbox{(oracle computability with at most one query)}\\
\mathcal{MR}ed^{{\rm ct},\eta,\mu}&\simeq\mbox{the Turing-Weihrauch degrees}\\
&\quad \mbox{(oracle computability with finitely many queries)}
\end{align*}

Since a preordered set can be thought of as a category, one can organize the above results using the language of category theory.
If an inclusion functor $\mathcal{\iota}\colon\mathcal{C}\embed\mathcal{D}$ between categories has a left adjoint, then $\mathcal{C}\embed\mathcal{D}$ is called a {\em reflective subcategory}, and such a left adjoint is called a {\em reflector}.
That is, $\mathcal{MR}ed^{{\rm ct},\eta,\mu}\embed\mathcal{MR}ed^{{\rm ct},\eta}\embed\mathcal{MR}ed^{{\rm ct}}\embed\mathcal{MR}ed$ are reflective subcategories, and corresponding universal oracle relativization functors are reflectors.

\begin{center}
\includegraphics[width=15cm]{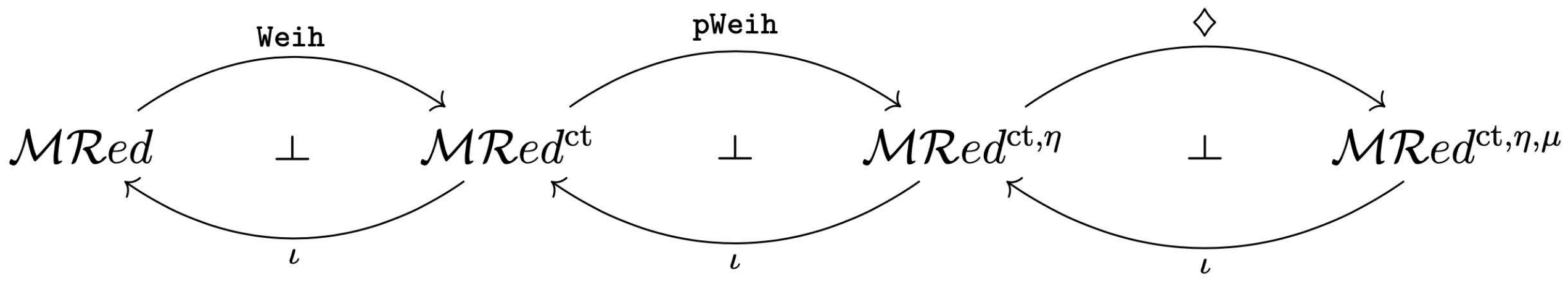}
\end{center}

Similar results hold when restricted to single-valued functions.
Here, the restriction of Turing-Weihrauch reducibility to total single-valued functions in the Kleene first algebra corresponds to ordinary Turing reducibility by definition.
Hereafter, we refer to the restriction of Turing-Weihrauch reducibility to partial single-valued functions as {\em subTuring reducibility}.
Note that, as mentioned in Section \ref{sec:background}, our subTuring reducibility for partial functions is Turing sub-reducibility in the sense of Sasso \cite{Sas75} and Madore \cite{Mad12}.
Under this terminology, it is straightforward to see the following:

\begin{cor}\label{cor:Turing-degrees-mna}
The poset of many-one degrees of computably transparent, inflationary, idempotent, partial single-valued maps on the Kleene first algebra is isomorphic to the subTuring degrees of partial functions.
\qed
\end{cor}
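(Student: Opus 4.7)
The plan is to mirror the proof of Corollary \ref{prop:computably-transparent-many-one-TWeih} verbatim, with the only new content being the verification that the constructions preserve single-valuedness when we restrict to the Kleene first algebra. Since Turing reducibility on partial functions has been \emph{defined} in the excerpt as the restriction of Turing-Weihrauch reducibility $\leq_{TW}$ to partial single-valued maps on $(\N,\N,\ast)$, there is no subtle new notion to match against; the only nontrivial task is to check that the diamond operator restricts well to partial functions.

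\textbf{Step 1 (Embedding).} Let $g\pcolon\N\to\N$ be a computably transparent, inflationary, idempotent partial single-valued map. Exactly as in the proof of Corollary \ref{prop:computably-transparent-many-one-TWeih}, I would invoke Fact \ref{fact:Westrick}: for an arbitrary partial multimap $f$, one has $f\leq_{TW}g$ iff $f\leq_m g^\diamondsuit$, and since $g\in\mathcal{MR}ed^{\mathrm{ct},\eta,\mu}$ the adjunction gives $g^\diamondsuit\leq_m g$. Specialising $f$ to a partial single-valued map, $f\leq_m g$ iff $f\leq_{TW}g$, i.e.\ (by the agreed terminology) iff $f\leq_T g$. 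Hence the identity map descends to an order-embedding from the poset in question into the Turing degrees of partial functions.

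\textbf{Step 2 (Surjectivity).} Given any partial function $g\pcolon\N\to\N$, I need a representative of its Turing degree that lies in $\mathcal{MR}ed^{\mathrm{ct},\eta,\mu}$ and is single-valued. The natural candidate is $g^\diamondsuit$: the excerpt has already recorded that $g^\diamondsuit$ is computably transparent, inflationary, idempotent, and that $g^\diamondsuit\equiv_{TW}g$. The crux is therefore to verify:

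\emph{Claim.} If $g\pcolon\N\to\N$ is single-valued, then $g^\diamondsuit\pcolon\N\tto\N$ is single-valued.

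This is where the main (and really only) obstacle lies, but it is a mild one. Informally, $g^\diamondsuit$ is built by iterating ``one query to $g$'' finitely many times; each query against a single-valued oracle returns a unique answer, so at every stage the nondeterministic branching of the definition of $g^\diamondsuit$ collapses. Concretely I would fix an explicit presentation of $g^\diamondsuit$ (as in \cite{NePa18,Wes21,Kih20}) in which an input encodes a finite dialogue tree, unwind the definition by induction on the query count, and observe that at each induction step the output set has cardinality at most one because $|g(z)|\le 1$ for every $z$. Combining this with Step 1 gives that the identity map is also surjective, completing the isomorphism.

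\textbf{Where difficulty might hide.} All the heavy lifting (the adjunction, the equivalences with $\leq_T$ and $\leq_{TW}$, and the properties of $\diamondsuit$) is already available from Fact \ref{fact:Westrick} and Corollary \ref{prop:computably-transparent-many-one-TWeih}. The single genuinely new verification is the preservation of single-valuedness by $\diamondsuit$; any extra care needed is merely to make sure the chosen implementation of $\diamondsuit$ does not introduce spurious branching through, say, coding choices for dialogue trees, but this can be handled by picking a deterministic coding scheme from the outset.
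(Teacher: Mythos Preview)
Your proposal is correct and follows exactly the line the paper has in mind: the paper gives no explicit proof (just \qed), treating the result as an immediate specialisation of Corollary \ref{prop:computably-transparent-many-one-TWeih} together with the observation that the diamond operator applied to a single-valued partial function remains single-valued. Your Step 2 claim and its justification (no nondeterministic branching arises because each oracle query to a single-valued $g$ has at most one answer) is precisely the ``straightforward'' verification the paper leaves implicit.
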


One might think that considering total single-valued functions, one would obtain a similar result for ordinary Turing reducibility; however usually a universal machine (even relative to a total oracle) cannot be total.
In other words, in general, computable transparency and totality seem incompatible.
Whether or not the characterization of ordinary Turing reducibility can be made is a subject for future work (see also Question \ref{question-ot} below).

We will see in Section \ref{sec:realizability} that via these translations of oracle-computability notions into multimap properties, it is possible to understand oracle-computations as operations on truth values.

\section{Oracles as operations on truth-values}\label{sec:realizability}

Next, let us explain the third point of view, which is to think of oracles as operations on truth-values.
In this section, we analyze this third view by comparing it to the second view introduced in Section \ref{sec:synthetic-dst}.
For this purpose, we translate the notions around transparent maps into the language regarding truth-values.

\subsection{Realizability and modality}\label{sec:modest-modality}

Before we begin, let us discuss a factor that changes truth values.
As described in Section \ref{sec:intro-oracle}, a factor that causes a change in Kripke semantics (see also \cite{GuHo19}) is a {\em coverage}.
In the usual intuitionistic Kripke semantics, in order to claim that $\varphi\lor\psi$ is valid at a state $p$, one must determine whether $\varphi$ or $\psi$ is valid at the state $p$, as in the usual BHK interpretation.
However, it is also useful to have a semantics that does not immediately decide which is valid, but rather postpones the decision of which is valid.
The notion of (abstract) coverage is used to realize various types of ``postpone''.
If there is a notion of covering of a state $p$, one can consider a modified semantics of the form that $\varphi\lor\psi$ is valid at a state $p$ if there exists a cover $\mathcal{U}$ of $p$ such that either $\varphi$ or $\psi$ holds locally in each state $q\in\mathcal{U}$ (which may be different for each $q$).
In this way, one can use a notion of covering to change the interpretation of the Kripke semantics.

\medskip
\noindent
{\it Abstract covering relation}:
Here are some typical properties of the covering relation $A\subseteq\bigcup\mathcal{U}$ (read that $A$ is covered by $\mathcal{U}$) for a set $A$ and family $\mathcal{U}$ of sets:
({\sf monotone}) Whenever $\mathcal{U}\subseteq\mathcal{V}$, if $A$ is covered by $\mathcal{U}$ then $A$ is covered by $\mathcal{V}$;
({\sf inflationary}) If $A\in\mathcal{U}$ then $A$ is covered by $\mathcal{U}$;
({\sf idempotent}) If $A$ is covered by $\mathcal{U}$ and all $B\in\mathcal{U}$ is covered by $\mathcal{V}$ then $A$ is covered by $\mathcal{V}$;
({\sf local}) $A$ is covered by $\mathcal{U}$ if and only if $A$ is covered by $\mathcal{U}|_A:=\{U\cap A:U\in\mathcal{U}\}$.

Letting $j(\mathcal{U})$ be the set of all $A$ covered by $\mathcal{U}$, the above property can be rewritten as follows:
({\sf monotone}) $\mathcal{U}\subseteq\mathcal{V}$ implies $j(\mathcal{U})\subseteq j(\mathcal{V})$;
({\sf inflationary}) $\mathcal{U}\subseteq j(\mathcal{U})$;
({\sf idempotent}) $j\circ j(\mathcal{U})\subseteq j(\mathcal{U})$;
({\sf local}) $A\in j(\U)$ if and only if $A\in j(\U|_A)$.

\begin{obs}\upshape
({\sf local}) is equivalent to say that $\U|_A=\V|_A$ implies $j(\U)|_A=j(\V)|_A$.
\end{obs}

\begin{proof}
If $\U|_A=\V|_A$ then $\U|_{U\cap A}=\V|_{U\cap A}$ for any $U$, so locality implies $U\cap A\in j(\U)$ iff $U\cap A\in j(\U|_{U\cap A})$ iff $U\cap A\in j(\V|_{U\cap A})$ iff $U\cap A\in j(\V)$, which means $j(\U)|_A=j(\V)|_A$ since $j(\U)$ and $j(\V)$ are $\subseteq$-downward closed.
Conversely, we have $\U|_A=\U|_A|_A$, so we get $j(\U)|_A=j(\U|_A)|_A$, which clearly implies $A\in j(\U)$ iff $A\in j(\U|_A)$.
\end{proof}

Note that the $\subseteq$-downward closed families form a complete Heyting algebra, and the above characterization of ({\sf local}) can be rewritten as $(\mathcal{U}\iffarr\mathcal{V})\subseteq (j(\mathcal{U})\iffarr j(\mathcal{V}))$, where $\arr$ is the Heyting operation on the $\subseteq$-downward closed families.

This leads us to the following definition:
Let $\Omega$ be a complete Heyting algebra.
An operation $j\colon\Omega\to\Omega$ is a {\em nucleus} \cite{PiPu12} if the following conditions hold:
({\sf monotone}) $p\leq q$ implies $j(p)\leq j(q)$;
({\sf inflationary}) $p\leq j(p)$;
({\sf idempotent}) $j\circ j(p)\leq j(p)$;
({\sf local}) 
$(p\iffarr q)\leq (j(p)\iffarr j(q))$.
By combining monotonicity and locality, a nucleus actually satisfies the condition ({\sf local monotone}) $(p\arr q)\leq (j(p)\arr j(q))$; see e.g.~\cite[Proposition I.5.3.1]{PiPu12}.
\medskip

As the discussion above suggests, this notion of coverage/nucleus provides a factor that causes a change in Kripke semantics \cite{GuHo19} (that is, Kripke-Joyal semantics \cite[Theorem VI.7.1]{SGL}).
The semantics of intuitionistic logic that is as important as Kripke semantics is Kleene's realizability interpretation \cite{Tro98,vO02}.
Obviously, an oracle can be a factor that causes a change in Kleene realizability, but what does this have to do with coverage/nucleus?
To investigate this relationship, let us first bring the notion of nucleus into the context of realizability.

According to the realizability interpretation, the set of truth values on the stage we are considering now is $\Omega=\mathcal{P}(\tpbf{N})$.
Indeed, the morphism tracked by $true\colon\mathbf{1}\to\mathcal{P}(\tpbf{N})$, where $true(\ast)=\tpbf{N}$, is a subobject classifier in the corresponding realizability topos (see also \cite{vOBook}).

\begin{definition}[Kleene realizability \cite{Tro98,vO02}]\label{def:Kleene-realizability}
For $\Omega=\mathcal{P}(\tpbf{N})$, we introduce binary operations $\land,\lor,\arr\colon\Omega^2\to\Omega$ as follows:
\begin{align*}
p\land q&:=\{\langle{\tt a},{\tt b}\rangle:{\tt a}\in p\mbox{ and }{\tt b}\in q\},\\
p\lor q&:=\{\langle {\tt 0},{\tt a}\rangle:{\tt a}\in p\}\cup \{\langle {\tt 1},{\tt b}\rangle:{\tt b}\in q\},\\
p\arr q&:=\{{\tt x}\in\tpbf{N}:p\subseteq{\rm dom}(\varphi_{\tt x})\mbox{ and }\varphi_{\tt x}[p]\subseteq q\}.
\end{align*}
For ${\tt e}\in\tpbf{N}$ and $p\in\Omega$, we say that {\em ${\tt e}$ realizes $p$} if ${\tt e}\in p$.
Moreover, we say that {\em $p$ is realizable} if such an ${\tt e}$ can be chosen from the lightface part $\tplf{N}$ of the algebra $(\tplf{N},\tpbf{N},\ast)$.
For a collection $(p(x))_{x\in I}$, we also say that {\em $\forall x\in I.\ p(x)$ is realizable} if there exists ${\tt e}$ such that ${\tt e}$ realizes $p(x)$ for all $x\in I$.
\end{definition}

\begin{definition}[see also \cite{LvO}]
For an operation $j\colon\Omega\to\Omega$, consider the following formulas:
\begin{enumerate}
\item ({\sf local monotone}) $\forall p,q\in\Omega\;[(p\arr q)\arr(j(p)\arr j(q))]$,
\item ({\sf inflationary}) $\forall p\in\Omega\;[p\arr j(p)]$,
\item ({\sf idempotent}) $\forall p\in\Omega\;[j(j(p))\arr j(p)]$.
\end{enumerate}

If (1) is realizable, $j$ is called {\it (computably) monotone}; if (2) is realizable, $j$ is called {\it inflationary}; and if (3) is realizable, $j$ is called {\it idempotent}.
\end{definition}

\begin{remark}
Consider (1$^\prime$) $\top\arr j(\top)$ and (2$^\prime$) $\forall p,q\in\Omega\;[j(p\land q)\iffarr j(p)\land j(q)]$.
Then, one can check that, for an operation $j\colon\Omega\to\Omega$, (1), (2) and (3) are realizable if and only if (1$^\prime$), (2$^\prime$) and (3) are realizable; see \cite{LvO}.
\end{remark}

In topos-theoretic language, an operation $j$ such that (1), (2) and (3) are realizable is called a {\em Lawvere-Tierney topology} \cite{SGL} or a {\em local operator} \cite{elephant,vOBook} (on the corresponding realizability topos).
In the context of frames and locales \cite{PiPu12}, it is an {\it (internal) nucleus}.
As it is an operation on truth-values, some people think that such a notion is a kind of modal operator.
For this reason, this notion is sometimes referred to as a {\em (geometric) modality}.

Some prefer the terms coverage or local operator, arguing that the term Lawvere-Tierney topology is inappropriate because this notion has nothing to do with topology in the classical sense.
However, these terms would still be inappropriate anyway, since this notion, when considered in a realizability topos, has no more to do with cover or locality than topology.
It appears that modality is the most appropriate term.

\begin{example}[see also \cite{PiPu12}]\label{exa:LT-topologies-nucleus}
Define $\neg p=p\arr\emptyset$ for $p\in\Omega$.
It is easy to see that $\neg\neg p=\tpbf{N}$ if $p\not=\emptyset$; otherwise $\neg\neg p=\emptyset$.
Then $\neg\neg\colon\Omega\to\Omega$ is a Lawvere-Tierney topology, which is known as the {double negation topology}.

For $p,q\in\Omega$, define $j^q(p)=q\arr p$.
Then $j^q\colon\Omega\to\Omega$ is a Lawvere-Tierney topology.
Such a topology determines an open sublocale of $\Omega$ (internally).

For $p,q\in\Omega$, define $j_q(p)=p\lor q$.
Then $j_q\colon\Omega\to\Omega$ is a Lawvere-Tierney topology.
Such a topology determines a closed sublocale of $\Omega$ (internally).
\end{example}

We now explain how transparent maps can be regarded as operations on truth-values.

\begin{definition}\label{def:transform-mult-op-truth}
For a multifunction $U\pcolon\tpbf{N}\tto\tpbf{N}$, define an operation $j_U\colon\Omega\to\Omega$ as follows:
\[j_U(p)=\{x\in{\rm dom}(U):U(x)\subseteq p\}.\]
\end{definition}

Note that if $U$ is single-valued, then $j_U(p)=U^{-1}[p]$.
In the context of coverage, one sometimes interprets $x\in j(p)$ as ``$p$ is a $j$-cover of $x$'', but the idea here is to read the expression $x\in j_U(p)$ as ``To solve the problem $p$, just run the algorithm $x$ with the oracle $U$ (more precisely, just enter the code $x$ into the universal oracle computation $U$ and run it)''.

\begin{theorem}\label{prop:transparent-to-modality}
Let $U\pcolon\tpbf{N}\tto\tpbf{N}$ be computably transparent.
Then, $j_U$ is monotone.
Moreover, if $U$ is inflationary, so is $j_U$; and if $U$ is idempotent, so is $j_U$.
\end{theorem}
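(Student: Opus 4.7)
The plan is to unpack all three properties into existence-of-realizer statements and read them directly off the combinators $\mathtt{u}$, $\eta$, $\mu$ provided by Observation~\ref{obs:transparent-multmap}. In each case the realizer for the relevant implication on truth-values will be built by $\lambda$-abstraction in the ambient (relative) PCA from the witness for the corresponding property of $U$, with essentially no arithmetic work beyond chasing inclusions ``$\subseteq$'' through the definition of $j_U(p)=\{x\in\mathrm{dom}(U):U(x)\subseteq p\}$.

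For monotonicity, I will show that the realizer $\mathtt{m}:=\lambda fx.\,\mathtt{u}\ast f\ast x$ witnesses $\forall p,q\in\Omega\,[(p\arr q)\arr(j_U(p)\arr j_U(q))]$. Suppose $\mathtt{f}$ realizes $p\arr q$ and $\mathtt{x}$ realizes $j_U(p)$; so $p\subseteq\mathrm{dom}(\varphi_{\mathtt f})$ with $\mathtt{f}\ast p\subseteq q$, and $\mathtt{x}\in\mathrm{dom}(U)$ with $U(\mathtt{x})\subseteq p$. In particular $\mathtt{f}\ast U(\mathtt{x})$ is defined, so by computable transparency $\mathtt{u}\ast\mathtt{f}\ast\mathtt{x}\in\mathrm{dom}(U)$ and
\[U(\mathtt{u}\ast\mathtt{f}\ast\mathtt{x})\;\subseteq\;\mathtt{f}\ast U(\mathtt{x})\;\subseteq\;\mathtt{f}\ast p\;\subseteq\;q,\]
which is exactly the statement that $\mathtt{u}\ast\mathtt{f}\ast\mathtt{x}$ realizes $j_U(q)$. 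Hence $\mathtt{m}$ realizes the required implication.

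For the inflationary case, the witness $\eta\in N$ for $U$ directly realizes $p\arr j_U(p)$: if $\mathtt{x}\in p$, then $\eta\ast\mathtt{x}\in\mathrm{dom}(U)$ and $U(\eta\ast\mathtt{x})\subseteq\{\mathtt{x}\}\subseteq p$, so $\eta\ast\mathtt{x}$ realizes $j_U(p)$. For idempotence, I claim $\mu\in N$ realizes $j_U(j_U(p))\arr j_U(p)$. If $\mathtt{x}$ realizes $j_U(j_U(p))$ then $\mathtt{x}\in\mathrm{dom}(U)$ and for every $\mathtt{y}\in U(\mathtt{x})$ one has $\mathtt{y}\in\mathrm{dom}(U)$ with $U(\mathtt{y})\subseteq p$; by the composition convention recalled in Section~\ref{sec:notations} this says $\mathtt{x}\in\mathrm{dom}(U\circ U)$ and $U\circ U(\mathtt{x})\subseteq p$. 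Then $\mu\ast\mathtt{x}\in\mathrm{dom}(U)$ and $U(\mu\ast\mathtt{x})\subseteq U\circ U(\mathtt{x})\subseteq p$, so $\mu\ast\mathtt{x}$ realizes $j_U(p)$.

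There is no real obstacle; the only thing to be careful about is the quantifier over the truth-value parameter $p$ (and $q$ in the monotone case). The realizers $\mathtt{m}$, $\eta$, $\mu$ do not depend on $p$ or $q$, which is exactly what it means for the displayed formulas to be realizable in the sense of Definition~\ref{def:Kleene-realizability}. The $\lambda$-abstraction used to form $\mathtt{m}$ is the standard one available in any (relative) PCA, which is why it suffices that $\mathtt{u}\in N$ in order to conclude $\mathtt{m}\in N$.
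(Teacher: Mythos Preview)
Your proof is correct and follows essentially the same approach as the paper's own proof: both build the realizers for monotonicity, inflation, and idempotence directly from the witnesses ${\tt u}$, $\eta$, $\mu$ of Observation~\ref{obs:transparent-multmap}, with the same chain of inclusions through the definition of $j_U$. The only cosmetic difference is that you are slightly more explicit about definedness of ${\tt f}\ast U({\tt x})$ and about invoking the composition convention from Section~\ref{sec:notations} in the idempotent case.
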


\begin{proof}
Let $U$ be computably transparent.
To see that $j_U$ is monotone, assume that ${\tt a}$ realizes $p\arr q$.
Given ${\tt x}\in j_U(p)$, by the definition of $j_U$, we have $U({\tt x})\subseteq p$.
Hence, ${\tt a}\ast U({\tt x})\subseteq q$.
By computable transparence, $U({\tt u}\ast{\tt a}\ast{\tt x})\subseteq {\tt a}\ast U({\tt x})\subseteq q$.
Again, by the definition of $j_U$, we have ${\tt u}\ast{\tt a}\ast{\tt x}\in j_U(q)$.
Hence, $\lambda ax.{\tt u}\ast a\ast x\in \tplf{N}$ realizes $(p\arr q)\arr (j_U(p)\arr j_U(q))$.

Let $U$ be inflationary.
In this case, ${\tt x}\in p$ implies $U(\eta\ast{\tt x})\subseteq\{{\tt x}\}\subseteq p$.
By the definition of $j_U$, we have $\eta \ast{\tt x}\in j_U(p)$.
Therefore, $\lambda x. \eta\ast x\in \tplf{N}$ realizes $p\arr U(p)$.

Let $U$ be idempotent.
If ${\tt x}\in j_U(j_U(p))$ then $U({\tt x})\subseteq j_U(p)$ by the definition of $j_U$.
Thus, for any ${\tt y}\in U({\tt x})$, we have ${\tt y}\in j_U(p)$, which implies that ${\tt y}\in{\rm dom}(U)$ and $U({\tt y})\subseteq p$ by definition.
By the former property, we have $U({\tt x})\subseteq{\rm dom}(U)$, so ${\tt x}\in{\rm dom}(U\circ U)$.
Hence, by the latter property, we have $U\circ U({\tt x})=\bigcup\{U({\tt y}):{\tt y}\in U({\tt x})\}\subseteq p$.
As $U$ is idempotent, we get $U(\mu\ast{\tt x})\subseteq U\circ U({\tt x})\subseteq p$.
This implies $\mu\ast{\tt x}\in j_U(p)$ by definition.
Hence, $\mu\in \tplf{N}$ realizes $j_U(j_U(p))\arr j_U(p)$.
\end{proof}

A natural question to ask is whether a converse of Theorem \ref{prop:transparent-to-modality} holds in some sense.
However, one can observe that there is a rather strong restriction on operations on $\Omega$ obtained from transparent maps, as follows.

\begin{prop}\label{prop:oracle-to-modality-preservation-basic}
For any partial multimap $U\pcolon \tpbf{N}\tto \tpbf{N}$, the operation $j_U\colon\Omega\to\Omega$ preserves arbitrary intersection:
\[
j_U\left(\bigcap_{i\in I}p_i\right)=\bigcap_{i\in I}j_U(p_i).
\]

Moreover, if $U$ is single-valued, $j_U$ preserves arbitrary union:
\[
j_U\left(\bigcup_{i\in I}p_i\right)=\bigcup_{i\in I}j_U(p_i).
\]
\end{prop}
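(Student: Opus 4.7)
The approach is to unfold the definition $j_U(p) = \{x \in \mathrm{dom}(U) : U(x) \subseteq p\}$ and reduce both identities to elementary set-theoretic manipulations. The argument is essentially symbol pushing, with the single-valuedness hypothesis entering at exactly one step.

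For the intersection identity, I would observe that $x \in j_U(\bigcap_{i \in I} p_i)$ is by definition the conjunction of $x \in \mathrm{dom}(U)$ and $U(x) \subseteq \bigcap_{i \in I} p_i$. The second clause is equivalent to $(\forall i \in I)\,U(x) \subseteq p_i$, and for nonempty $I$ the domain condition can be absorbed into the universal quantifier, yielding $(\forall i \in I)\,x \in j_U(p_i)$, i.e., $x \in \bigcap_{i \in I} j_U(p_i)$. One may either restrict attention to nonempty families or interpret the empty case via the topos-internal convention: externally $j_U(\tpbf{N}) = \mathrm{dom}(U)$ may differ from $\tpbf{N}$ when $U$ is partial, but this discrepancy is invisible to the internal logic in which $j_U$ is to be understood as acting on subobjects.

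For the union identity under the single-valued hypothesis, I would write $U(x) = \{u(x)\}$ for $x \in \mathrm{dom}(U)$, where $u$ is the underlying partial function. Then $U(x) \subseteq \bigcup_{i \in I} p_i$ unfolds to $u(x) \in \bigcup_{i \in I} p_i$, which is exactly $(\exists i \in I)\,u(x) \in p_i$, equivalently $(\exists i \in I)\,U(x) \subseteq p_i$. Threading this back through the definition of $j_U$ gives $x \in j_U(\bigcup_{i \in I} p_i)$ iff $(\exists i \in I)\,x \in j_U(p_i)$, as required. (The edge case $I = \emptyset$ causes no trouble: both sides are empty, since $U(x)$ is nonempty for every $x \in \mathrm{dom}(U)$.)

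The only substantive point, and the main obstacle to extending the second identity, is appreciating why single-valuedness is indispensable. For a genuinely multivalued $U$, the containment $U(x) \subseteq p_0 \cup p_1$ need not entail $U(x) \subseteq p_0$ or $U(x) \subseteq p_1$: take any $U(x)$ with one element in each $p_i$. This asymmetry between the universal and existential quantifier over the index set is precisely what distinguishes the two cases, and it matches the familiar pattern that preimages along multivalued relations preserve intersections but, in general, not unions.
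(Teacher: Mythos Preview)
Your proof is correct and takes the same approach as the paper's: unfold the definition $j_U(p) = \{x \in \mathrm{dom}(U) : U(x) \subseteq p\}$ and push symbols, with the paper phrasing the single-valued case simply as $j_U(p) = U^{-1}[p]$. Your caution about the empty-$I$ case is warranted---the paper itself observes in the subsequent remark that $j_U(\tpbf{N}) = \mathrm{dom}(U)$, so the external equality does fail for $I = \emptyset$ when $U$ is properly partial; the intended reading is simply nonempty $I$, not an appeal to the internal logic.
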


\begin{proof}
By the definition of $j_U$, we have that $x\in j_U\left(\bigcap_{i\in I}p_i\right)$ if and only if $U(x)\subseteq \bigcap_{i\in I}p_i$.
Clearly, the latter condition is equivalent to that $U(x)\subseteq p_i$ for any $i\in I$.
By the definition of $j_U$ again, this is equivalent to that $x\in\bigcap_{i\in I}j_U(p_i)$.
If $U$ is single-valued, as $j_U(p)=U^{-1}[p]$, it is clear that $j_U$ preserves arbitrary union.
\end{proof}

\begin{remark}
This is actually a strong restriction.
There are many operations on $\Omega$ that do not preserve arbitrary intersection in any sense, for example, consider the double negation topology $\neg\neg\colon\Omega\to\Omega$.
There are also natural operations on $\Omega$ that preserve arbitrary intersection but not union.
For example, one can easily see that the topology $j^q\colon\Omega\to\Omega$ for an open sublocale introduced in Example \ref{exa:LT-topologies-nucleus} is such an operation.
\end{remark}

\begin{remark}
Note that if $j\colon\Omega\to\Omega$ preserves arbitrary union, then $j$ preserves the bottom element, i.e., $j(\emptyset)=\emptyset$.
Indeed, $U$ is $\neg\neg$-dense (see \cite{Bau21} and Section \ref{sec:universal-closure-operator-intro} below) if and only if $U(p)\not=\emptyset$ for any $p\in{\rm dom}(U)$ if and only if $j_U(\emptyset)=\emptyset$.
Note also that $j_U(\tpbf{N})={\rm dom}(U)$ for a partial multimap $U$.
Thus, if $U$ is total, then $j_U$ preserves the top element, i.e., $j_U(\tpbf{N})=\tpbf{N}$.
\end{remark}

Now, let us consider the possibility of a certain backwards assertion of Theorem \ref{prop:transparent-to-modality}.
In other words, let us consider how to obtain a partial multimap on $\tpbf{N}$ from an operation on $\Omega$.

\begin{definition}\label{def:operation-to-pamulti}
For an operation $j\colon\Omega\to\Omega$, define a partial multifunction $U_j\pcolon\tpbf{N}\tto\tpbf{N}$ as follows:
\begin{align*}
{\rm dom}(U_j)=\bigcup\{j(p):p\in\Omega\},& &
U_j(x)=\bigcap\{p\in\Omega:x\in j(p)\}.
\end{align*}
\end{definition}

\begin{example}\label{example:open-sublocales-Medvedev}
For $q\in\Omega$, let $j^q\colon\Omega\to\Omega$ be the topology for an open sublocale introduced in Example \ref{exa:LT-topologies-nucleus}.
Note that ${\rm dom}(U_{j^q})=\bigcup\{q\arr p:p\in\Omega\}=q\arr\tpbf{N}$, which is the set of all $e\in\tpbf{N}$ such that $q\subseteq{\rm dom}(\varphi_e)$.
Moreover, $U_{j^q}(x)=\bigcap\{p:x\in q\arr p\}=x\ast q=\varphi_x[q]$.
Observe that this is exactly the universal computation ${\tt Med}_q$ relative to the Medvedev oracle $q$ introduced in Example \ref{exa:oracle-Medvedev-reducibility}, i.e., $U_{j^q}={\tt Med}_q$.
\end{example}

The transformation $j\mapsto U_j$ is the inverse of the transformation $U\mapsto j_U$ in the following sense:

\begin{theorem}\label{thm:modality-to-oracle-various-preserve}
Let $j\colon\Omega\to\Omega$ be an operation preserving arbitrary intersection.
Then, $j_{U_j}=j$ and $U_{j_U}=U$.
If $j$ is monotone, $U_j$ is computably transparent; if $j$ is inflationary, so is $U_j$; and if $j$ is monotone and idempotent, so is $U_j$.
Moreover, if $j$ preserves arbitrary union, then $U_j$ is single-valued.
\end{theorem}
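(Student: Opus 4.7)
The plan centers on one key lemma: for an operation $j$ preserving arbitrary intersections,
\[
x \in j(q) \iff x \in \mathrm{dom}(U_j)\text{ and } U_j(x) \subseteq q. \qquad (\star)
\]
The forward direction is immediate from $U_j(x) = \bigcap\{p : x \in j(p)\}$. For the converse, intersection preservation gives $j(U_j(x)) = \bigcap\{j(p) : x \in j(p)\} \ni x$, and the set-theoretic monotonicity of $j$ (automatic from intersection preservation, via $j(p) = j(p \cap q) = j(p) \cap j(q)$ when $p \subseteq q$) upgrades this to $x \in j(U_j(x)) \subseteq j(q)$. Unpacking the definition of $j_{U_j}$, the equivalence $(\star)$ is literally the statement $j_{U_j} = j$. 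The identity $U_{j_U} = U$ is direct unwinding: $\mathrm{dom}(U_{j_U}) = \bigcup_p j_U(p) = \mathrm{dom}(U)$, and $U_{j_U}(x) = \bigcap\{p : U(x) \subseteq p\} = U(x)$.

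From $(\star)$, each preservation property for $U_j$ is obtained by instantiating the corresponding realizer for $j$ at a suitable $p, q$ and translating via $(\star)$. For computable transparency under \emph{monotone} $j$: given $f, x$ such that $x \in \mathrm{dom}(U_j)$ and $\varphi_f$ is total on $U_j(x)$, the element $f$ itself realizes $U_j(x) \arr (f \ast U_j(x))$, and $(\star)$ gives $x \in j(U_j(x))$; feeding these to a realizer ${\tt m}$ of $(p \arr q) \arr (j(p) \arr j(q))$ yields ${\tt m} \ast f \ast x \in j(f \ast U_j(x))$, hence $U_j({\tt m} \ast f \ast x) \subseteq f \ast U_j(x)$. \emph{Inflationary:} applying a realizer of $p \arr j(p)$ at $p = \{x\}$ gives ${\tt e} \ast x \in j(\{x\})$, so $U_j({\tt e} \ast x) \subseteq \{x\}$. \emph{Idempotent:} for $x \in \mathrm{dom}(U_j \circ U_j)$, set $q = U_j \circ U_j(x)$; each $y \in U_j(x)$ satisfies $U_j(y) \subseteq q$, so $y \in j(q)$ by $(\star)$, giving $U_j(x) \subseteq j(q)$, and one more application of $(\star)$ gives $x \in j(j(q))$; a realizer ${\tt i}$ of $j(j(p)) \arr j(p)$ then yields $U_j({\tt i} \ast x) \subseteq q$.

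For single-valuedness under union preservation: applied to the empty family, union preservation gives $j(\emptyset) = \emptyset$, which combined with $x \in j(U_j(x))$ forces $U_j(x) \neq \emptyset$ whenever $x \in \mathrm{dom}(U_j)$. Writing $U_j(x) = \bigcup_{y \in U_j(x)} \{y\}$ and pushing $j$ through the union gives $j(U_j(x)) = \bigcup_{y \in U_j(x)} j(\{y\})$, so some $y \in U_j(x)$ satisfies $x \in j(\{y\})$, forcing $\{y\}$ into the family defining $U_j(x)$ and hence $U_j(x) = \{y\}$. The only step requiring real care is establishing $(\star)$, which is the conceptual heart of the bijection between intersection-preserving operations and partial multimaps; once it is in place, the remaining clauses are a uniform mechanical recipe of ``instantiate the realizer for $j$'s property at an appropriate $p$ and $q$, then apply $(\star)$.''
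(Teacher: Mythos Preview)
Your proof is correct and follows essentially the same route as the paper: both hinge on the key equivalence $(\star)$ (the paper's equation (3.1)), and then transfer each realized property of $j$ to the corresponding property of $U_j$ by instantiating at suitable $p,q$ and translating through $(\star)$. One small improvement: your idempotence argument is cleaner than the paper's---by applying $(\star)$ twice (first at $q=U_j\circ U_j(x)$ to get $U_j(x)\subseteq j(q)$, then at $p=j(q)$ to get $x\in j(j(q))$) you reach $x\in j(j(q))$ directly, whereas the paper routes through the monotonicity realizer to lift $U_j(x)\arr j(q)$ to $j(U_j(x))\arr j(j(q))$; your witness is just the idempotence realizer applied to $x$, while the paper's also composes with the monotonicity realizer.
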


\begin{proof}
As $j$ preserves arbitrary intersection, we have, for any $x\in{\rm dom}(U_j)$,
\[x\in\bigcap\{j(p):x\in j(p)\}=j\left(\bigcap\{p:x\in j(p)\}\right)=j(U_j(x)).\]

Combining the above equality and the definition of $U_j$, it immediately follows that $U_j(x)$ is the least $p$ such that $x\in j(p)$.
In other words,
\begin{align}\label{equ:from-modality-to-oracle-modest-case}
x\in j(p)\iff U_j(x)\subseteq p
\end{align}

Therefore, we get $j_{U_j}(p)=\{x\in{\rm dom}(U_j):U_j(x)\subseteq p\}=j(p)\cap{\rm dom}(U_j)=j(p)$.
Here, the last equality follows from $j(p)\subseteq{\rm dom}(U_j)$.
Hence, we obtain $j_{U_j}=j$.

Next, let $U\colon\tpbf{N}\tto\tpbf{N}$ be given.
Then, we have ${\rm dom}(U_{j_U})=\bigcup_pj_U(p)={\rm dom}(U)$ by the definitions of $U_{j_U}$ and $j_U$, and the observation that $x\in{\rm dom}(U)$ implies $x\in j_U(U(x))$.
Since $j_U$ preserves arbitrary intersection by Proposition \ref{prop:oracle-to-modality-preservation-basic}, we again have the equivalence (\ref{equ:from-modality-to-oracle-modest-case}) for $j_U$, so $U_{j_U}(x)\subseteq p$ if and only if $x\in j_U(p)$, which is equivalent to $U(x)\subseteq p$ by definition.
This implies $U_{j_U}=U$.

Assume that $j$ is monotone, realized by ${\tt u}\in \tplf{N}$.
To show that $U_j$ is computably transparent, let ${\tt f},{\tt x}\in\tpbf{N}$ be given.
First note that ${\tt f}$ clearly realizes $U_j({\tt x})\arr {\tt f}\ast U_j({\tt x})$.
By monotonicity, ${\tt u}\ast{\tt f}$ realizes $j(U_j({\tt x}))\arr j({\tt f}\ast U_j({\tt x}))$.
By the definition of $U_j$, we have ${\tt x}\in j(U_j({\tt x}))$, so we get ${\tt u}\ast{\tt f}\ast{\tt x}\in j({\tt f}\ast U_j({\tt x}))$.
By the equivalence (\ref{equ:from-modality-to-oracle-modest-case}) again, this is equivalent to that $U_j({\tt u}\ast{\tt f}\ast{\tt x})\subseteq {\tt f}\ast U_j({\tt x})$, which means that $U_j$ is computably transparent.

Assume that $j$ is inflationary, realized by $\eta\in \tplf{N}$.
In particular, $\eta$ realizes $\{{\tt x}\}\arr j(\{{\tt x}\})$, so we have $\eta\ast{\tt x}\in j(\{\tt x\})$.
By the equivalence (\ref{equ:from-modality-to-oracle-modest-case}), this is equivalent to that $U_j(\eta\ast{\tt x})\subseteq\{\tt x\}$, which means that $U_j$ is inflationary.

Assume that $j$ is idempotent, realized by $\mu\in \tplf{N}$.
In particular, $\mu$ realizes $j\circ j(U_j\circ U_j({\tt x}))\arr j(U_j\circ U_j({\tt x}))$.
Note that from the definition of composition of multifunctions, if ${\tt x}\in{\rm dom}(U_j\circ U_j)$ then $U_j({\tt x})\subseteq{\rm dom}(U_j)$; that is, ${\tt z}\in U_j({\tt x})$ implies ${\tt z}\in{\rm dom}(U_j)$.
Then, for such ${\tt x}$ and ${\tt z}$ we have $U_j({\tt z})\subseteq U_j\circ U_j({\tt x})$ by the definition of composition.
This implies ${\tt z}\in j(U_j\circ U_j({\tt x}))$ by the equivalence (\ref{equ:from-modality-to-oracle-modest-case}).
Hence, $U_j({\tt x})\subseteq j(U_j\circ U_j({\tt x}))$; therefore, a code ${\tt i}$ of the identity function realizes $U_j({\tt x})\arr j(U_j\circ U_j({\tt x}))$.
Then, by monotonicity, ${\tt u}\ast{\tt i}$ realizes $j(U_j({\tt x}))\arr j\circ j(U_j\circ U_j({\tt x}))$.
By the equivalence (\ref{equ:from-modality-to-oracle-modest-case}), we have ${\tt x}\in j(U_j({\tt x}))$; therefore ${\tt u}\ast{\tt i}\ast{\tt x}\in j\circ j(U_j\circ U_j({\tt x}))$.
Hence we obtain $\mu\ast({\tt u}\ast{\tt i}\ast{\tt x})\in j(U_j\circ U_j({\tt x}))$.
By the equivalence (\ref{equ:from-modality-to-oracle-modest-case}) again, we get $U_j(\mu\ast({\tt u}\ast{\tt i}\ast{\tt x}))\subseteq U_j\circ U_j({\tt x})$.
Thus, $\lambda x.\mu\ast({\tt u}\ast{\tt i}\ast x)$ witnesses that $U_j$ is idempotent.

Finally, assume that $j$ preserves arbitrary union.
Given $x\in\tpbf{N}$, if there exists $p\in\Omega$ such that $x\in j(p)$, then let us consider an enumeration $\{y_i\}_{i\in I}$ of all elements of $p$.
As $j$ preserves arbitrary union, we have $x\in j(p)=j(\bigcup_{i\in I}\{y_i\})=\bigcup_{i\in I} j(\{y_i\})$.
Therefore, $x\in j(\{y_i\})$ for some $i\in I$.
As noted above, preservation of arbitrary union also implies $j(\emptyset)=\emptyset$, so we have $U_j(x)=\{y_i\}$ by the equivalence (\ref{equ:from-modality-to-oracle-modest-case}).
This means that $U_j$ is single-valued.
\end{proof}

By Theorems \ref{prop:transparent-to-modality} and \ref{thm:modality-to-oracle-various-preserve} and Proposition \ref{prop:oracle-to-modality-preservation-basic}, we obtain the correspondences between partial multifunctions on $\tpbf{N}$ and $\bigcap$-preserving operations on $\Omega$; and between partial (single-valued) functions on $\tpbf{N}$ and $\bigcup,\bigcap$-preserving operations on $\Omega$.
Moreover, computable transparency corresponds to monotonicity.
Being inflationary and idempotent are also preserved by this correspondence; see Table \ref{table:correspondence-truth}.
Similarly, by Example \ref{example:open-sublocales-Medvedev}, Medvedev oracles correspond to open topologies.
Here, Lawvere-Tierney topologies for which internal nuclei yield open sublocales of $\Omega$ are called open topologies; see e.g.~\cite[Section A.4.5]{elephant}.

\begin{table}[t]
\begin{center}
  \begin{tabular}{ccc}
  {multifunction} & & {operation on truth-values} \\
    \hline
    {computably transparent} &$\iff$& {monotone} \\
    {\footnotesize $(\forall f)(\exists F)\;U\circ F\preceq f\circ U$} & & {\footnotesize $(p\to q)\to(j(p)\to j(q))$}\\[0.4em]
    {inflationary} &$\iff$& {inflationary} \\
    {\footnotesize ${\rm id}\leq_mU$} & & {\footnotesize $p\to j(p)$}  \\[0.4em]
    {idempotent} &$\iff$& {idempotent} \\
    {\footnotesize $U\circ U\leq_mU$} & & {\footnotesize $j(j(p))\to j(p)$} \\
    \hline \\
  \end{tabular}
  \caption{The correspondence between oracles (multifunctions) and $\bigcap$-preserving operations on truth-values.}\label{table:correspondence-truth}
 \end{center}
 \end{table}

\subsection{Reducibility}\label{sec:truth-value-reducibility}
Let us formulate more rigorously the meaning of the correspondences in the previous section.

\begin{definition}\label{def:m-morphi-r-morphi}
Given partial multifunctions $f,g\pcolon\tpbf{N}\tto\tpbf{N}$, we say that a computable element $e\in \tplf{N}$ is an {\em $m$-morphism from $f$ to $g$} if, for any $x\in{\rm dom}(f)$, $\varphi_e(x)$ is defined and $g(\varphi_e(x))\subseteq f(x)$.
Given operations $j,k\colon\Omega\to\Omega$, we say that a computable element $e\in \tplf{N}$ is an {\em $r$-morphism from $j$ to $k$} if $e$ realizes $\forall p\in\Omega.j(p)\arr k(p)$.
\end{definition}

Note that $e$ is an $m$-morphism from $f$ to $g$ if and only if $g\circ\varphi_e$ refines $f$.
Using the terminology of Definition \ref{def:pmultimap-transparent}, for $e\in \tplf{N}$ to be an $m$-morphism from $f$ to $g$ means that $f$ is $g$-computable via $\varphi_e$, or equivalently, $\varphi_e$ is a many-one reduction witnessing $f\leq_mg$.

\begin{prop}\label{prop:two-category-isomorphic}
For any partial multifunctions $f,g\pcolon\tpbf{N}\tto\tpbf{N}$, an $m$-morphism $e\colon f\to g$ can be thought of as an $r$-morphism $e\colon j_f\to j_g$.
Conversely, for any $\bigcap$-preserving operations $j,k\colon\Omega\to\Omega$, an $r$-morphism $e\colon j\to k$ can be thought of as an $m$-morphism $e\colon U_j\to U_k$.
\end{prop}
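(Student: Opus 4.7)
The plan is to unfold the definitions of $m$-morphism and $r$-morphism and show that the \emph{same} computable element $e$ satisfies one condition if and only if it satisfies the other (under the translation $U\mapsto j_U$ or $j\mapsto U_j$). Both directions are element chases; the second relies crucially on the biconditional \eqref{equ:from-modality-to-oracle-modest-case} already established for $\bigcap$-preserving operations in the proof of Theorem \ref{thm:modality-to-oracle-various-preserve}.

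For the forward direction, I would fix $p\in\Omega$ and $x\in j_f(p)$. By definition of $j_f$, this means $x\in{\rm dom}(f)$ and $f(x)\subseteq p$. The $m$-morphism condition then yields that $\varphi_e(x)$ is defined, lies in ${\rm dom}(g)$, and satisfies $g(\varphi_e(x))\subseteq f(x)\subseteq p$; equivalently $\varphi_e(x)\in j_g(p)$. Since $p$ was arbitrary, the same $e$ realizes $\forall p\in\Omega.\,j_f(p)\arr j_g(p)$, so $e$ is an $r$-morphism from $j_f$ to $j_g$.

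For the backward direction, suppose $j,k$ preserve $\bigcap$ and $e$ is an $r$-morphism from $j$ to $k$. Given $x\in{\rm dom}(U_j)$, I would invoke \eqref{equ:from-modality-to-oracle-modest-case} for $j$ at the specific value $p=U_j(x)$ to obtain $x\in j(U_j(x))$. The $r$-morphism condition then forces $\varphi_e(x)\in k(U_j(x))$, and a second application of \eqref{equ:from-modality-to-oracle-modest-case}, this time for $k$, translates this into $U_k(\varphi_e(x))\subseteq U_j(x)$. This is exactly the $m$-morphism condition from $U_j$ to $U_k$ (the fact that $\varphi_e(x)\in{\rm dom}(U_k)$ is automatic from $\varphi_e(x)\in k(U_j(x))$ and the definition of ${\rm dom}(U_k)$).

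There is no serious obstacle here. The content of the proposition is essentially that the biconditional \eqref{equ:from-modality-to-oracle-modest-case}, which was the workhorse of Theorem \ref{thm:modality-to-oracle-various-preserve}, is precisely what upgrades the pointwise bijection between oracles and $\bigcap$-preserving operations into a morphism-level correspondence. The only point to watch is that the uniform quantification over $p\in\Omega$ on the realizability side matches the uniform quantification over $x\in{\rm dom}(f)$ on the multifunction side; since the same computable $e$ acts as the witness in both formulations, this uniformity transports automatically.
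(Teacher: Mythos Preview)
Your proposal is correct and follows essentially the same route as the paper's proof: both directions are definition-unfolding element chases, with the backward direction hinging on the equivalence \eqref{equ:from-modality-to-oracle-modest-case} applied at $p=U_j(x)$. The only cosmetic difference is that the paper first records the general implication $U_j(x)\subseteq p\Rightarrow U_k(e\ast x)\subseteq p$ before specializing to $p=U_j(x)$, whereas you go straight to that instance; the content is identical.
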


\begin{proof}
Assume that $e$ is an $m$-morphism from $f$ to $g$.
Then $g(\varphi_e(x))\subseteq f(x)$ for any $x\in{\rm dom}(f)$.
Therefore, by definition, we have $x\in j_f(p)$ iff $f(x)\subseteq p$, which implies $g(\varphi_e(x))\subseteq p$, iff $\varphi_e(x)\in j_g(p)$
for any $p\in\Omega$.
This means that $e$ realizes $j_f(p)\arr j_g(p)$, so $e$ is an $r$-morphism from $j_f$ to $j_g$.

Conversely, let $e$ be an $r$-morphism from $j$ to $k$.
Then, for any $p\in\Omega$ and ${\tt x}\in\tpbf{N}$, ${\tt x}\in j(p)$ implies $e\ast {\tt x}\in k(p)$.
By the equivalence (\ref{equ:from-modality-to-oracle-modest-case}) in the proof of Theorem \ref{thm:modality-to-oracle-various-preserve}, this shows that $U_j({\tt x})\subseteq p$ implies $U_k(e\ast{\tt x})\subseteq p$.
In particular, by setting $p=U_j({\tt x})$, we get $U_k(e\ast {\tt x})\subseteq U_j({\tt x})$.
This means that $e$ is an $m$-morphism from $U_j$ to $U_k$.
\end{proof}

Let us organize our results in Section \ref{sec:realizability} by introducing a little terminology.
Since an $m$-morphism is essentially a many-one reduction, let us write ${\bf MRed}$ for the category of partial multifunctions on $\tpbf{N}$ and $m$-morphisms.
We also write ${\bf Op}_{\tt X}(\Omega)$ for the category of ${\tt X}$-preserving operations on $\Omega$ and $r$-morphisms.
By Proposition \ref{prop:two-category-isomorphic}, we get the following isomorphism (via $U\mapsto j_U$ and $j\mapsto U_j$):
\begin{align*}
{\bf MRed}\simeq{\bf Op}_{\bigcap}(\Omega),
\end{align*}

Similarly, we have isomorphisms between corresponding full subcategories.
The full subcategory of ${\bf MRed}$ consisting of partial single-valued functions is denoted by ${\bf Red}$.
Then Proposition \ref{prop:oracle-to-modality-preservation-basic} and Theorem \ref{thm:modality-to-oracle-various-preserve} ensure ${\bf Red}\simeq{\bf Op}_{\bigcup,\bigcap}(\Omega)$.

For the full subcategory of ${\bf MRed}$, restricted to those that are computably transparent (inflationary, and idempotent, respectively), we add the superscript ${\rm ct}$ ($\eta$ and $\mu$, respectively).
Similarly, for the full subcategory of ${\bf Op}_{\bigcap}(\Omega)$, restricted to those that are monotone (inflationary, and idempotent, respectively), we add the superscript ${\rm mon}$ ($\eta$ and $\mu$, respectively).
Using this notation, Theorems \ref{prop:transparent-to-modality} and \ref{thm:modality-to-oracle-various-preserve} give the following isomorphisms, for example:
\begin{align*}
{\bf (M)Red}^{\rm ct}\simeq{\bf Op}^{\rm mon}_{(\bigcup),\bigcap}(\Omega),
& &
{\bf (M)Red}^{\rm ct,\eta,\mu}\simeq{\bf Op}^{{\rm mon},\eta,\mu}_{(\bigcup),\bigcap}(\Omega),
\end{align*}

One of the most important parts of computability theory is to discuss the complexity of various objects.
Therefore, we shall explain these results in the context of reducibility.
We consider the following reducibility notion:

\begin{definition}
For operations $j,k\colon\Omega\to\Omega$ we say that {\em $j$ is $r$-reducible to $k$} (written $j\leq_rk$) if an $r$-morphism from $j$ to $k$ exists; that is, $\forall p\in\Omega.j(p)\arr k(p)$ is realizable.
\end{definition}

There also have been previous studies on $r$-reducibility for operations on $\Omega$; see e.g.~\cite{LvO,Kih21}.
For basic results on the $r$-ordering of Lawvere-Tierney topologies in a more general topos-theoretic setting, see also Johnstone \cite[Section A.4.5]{elephant}.
Each preorder obtained from a reducibility notion is often referred to as the degrees.
As a corollary of Theorem \ref{thm:modality-to-oracle-various-preserve} and Proposition \ref{prop:two-category-isomorphic}, we get the following:

\begin{cor}\label{cor:correspondence-m-deg-r-deg}
The $m$-degrees of partial multifunctions on $\tpbf{N}$ and the $r$-degrees of $\bigcap$-preserving operations on $\Omega$ are isomorphic.
\end{cor}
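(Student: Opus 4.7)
The plan is to assemble the isomorphism directly from the three earlier results: the definitions of the transformations $U\mapsto j_U$ and $j\mapsto U_j$, Proposition \ref{prop:oracle-to-modality-preservation-basic} (which says that every $j_U$ is $\bigcap$-preserving), Theorem \ref{thm:modality-to-oracle-various-preserve} (which says that these two transformations are mutually inverse once we restrict to $\bigcap$-preserving operations), and Proposition \ref{prop:two-category-isomorphic} (which says that $m$-morphisms and $r$-morphisms match up under the same transformations). Together these four facts already give an isomorphism of categories $\mathbf{MRed}\simeq\mathbf{Op}_{\bigcap}(\Omega)$; passing to the posetal reflections yields the claimed degree isomorphism.

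Concretely, I would define $\Phi\colon \mathbf{MRed}/\!\equiv_m\;\to\;\mathbf{Op}_{\bigcap}(\Omega)/\!\equiv_r$ by $\Phi([U]_m)=[j_U]_r$, and $\Psi$ in the reverse direction by $\Psi([j]_r)=[U_j]_m$. The first task is well-definedness and monotonicity of $\Phi$: if $f\leq_m g$ then a witnessing computable $e\in N$ is an $m$-morphism $f\to g$, hence by the forward direction of Proposition \ref{prop:two-category-isomorphic} the same $e$ is an $r$-morphism $j_f\to j_g$, giving $j_f\leq_r j_g$. The same argument applied in the reverse direction, using the backward half of Proposition \ref{prop:two-category-isomorphic} (which is where $\bigcap$-preservation is needed) shows that if $j_f\leq_r j_g$ via some $e$, then $e$ is an $m$-morphism $U_{j_f}\to U_{j_g}$; but by Theorem \ref{thm:modality-to-oracle-various-preserve} we have $U_{j_f}=f$ and $U_{j_g}=g$, so $f\leq_m g$. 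Hence $\Phi$ is an order-embedding, and in particular well-defined on equivalence classes.

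For surjectivity one uses that $\Phi\circ\Psi=\mathrm{id}$: given any $\bigcap$-preserving $j$, Theorem \ref{thm:modality-to-oracle-various-preserve} gives $j_{U_j}=j$, so $\Phi([U_j]_m)=[j]_r$. The identity $\Psi\circ\Phi=\mathrm{id}$ comes from $U_{j_U}=U$ in the same theorem, which is why $\Phi$ is even an isomorphism of categories, not merely of posetal reflections. Collecting these observations gives the desired bijection of degrees that preserves and reflects the ordering.

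There is really no obstacle to overcome here: the work was done in Proposition \ref{prop:oracle-to-modality-preservation-basic}, Theorem \ref{thm:modality-to-oracle-various-preserve}, and Proposition \ref{prop:two-category-isomorphic}. The only point one must keep in mind is to restrict the target to $\bigcap$-preserving operations, since only there does $j_{U_j}=j$ hold; without this restriction $\Psi$ would fail to be a one-sided inverse, while $\Phi$ remains automatically valued in $\mathbf{Op}_{\bigcap}(\Omega)$ by Proposition \ref{prop:oracle-to-modality-preservation-basic}.
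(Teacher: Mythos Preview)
Your proposal is correct and follows exactly the approach the paper intends: the corollary is stated immediately after the isomorphism ${\bf MRed}\simeq{\bf Op}_{\bigcap}(\Omega)$ as a direct consequence of Theorem~\ref{thm:modality-to-oracle-various-preserve} and Proposition~\ref{prop:two-category-isomorphic} (with Proposition~\ref{prop:oracle-to-modality-preservation-basic} ensuring that $j_U$ lands in the $\bigcap$-preserving operations). You have simply spelled out the details that the paper leaves implicit.
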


Combining the results of this section and Section \ref{sec:left-adjoint}, various degree notions can be characterized using operations on truth values.
As we have seen that $\bigcap$-preserving monotone operations on $\Omega$ correspond to computably transparent multimaps (Theorems \ref{prop:transparent-to-modality} and \ref{thm:modality-to-oracle-various-preserve}), which correspond to Weihrauch degrees (Corollary \ref{prop:computably-transparent-many-one-Weih}), we get the following:


\begin{cor}
The Weihrauch lattice of partial multifunctions on $\tpbf{N}$ is isomorphic to the $r$-degrees of $\bigcap$-preserving monotone operations on $\Omega$.
\qed
\end{cor}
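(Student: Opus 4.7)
The plan is to obtain the corollary simply by chaining together three isomorphisms that have already been established in the excerpt, so the main task is to verify that they fit together along a common middle term.

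First, I would invoke Corollary \ref{prop:computably-transparent-many-one-Weih}, which identifies the Weihrauch lattice with the $m$-degrees of the category $\mathcal{MR}ed^{\rm ct}$ of computably transparent partial multimaps on $\tpbf{N}$. The witnessing isomorphism sends a multimap $f$ to ${\tt Weih}(f)$, which by Example \ref{exa:oracle-Weihrauch-reducibility} is both computably transparent and $\equiv_W f$, and whose many-one degree inside $\mathcal{MR}ed^{\rm ct}$ coincides with its Weihrauch degree by Proposition \ref{thm:computably-transparent-many-one-Weihx}.

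Second, I would pass through the correspondence $U\mapsto j_U$ between multimaps and operations on $\Omega$. By Corollary \ref{cor:correspondence-m-deg-r-deg} this yields an isomorphism between the $m$-degrees of $\mathcal{MR}ed$ and the $r$-degrees of the category ${\bf Op}_{\bigcap}(\Omega)$ of $\bigcap$-preserving operations on $\Omega$. The key point is that this isomorphism restricts to the subcategory of computably transparent multimaps on the one side and the subcategory of monotone operations on the other: by Theorem \ref{prop:transparent-to-modality}, if $U$ is computably transparent then $j_U$ is monotone, and conversely by Theorem \ref{thm:modality-to-oracle-various-preserve}, if $j$ is $\bigcap$-preserving and monotone then $U_j$ is computably transparent. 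Since the assignments $U\mapsto j_U$ and $j\mapsto U_j$ are mutually inverse on this level by the same theorem, we obtain a bijection
\[
\mathcal{MR}ed^{\rm ct}\;\longleftrightarrow\;\{\text{$\bigcap$-preserving monotone operations on $\Omega$}\},
\]
and Proposition \ref{prop:two-category-isomorphic} ensures that this bijection identifies $m$-morphisms with $r$-morphisms, hence also $m$-reducibility with $r$-reducibility.

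Composing the two identifications gives the desired isomorphism between the Weihrauch lattice and the $r$-degrees of $\bigcap$-preserving monotone operations on $\Omega$. Since every step is either quoted or a direct specialization of an established correspondence, there is no genuine obstacle; the only thing to be careful about is bookkeeping—checking that the restriction from $\mathcal{MR}ed$ to $\mathcal{MR}ed^{\rm ct}$ on one side matches exactly the restriction from ${\bf Op}_{\bigcap}(\Omega)$ to ${\bf Op}^{\rm mon}_{\bigcap}(\Omega)$ on the other, which is precisely the content of Theorems \ref{prop:transparent-to-modality} and \ref{thm:modality-to-oracle-various-preserve}.
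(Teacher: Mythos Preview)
Your proposal is correct and matches the paper's approach exactly: the paper treats this corollary as immediate (it is stated with a bare \qed), and the sentence preceding it summarizes precisely the chain you spell out—namely, that $\bigcap$-preserving monotone operations correspond to computably transparent multimaps (Theorems \ref{prop:transparent-to-modality} and \ref{thm:modality-to-oracle-various-preserve}, Proposition \ref{prop:two-category-isomorphic}), which in turn correspond to Weihrauch degrees (Corollary \ref{prop:computably-transparent-many-one-Weih}). Your write-up simply makes explicit the bookkeeping that the paper leaves to the reader.
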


Similarly, we have seen that $\bigcap$-preserving monotone, inflationary, operations on $\Omega$ correspond to computably transparent, inflationary, multimaps (Theorems \ref{prop:transparent-to-modality} and \ref{thm:modality-to-oracle-various-preserve}), which correspond to pointed Weihrauch degrees (Corollary \ref{prop:computably-transparent-many-one-pWeih}), so we get the following:

\begin{cor}
The pointed Weihrauch lattice of partial multifunctions on $\tpbf{N}$ is isomorphic to the $r$-degrees of $\bigcap$-preserving, monotone, inflationary, operations on $\Omega$.
\qed
\end{cor}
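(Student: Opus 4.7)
The plan is to derive this corollary by composing three isomorphisms that have already been established in the paper, so that no new technical work is required beyond verifying that the restrictions match up cleanly.

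First, I would invoke Corollary \ref{prop:computably-transparent-many-one-pWeih}, which identifies the pointed Weihrauch degrees with the $m$-degrees of the full subcategory $\mathcal{MR}ed^{{\rm ct},\eta}$ of partial multifunctions on $\tpbf{N}$ that are computably transparent and inflationary. This reduces the problem to an assertion about $m$-degrees on a restricted class of multimaps.

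Next, I would apply the categorical equivalence $U \mapsto j_U$, $j \mapsto U_j$ of Proposition \ref{prop:two-category-isomorphic}, which identifies ${\bf MRed}$ with ${\bf Op}_{\bigcap}(\Omega)$ via $m$-morphisms and $r$-morphisms. Passing to poset reflections (as in Corollary \ref{cor:correspondence-m-deg-r-deg}), this gives an isomorphism between the $m$-degrees of partial multifunctions and the $r$-degrees of $\bigcap$-preserving operations on $\Omega$. The key observation is then that the equivalence restricts nicely: by Theorem \ref{prop:transparent-to-modality}, if $U$ is computably transparent (respectively inflationary), then $j_U$ is monotone (respectively inflationary); and by Theorem \ref{thm:modality-to-oracle-various-preserve}, if $j$ is monotone (respectively inflationary), then $U_j$ is computably transparent (respectively inflationary). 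Since both transformations preserve the two relevant decorations in both directions, the equivalence of Proposition \ref{prop:two-category-isomorphic} restricts to an equivalence ${\bf MRed}^{{\rm ct},\eta} \simeq {\bf Op}_{\bigcap}^{{\rm mon},\eta}(\Omega)$.

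Composing these two steps yields the desired isomorphism between the pointed Weihrauch degrees and the $r$-degrees of $\bigcap$-preserving, monotone, inflationary operations on $\Omega$. There is no substantive obstacle: all the hard work (the left adjoint construction ${\tt pWeih}$, the verification that $j \mapsto U_j$ inverts $U \mapsto j_U$ on the $\bigcap$-preserving side, and the matching of the decorations $\mathrm{ct} \leftrightarrow \mathrm{mon}$ and $\eta \leftrightarrow \eta$) has already been carried out earlier in the paper. The only point to be careful about is that the correspondence of decorations of Theorem \ref{thm:modality-to-oracle-various-preserve} requires $j$ to preserve arbitrary intersection (which is built into the target category) and that inflation is preserved in both directions without needing monotonicity as a prerequisite, so the restriction is internally consistent.
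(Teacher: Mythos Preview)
Your proposal is correct and follows essentially the same approach as the paper: the corollary is marked with \qed\ and the paper's only justification is the sentence immediately preceding it, which says that $\bigcap$-preserving monotone inflationary operations correspond to computably transparent inflationary multimaps (via Theorems \ref{prop:transparent-to-modality} and \ref{thm:modality-to-oracle-various-preserve} and Proposition \ref{prop:two-category-isomorphic}), which in turn correspond to pointed Weihrauch degrees (via Corollary \ref{prop:computably-transparent-many-one-pWeih}). You have unpacked precisely this composition.
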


We have also seen that $\bigcap$-preserving Lawvere-Tierney topologies on $\Omega$ correspond to computably transparent, inflationary, idempotent, multimaps (Theorems \ref{prop:transparent-to-modality} and \ref{thm:modality-to-oracle-various-preserve}), which correspond to Turing-Weihrauch degrees (Corollary \ref{prop:computably-transparent-many-one-TWeih}), so we get the following:

\begin{cor}\label{cor:generalized-Weihrauch}
The Turing-Weihrauch lattice of partial multifunctions on $\tpbf{N}$ is isomorphic to the $r$-degrees of $\bigcap$-preserving Lawvere-Tierney topologies.
\qed
\end{cor}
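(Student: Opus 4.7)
The plan is to obtain this corollary by chaining together the two bijections that have already been set up: the one between Turing-Weihrauch degrees and $m$-degrees of well-behaved multimaps (Corollary \ref{prop:computably-transparent-many-one-TWeih}), and the one between multimaps and operations on $\Omega$ via $U\mapsto j_U$ and $j\mapsto U_j$ (Theorem \ref{thm:modality-to-oracle-various-preserve} and Proposition \ref{prop:two-category-isomorphic}). So the proof amounts to reading off the special case of the general isomorphism ${\bf MRed}^{{\rm ct},\eta,\mu}\simeq {\bf Op}^{{\rm mon},\eta,\mu}_{\bigcap}(\Omega)$, observing that the right-hand side is exactly the category of $\bigcap$-preserving Lawvere-Tierney topologies, and then descending to $r$-degrees.

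First I would recall that, by definition, a $\bigcap$-preserving Lawvere-Tierney topology on $\Omega$ is precisely a $\bigcap$-preserving operation $j\colon\Omega\to\Omega$ which is monotone, inflationary, and idempotent; this just unpacks the axioms (1)--(3) recalled after Definition \ref{def:Kleene-realizability}. Next, I would apply Corollary \ref{prop:computably-transparent-many-one-TWeih} to replace the Turing-Weihrauch lattice with the $m$-degrees of computably transparent, inflationary, idempotent partial multimaps on $\tpbf{N}$.

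Then the core step is to invoke Theorem \ref{thm:modality-to-oracle-various-preserve}: the assignments $U\mapsto j_U$ and $j\mapsto U_j$ are mutually inverse bijections between partial multimaps $\tpbf{N}\tto\tpbf{N}$ and $\bigcap$-preserving operations on $\Omega$, and they preserve each of the three properties (computably transparent $\leftrightarrow$ monotone, inflationary $\leftrightarrow$ inflationary, idempotent $\leftrightarrow$ idempotent). Hence the bijection restricts to one between ${\bf MRed}^{{\rm ct},\eta,\mu}$ and the class of $\bigcap$-preserving Lawvere-Tierney topologies. Proposition \ref{prop:two-category-isomorphic} then tells us that this bijection also carries $m$-morphisms to $r$-morphisms and back, so it descends to an order isomorphism between the $m$-degree poset on one side and the $r$-degree poset on the other (this is really just Corollary \ref{cor:correspondence-m-deg-r-deg} restricted to the appropriate full subcategories).

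The only point that needs a brief argument rather than a citation is the preservation of the lattice structure, but since the isomorphism is already an isomorphism of posets, whatever joins and meets exist on one side are automatically mirrored on the other; there is nothing extra to check beyond monotonicity of the two assignments, which is already in hand. I do not expect any genuine obstacle here: the whole scaffolding (Theorems \ref{prop:transparent-to-modality} and \ref{thm:modality-to-oracle-various-preserve}, Proposition \ref{prop:two-category-isomorphic}, and Corollary \ref{prop:computably-transparent-many-one-TWeih}) has been arranged precisely so that this corollary falls out by composition. The mildest care needed is to note explicitly that the definition of $\bigcap$-preserving Lawvere-Tierney topology matches the adjectives \emph{monotone}, \emph{inflationary}, and \emph{idempotent} used in Theorem \ref{thm:modality-to-oracle-various-preserve}, so that the restriction of the bijection is literally to the class named in the statement of the corollary.
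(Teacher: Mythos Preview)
Your proposal is correct and follows essentially the same route as the paper: the corollary is stated there with a \qed and no explicit proof, as it is obtained by composing the isomorphism of Corollary \ref{prop:computably-transparent-many-one-TWeih} with the correspondence ${\bf MRed}^{{\rm ct},\eta,\mu}\simeq{\bf Op}^{{\rm mon},\eta,\mu}_{\bigcap}(\Omega)$ coming from Theorems \ref{prop:transparent-to-modality} and \ref{thm:modality-to-oracle-various-preserve} together with Proposition \ref{prop:two-category-isomorphic} (equivalently, the restriction of Corollary \ref{cor:correspondence-m-deg-r-deg}). Your unpacking of ``$\bigcap$-preserving Lawvere-Tierney topology'' as ``$\bigcap$-preserving, monotone, inflationary, idempotent operation'' is exactly the identification the paper uses.
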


As seen in Theorems \ref{prop:transparent-to-modality} and \ref{thm:modality-to-oracle-various-preserve}, being single-valued corresponds to the $\bigcup$-preservation property, so this also gives the following characterization of the subTuring degrees (in the sense of Sasso-Madore \cite{Sas75,Mad12} as in Corollary \ref{cor:Turing-degrees-mna}) since partial Turing oracles are exactly single-valued Turing-Weihrauch oracles in the Kleene first algebra.

\begin{cor}\label{cor:Turing-degrees-mna2}
The subTuring degrees of partial functions is isomorphic to the $r$-degrees of $\bigcup,\bigcap$-preserving Lawvere-Tierney topologies (on the Kleene first algebra).
\qed
\end{cor}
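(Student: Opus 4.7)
The plan is to deduce this corollary by restricting the isomorphism of Corollary \ref{cor:generalized-Weihrauch} to the single-valued case in the Kleene first algebra. Three ingredients are needed, all of which are already in place.

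First, by the convention adopted immediately after Corollary \ref{prop:computably-transparent-many-one-TWeih}, \emph{Turing reducibility of partial functions} on $\N$ is, by definition, the restriction of Turing-Weihrauch reducibility to partial single-valued maps in the Kleene first algebra. So the Turing degrees of partial functions form, tautologically, the full sub-poset of the Turing-Weihrauch degrees carved out by the single-valued representatives. On the other side, I will show that the $r$-degrees of $\bigcup,\bigcap$-preserving Lawvere-Tierney topologies form the corresponding sub-poset of the $r$-degrees of $\bigcap$-preserving Lawvere-Tierney topologies.

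Second, I invoke the mutually inverse assignments $U \mapsto j_U$ and $j \mapsto U_j$ from Definition \ref{def:operation-to-pamulti} and Theorem \ref{thm:modality-to-oracle-various-preserve}. The key compatibility I need is that, under this bijection, single-valuedness of $U$ matches preservation of arbitrary unions by $j$. The forward direction is the second half of Proposition \ref{prop:oracle-to-modality-preservation-basic}, and the converse is the final clause of Theorem \ref{thm:modality-to-oracle-various-preserve}. Hence the bijection $U \leftrightarrow j_U$ restricts to a bijection between computably transparent, inflationary, idempotent, single-valued partial functions on the one side and $\bigcup,\bigcap$-preserving Lawvere-Tierney topologies on the other. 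By Proposition \ref{prop:two-category-isomorphic} this bijection is compatible with $m$-morphisms and $r$-morphisms, so it descends to an isomorphism of the corresponding degree posets.

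Combining these two points with Corollary \ref{cor:generalized-Weihrauch} yields the statement. The only matter to double-check is that every Turing degree of a partial function is represented by a computably transparent, inflationary, idempotent \emph{single-valued} map (and dually for topologies); but this is automatic, since the diamond construction $\diamondsuit$ and the passage $j \mapsto U_j$ both preserve single-valuedness / $\bigcup$-preservation, so each degree contains a canonical representative of the required form. Thus there is no real obstacle: the corollary is a bookkeeping consequence of Corollary \ref{cor:generalized-Weihrauch} together with the $\bigcup$-preservation clauses of Proposition \ref{prop:oracle-to-modality-preservation-basic} and Theorem \ref{thm:modality-to-oracle-various-preserve}.
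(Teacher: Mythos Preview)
Your proposal is correct and follows exactly the route the paper intends: the corollary is obtained by restricting Corollary~\ref{cor:generalized-Weihrauch} along the single-valued/$\bigcup$-preserving correspondence supplied by Proposition~\ref{prop:oracle-to-modality-preservation-basic} and Theorem~\ref{thm:modality-to-oracle-various-preserve}, together with the earlier identification of Turing reducibility for partial functions with Turing--Weihrauch reducibility on single-valued maps. The one detail you make explicit that the paper leaves tacit is that $g^{\diamondsuit}$ remains single-valued when $g$ is; this is indeed straightforward (a deterministic oracle yields a deterministic relative computation) and is implicitly used already in the single-valued corollary following Corollary~\ref{prop:computably-transparent-many-one-TWeih}.
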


\begin{remark}
It was Hyland \cite{Hey82} who first noticed that each Turing degree yields a Lawvere-Tierney topology (so a subtopos) on the effective topos.
Later, Faber-van Oosten \cite{FavO14} showed that the subTuring degrees correspond to the realizability subtoposes (i.e., the subtoposes that are realizability toposes) of the effective topos.
They also gave another characterization of a partial Turing oracle (i.e., a subTuring degree) as a Lawvere-Tierney topology preserving arbitrary union and disjointness.
\end{remark}

Unfortunately, Sasso-Madore's Turing sub-reducibility has not become mainstream as reducibility between partial functions (see e.g.~Cooper \cite[Section 11]{Cooper}) and has not been studied in depth.
However, Corollary \ref{cor:Turing-degrees-mna2} and Faber-van Oosten's result \cite{FavO14} mentioned above suggest that Sasso-Madore's Turing sub-reducibility is more natural, at least from a topos-theoretic point of view, than other reducibilities between partial functions.

Therefore, we present the analysis of the subTuring degrees as an important problem.

\begin{question}[see also Madore \cite{Mad12}]
Analyze the structure of the subTuring degrees of partial functions.
\end{question}

This problem has recently been addressed by Kihara-Ng \cite{KN24}.

\begin{question}\label{question-ot}
Is it possible to characterize ordinary Turing reducibility (i.e., Turing reducibility for total functions) as a property of operations on truth values?
\end{question}

By Example \ref{example:open-sublocales-Medvedev} we also see that Medvedev degrees correspond to open Lawvere-Tierney topologies (i.e., topologies for which internal nuclei yield open sublocales of $\Omega$; see e.g.~Johnstone \cite[Section A.4.5]{elephant}).

\begin{cor}\label{cor:Medvedev-deg}
The Medvedev lattice is isomorphic to the $r$-degrees of open Lawvere-Tierney topologies (on the Kleene-Vesley algebra).
\qed
\end{cor}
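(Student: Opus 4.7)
The plan is to deduce this from Example \ref{example:open-sublocales-Medvedev} together with Corollary \ref{cor:correspondence-m-deg-r-deg}. By definition, every open Lawvere-Tierney topology is of the form $j^q(p)=q\arr p$ for some $q\in\Omega$, so the assignment $q\mapsto j^q$ is surjective onto open topologies. Moreover, $j^q$ preserves arbitrary intersection (it is right adjoint to $-\land q$), so Corollary \ref{cor:correspondence-m-deg-r-deg} applies: $j^P\leq_r j^Q$ if and only if $U_{j^P}\leq_m U_{j^Q}$. By Example \ref{example:open-sublocales-Medvedev}, $U_{j^q}={\tt Med}_q$, hence the statement reduces to the claim that for all $P,Q\subseteq\tpbf{N}$,
\[
P\leq_M Q\iff {\tt Med}_P\leq_m {\tt Med}_Q.
\]

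For the forward direction, suppose $P\leq_M Q$ is witnessed by a computable $\psi\in N$ with $\varphi_\psi[Q]\subseteq P$. Given any $\tau\in{\rm dom}({\tt Med}_P)$, so $P\subseteq{\rm dom}(\varphi_\tau)$, the composition $\varphi_\tau\circ\varphi_\psi$ has $Q$ in its domain and sends $Q$ into $\varphi_\tau[P]$. Using the standard fact that one can effectively compute a code $b_{\tau,\psi}$ for $\varphi_\tau\circ\varphi_\psi$ from $\tau$ (and the fixed $\psi$), the map $\tau\mapsto b_{\tau,\psi}$ is an $m$-reduction from ${\tt Med}_P$ to ${\tt Med}_Q$.

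For the backward direction, suppose ${\tt Med}_P\leq_m {\tt Med}_Q$ via a computable $\varphi$. Take $\tau_0$ to be a code for the identity function on $\tpbf{N}$. Then $P\subseteq{\rm dom}(\varphi_{\tau_0})$ trivially, so $\tau_0\in{\rm dom}({\tt Med}_P)$ and ${\tt Med}_P(\tau_0)=P$. Hence $\varphi(\tau_0)\in{\rm dom}({\tt Med}_Q)$, which means $Q\subseteq{\rm dom}(\varphi_{\varphi(\tau_0)})$, and ${\tt Med}_Q(\varphi(\tau_0))=\varphi_{\varphi(\tau_0)}[Q]\subseteq P$. Thus $\varphi_{\varphi(\tau_0)}$ witnesses $P\leq_M Q$.

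Combining the two directions gives an isomorphism of preorders between Medvedev reducibility and $m$-reducibility on the image $\{{\tt Med}_q:q\in\Omega\}$, hence by Corollary \ref{cor:correspondence-m-deg-r-deg} an isomorphism with the $r$-degrees of open Lawvere-Tierney topologies. No step looks technically hard; the only subtle point is being careful about what ``$r$-morphism'' requires (uniformity in $p\in\Omega$), but this is handled entirely at the level of Corollary \ref{cor:correspondence-m-deg-r-deg}, so the remaining work is just the two-line chasing of Medvedev reductions exhibited above.
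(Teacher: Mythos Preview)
Your proof is correct and follows essentially the approach the paper has in mind: the paper marks this corollary with \qed, treating it as immediate from Example~\ref{example:open-sublocales-Medvedev} (which gives $U_{j^q}={\tt Med}_q$) combined with the $m$/$r$ correspondence of Corollary~\ref{cor:correspondence-m-deg-r-deg}, and you have simply spelled out the remaining verification that $P\leq_M Q\iff {\tt Med}_P\leq_m{\tt Med}_Q$. One small remark: your parenthetical justification that $j^q$ preserves arbitrary intersection ``because it is right adjoint to $-\land q$'' is phrased in frame-theoretic language that does not literally apply to the realizability $\Omega$; it is cleaner to just verify directly that $q\arr\bigcap_i p_i=\bigcap_i(q\arr p_i)$ as subsets of $\tpbf{N}$, which is immediate from the definition of $\arr$.
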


\begin{table*}[t]\small
  \caption{Correspondences between operations on $\Omega$ and degree-theoretic notions}
  \label{table}
  \centering
  \begin{tabular}{c|ccc}
  	& no condition & monotone &  Lawvere-Tierney topology \\ \hline
      $\bigcup,\bigcap$-preserving & many-one & single-Weihrauch & Turing \\
     $\bigcap$-preserving & multi-many-one & Weihrauch & Turing-Weihrauch \\
     no condition & extended many-one & extended Weihrauch & extended Turing-Weihrauch \\
  \end{tabular}
\end{table*}
We have now characterized various notions of oracles as operations on the truth values (summarized as Table \ref{table}).
A similar characterization of operations that do not preserve $\bigcap$ is given in Section \ref{sec:mmmap}.

Note that the preservation property of union and intersection cannot be written in the language of realizability.
Indeed, these preservation properties are not closed under the $r$-equivalence $\equiv_r$, so let us introduce a slightly modified notion.
For $j\colon\Omega\to\Omega$, we say that $j$ is {\em realizably $\bigcup$-preserving} if, for any collection $(p_i)_{i\in I}$, the formula $j(\bigcup_{i\in I}p_i)\iffarr\bigcup_{i\in I}j(p_i)$ is realizable.
Similarly, $j$ is {\em realizably $\bigcap$-preserving} if, for any collection $(p_i)_{i\in I}$, the formula $j(\bigcap_{i\in I}p_i)\iffarr\bigcap_{i\in I}j(p_i)$ is realizable.

\begin{prop}
Let $j\colon\Omega\to\Omega$ be a monotone operation.
If $j$ is realizably $\bigcap$-preserving, then there exists a monotone $\bigcap$-preserving operation $k\colon\Omega\to\Omega$ such that $k\equiv_rj$.
If $j$ is realizably $\bigcup,\bigcap$-preserving, then there exists a monotone $\bigcup,\bigcap$-preserving operation $k\colon\Omega\to\Omega$ such that $k\equiv_rj$.
\end{prop}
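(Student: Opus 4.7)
The plan is to take $k := j_{U_j}$, where $U_j$ is extracted from $j$ as in Definition~\ref{def:operation-to-pamulti} and $j_U(p) := \{x \in {\rm dom}(U) : U(x) \subseteq p\}$. By Proposition~\ref{prop:oracle-to-modality-preservation-basic}, any operation of the form $j_U$ preserves arbitrary intersection (and, if $U$ is single-valued, also arbitrary union), so the preservation clauses come for free from the construction. The remaining work is then: (i) to verify that $k$ is realizably monotone, and (ii) to establish $k \equiv_r j$.

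First I would fix uniform realizers: $m_j$ for the monotonicity of $j$, and $e_\bigcap$ for the nontrivial direction $\bigcap_i j(p_i) \arr j(\bigcap_i p_i)$ of the realizable $\bigcap$-preservation. The pivotal observation is that for any $x \in {\rm dom}(U_j)$, the element $x$ trivially inhabits $\bigcap\{j(s) : x \in j(s)\}$, so $e_\bigcap \ast x$ lies in $j(\bigcap\{s : x \in j(s)\}) = j(U_j(x))$. The direction $j \leq_r k$ then holds via the identity: $x \in j(p)$ directly witnesses $U_j(x) \subseteq p$, i.e.\ $x \in k(p)$. Conversely, given $x \in k(p)$ one has $U_j(x) \subseteq p$, so the identity realizes $U_j(x) \arr p$; applying $m_j$ yields a realizer of $j(U_j(x)) \arr j(p)$, whose action on $e_\bigcap \ast x$ produces an element of $j(p)$, giving $k \leq_r j$. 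Substituting an arbitrary $a : p \arr q$ for the identity in the same combination establishes realizable monotonicity of $k$.

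For the $\bigcup,\bigcap$-preserving case I would refine $U_j$ to a single-valued partial function. Let $e_\bigcup$ realize $j(\bigcup_i p_i) \arr \bigcup_i j(p_i)$. Writing any $s \in \Omega$ as $\bigcup_{z \in s}\{z\}$, for every $x \in j(s)$ the application $e_\bigcup \ast x = \langle z_x, y_x \rangle$ satisfies $z_x \in s$ and $y_x \in j(\{z_x\})$; by the uniformity of $e_\bigcup$, the value $z_x$ depends only on $x$, so $z_x \in \bigcap\{s : x \in j(s)\} = U_j(x)$. Setting $U(x) := \{z_x\}$ and $k := j_U$, Proposition~\ref{prop:oracle-to-modality-preservation-basic} now delivers preservation of both $\bigcup$ and $\bigcap$, and the calculus of the previous paragraph, using $y_x$ in place of $e_\bigcap \ast x$, again yields $k \equiv_r j$ together with realizable monotonicity of $k$.

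The step I expect to be most delicate is the \emph{realizable} (as opposed to merely set-theoretic) monotonicity of $k$: the inclusion $k(p) \subseteq k(q)$ when $p \subseteq q$ is immediate from the definition, but turning an arbitrary realizer $a : p \arr q$ into a uniform realizer of $k(p) \arr k(q)$ forces one to route $x$ through $j(U_j(x))$ and then re-apply the monotonicity of $j$. It is precisely here that the hypothesis that $j$ is itself monotone, not merely realizably $\bigcap$-preserving, becomes indispensable.
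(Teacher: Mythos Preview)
Your argument for the $\bigcap$-preserving case is correct and essentially identical to the paper's: both take $k = j_{U_j}$, both get $j \leq_r k$ via the identity, and both get $k \leq_r j$ by routing $x$ through $j(U_j(x))$ using the $\bigcap$-preservation realizer and then applying monotonicity of $j$. The paper does not spell out the monotonicity of $k$, tacitly using that monotonicity is invariant under $\equiv_r$; your explicit treatment is a harmless extra.

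The $\bigcup,\bigcap$-case has a gap. You read $e_\bigcup \ast x$ as a pair $\langle z_x, y_x\rangle$ with $z_x \in s$ and $y_x \in j(\{z_x\})$, but in this paper $\bigcup_{i\in I} j(p_i)$ denotes the \emph{plain set-theoretic union} in $\Omega = \mathcal{P}(\tpbf{N})$, not an indexed sum (the index set $I$ need not even embed in $\tpbf{N}$). So from $x \in j(s)$ you only obtain $e_\bigcup \ast x \in \bigcup_{z\in s} j(\{z\})$, i.e.\ $e_\bigcup \ast x \in j(\{z\})$ for \emph{some} $z \in s$; nothing yet forces the witnessing $z$ to be independent of $s$, which is exactly what you need to make $U(x) := \{z_x\}$ well defined with $U(x) \subseteq U_j(x)$. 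The paper closes this by first invoking the $\bigcap$-case to replace $j$ by a genuinely $\bigcap$-preserving operator, then observing that realizable $\bigcup$-preservation forces $j(\emptyset) = \emptyset$ (take the empty family), so that $z \neq z'$ implies $j(\{z\}) \cap j(\{z'\}) = j(\{z\}\cap\{z'\}) = j(\emptyset) = \emptyset$. With the singleton fibres $j(\{z\})$ pairwise disjoint, the element $e_\bigcup \ast x$ lies in $j(\{z\})$ for a \emph{unique} $z$, and your remaining calculus (with $e_\bigcup \ast x$ itself playing the role of your ``$y_x$'') then goes through unchanged.
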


\begin{proof}
Assume that $j$ is realizably $\bigcap$-preserving.
Then, by Proposition \ref{prop:oracle-to-modality-preservation-basic}, $j_{U_j}$ preserves arbitrary intersection, so it suffices to show that $j\equiv_rj_{U_j}$.
If $x\in j(p)$ then by the definition of $U_j$ we have $U_j(x)=\bigcap\{q:x\in j(q)\}\subseteq p$, so $x\in j_{U_j}(p)$ by the definition of $j_{U_j}$.
Hence, the identity map witnesses $j\leq_r j_{U_j}$.
Conversely, first note that we clearly have $x\in\bigcap\{j(p):x\in j(p)\}$ for any $x$.
As $j$ is realizably $\bigcap$-preserving, some ${\tt a}\in \tplf{N}$ realizes $\bigcap_{x\in j(p)}j(p)\arr j(\bigcap_{x\in j(p)}p)$, so ${\tt a}\ast x\in j(\bigcap_{x\in j(p)}p)=j(U_j(x))$, where the last equality follows from the definition of $U_j$.
Now, $x\in j_{U_j}(p)$ implies $U_j(x)\subseteq p$ by the definition of $j_{U_j}$, so the identity map realizes $U_j(x)\arr p$.
As $j$ is monotone, some ${\tt b}\in\tplf{N}$ realizes $j(U_j(x))\arr j(p)$.
Hence, we get ${\tt b}\ast({\tt a}\ast x)\in j(p)$; that is, $\lambda x.{\tt b}\ast({\tt a}\ast x)\in \tplf{N}$ realizes $j_{U_j}(p)\arr j(p)$, which shows $j\equiv_r j_{U_j}$.

We next assume that $j$ is realizably $\bigcup,\bigcap$-preserving.
By the previous argument, we may assume that $j$ preserves arbitrary intersection (by replacing $j$ with $j_{U_j}$ if necessary).
We first claim that $j$ preserves disjointness, that is, $p\cap q=\emptyset$ implies $j(p)\cap j(q)=\emptyset$.
To see this, as $j$ is realizably $\bigcup$-preserving, in particular, $j(\emptyset)\iffarr\emptyset$ is realizable, but this property implies $j(\emptyset)=\emptyset$.
If $p\cap q=\emptyset$ then, as $j$ preserves intersection, we have $j(p)\cap j(q)=j(p\cap q)=j(\emptyset)=\emptyset$.
This verifies the claim.
Now, by our assumption, we have an element ${\tt a}\in \tplf{N}$ realizing $j(\bigcup_ip_i)\arr\bigcup_ij(p_i)$ whatever $(p_i)_{i\in I}$ is.
In particular, $x\in j(p)=j(\bigcup_{y\in p}\{y\})$ implies ${\tt a}\ast x\in\bigcup_{y\in p}j(\{y\})$.
As $j$ preserves disjointness, one can see that there exists a unique $y_x\in p$ such that ${\tt a}\ast x\in j(\{y_x\})$.
Then, we define $U_j'(x)=\{y_x\}$ for such $y_x$.
As $U_j'$ is single-valued, by Proposition \ref{prop:oracle-to-modality-preservation-basic}, $j_{U_j'}$ preserves arbitrary union and intersection.

It remains to show that $j\equiv_r j_{U_j'}$.
Given $x\in j(p)$, for the unique $y_x\in p$ as above, we have $U_j'(x)=\{y_x\}\subseteq p$, so we have $x\in j_{U_j'}(p)$ by definition.
Hence, the identity map witnesses $j\leq_rj_{U_j'}$
Conversely, if $x\in j_{U_j'}(p)$, by definition, we have $U_j'(x)\subseteq p$; that is the identity map realizes $U_j'(x)\arr p$.
Thus, by monotonicity of $j$, some ${\tt b}\in N$ realizes $j(U_j'(x))\arr j(p)$.
By our choice of $y_x$, we have ${\tt a}\ast x\in j(\{y_x\})=j(U_j'(x))$, so we get ${\tt b}\ast({\tt a}\ast x)\in j(p)$.
Therefore, $\lambda x.{\tt b}\ast({\tt a}\ast x)\in \tplf{N}$ realizes $j_{U'_j}(p)\arr j(p)$; hence we obtain $j\equiv_r j_{U'_j}$.
\end{proof}

In particular, the $r$-degrees of monotone realizably $\bigcap$-preserving operations are equal to those of monotone $\bigcap$-preserving operations.
The same is true for the $\bigcup,\bigcap$ preservation property.


\section{Oracles as factors that change representations of spaces}\label{sec:rep-sp-main}

The goal of this section is to provide an explanation of the notion of ``jump of representation (change of coding) \cite{Zie07,dB14}'' in the context of ``universal closure operator \cite{elephant}'', which is known to be closely tied with Lawvere-Tierney topology.

\subsection{Represented space}\label{sec:represented-space}

A triple $(\tplf{N},\tpbf{N},\ast)$, dealt with in Section \ref{sec:realizability-PCA} (see also Definition \ref{def:partial-combinatory-algebra-int}), can be used to encode various mathematical objects.
For example, the case $\tplf{N}=\tpbf{N}=\N$ corresponds to encoding various mathematical notions by using natural numbers (or finite sequences of symbols), and it is not difficult to imagine that this idea is used in various fields, including the theory of computation.
Ershov's theory of numbering \cite{Er99}, for example, is well known as an abstract theory of $\N$-coding.
The case $\tpbf{N}=\N^\N$ corresponds to the idea of encoding abstract mathematical objects by streams (possibly infinite sequences of symbols), which has also been used in various fields, including set theory, reverse mathematics and  computable analysis.
In modern computable analysis, this idea is formulated as follows:

\begin{definition}[see e.g.~\cite{SchCCA}]
A {\it represented space} is a set $X$ equipped with a partial surjection $\delta_X\pcolon\tpbf{N}\to X$.
If $\delta_X(p)=x$ then $p$ is called a {\it name} or a {\it code} of $x$.
A map $f\colon X\to Y$ is {\it continuous} if
one can transform a name of $x\in X$ into a name of $f(x)\in Y$ in a continuous manner; that is, there exists a continuous function $F\pcolon\tpbf{N}\to\tpbf{N}$ such that $\delta_Y\circ F=F\circ \delta_X$.
If such an $F$ is computable, we say that $f$ is {\it computable}.
\[
\xymatrix{
X\ar[rr]^f & & Y\\
\tpbfxy{N}\ar[rr]_{F} \ar[u]^{\delta_X} & & \tpbfxy{N}\ar[u]_{\delta_Y}
}
\]
\end{definition}

\begin{example}[Coding topological spaces]\label{exa:coding-topological-space}
As an example of coding by stream, a real $x$ can be coded by a (rapidly converging) Cauchy sequence of rationals; that is, a stream $\alpha$ codes $x$ if and only if $|x-q_{\alpha(n)}|<2^{-n}$, where $q_i$ is the $i$th rational.
In general, every point in a Polish space can be coded by a (rapidly converging) Cauchy sequence in a countable dense set $\{q_i\}_{i\in\N}$; see e.g.~\cite{MosBook,Handbook-CCA}.
Similarly, every point in a second-countable $T_0$ space can be coded by (an enumeration of) its neighborhood filter restricted to a fixed countable basis $\{B_i\}_{i\in\N}$; that is, a stream $\alpha$ codes $x$ if and only if $\{\alpha(n):n\in\N\}=\{i\in\N:x\in B_i\}$.
By using some variant (e.g., cs-network and k-network) of (Arhangel'skii's notion of) network instead of a basis, one can represent an even wider class of topological spaces in such a way that computability and (sequential) topology are compatible; see Schr\"oder \cite{Sch02}.
\end{example}

\begin{example}[Coding function spaces]\label{exa:coding-continuous functions}
It is known that the category of represented spaces and continuous maps is cartesian closed.
In detail, given represented spaces $X$ and $Y$, a code for a continuous map $f\colon X\to Y$ is given by an index of a partial continuous function $F$ on $\tpbf{N}$ that tracks $f$.
This yields a representation of the function space $C(X,Y)$.

One important special case of this construction is a coding of a hyperspace.
For a topological space $X$, recall that $U\subseteq X$ is open if and only if the characteristic function $\chi_U\colon X\to\mathbb{S}$ is continuous, where $\mathbb{S}$ is Sierpi\'nski's connected two point space.
One can then introduce the hyperspace $\mathcal{O}(X)$ of open sets in $X$ as the function space $C(X,\mathbb{S})$.
For the details, see \cite{Handbook-CCA}.
\end{example}

\begin{example}[Coding Borel sets]
One of the most famous representated spaces in set theory is the hyperspace of Borel sets, whose representation is given by the notion of Borel codes, which is first introduced by Solovay; see e.g.~\cite{MosBook}.
\end{example}

In the following, a multi-surjection refers to a multimap $\delta\pcolon X\tto Y$ such that for any $y\in Y$ there exists $x$ such that $y\in \delta(x)$.

\begin{definition}[see e.g.~\cite{Sch02,SchCCA}]\label{def:multi-represented-space}
A {\it multi-represented space} is a set $X$ equipped with a partial multi-surjection $\delta_X\pcolon\tpbf{N}\tto\tpbf{N}$.
If $x\in\delta_X(p)$ then $p$ is called a {\it name} or a {\it code} of $x$.
A map $f\colon X\to Y$ is {\it continuous} if there exists a continuous function $F\pcolon\tpbf{N}\to\tpbf{N}$ which, given a $\delta_X$-name of $x\in X$, returns a $\delta_Y$-name of $f(x)\in Y$.
If such an $F$ is computable, we say that $f$ is {\it computable}.
\end{definition}

Also, if a transparent map $U$ is given, one may define the notion of $U$-continuity ($U$-computability) of a map $f\colon X\to Y$ by replacing the condition of $F$ with that of being $U$-continuous ($U$-computable).

\begin{example}
A name of a null set $A$ in a Polish space may be given by a certain $G_\delta$-witness of its nullness, i.e., a $G_\delta$-code of a null $G_\delta$ set $L\supseteq A$ (see e.g.~\cite{DHBook}).
However, a null $G_\delta$ set may cover many null sets, so a $G_\delta$-code of such a null $G_\delta$ set gives a name of a lot of null sets.
The (multi-)represented space of null sets plays important roles in both set theory and computability theory (for the latter, especially in the theory of algorithmic randomness \cite{DHBook}).
%
\end{example}

Above we have given examples of coding by natural numbers and streams, but more generally, the theory of coding with a relative PCA (that unifies the notions of numbering, representation, multi-representation, etc.)~has also been studied in realizability theory and related areas.
There, an equivalent notion to multi-represented space (with respect to a relative PCA) is called assembly; see also Section \ref{sec:assembly}.


We now explain an operation of changing the way a space is accessed using an oracle, which is the basis of the second view of oracle mentioned in Section \ref{sec:intro-oracle}.
In de Brecht-Pauly \cite{dB14,PaBr15}, the relativization of a represented space by an oracle is defined in the following manner.

\begin{definition}[\cite{dB14,PaBr15}]\label{def:space-relative-to-transparent}
Let $U\pcolon\tpbf{N}\to\tpbf{N}$ be a transparent map.
Then given a represented space $X=(X,\delta_X)$ and a continuous map $f\colon X\to Y$ on represented spaces, define a represented space $U(X)$ and a function $U(f)\colon U(X)\to U(Y)$ as follows:
\begin{align*}
U(X)=(X,\delta_X\circ U) & & U(f)=f
\end{align*}
\end{definition}

Note that a map $f\colon X\to Y$ is $U$-continuous ($U$-computable, respectively) if and only if $f\colon X\to U(Y)$ is continuous (computable, respectively).

\begin{remark}
The origin of this idea is the notion of the {\em jump of a representation} by Ziegler \cite{Zie07}, so this notion may be called the $U$-jump of the representation space $X$.
We refer to the space $U(X)$ as the {\em $U$-relativization} of $X$.
\end{remark}

\begin{example}[Ziegler \cite{Zie07}]
Recall the limit operation introduced below Definition \ref{def:basic-computably-transparent}.
The $\lim$-relativization of the rapid Cauchy representation of $\mathbb{R}$ (see Example \ref{exa:coding-topological-space}) is equivalent to the na\"ive Cauchy representation of $\mathbb{R}$.
\end{example}


\begin{remark}[see also \cite{dB14,PaBr15}]
Surjectivity of a transparent map implies that $U(X)$ is also a represented space.
Moreover, if $f\colon X\to Y$ is continuous, so is $U(f)\colon U(X)\to U(Y)$.
Indeed, transparency is a condition for $U$ to induce an endofunctor on the category ${\bf Rep}$ of represented spaces and continuous functions.
Furthermore, if $U$ is inflationary and idempotent, i.e., ${\rm id}=U\circ\eta$ and $U\circ U=U\circ\mu$ for some computable functions $\eta$ and $\mu$, then such $\eta$ and $\mu$ yield a monad unit and a monad multiplication, respectively.
To be more precise, both the monad unit $\eta_X\colon X\to U(X)$ and the monad multiplication $\mu_X\colon U\circ U(X)\to U(X)$ are given as identity maps, which are tracked by $\eta$ and $\mu$, respectively.
Thus, the triple $(U,\eta,\mu)$ forms a {\em monad} on ${\bf Rep}$.
Indeed, $f\colon X\to Y$ is $U$-continuous if and only if $f$ is a {\em Kleisli morphism} for the monad $(U,\eta,\mu)$.
If $U$ is computably transparent, the same is true for the category of representated spaces and computable functions.
\end{remark}

\begin{example}[Synthetic descriptive set theory \cite{PaBr15,dBPa17}]\label{exa:synthetic-DST}
Definition \ref{def:space-relative-to-transparent} forms the basis of synthetic descriptive set theory.
For instance,
the hyperspace of {$\tpbf{\Sigma}^0_2$ sets} in a Polish space $X$, {$\tpbf{\Sigma}^0_2(X)$}, can be given by applying the {endofunctor $\lim$} appropriately to {$\mathcal{O}(X)=C(X,\mathbb{S})$}, the represented hyperspace of {open sets} in $X$ (Example \ref{exa:coding-continuous functions}): 
To be precise, put $\mathcal{O}^{\lim}(X):=C(X,\lim(\mathbb{S}))=\tpbf{\Sigma}^0_2(X)$.
Similarly, the discrete limit operator $\lim_\Delta$ yields the hyperspace of {$\tpbf{\Delta}^0_2$ sets} in $X$, i.e., $\mathcal{O}^{\lim_\Delta}(X):=C(X,\lim_\Delta(\mathbb{S}))=\tpbf{\Delta}^0_2(X)$.
Note that $\lim(\mathbb{S})$ and $\lim_\Delta(\mathbb{S})$ are pre-dominances (in the sense of Rosolini; see e.g.~\cite[Section 4.1.2]{LoPhD95}), and $\mathcal{O}^{\lim}$ and $\mathcal{O}^{\lim_\Delta}$ give the corresponding representable classes of subobjects.
Through this particular endofunctor $\lim_\Delta$, various topological notions (e.g.~Hausdorffness, compactness, overtness) can be lifted to notions about $\tpbf{\Delta}^0_2$ sets.
As a sample result, de Brecht-Pauly \cite{dBPa17} showed that a quasi-Polish space is {$\lim_\Delta$-compact} if and only if it is {Noetherian}, i.e. every strictly ascending chain of open sets is finite.
Of course, there are endofunctors not only for $\tpbf{\Delta}^0_2$, but for various descriptive set-theoretic notions as well.
As other applications of Definition \ref{def:space-relative-to-transparent}, see e.g.~\cite{dB14,PaBr15,dBPa17}.
\end{example}



\subsection{Assembly}\label{sec:assembly}

There are several advantages to transforming the partial multifunctions on $\tpbf{N}$ into the operations on truth values, but first of all, operations on $\Omega=\mathcal{P}(\tpbf{N})$ are more compatible with multi-represented spaces.
Here, it is convenient to rephrase multi-represented spaces as follows.

\begin{definition}[see e.g.~\cite{vOBook}]
A pair of a set $X$ and a function $E_X\colon X\to\Omega^+$ is called an {\em assembly}, where $\Omega^+=\Omega\setminus\{\emptyset\}$.
\end{definition}

\begin{remark}
For a multi-represented space $(X,\delta_X)$, let $E_X(x)\subseteq\N^\N$ be the set of all {names} of $x$.
Clearly, $(X,E_X)$ is an assembly.
Conversely, if $(X,E_X)$ is an assembly, one can recover a multi-represented space $(X,\delta_X)$ by setting $x\in \delta_X(p)$ if $p\in E_X(x)$.
Therefore, a multi-represented space can be identified with an assembly.
\end{remark}

Now, one can relativize a multi-represented space by using an operation $j$ on $\Omega$ preserving nonemptieness, i.e., $p\not=\emptyset$ implies $j(p)\not=\emptyset$.
Such an operation can be thought of as an operation on $\Omega^+$.
Let us analyze when $j$ preserves nonemptieness.
We say that $j$ is {\em trivial} if $j(p)=\emptyset$ for any $p\in\Omega$.
Recall that we impose nonemptiness (i.e., ${\rm dom}(U)\not=\emptyset$) on the definition of a transparent map $U$ (Definition \ref{def:basic-computably-transparent}); hence $j_U$ is always nontrivial.

\begin{obs}\label{obs:mono-pres-nonemp}
Any nontrivial monotone operation $j\colon\Omega\to\Omega$ preserves nonemptieness.
\end{obs}

\begin{proof}
As $j$ is nontrivial, there exists $q\in\Omega$ such that $j(q)\not=\emptyset$.
%
Assume that a nonempty set $p\in\Omega$ is given.
Pick ${\tt a}\in p$.
Then $\lambda x.{\tt a}$ realizes $q\arr \{{\tt a}\}$, and the identity map realizes $\{\tt a\}\arr p$.
By composing these two realizers, we see that $q\arr p$ is realizable.
Thus, by monotonicity, $j(q)\arr j(p)$ is also realizable.
As $j(q)$ is nonempty, so is $j(p)$.
Hence, $j$ preserves nonemptieness.
\end{proof}

In particular, by Theorem \ref{prop:transparent-to-modality}, if $U$ is computably transparent, then $j_U$ preserves nonemptieness.
Note also that any computably transparent map $U\pcolon\tpbf{N}\tto\tpbf{N}$ is surjective in the sense that for any $y\in\tpbf{N}$ there exists $x\in{\rm dom}(U)$ such that $U(x)\subseteq\{y\}$ (consider $f=\lambda z.y$).

\begin{definition}\label{def:mult-represented-monad-trans}
Let $j\colon\Omega^+\to\Omega^+$ be a monotone operation.
Then given an assembly $X=(X,E_X)$, and a continuous map $f\colon X\to Y$ on multi-represented spaces, define an assembly $j(X)$ and a map $j(f)\colon j(X)\to j(Y)$ as follows:
\begin{align*}
j(X)=(X,j\circ E_X),
& &
j(f)=f.
\end{align*}

We say that a function $f\pcolon X\to Y$ is {\em $j$-computable} if $f\pcolon X\to j(X)$ is computable.
The $j$-continuity is defined in the same way.
\end{definition}

%

For any monotone operation $j\colon\Omega^+\to\Omega^+$, if $f\colon X\to Y$ is computable, so is $j(f)$:
This is because computability of $f$ means that $E_X(x)\arr E_Y(f(x))$ is realizable, so monotonicity of $j$ implies that $j\circ E_X(x)\arr j\circ E_Y(f(x))$ is realizable, which means that $j(f)\colon j(X)\to j(Y)$ is computable.
The same is true for continuity.
Hence, monotonicity of $j$ ensures that $j$ yields an endofunctor on the category ${\bf MultRep}$ of multi-represented spaces (equivalently, assemblies).
Note that this endofunctor preserves underlying sets, and such a functor is called an {\em S-functor} in \cite[Section 1.6]{vOBook}.

Similarly, if $j$ is inflationary, ${\rm id}\colon X\to j(X)$ is computable, and if $j$ is idempotent, ${\rm id}\colon j\circ j(X)\to j(X)$ is computable.
This ensures that if $j$ is a Lawvere-Tierney topology, then $j$ induces a monad on ${\bf MultRep}$.
As before, a $j$-continuous function is a Kleisli morphism for this monad.

\begin{remark}
An assembly of the form $j(X)$ is also known as a $j$-assembly (see e.g.~\cite{FavO16}).
If $j$ is a Lawvere-Tierney topology, then the category of $j$-assemblies is essentially the Kleisli category of the corresponding monad.
\end{remark}

Let us make sure that Definition \ref{def:mult-represented-monad-trans} is an extension of Definition \ref{def:space-relative-to-transparent}.
Recall from Definition \ref{def:transform-mult-op-truth} (and a subsequent comment) that $j_U=U^{-1}$ for $U\pcolon\tpbf{N}\to\tpbf{N}$.

\begin{obs}\label{obs:relative-rep-sp-equiv}
Let $X$ be a represented space, and $U\pcolon\tpbf{N}\to\tpbf{N}$ be a transparent map.
Then, the $U$-relativization $U(X)$ in the sense of Definition \ref{def:space-relative-to-transparent} is equivalent to the $j_U$-relativization $j_U(X)$ in the sense of Definition \ref{def:mult-represented-monad-trans}.
\end{obs}

\begin{proof}
On the one hand, $x\in\delta_X\circ U(p)$ if and only if $p\in\tpbf{N}$ is a $U(X)$-name of $x$ if and only if $U(p)$ is an $X$-name of $x$.
On the other hand, $E_X(x)$ is the set of all $X$-names of $x$, and so $j_U\circ E_X(x)$ is the set of all $j_U(X)$-names of $x$.
As $j_U=U^{-1}$, $p\in j_U\circ E_X(x)$ if and only if $U(p)\in E_X(x)$ if and only if $U(p)$ is an $X$-name of $x$.
Thus, $p$ is a $j_U(X)$-name of $x$ if and only if $p$ is a $U(X)$-name of $x$.
\end{proof}

Definition \ref{def:mult-represented-monad-trans} leads us to the right extension of Definition \ref{def:space-relative-to-transparent} to partial multifunctions.
Let $U$ be a partial multifunction on $\tpbf{N}$, and $X=(X,\delta_X)$ be a multi-represented space.
Then, define a new multi-representation $\delta_X^U$ as follows:
$p\in\tpbf{N}$ is a $\delta_X^U$-name of $x\in X$ if and only if $U(p)$ is defined and any solution $q\in U(p)$ is a $\delta_X$-name of $x$. 
Then define $U(X)=(X,\delta_X^U)$.
One can check that $U(X)$ is equivalent to $j_U(X)$ in the sense of Definition \ref{def:mult-represented-monad-trans}.

In general, $j$ does not necessarily preserve intersection.
Thus, relativization of spaces by operations on $\Omega$ is a broader concept than relativization of spaces by partial multifunctions on $\tpbf{N}$.

\begin{remark}
The relativization of a space by an operation on $\Omega$ which is not $\bigcap$-preserving can also be useful.
For instance, one can consider $\neg\neg X$ for a multi-represented space $X$, where note that $\neg\neg \circ E_X(x)=\tpbf{N}$ for any $x\in X$.
On the other hand, there is a notion called a strong subobject.
In the category ${\bf MultRep}$, strong subobjects of a multi-represented space $X$ are exactly the ones isomorphic to subspaces of $X$; that is, multi-represented spaces of the form $(Y,E_X\upto Y)$ for some $Y\subseteq X$.
Then the $\neg\neg$-relativization $\neg\neg 2$ of a two-point space $2$ gives a strong-subobject classifier in the category ${\bf MultRep}$, which ensures that ${\bf MultRep}$ forms a {\em quasi-topos} (see e.g.~\cite{MenniPhD00,FaberMsc14}).
For other applications of $\neg\neg$-relativization, see also Sections \ref{sec:secret-input} and \ref{sec:realizability-relative-to-oracle}.
\end{remark}

Two multi-represented spaces $X$ and $Y$ are computably isomorphic if there exists a bijection $h\colon X\to Y$ such that both $h$ and $h^{-1}$ are computable.

\begin{prop}
For any operations $j,k\colon\Omega^+\to\Omega^+$, if $j\equiv_rk$ then $j(X)$ and $k(X)$ are computably isomorphic for any multi-represented space $X$.
If $j$ and $k$ are Lawvere-Tierney topologies, then the converse also holds.
\end{prop}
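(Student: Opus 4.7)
The forward direction is essentially a direct unwinding of definitions. Suppose $j \equiv_r k$, witnessed by $a,b \in N$ realizing $\forall p \in \Omega.\ j(p) \arr k(p)$ and $\forall p \in \Omega.\ k(p) \arr j(p)$ respectively. For any multi-represented space $X = (X, E_X)$, I claim the identity bijection on the underlying set $X$ is a computable isomorphism from $j(X)$ to $k(X)$. Indeed, given any $(j \circ E_X)$-name $y$ of $x \in X$, so that $y \in j(E_X(x))$, specializing $a$ to $p = E_X(x)$ gives $a \ast y \in k(E_X(x))$, which is a $(k \circ E_X)$-name of $x$. Hence $a$ tracks $\mathrm{id}\colon j(X) \to k(X)$, and symmetrically $b$ tracks its inverse.

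For the converse, assume $j$ and $k$ are Lawvere--Tierney topologies and $j(X) \cong k(X)$ computably for every multi-represented space $X$. The task is to turn the family of per-$X$ isomorphisms into a single uniform realizer. A first observation: for each $p \in \Omega^+$, the one-point assembly $X_p = (\{\ast\}, E_p)$ with $E_p(\ast)=p$ admits only the trivial bijection on its underlying set, so the given isomorphism yields computable $F_p, G_p \in N$ with $F_p[j(p)] \subseteq k(p)$ and $G_p[k(p)] \subseteq j(p)$. The difficulty is that $F_p$ and $G_p$ depend on $p$.

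To force uniformity, I would apply the hypothesis to a single space parameterising all nonempty truth-values at once, namely $X = (\Omega^+, E)$ with $E(P) = P$, so that a name of $P$ is any element of $P$. Any computable isomorphism $h\colon j(X) \to k(X)$ then comes with realizers $F, G \in N$ satisfying $F[j(P)] \subseteq k(h(P))$ for all $P \in \Omega^+$, and dually for $G$ and $h^{-1}$. The Lawvere--Tierney structure enters to replace $h$ by the identity: inflationarity gives $x \in j(\{x\})$, so restricting to singletons $P = \{x\}$ forces $F$ to send $j$-names of $x$ to $k$-names of $h(\{x\})$; combining these constraints with the idempotency realizers, which provide computable maps $j(p) \to j(j(p)) \to j(p)$ on both sides, one normalises $F$ into a single realizer of $\forall P.\ j(P) \arr k(P)$. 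The same argument applied to $G$ yields the opposite direction, so $j \equiv_r k$.

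The main obstacle is precisely this renormalisation step: reducing an arbitrary bijection $h$ to the identity while preserving computability of the witnessing realizers. If the direct monad-theoretic argument above turns out too delicate, a fallback is to invoke the topos-theoretic correspondence between Lawvere--Tierney topologies and subtoposes (see e.g.~\cite{elephant,LvO}): the category of $j$-assemblies is determined by $j$ up to $\equiv_r$, and the hypothesis that $j(X) \cong k(X)$ for every $X$ forces the subtoposes associated with $j$ and $k$ to coincide, hence $j \equiv_r k$.
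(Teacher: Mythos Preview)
Your forward direction is correct and matches the paper exactly.

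For the converse, you correctly identify the crucial test space $(\Omega^+,E)$ with $E(P)=P$, and you correctly extract from the isomorphism $h$ realizers of $j(p)\iffarr k(h(p))$ uniformly in $p$. This is exactly how the paper begins. But the ``renormalisation step'' you describe is not an argument: you say ``one normalises $F$ into a single realizer of $\forall P.\ j(P)\arr k(P)$'' without showing how, and the route you sketch through singletons $P=\{x\}$ does not obviously lead anywhere. The topos-theoretic fallback is also insufficient as stated, since per-object isomorphisms $j(X)\cong k(X)$ do not by themselves give a natural equivalence of the $j$- and $k$-assembly categories.

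The missing idea is short and purely algebraic. From $j(p)\iffarr k(h(p))$, apply monotonicity of $k$ to obtain $k(j(p))\iffarr k(k(h(p)))$; then idempotence of $k$ gives $k(k(h(p)))\iffarr k(h(p))$, and the original equivalence again gives $k(h(p))\iffarr j(p)$. Chaining these yields $k(j(p))\iffarr j(p)$, with $h$ now eliminated. Finally, since $j$ is inflationary, $p\arr j(p)$ is realizable, so monotonicity of $k$ gives $k(p)\arr k(j(p))\arr j(p)$, hence $k\leq_r j$. The symmetric argument (swapping $j$ and $k$, using $h^{-1}$) gives $j\leq_r k$. All realizers are obtained by composing the fixed witnesses for monotonicity, idempotence, and inflation with the trackers of $h$ and $h^{-1}$, so uniformity in $p$ is automatic.
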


\begin{proof}
For the first assertion, we claim that $j\leq_rk$ if and only if the identity map (on the underlying set $X$), ${\rm id}\colon j(X)\to k(X)$, is computable for any multi-represented space $X$.
For the forward direction, if $j\leq_rk$ then some ${\tt a}\in N$ realizes $j(p)\arr k(p)$.
In particular, for any $x\in X$, ${\tt x}\in j(E_X(x))$ (i.e., ${\tt x}$ is a name of $x$ in $j(X)$) implies ${\tt a}\ast{\tt x}\in k(E_X(x))$ (i.e., ${\tt a}\ast{\tt x}$ is a name of $x$ in $k(X)$).
This means that ${\tt a}$ tracks ${\rm id}\colon j(X)\to k(X)$, so ${\rm id}\colon j(X)\to k(X)$ is computable.
Therefore, if $j\equiv_rk$ then $j(X)$ is computably isomorphic to $k(X)$ via the identity map.

For the backward direction, consider a multi-represented space $\hat{\Omega}$, where its underlying set is $\Omega^+$ and an assembly map is $E_{\hat{\Omega}}={\rm id}$.
Then, consider the space $j(\hat{\Omega})$, where note that the set of names of $p\in\Omega^+$ in $j(\hat{\Omega})$ is $j(p)$.
Hence, computability of ${\rm id}\colon j(\hat{\Omega})\to k(\hat{\Omega})$ means that $j(p)\arr k(p)$ is realizable via a single realizer; that is, $j\leq_rk$.

To show the second assertion, we claim that if $j(\hat{\Omega})$ computably embeds into $k(\hat{\Omega})$ then $k\leq_rj$ holds whenever $k$ is a Lawvere-Tierney topology.
Let $h\colon j(\hat{\Omega})\to k(\hat{\Omega})$ be a computable embedding; that is, $p\mapsto h(p)$ and $h(p)\mapsto p$ are computable.
This means that $j(p)\iffarr k(h(p))$ is realizable, where a realizer is independent of $p$.
Applying monotonicity of $k$ to this property, then idempotence of $k$, and then using the above property again,
\[
k(j(p))\iffarr k(k(h(p)))\iffarr k(h(p))\iffarr j(p)
\]
is realizable.
Furthermore, as $j$ is inflationary, i.e., $p\arr j(p)$ is realizable, so applying monotonicity of $k$ to this, and using the above formula, we obtain that $k(p)\arr k(j(p))\iffarr j(p)$ is realizable independent of $p$, and in particular $k\leq_r j$.
Now, for any computable isomorphism $h\colon j(X)\to k(X)$, both $h$ and $h^{-1}$ are computable embeddings, so we get $k\equiv_rj$.
\end{proof}


\begin{question}
Does there exist locally monotone operators $j,k\colon\Omega^+\to\Omega^+$ such that $j(X)$ and $k(X)$ are computably isomorphic for any multi-represented space $X$, but $j\not\equiv_r k$?
\end{question}

\subsection{Universal closure operator}\label{sec:universal-closure-operator-intro}
It is known that there is a correspondence between Lawvere-Tierney topology and {\em universal closure operator} \cite[Sections A4.3 and A4.4]{elephant}.
That is, given a subobject $A\mono X$, one can consider its {\em $j$-closure} ${\sf cl}_j(A)\mono X$.
One can also consider $j$-closedness (i.e., ${\sf cl}_j(A)\equiv A$), $j$-density (i.e., ${\sf cl}_j(A)\equiv X$) and so on.

We will give precise definitions of these notions.
Our target is the category of represented spaces.
No abstract setting is required, and the discussion is entirely elementary.
The key idea is the following:
One may use a formula $\varphi$ to define a subset $A=\{x\in X:\varphi(x)\}$ of a represented space $X$, which entails the notion of witness for $x\in A$.
That is, a witness for $x\in A$ is a witness for the formula $\varphi(x)$ being true, i.e., a realizer for $\varphi(x)$ with respect to a corresponding realizability interpretation.

\begin{definition}
A {\em witnessed subset} $A$ of a represented space $X$ is a represented space such that $A\subseteq X$ and every name of $x\in A$ is a pair $\langle w,p\rangle$ of an $X$-name $p$ of $x$ and some $w\in\tpbf{N}$.
In this case, $w$ is called a {\em witness} for $x\in A$.
\end{definition}

One can see that a subobject of a represented space is nothing more than a witnessed subset.

To explain this, we first introduce the notion of subobject.
In the category of (multi-)represented spaces, a monomorphism is merely an injective computable function.

\begin{definition}
A monomorphism $i\colon A\mono X$ is included in $j\colon B\mono X$ if there exists a morphism $k\colon A\to B$ such that $i=j\circ k$.
If $i$ is included in $j$ and vice versa, we write $i\equiv j$.
A subobject is the $\equiv$-equivalence class of a mono.
\end{definition}

We use ${\sf Sub}(X)$ to denote the set of all subobjects of $X$ ordered by the inclusion relation.
%
In the category of represented spaces, any $\equiv$-equivalence class of a mono contains an inclusion map; that is, its underlying set is a subset $A\subseteq X$ such that the inclusion map $A\embed X$ is computable.
Thereafter, we assume that a subobject is an inclusion map $A\mono X$.
In this case, there is no need to name the monomorphism $A\mono X$ since it is uniquely determined.
Therefore, it is not confusing to write $A\leq B$ and $A\equiv B$ for the inclusion relation and the bi-inclusion relation, respectively.
One can also show the following:

\begin{obs}[see also \cite{Kih24}]\label{obs:subobject-witnessed-subset}
A witnessed subset of $X$ is a subobject of $X$.
Conversely, every subobject of $X$ is equivalent to a witnessed subset of $X$.
\end{obs}

\begin{definition}\label{def:universal-closure-operator-b}
Given an operation $j\colon\Omega\to\Omega$ and an assembly $X$, the {\em $j$-closure} of a subobject $A\mono X$ is a subobject ${\sf cl}_jA\mono X$ defined as follows:
\begin{align*}
E_{{\sf cl}_j(A)}(x)=j(\hat{E}_A(x))\land E_X(x), & & {\sf cl}_j(A)=\{x\in A:E_{{\sf cl}_j(A)}(x)\not=\emptyset\},
\end{align*}
where $\hat{E}_A(x)$ is the set of all $A$-names of $x\in X$.
Note that $\hat{E}_A(x)=\emptyset$ if $x\in X\setminus A$.

A subobject $A\mono X$ is {\em $j$-closed} if its $j$-closure is equivalent to $A\mono X$.
Similarly, a subobject $A\mono X$ is  {\em $j$-dense} if its $j$-closure is equivalent to $X\mono X$.
\end{definition}

If $j$ is a Lawvere-Tierney topology, this notion deserves the name ``closure'', but this name is less appropriate if $j$ does not satisfy sufficient conditions.
If $j$ is a Lawvere-Tierney topology, then ${\sf cl}_j$ is what is known as a universal closure operator.

\begin{definition}\label{def:subobject-universal-closure}
Let $c$ be an assignment of a map $c_X\colon{\sf Sub}(X)\to{\sf Sub}(X)$ to each object $X$.
Then, consider the following properties for any subobject $A,B\mono X$:
\begin{enumerate}
\item ({\sf monotone}) If $A\leq B$ then $c_X(A)\leq c_X(B)$.
\item ({\sf inflationary}) $A\leq c_X(A)$.
\item ({\sf idempotent}) $c_X(c_X(A))\leq c_X(A)$.
\item ({\sf natural}) $c_Y(f^\ast A)\equiv f^\ast (c_X(A))$ for any $f\colon Y\to X$.
\end{enumerate}

Here, the underlying set of the pullback $f^\ast A$ is $\{x\in Y:f(x)\in A\}$, and a name of $x\in f^\ast A$ is the pair of a name of $x\in Y$ and a name of $f(x)\in A$.
Often, $c_X(A)$ is simply written as $c(A)$.
A {\em universal closure operator} is an assignment satisfying all of the above conditions (1)--(4).
\end{definition}

The following is well-known; see e.g.~\cite[Lemma A4.4.2]{elephant}.

\begin{fact}
An operation $j\colon\Omega\to\Omega$ is a Lawvere-Tierney topology if and only if ${\sf cl}_j$ is a universal closure operator.
\end{fact}

As we have already seen, not only Lawvere-Tierney topologies but also (computably) monotone operations are important in the context of oracle computations.
Recall that $j\colon\Omega\to\Omega$ is monotone if $\forall p,q.\,(p\arr q)\arr(j(p)\arr j(q))$ is realizable.
A {\em universal monotone operator} is an assignment satisfying ({\sf monotone}) and ({\sf natural}) in Definition \ref{def:subobject-universal-closure}.

\begin{prop}
An operation $j\colon\Omega\to\Omega$ is monotone if and only if ${\sf cl}_j$ is a universal monotone operator.
\end{prop}

\begin{proof}
In this proof, by an abuse of notation, $\hat{E}_A$ is abbreviated to $E_A$.

($\Rightarrow$)
The condition $A\leq B$ means that $\forall x.[E_A(x)\arr E_B(x)]$ is realizable.
Hence, by monotonicity, $\forall x.[j(E_A(x))\arr j(E_B(x))]$ is realizable, which means ${\sf cl}_j(A)\leq{\sf cl}_j(B)$.
For ${\sf cl}_j(f^\ast A)\leq f^\ast({\sf cl}_j(A))$, since $E_Y(x)\land E_A(f(x))\arr E_A(f(x))$ and $E_Y(x)\arr E_X(f(x))$ are clearly realizable, by computable monotonicity, the following is also realizable:
\begin{align*}
E_{{\sf cl}_j(f^\ast A)}(x)&=E_Y(x)\land j(E_{f^\ast A}(x))\\
&=E_Y(x)\land j(E_Y(x)\land E_A(f(x)))\\
&\arr E_Y(x)\land j(E_A(f(x)))\\
&\iffarr E_Y(x)\land E_X(f(x))\land j(E_A(f(x)))=E_{f^\ast({\sf cl}_j(A))}(x).
\end{align*}

To see $f^\ast({\sf cl}_j(A))\leq{\sf cl}_j(f^\ast A)$, we claim that computable monotonicity implies that $\forall p,q.[j(p)\land q\arr j(p\land q)]$ is realizable.
Independently of $p,q$, some ${\tt i}$ realizes $p\land q\arr p\land q$, so by currying, some ${\tt a}$ realizes $q\arr p\arr p\land q$.
Given ${\tt b}\in q$, ${\tt a}\ast{\tt b}$ realizes $p\arr p\land q$, so some ${\tt u}\ast({\tt a}\ast {\tt b})$ realizes $j(p)\arr j(p\land q)$ by computable monotonicity.
Hence, $\lambda {\tt b}.{\tt u}\ast({\tt a}\ast {\tt b})$ realizes $q\arr j(p)\arr j(p\land q)$.
By uncurrying, some ${\tt c}$ realizes $j(p)\land q\arr j(p\land q)$.
Then, the following is realizable:
\begin{align*}
E_{f^\ast({\sf cl}_j(A))}(x)
&\iffarr j(E_A(f(x)))\land E_Y(x)\\
&\arr j(E_A(f(x))\land E_Y(x))\land E_Y(x)\\
&= j(E_{f^\ast A}(x))\land E_Y(x)=E_{{\sf cl}_j(f^\ast A)}(x).
\end{align*}

($\Leftarrow$)
Let the underlying sets of $P$ and $Q$ be $\Omega\times\Omega$ and define $E_P(p,q)=p$ and $E_Q(p,q)=q$.
One can consider $P,Q$ as subobjects of the trivial assembly $\Omega\times\Omega$.
Let $\iota_Q\colon Q\mono\Omega\times\Omega$ be the inclusion map.
By (4), we have ${\sf cl}_j(\iota_Q^\ast P)\equiv\iota_Q^\ast({\sf cl}_j(P))$, so the following is realizable:
\begin{align*}
&j(E_{\iota_Q^\ast P}(p,q))=j(E_Q(p,q)\land E_P(p,q))=j(q\land p)\\
\iffarr\ & E_{\iota_Q^\ast({\sf cl}_j(P))}(p,q)=E_Q(p,q)\land E_{{\sf cl}_j(P)}(p,q)=q\land j(E_P(p,q))=q\land j(p).
\end{align*}

Hence, $\forall p,q.[j(p)\land q\iffarr j(p\land q)]$ is realizable.
In particular, independently of $p,q$, some ${\tt a}$ realizes $(j(p)\land (p\arr q))\arr j(p\land (p\arr q))$.
By currying, some ${\tt b}$ realizes $j(p)$
Let the underlying set of $R$ be $\Omega\times\Omega$ and define $E_R(p,q)=p\land (p\arr q)$.
Again, $R$ is a subobject of $\Omega\times\Omega$, and $R\leq B$ holds since $\forall p,q.[(p\land (p\arr q))\arr q]$ is realizable.
By monotonicity of ${\sf cl}_j$, we get ${\sf cl}_j(R)\leq {\sf cl}_j(B)$, which means that some ${\tt b}$ realizes $\forall p,q.[j(p\land (p\arr q))\arr j(q)]$.
Composing ${\tt a}$ and ${\tt b}$, we get some realizer ${\tt c}$ of $(j(p)\land (p\arr q))\arr j(q)$.
By currying, some ${\tt d}$ realizes $(p\arr q)\arr (j(p)\arr j(q))$.
\end{proof}

Conversely, a monotone operation on $\Omega$ can always be reconstructed from a universal monotone operator.

\begin{definition}
Let $c$ be a universal monotone operator.
Consider the trivial assembly $\Omega$, that is, anything is a name of $p\in\Omega$.
This has a subobject $\hat{\Omega}\mono\Omega$, where the underlying set of $\hat{\Omega}$ is $\Omega^+=\Omega\setminus\{\emptyset\}$ and its assembly map is given by $E_{\hat{\Omega}}(p)=p$.
Then the operation $j_c\colon\Omega\to\Omega$ is defined by $j_c(p)=E_{c(\hat{\Omega})}(p)$.
\end{definition}

\begin{obs}
For $j,k\colon\Omega\to\Omega$, if $j\leq_rk$ then ${\sf cl}_j(A)\leq{\sf cl}_k(A)$ for any $A\mono X$.
For universal monotone operators $c$ and $d$, if $c(A)\leq d(A)$ for any $A\mono X$ then $j_c\leq_rj_d$.
\end{obs}

\begin{proof}
If $j\leq_rk$ then $j(E_A(x))\arr k(E_A(x))$ is realizable, which means ${\sf cl}_j(A)\leq{\sf cl}_k(A)$.
For $\hat{\Omega}\mono\Omega$, if $c(\hat{\Omega})\leq d(\hat{\Omega})$ then $E_{c(\hat{\Omega})}(p)\arr E_{d(\hat{\Omega})}(p)$ is realizable.
By definition, this means $j_c\leq_rj_d$.
\end{proof}

\begin{prop}
For any $j\colon\Omega\to\Omega$, $j_{{\sf cl}_j}\equiv_rj$ holds.
For any universal monotone operator $c$, ${\sf cl}_{j_c}(A)\equiv c(A)$ holds for any $A\mono X$.
\end{prop}

\begin{proof}
By definition, we get $j_{{\sf cl}_j}(p)=E_{{\sf cl}_j(\hat{\Omega})}(p)=j(E_{\hat{\Omega}}(p))\land E_\Omega(p)=j(p)\land\tpbf{N}$.
Clearly, $j(p)\land\tpbf{N}\iffarr j(p)$ is realizable, so $j_{{\sf cl}_j}\equiv_rj$ holds.

For each $A\mono X$,
$\hat{E}_A\colon X\to\Omega$ is computable if $\Omega$ is thought of as a trivial assembly.
By naturality, we have $c_X(\hat{E}_A^\ast\hat{\Omega})\equiv \hat{E}_A^\ast(c_\Omega(\hat{\Omega}))$.
First, a name of $x\in \hat{E}_A^\ast\hat{\Omega}\mono X$ is the pair of a name of $x\in X$ and a name of $\hat{E}_A(x)\in\hat{\Omega}$, i.e., an element of $\hat{E}_A(x)$.
Note that $x\in \hat{E}_A^\ast\hat{\Omega}$ implies that $\hat{E}_A(x)\not=\emptyset$, so $\hat{E}_A(x)=E_A(x)$.
Thus, we get $\hat{E}_A^\ast\hat{\Omega}\equiv A$; therefore $c_X(\hat{E}_A^\ast\hat{\Omega})\equiv c_X(A)$ by monotonicity.
Next, a name of $x\in \hat{E}_A^\ast(c_\Omega(\hat{\Omega}))\mono X$ is the pair of a name of $x\in X$ and a name of $\hat{E}_A(x)\in c_\Omega(\hat{\Omega})$, which is an element of $E_{c_\Omega(\hat{\Omega})}(\hat{E}_A(x))=j_c(\hat{E}_A(x))$.
This is also a ${\sf cl}_{j_c}(A)$-name of $x$, so we obtain ${\sf cl}_{j_c}(A)=\hat{E}_A^\ast(c_\Omega(\hat{\Omega}))$.
Consequently, ${\sf cl}_{j_c}(A)\equiv c(A)$.
\end{proof}

As we have already seen, a computably transparent map corresponds to a monotone operator on $\Omega$, so we expect that a jump of representation can be regarded as an application of a universal monotone operator.
It should be noted, however, that a jump of representation is for represented spaces, i.e., objects, while an application of a universal monotone operator is for subobjects.
To bridge this gap, it is necessary to consider the trivial assembly $\nabla X$ for a set $X$.
Here, the underlying set of $\nabla X$ is $X$, and anything is a name of $x\in\nabla X$.
Any (multi-)represented space $X$ can be thought of as a subobject $X\mono\nabla X$ since the identity map $X\to\nabla X$ is obviously computable.

\begin{obs}
Let $X$ be a multi-represented space and  $j\colon\Omega^+\to\Omega^+$ be a monotone operation.
Then the $j$-relativization $j(X)$ in the sense of Definition \ref{def:mult-represented-monad-trans} is equal to ${\sf cl}_j(X)\mono \nabla X$.
\end{obs}

\subsection{Applicative morphism}\label{sec:applicative-morphism}

Next, let us discuss the relationship between computably transparent maps and applicative morphisms (PCA-morphisms).
The relationship between these notions has been suggested by Thomas Streicher (in private communication), and the goal of this section is to provide a rigorous analysis of this relationship.

\medskip
\noindent
{\it Partial combinatory algebra:}
As one of the oldest Turing-complete computational models, Sch\"onfinkel's {\em combinatory logic} is well known (especially in the context of lambda calculus).
An algebraic generalization of this model, called a partial combinatory algebra (abbreviated as PCA), has been studied in depth.
As a benefit of such a generalization, a PCA also encompasses, in a rather broad sense, notions such as computation relative to an oracle, infinitary computation, etc.
A PCA is also used as the basis for the construction of a type of topos called a realizability topos \cite{vOBook}.

\begin{definition}[see e.g.~\cite{vOBook}]\label{def:partial-combinatory-algebra-int}
A partial magma is a pair $(M,\ast)$ of a set $M$ and a partial binary operation $\ast$ on $M$.
We often write $xy$ instead of $x\ast y$, and as usual, we consider $\ast$ as a left-associative operation, that is, $xyz$ stands for $(xy)z$.
A partial magma is {\em combinatory complete} if, for any term $t(x_1,x_2,\dots,x_n)$, there is $a_t\in M$ such that $a_tx_1x_2\dots x_{n-1}\downarrow$ and $a_tx_1x_2\dots x_n\simeq t(x_1,x_2,\dots,x_n)$.
By abusing the notation, we use $\lambda x_1x_2\dots x_n.t(x_1,x_2,\dots,x_n)$ to denote such an $a_t$.
The symbols ${\sf k}$ and ${\sf s}$ are used to denote $\lambda xy.x$ and $\lambda xyz.xz(yz)\in M$, respectively.
A combinatory complete partial magma is called a {\em partial combinatory algebra} (abbreviated as {\em PCA}).
A {\em relative PCA} is a triple $(\tplf{N},\tpbf{N},\ast)$ such that $\tplf{N}\subseteq\tpbf{N}$, both $(\tpbf{N},\ast)$ and $(\tplf{N},\ast\upto \tplf{N})$ are PCAs, and share combinators ${\sf s}$ and ${\sf k}$.
\end{definition}

For basics on a relative PCA, we refer the reader to van Oosten \cite[Sections 2.6.9 and 4.5]{vOBook}.
Typical examples are computations on natural numbers and on streams, which were treated in Section \ref{sec:realizability-PCA}.
In descriptive set theory, the idea of a relative PCA is ubiquitous, which usually occurs as a pair of {\em lightface} and {\em boldface} pointclasses.
Hence, one might say that a large number of nontrivial, deep, examples of relative PCAs have been (implicitly) studied in descriptive set theory; see Section \ref{sec:pointclass-partial-combinatory-algebra}.

\medskip
\noindent
{\it Applicative morphism:}
The basic idea behind applicative morphism is the concept of structure-compatible coding.
Coding here can be numbering, represented space, assembly, or whatever, but if an object to be coded has some mathematical structure, such as an algebraic structure or topological structure, then coding should be compatible with that structure.
For example, given an algebra $A$ with a binary operation $\ast$, a coding of $A$ should be compatible with $\ast$, which means that the coding should be such that $\ast\colon A^2\to A$ is computable.

The algebraic structure considered here is a (relative) PCA.
An applicative morphism is a compatible coding of PCA $\mathcal{A}=(\tpbf{A},\ast_A)$ by PCA $\mathcal{B}=(\tpbf{B},\ast_B)$ in the above sense; that is, it is one such multi-coding of $\mathcal{A}$ by $\mathcal{B}$ (i.e., a multi-representation or an assembly) that has the property of allowing the application $\ast_A$ to be simulated in $\mathcal B$.

\begin{definition}[Longley \cite{LoPhD95}, see also \cite{vOBook}]\label{def:longley-app-morphi}
Let $\mathcal{A}=(\tpbf{A},\ast_A)$ and $\mathcal{B}=(\tpbf{B},\ast_B)$ be PCAs.
An {\em applicative morphism} from $\mathcal A$ to $\mathcal B$ is a map $\gamma$ such that $(\tpbf{A},\gamma)$ is an assembly over $\mathcal{B}$ for which $\ast_A\colon (\tpbf{A},\gamma)^2\to(\tpbf{A},\gamma)$ is computable.
\end{definition}

To be more specific, $\gamma$ is an applicative morphism if and only if there exists ${\tt ev}\in \tplf{B}$ such that, for any ${\tt f},{\tt x}\in\tpbf{A}$ and any ${\tt F},{\tt X}\in\tpbf{B}$, the conditions ${\tt F}\in\gamma({\tt f})$ and ${\tt X}\in\gamma({\tt x})$ and ${\tt f}\ast_A{\tt x}\downarrow$ imply ${\tt ev}\ast_B\langle{\tt F},{\tt X}\rangle\downarrow$ and ${\tt ev}\ast_B\langle{\tt F},{\tt X}\rangle\in\gamma({\tt f}\ast_A{\tt x})$.
The latter condition is essentially the same as the usual definition of applicative morphism (see \cite{vOBook}).

To be precise, Definition \ref{def:longley-app-morphi} is for (non-relative) PCAs, not for relative PCAs.
The definition of an applicative morphism for relative PCAs is given by Zoethout \cite{ZoePhD}, for example.
If we are dealing with relative PCAs, we assume that any computable element ${\tt a}$ in $\mathcal{A}$ (i.e., ${\tt a}\in \tplf{A}$) has a computable $\gamma$-name in $\tplf{B}$.
In particular, both ${\tt s},{\tt k}\in\tplf{A}$ have computable $\gamma$-names; that is, both $\gamma({\tt s})\cap\tplf{B}$ and $\gamma({\tt k})\cap\tplf{B}$ are nonempty.

The notion of applicative morphism is usually used as a tool to convert coding by one PCA to coding by another PCA.
To transform an argument on $\mathcal A$ into an argument on $\mathcal B$, all we need is a (multi-)map from $\mathcal A$ to $\mathcal B$, so suppose that an applicative morphism $\gamma\colon\tpbf{A}\to\mathcal{P}^+(\tpbf{B})$ is given to us, where $\mathcal{P}^+(\tpbf{B})=\mathcal{P}(\tpbf{B})\setminus\{\emptyset\}$.
Then, any multi-represented space over $\mathcal{A}$ can be thought of as a multi-represented space over $\mathcal{B}$.
To be more precise:

\begin{definition}
Given $E\colon X\to\mathcal{P}^+(\tpbf{A})$, define $\gamma\circ E\colon X\to\mathcal{P}^+(\tpbf{B})$ by $\gamma\circ E(x)=\bigcup\{\gamma(a):a\in E(x)\}$.
For an assembly $X=(X,E)$, define $\gamma(X)=(X,\gamma\circ E)$.
\end{definition}

Note that in \cite{vOBook}, the symbol $\gamma^\ast(X,E)$ is used instead.
This is considered a special case of Definition \ref{def:mult-represented-monad-trans}.
This construction suggests that an applicative morphism yields an $S$-functor (i.e., a set-preserving functor) from ${\bf MultRep}_\mathcal{A}$ to ${\bf MultRep}_\mathcal{B}$.
Indeed, an applicative morphism is known to correspond to a {\em regular $S$-functor} (see \cite[Theorem 1.6.2]{vOBook}).

Applicative morphism has another equivalent definition.
We note that $\tpbf{A}$ itself can be viewed as a represented space over $\mathcal{A}$ by the identity representation (or equivalently $E\colon a\mapsto\{a\}$).
Also note that we have the represented function space $[\subseteq\tpbf{A}\to\tpbf{A}]$, where the underlying set is the collection of all partial continuous (i.e., boldface realizable) functions, and $a\in\tpbf{A}$ is a name of $f\pcolon\tpbf{A}\to\tpbf{A}$ if and only if $f(x)=a\ast_A x$ for any $x\in{\rm dom}(f)$; see also Example \ref{exa:coding-continuous functions}.

\begin{obs}
An applicative morphism from $\mathcal A$ to $\mathcal B$ is exactly a map $\gamma\colon\tpbf{A}\to\mathcal{P}^+(\tpbf{B})$ such that the evaluation map ${\rm eval}_\gamma\pcolon \gamma([\subseteq \tpbf{A}\to\tpbf{A}])\times\gamma(\tpbf{A})\to\gamma(\tpbf{A})$ is computable, where ${\rm eval}_\gamma(f,x)=f(x)$ for any $x\in{\rm dom}(f)$.
\end{obs}

As mentioned in Section \ref{sec:assembly}, a transparent map always yields an $S$-(endo)functor.
On the other hand, an applicative morphism corresponds to a regular $S$-functor.
The question then arises, what is the relationship between a transparent map and an applicative morphism?

\begin{obs}
A multimap $U\pcolon \tpbf{N}\tto \tpbf{N}$ is computably transparent if and only if the evaluation map ${\rm eval}_U\pcolon[\subseteq \tpbf{N}\to\tpbf{N}]\times U(\tpbf{N})\to U(\tpbf{N})$ is computable if and only if the partial application map $\ast_A\pcolon \tpbf{N}\times U(\tpbf{N})\to U(\tpbf{N})$ is computable.
\end{obs}

Here, recall that the $U$-relativization of a (multi-)represented space has been introduced below Observation \ref{obs:relative-rep-sp-equiv}, and $\tpbf{N}$ can be thought of as a represented space via the identity representation.

\begin{proof}
By definition, $U$ is computably transparent if and only if given a continuous function $f$ one can effectively find $F$ such that $U\circ F$ refines $f\circ U$.
The latter means that if ${\bf x}\in\tpbf{N}$ is a $U$-name of $x\in\tpbf{N}$ then $F({\bf x})$ is a $U$-name of $f(x)$.
Hence, if $U$ is computably transparent, then given a name ${\tt f}$ of $f$ and a $U$-name ${\tt x}$ of $x$, one one can effectively find a $U$-name $F({\bf x})$ of $f(x)$; that is, the evaluation map is computable.
Conversely, if the evaluation map is computable, then given a name ${\tt f}$ of $f$ and a $U$-name ${\tt x}$ of $x$, one one can effectively find a $U$-name $u({\tt f},{\tt x})$ of $f(x)$, which means that $\lambda x.U\circ u({\tt f},x)$ refines $f\circ U$.
Thus, $U$ is computably transparent via $\lambda fx.u(f,x)$.
The equivalence of the second and third conditions is obvious.
\end{proof}

Given a multimap $U\pcolon \tpbf{N}\tto \tpbf{N}$, consider $\gamma_U\colon\tpbf{N}\to\mathcal{P}(\tpbf{N})$ defined by ${\tt a}\mapsto j_U(\{{\tt a}\})$; that is, $\gamma_U({\tt a})=\{{\tt x}\in{\rm dom}(U):U({\tt x})\subseteq\{{\tt a}\}\}$.
As mentioned below Observation \ref{obs:mono-pres-nonemp}, if $U$ is computably transparent, then $U$ is surjective, so $\gamma_U({\tt a})$ is nonempty.
By Observation \ref{obs:relative-rep-sp-equiv}, one can see that $\gamma_U(X)$ is equivalent to $U(X)$; that is, ${\tt a}$ is a $\gamma_U(X)$-name of $x$ if and only if ${\tt a}$ is a $U(X)$-name of $x$.

The condition of $U$ being computably transparent is similar to
$\gamma_U$ is an applicative morphism from $\tpbf{N}$ to $\tpbf{N}$,
but slightly different.
We say that a computably transparent map $U$ is {\em binary parallelizable} if there exists a computable function $p$ such that $U(p(x,y))\subseteq U(x)\times U(y)$ for any $x,y\in{\rm dom}(U)$.
Note that such a map $U$ preserves binary product, i.e., $U(X)\times U(Y)\simeq U(X\times Y)$, since there is an algorithm which, given $U$-names of $x\in X$ and $y\in Y$, returns a $U$-name of $(x,y)\in X\times Y$.
Note also that if $U$ is binary parallelizable then given any $f\pcolon\tpbf{N}^m\to\tpbf{N}$ one can effectively find $F\pcolon\tpbf{N}^m\to\tpbf{N}$ such that $f(U(x_1),\dots,U(x_m))=U(F(x_1,\dots,x_m))$.

\begin{obs}
If a computably transparent map $U\pcolon \tpbf{N}\tto \tpbf{N}$ is binary parallelizable, then 
$\gamma_U$ is an applicative morphism from $\tpbf{N}$ to $\tpbf{N}$.
\end{obs}

\begin{proof}
This is because, given $U$-names of $f$ and $x$, one can effectively find an $U$-name of the pair $(f,x)$.
As $\tpbf{N}$ is a PCA, we know a code of an evaluation map, ${\rm eval}$, on $\tpbf{N}$.
Then, ${\rm eval}_U({\rm eval},(f,x))={\rm eval}(f,x)=f(x)={\rm eval}_\gamma(f,x)$, where $\gamma=\gamma_U$.
As ${\rm eval}_U$ and ${\rm eval}$ are computable, so is ${\rm eval}_\gamma$.
Next, if ${\tt a}\in \tplf{N}$ then we have $U({\tt u}(\lambda x.{\tt a}){\tt b})\subseteq (\lambda x.{\tt a})U({\tt b})\subseteq \{{\tt a}\}$; that is, ${\tt a}$ has a $U$-name ${\tt u}(\lambda x.{\tt a}){\tt b}$ in $\tplf{N}$.
\end{proof}

However, there are many types of oracle computation that do not preserve binary product (where note that $(j_U(p)\land q)\arr j_U(p\land q)$ is always realizable by internal monotonicity, but $(j_U(p)\land j_U(q))\arr j_U(p\land q)$ is not always so).
These include the type of computation where multiple oracles are provided, but only one oracle can be accessed during a single computation process:

\begin{example}\label{exa:comp-transparent-not-parallel}
Given $h_0,h_1\pcolon\tpbf{N}\to\tpbf{N}$, define $U(i,e,x)=\varphi_e(h_i(x))$.
Then, clearly $\varphi_d\circ U(i,e,x)=\varphi_{d}\circ\varphi_e(h_i(x))=U(i,b(d,e),x)$, where $\varphi_{b(d,e)}=\varphi_{d}\circ\varphi_e$, so $U$ is computably transparent.
However, if $h_0$ and $h_1$ are sufficiently different, $U$ does not preserve binary product:
Given an $h_0$-name of $f$ and an $h_1$-name of $x$ one may find an $h_0\oplus h_1$-name of $f(x)$, but in general, $h_0(p)\oplus h_1(p)\leq_Th_i(p)$ does not hold for $i\in\{0,1\}$.
\end{example}

On the other hand, it is known that an applicative morphism $\gamma$ preserves all finite limits (see \cite[Proposition 2.2.2]{LoPhD95} or \cite[Theorem 1.6.2]{vOBook}); in particular:

\begin{obs}\label{obs:appl-morphism-binary-product}
An applicative morphism $\gamma$ from $\mathcal A$ to $\mathcal B$ preserves binary product; that is, $\gamma(X)\times\gamma(Y)\simeq\gamma(X\times Y)$ for any $\mathcal A$-multirepresented spaces $X$ and $Y$.
\end{obs}

\begin{proof}
Let ${\tt p}$ be a code of a paring map in $\mathcal A$.
This can be made by combining ${\tt s}$ and ${\tt k}$, so it has a computable $\gamma$-name.
As $\gamma$ is applicative morphism, given $\gamma$-names ${\tt p}'$ and ${\tt x}'$ of ${\tt p}$ and ${\tt x}\in A$, one can effectively find a $\gamma$-name ${\tt q}$ of ${\tt p}\ast_A{\tt x}$.
Again by the definition of applicative morphism, given a $\gamma$-name ${\tt y}$, one can effectively find a $\gamma$-name of ${\tt p}\ast_A{\tt x}\ast_A{\tt y}$ using the information of ${\tt q}$ and ${\tt y}$.
This means that a $\gamma$-name of the pair ${\tt p}\ast_A{\tt x}\ast_A{\tt y}$ is computable from the pair of $\gamma$-names of ${\tt x}$ and ${\tt y}$ in a uniform manner.
It is straightforward to see that this ensures $\gamma$ to preserve binary product.
\end{proof}

Hence, a computably transparent map is not necessarily an applicative morphism.
This may be seen as an answer to Streicher's suggestion.

\begin{cor}
There exists a computably transparent map $U\pcolon\tpbf{N}\tto\tpbf{N}$ such that $\gamma_U$ is not an applicative morphism from $\tpbf{N}$ to $\tpbf{N}$.
\end{cor}

\begin{proof}
By Example \ref{exa:comp-transparent-not-parallel} and Observation \ref{obs:appl-morphism-binary-product}.
\end{proof}

Although they are not precisely corresponding notions, there are some known results that suggest a relationship between applicative morphisms and oracles.

The concept of applicative morphism allows us to explore the relationships among various PCAs, rather than just discussing one PCA.
An attempt to capture the relationship among several PCAs in the context of oracle computation was made, for example, by Longley \cite{LoPhD95}, van Oosten \cite{vO06}, and Golov-Terwijn \cite{GoTe22}.
For example, Longley \cite{LoPhD95} showed that $X$ is Turing reducible to $Y$ if and only if a decidable (projective and modest) applicative morphism from $K_1^X$ to $K_1^Y$ exists if and only if an applicative inclusion (\cite[Definition 2.5.2]{LoPhD95}) from $K_1^Y$ into $K_1^X$ exists.
As an applicative inclusion induces a geometric inclusion between the realizability toposes, this corresponds to the embedding of the Turing degrees into the Lawvere-Tierney topologies.
Not only this, Longley \cite[Chapter 3]{LoPhD95} also examines the relationships among various other PCAs using the notion of an applicative morphism.


As above, certain applicative morphisms can be associated with oracle-relativizations.
The important point here is that applicative morphisms can be used to compare completely different kinds of PCAs.
Therefore, if we could find a way to always understand a certain kind of applicative morphism (or its variant) as an oracle-relativization, we would be able to greatly expand the notion of oracle-relativization.
As a first step toward this, let us present the question of whether it is possible to extend the notion of a computably transparent map, an abstraction of oracle computability, to one between arbitrary PCAs.

\begin{question}
Is there a natural notion of a morphism between relative PCAs that is identical to a computably transparent map when restricted to endomorphisms?
\end{question}


\section{Oracles on multi-represented spaces}\label{sec:mmmap}

\subsection{Predicate and reducibility}\label{sec:secret-input}

Now, our realizability world follows $\Omega$-valued logic, and as a result, each predicate turns into an $\Omega$-valued map.
Formally, a {\em realizability predicate} (or simply a {\em predicate}) on a multi-represented space $X$ is a map $\varphi\colon X\to\Omega$.
Note that, since $\Omega=\mathcal{P}(\tpbf{N})$, a predicate can also be viewed as a multimap $\varphi\colon X\tto\tpbf{N}$ on a multi-represented domain.
Bauer \cite{Bau21} introduced a reducibility notion on predicates, which can be viewed as Weihrauch reducibility on multi-represented spaces.

\begin{remark}[Predicates as problems]
To explain the background of this terminology, in general, a predicate $\varphi(x^X)$ is interpreted as a subobject $\eval{x^X:\varphi(x)}\mono X$.
By Observation \ref{obs:subobject-witnessed-subset}, a subobject of $X$ is just a witnessed subset.
For each $x\in X$, let $\chi_A(x)$ be the set of all witnesses for $x\in A$, where $\chi_A(x)=\emptyset$ for $x\in X\setminus A$.
This is a map $\chi_A\colon X\to\Omega$, and hence a multimap $X\tto\tpbf{N}$.
Remembering that a multimap is just a search problem, this means that a predicate $\varphi(x^X)$ is regarded as a witness-search problem.

This may be viewed as a rigorous mathematical justification of Kolmogorov's ``{\em propositions as problems}'' interpretation.
\end{remark}

So far we have only discussed Weihrauch reducibility for multimaps on $\tpbf{N}$.
However, in computable analysis, the notion of Weihrauch reducibility is usually defined for functions on representated spaces \cite{CCA}:
For represented spaces $X,Y,X',Y'$, a multimap $f\pcolon X\tto Y$ is {\em Weihrauch reducible to $g\pcolon X'\tto Y'$} if there exist partial computable functions $\varphi_-,\varphi_+$ on $\tpbf{N}$ such that
\begin{itemize}
\item the inner reduction $\varphi_-$ transforms a given name ${\tt x}$ of any instance $x\in{\rm dom}(f)$ into a name $\varphi_-({\tt x})$ of some instance $\tilde{x}\in{\rm dom}(g)$,
\item and the outer reduction $\varphi_+$ transforms a given name ${\tt y}$ of any solution $y\in g(\tilde{x})$ into a name $\varphi_+({\tt x},{\tt y})$ of some solution $\tilde{y}\in f(x)$.
\end{itemize}

Since the definition is a bit complicated, let us rewrite the above definition using symbols.
For a (multi-)represented space $X$, we write ${\tt x}\vdash_Xx$ if ${\tt x}$ is a name of $x$.
Then:
\begin{definition}
Let $X,Y,X',Y'$ be multi-represented spaces.
For $f\pcolon X\tto Y$ and $g\pcolon X'\tto Y'$, we say that $f$ is {\em Weihrauch reducible to $g$} if there exist partial computable functions $\varphi_-,\varphi_+$ on $\tpbf{N}$ such that
\[
\begin{cases}
(\forall {\tt x},x)\;\big[{\tt x}\vdash_Xx\in{\rm dom}(f)\implies(\exists\tilde{x})\;\big(\varphi_-({\tt x})\vdash_{X'}\tilde{x}\in{\rm dom}(g),\\
(\forall {\tt y},y)\;\;[{\tt y}\vdash_{Y'}y\in g(\tilde{x})\implies(\exists\tilde{y})\;\varphi_+({\tt x},{\tt y})\vdash_Y\tilde{y}\in f(x)]\big)\big].
\end{cases}
\]
\end{definition}

If $X$ and $Y$ are represented spaces, $\tilde{x}$ and $\tilde{y}$ are uniquely determined.
However, in the case of multi-represented spaces\footnote{Weihrauch reducibility on multi-represented spaces has also recently been introduced independently by Matthias Schr\"oder. However, his definition (presented in CCA 2022) seems to differ from ours.}, these are not uniquely determined, so we need to be careful about the range enclosed by the existential quantifications.
Specifically, $\tilde{x}$ in the second line is assumed to be bounded by the existential quantifier in the first line.
In other words, this reduction also involves a map $\str\colon ({\tt x},x)\mapsto \tilde{x}$.


\begin{remark}[Kihara's definition \cite{Kih21}]
Note that the definition of Weihrauch reducibility on multi-represented spaces can also be viewed as an imperfect information three-player game, as introduced in \cite{Kih21}:
\[
\begin{array}{|rccc|}\hline 
	& \Me & \Ar & \Ni \\[0.5em]
\texttt{1}\colon	& {\tt x}\vdash_X {x}\in{\rm dom}(f)	& & \\
\texttt{2}\colon	&				& {\tilde{\tt x}} \vdash_{X'} & {\tilde{x}}\in{\rm dom}(g)	\\
\texttt{3}\colon & {\tt y}\vdash_{Y'} {y}\in g({\tilde{x}})	& & \\	 
\texttt{4}\colon & 		& {\tilde{\tt y}}  \vdash_{Y} &{\tilde{y}}\in f({x}) \\[0.5em] \hline
\end{array}
\]

Here, ${\tt x},\tilde{\tt x},{\tt y},\tilde{\tt y}$ are public moves and open to all players.
On the other hand, $x,\tilde{x},y,\tilde{y}$ are secret moves, visible only to \Me and \Nim.
Also, computability is imposed only on \Art's moves; that is, $\varphi_-\colon{\tt x}\mapsto\tilde{\tt x}$ and $\varphi_+\colon({\tt x},{\tt y})\mapsto\tilde{\tt y}$ are computable.
Then $f\leq_Wg$ if \Art-\Ni have a winning strategy for the above game.
For the details, see Definition \ref{def:game-LT-reducibility} and also \cite{Kih21}.
\end{remark}

\begin{remark}[Bauer's defintion \cite{Bau21}]
As noted by Bauer \cite{Bau21}, it is the realizability interpretation of the statement $\forall x\in{\rm dom}(f)\exists\tilde{x}\in{\rm dom}(g).\;g(\tilde{x})\arr f(x)$.
\end{remark}

In this way, the definition of Weihrauch reducibility has been extended.
Of course, when broadening the framework, it is obligatory to provide a number of concrete examples that could not be dealt with in the previous framework.
Some basic examples are given below.

\begin{example}\label{exa:double-negation-space1}
The {\em double negation representation} of a set $X$ is a multi-representation of $X$ such that any ${\tt a}\in\tpbf{N}$ is a name of any element of $X$.
The double negation space $\neg\neg X$ is the set $X$ equipped with the double negation representation; that is, $p\vdash_{\neg\neg X}x$ always holds whenever $p\in\tpbf{N}$ and $x\in X$.
Note that $\neg\neg X$ is usually denoted as $\nabla X$ in realizability theory; see e.g.~\cite{vOBook}.

The {\em double negation elimination on the natural numbers}, ${\sf DNE}_\N\colon\neg\neg\N\to\N$ defined by ${\sf DNE}_\N(n)=n$.
If $(\tplf{N},\tpbf{N},\ast)$ is the Kleene-Vesley algebra:

\begin{claim}
A partial multimap $f\pcolon\tpbf{N}\tto\tpbf{N}$ is Weihrauch reducible to ${\sf DNE}_\N$ if and only if $f$ is non-uniformly computable; that is, for any $x\in{\rm dom}(f)$ there exists $y\leq_Tx$ such that $y\in f(x)$.
\end{claim}

\begin{proof}
($\Rightarrow$)
Since $\tpbf{N}=\N^\N$ and a name of $x\in\N^\N$ is $x$ itself, we have $y=\varphi_+(e,x)$ for some $e$, but since $\varphi_+$ is computable and $e\in\N$, we get $y\leq_Tx$.

($\Leftarrow$)
By the assumption, $y=\varphi_{e(x)}(x)$ for some $e(x)$.
For a public reduction, $\varphi_-(x)$ can be any value, and $\varphi_+(e,x)=\varphi_e(x)$.
By the definition of $\neg\neg\N$, we have $\varphi_-(x)\vdash_{\neg\neg\N}e$ for any $e$.
Therefore, a secret reduction can be given by $\str(x)=e_x$.
\end{proof}
\end{example}

\begin{example}\label{exa:double-negation-space2}
Let ${\rm Meas}_>0$ be the set of all subsets of $2^\N$ of positive measure, and consider the nonuniform positive measure choice ${\sf PC}\colon \neg\neg {\rm Meas}_{>0}\tto 2^\N$ defined by ${\sf PC}(A)=A$; that is, any element $r\in A$ is a solution to ${\sf PC}(A)$.
If $(\tplf{N},\tpbf{N},\ast)$ is the Kleene-Vesley algebra, then a partial multimap $f\pcolon\tpbf{N}\tto\tpbf{N}$ is Weihrauch reducible to ${\sf PC}$ if and only if $f$ is computable with random advice in the sense of Brattka-Pauly \cite{BrPa10}.
\end{example}

To analyze the structure of Weihrauch degrees on multi-represented spaces, it is sufficient to consider only Weihrauch degrees of predicates, where we consider $\tpbf{N}$ to be the space represented by the identity map.

\begin{obs}\label{obs:W-multi-predicate}
Every multimap on multi-represented spaces is Weihrauch equivalent to a predicate.
\end{obs}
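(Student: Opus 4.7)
The plan is to associate to any multimap $f\colon X\tto Y$ a canonical predicate that records names of solutions of $f$, and then to verify that the two identity reductions witness a Weihrauch equivalence. Concretely, given a multi-represented codomain $(Y,E_Y)$, I will define the predicate $\tilde f\colon X\to\Omega$ by
\[
\tilde f(x)=\bigcup_{y\in f(x)}E_Y(y)\qquad\text{for }x\in{\rm dom}(f),
\]
and ${\rm dom}(\tilde f)={\rm dom}(f)$. Viewing $\tilde f$ as a multimap $X\tto\tpbf{N}$, where $\tpbf{N}$ carries the identity representation ${\tt y}\vdash_{\tpbf{N}}{\tt y}$, the intended reading is that a ``solution'' of $\tilde f$ at $x$ is literally a $Y$-name of some genuine solution of $f(x)$.

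For $f\leq_W\tilde f$, I take both $\varphi_-$ and $\varphi_+$ to be (projections to) the identity. Given any name ${\tt x}\vdash_Xx\in{\rm dom}(f)={\rm dom}(\tilde f)$, the inner reduction $\varphi_-({\tt x})={\tt x}$ is still an $X$-name of $x\in{\rm dom}(\tilde f)$. Any ``name'' ${\tt y}\vdash_{\tpbf{N}}{\tt y}$ with ${\tt y}\in\tilde f(x)$ is by definition of $\tilde f$ a $Y$-name of some $y\in f(x)$, so $\varphi_+({\tt x},{\tt y})={\tt y}$ witnesses ${\tt y}\vdash_Yy\in f(x)$ for an appropriately chosen $y$. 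The existential witness required by the Weihrauch clause is precisely this $y$, and the quantifier ordering in the multi-represented definition is respected because the choice of $\tilde x$ in the inner reduction ($=x$) does not depend on ${\tt y}$.

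For $\tilde f\leq_W f$, I again take both reductions to be the identity. Given ${\tt x}\vdash_Xx\in{\rm dom}(\tilde f)={\rm dom}(f)$, the inner reduction keeps ${\tt x}$, which is a name of the same $x\in{\rm dom}(f)$. Given ${\tt y}\vdash_Yy\in f(x)$, the element ${\tt y}\in E_Y(y)\subseteq\tilde f(x)$ is in particular a member of $\tilde f(x)$, and under the identity representation on $\tpbf{N}$ it is its own name, so $\varphi_+({\tt x},{\tt y})={\tt y}$ is a valid $\tpbf{N}$-name of some ${\tt y}\in\tilde f(x)$.

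The only mildly delicate point, and what I would expect a careful reader to scrutinize, is the handling of the non-uniqueness of the existential witnesses $\tilde x,\tilde y$ in the multi-represented version of $\leq_W$: one must check that for each ${\tt x}$ one can name a single $\tilde x$ that works for \emph{every} subsequent ${\tt y}$. In both directions above the inner witness is simply $\tilde x=x$ (determined by ${\tt x}$ alone), so this causes no trouble. No other step requires more than unfolding definitions, which is why the observation deserves only a brief proof.
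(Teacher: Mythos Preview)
Your proposal is correct and takes essentially the same approach as the paper: your predicate $\tilde f(x)=\bigcup_{y\in f(x)}E_Y(y)$ is exactly the paper's $\underline{f}(x)=\{{\tt y}:(\exists y)\;{\tt y}\vdash_Y y\in f(x)\}$. The only presentational difference is that the paper, rather than explicitly checking both directions via identity reductions as you do, observes that the second clause of the Weihrauch definition rewrites as ${\tt y}\in\underline{g}(\tilde x)\implies\varphi_+({\tt x},{\tt y})\in\underline{f}(x)$, so the Weihrauch degree of $f$ depends only on $\underline{f}$; your explicit verification amounts to the same thing.
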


\begin{proof}
Given a multimap $f$, let $\underline{f}(x)$ be the set of all names of all solutions of the $x$-th instance of problem $f$; that is, $\underline{f}(x)=\{{\tt y}\in\tpbf{N}:(\exists y)\;{\tt y}\vdash_Y y\in f(x)\}$.
Obviously, $\underline{f}$ is a predicate.
Using this notation, the second line of the definition of Weihrauch reducibility can be rewritten as follows:
\[{\tt y}\in\underline{g}(\tilde{x})\implies\varphi_+({\tt x},{\tt y})\in\underline{f}(x).\]

Hence, the Weihrauch degree of $f$ only depends on its predicate part $\underline{f}$.
\end{proof}

Now note that, when the relation ${\tt x}\vdash_Xx$ holds for a multi-represented space $X$, the name ${\tt x}$ is {\em public} information that can be accessed during a computation and the point $x$ is {\em secret} information that cannot be accessed during a computation.
Often, a multimap $f\pcolon X\tto Y$ is easier to handle if it is thought of as $\hat{f}\pcolon\tpbf{N}\times X\tto Y$ defined by $\hat{f}({\tt x},x)=f(x)$ for ${\tt x}\vdash_Xx$ (sometimes $\hat{f}({\tt x},x)$ is written as $\hat{f}({\tt x}\vdash_Xx)$ or $\hat{f}({\tt x}\mid x)$).
In particular, a predicate $\varphi\colon X\tto\tpbf{N}$ can be viewed as a function $\hat{\varphi}\pcolon \tpbf{N}\times X\tto\tpbf{N}$.
We call $\hat{\varphi}$ the {\em extension} of $\varphi$.
In Kihara \cite{Kih21}, such a function has been called a bilayered function.
Essentially the same notion has been called an extended Weihrauch predicate by Bauer \cite{Bau21}, which sounds better, so we adopt Bauer's terminology in this article.

\begin{definition}
An {\em extended predicate} is a partial multimap $f\pcolon\tpbf{N}\times\Lambda\tto\tpbf{N}$, where $\Lambda$ is a set.
We write $f(n\mid p)$ for $f(n,p)$, and $n$ is called a {\em public input} and $p$ a {\em secret input}.
\end{definition}

\begin{example}
Many natural examples of extended predicates have been studied in \cite{Kih21}.
For example, various computational notions of the type of allowing for minute errors can be expressed as computations relative to some extended predicate oracles.
Examples include computability with error probability less than $\ep$ and error density (in the sense of the lower asymptotic density) less than $\ep$.
However, most extended predicates treated in \cite{Kih21} can also be described using double negation spaces, as in Examples \ref{exa:double-negation-space1} and \ref{exa:double-negation-space2}.
\end{example}

\begin{definition}[Bauer \cite{Bau21}; see also Kihara \cite{Kih21}]\label{def:extended-Weihrauch}
Let $f$ and $g$ be extended predicates.
Then $f$ is {\em extended Weihrauch reducible to} $g$ (written $f\leq_{eW}g$) if
there exist partial computable functions $\varphi_-,\varphi_+$ on $\tpbf{N}$ such that
\[
\begin{cases}
(\forall {\tt x},x)\;\big[({\tt x}\mid x)\in{\rm dom}(f)\implies(\exists\tilde{x})\;\big((\varphi_-({\tt x})\mid\tilde{x})\in{\rm dom}(g),\\
(\forall {\tt y})\;\;[{\tt y}\in g(\varphi_-({\tt x})\mid\tilde{x})\implies \varphi_+({\tt x},{\tt y})\in f({\tt x}\mid x)]\big)\big].
\end{cases}
\]

Note that $\tilde{x}$ in the second line is bounded by the existential quantifier in the first line.
In other words, it involves a map $\str\colon({\tt x},x)\mapsto \tilde{x}$.
\end{definition}

As noted above, a predicate (on a multi-represented space) can be regarded as an extended predicate.
Obviously, restricting extended Weihrauch reducibility to extensions of predicates is equivalent to restricting Weihrauch reducibility on multi-represented spaces to predicates.
Bauer \cite{Bau21} has shown that the extended Weihrauch degrees form a Heyting algebra.

%

Note also that obviously Definition \ref{def:extended-Weihrauch} depends only on the set $\{g(a\mid p):p\in\Lambda\}$.
For extended predicates $f\pcolon\tpbf{N}\times\Lambda\tto\tpbf{N}$ and $g\pcolon\tpbf{N}\times\Lambda'\tto\tpbf{N}$, we say that {\em $f$ is equivalent to $g$} if $\{f(x\mid p):p\in\Lambda\}=\{g(x\mid q):q\in\Lambda'\}$ for any $x\in\tpbf{N}$.
An extended predicate $f$ is {\em canonical} if $f(x\mid p)=p$ for any $(x\mid p)\in{\rm dom}(f)$.

Note that a canonical extended predicate is always an extension of a predicate.
Indeed, a canonical extended predicate (seen as a predicate) is a map $i\colon(\Omega,\delta)\to\Omega$ (where $\delta$ is a multi-representation) that is identity on the underlying set $\Omega$; that is, $i(p)=p$.
In other words, it is a subobject of the trivial assembly $\Omega$.

\begin{obs}[see also \cite{LvO,Bau21}]\label{obs:bimap-canonical}
Every extended predicate is equivalent to a canonical one.
\end{obs}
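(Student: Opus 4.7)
The plan is a direct reparametrization. Given an m.m.~map $f\pcolon\tpbf{N}\times\Lambda\to\tpbf{N}$, I would first, for each $x\in\tpbf{N}$, extract the vertical image
\[
S_x=\{f(x\mid p):p\in\Lambda\mbox{ and }(x,p)\in{\rm dom}(f)\}\subseteq\tpbf{N}.
\]
Then I would take $\Lambda'=\tpbf{N}$ and construct $\hat{f}\pcolon\tpbf{N}\times\tpbf{N}\to\tpbf{N}$ by declaring ${\rm dom}(\hat{f})=\{(x,q)\in\tpbf{N}\times\tpbf{N}:q\in S_x\}$ and $\hat{f}(x\mid q)=q$ on this domain. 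By the definition of $\hat{f}$, the equation $\hat{f}(x\mid q)=q$ holds throughout ${\rm dom}(\hat{f})$, so $\hat{f}$ is canonical in the sense introduced before the observation.

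It then remains to verify the equivalence $\{\hat{f}(x\mid q):q\in\Lambda'\}=\{f(x\mid p):p\in\Lambda\}$ for every $x\in\tpbf{N}$. The right-hand side is $S_x$ by definition. For the left-hand side, a pair $(x,q)$ lies in ${\rm dom}(\hat{f})$ exactly when $q\in S_x$, and in that case $\hat{f}(x\mid q)=q$; hence the $x$-slice image of $\hat{f}$ is $\{q:q\in S_x\}=S_x$. Thus both vertical images coincide for every $x$, which is exactly the definition of equivalence.

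No real obstacle is expected: the statement is a tautological normal-form result, replacing the opaque index set $\Lambda$ by the set $\tpbf{N}$ of possible secret outputs, with each secret input being precisely the value it produces. Conceptually, this matches the viewpoint of Section \ref{sec:secret-input}, where a predicate is the identity map $i\colon (\Omega,\delta)\to\Omega$ relative to some multi-representation $\delta$; the multi-representation in our construction assigns to a secret $q\in\tpbf{N}$ the set of public inputs $\{x:q\in S_x\}$, and this reparametrization is the same one tacitly used in Lee--van Oosten \cite{LvO} and Bauer \cite{Bau21}.
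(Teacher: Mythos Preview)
Your reparametrization idea is exactly the one the paper uses, but you have a type slip that matters. Although the definition of an m.m.~map is written with a single arrow $f\pcolon\tpbf{N}\times\Lambda\to\tpbf{N}$, everywhere in the paper (e.g.\ the definition of equivalence, Definition~\ref{def:extended-Weihrauch}, Example~\ref{exa:oracle-Weihrauch-reducibility-bimap}) the value $f(x\mid p)$ is treated as a \emph{subset} of $\tpbf{N}$, i.e.\ an element of $\Omega=\mathcal{P}(\tpbf{N})$; and the sentence just before the observation says explicitly that a canonical m.m.~map is a map $i\colon(\Omega,\delta)\to\Omega$. So your line $S_x\subseteq\tpbf{N}$ is wrong: one has $S_x=\{f(x\mid p):p\in\Lambda\}\subseteq\Omega$, and the new secret parameter set must be $\Lambda'=\Omega$, not $\tpbf{N}$. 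With $\hat f\pcolon\tpbf{N}\times\Omega\tto\tpbf{N}$ defined by $\hat f(x\mid q)=q$ whenever $q\in S_x$, your argument goes through verbatim and coincides with the paper's proof.

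In short: right construction, wrong codomain for the secret parameter. Replace $\tpbf{N}$ by $\Omega$ throughout your reparametrization and you recover the paper's argument exactly.
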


\begin{proof}
This is because given an extended predicate $g$ define $\tilde{g}\pcolon\tpbf{N}\times\Omega\tto\tpbf{N}$ by $\tilde{g}(x\mid y)=y$ if $g(x\mid p)=y$ for some $(x\mid p)\in{\rm dom}(g)$; otherwise $\tilde{g}(x\mid y)$ is undefined.
Then $\tilde{g}$ is clearly equivalent to $g$.
\end{proof}

In partiuclar, every extended predicate is Weihrauch equivalent to an extension of a predicate.
What this means is that the extended Weihrauch degrees are isomorphic to the (extended) Weihrauch degrees of predicates (see also \cite{Bau21}), and thus, by Observation \ref{obs:W-multi-predicate}, we get the following:

\begin{cor}\label{cor:eWeih-equ-W-mrep-sp}
The Heyting algebra of the extended Weihrauch degrees is isomorphic to the Weihrauch degrees on multi-represented spaces.
\qed
\end{cor}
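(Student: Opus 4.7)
The plan is to assemble three equivalences already prepared in the text and observe that they compose into the desired Heyting-algebra isomorphism. The chain is
\[
\mathrm{eW}\text{-degrees} \;\simeq\; \mathrm{eW}\text{-degrees of canonical m.m.\ maps} \;\simeq\; \mathrm{W}\text{-degrees of predicates} \;\simeq\; \mathrm{W}\text{-degrees on multi-represented spaces},
\]
and I will justify each step in turn.

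First I would invoke Observation \ref{obs:bimap-canonical}: given any m.m.~map $g$, the canonical m.m.~map $\tilde{g}$ has the same fiber-set $\{g(x\mid p):p\in\Lambda\}$ as $g$ at each public input $x$, and extended Weihrauch reducibility depends only on these fiber-sets (as remarked right before that observation). Therefore $g\equiv_{eW}\tilde{g}$, so the inclusion of canonical m.m.~maps into all m.m.~maps induces a surjection on $\equiv_{eW}$-classes, hence an isomorphism of posets. Since a canonical m.m.~map is, by construction, a realizability predicate (it is the identity on its fibers, so the secret input \emph{is} the output), we have identified the extended Weihrauch degrees with the extended Weihrauch degrees of predicates.

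Next I would use the remark made in Section \ref{sec:secret-input} that \emph{restricting extended Weihrauch reducibility to predicates is equivalent to restricting Weihrauch reducibility on multi-represented spaces to predicates}. The two definitions are literally the same once one reads ``${\tt x}\vdash_X x$'' as ``$({\tt x}\mid x)\in\mathrm{dom}(\varphi)$'' and notes that for predicates the solution set $\varphi(x)\subseteq\tpbf{N}$ already consists of names, so the inner $\exists\tilde{y}$ disappears. This gives the middle link of the chain.

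Finally I would appeal to Observation \ref{obs:W-multi-predicate}: every multimap $f$ on multi-represented spaces is Weihrauch equivalent to its predicate part $\underline{f}$. This makes the inclusion of predicates into multimaps on multi-represented spaces a surjection on Weihrauch degrees, and hence an order isomorphism. Composing the three isomorphisms yields a poset isomorphism between the extended Weihrauch degrees and the Weihrauch degrees on multi-represented spaces. Since Bauer's Heyting-algebra structure on the extended Weihrauch degrees is determined by the order (meets, joins, and the Heyting implication are order-theoretic, defined in terms of products, coproducts, and the function-space-like construction on m.m.~maps), the transported structure on Weihrauch degrees of multi-represented spaces coincides with the analogous structure there, and the isomorphism is one of Heyting algebras. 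The main (very minor) obstacle is the careful bookkeeping in the first step: one must check that in translating an m.m.~map to its canonical form via $\tilde g$, the existential witness $\str\colon({\tt x},x)\mapsto\tilde{x}$ required by Definition \ref{def:extended-Weihrauch} is still available on both sides of the reduction; this is immediate from the definition of $\tilde g$, which sets $\tilde x = g({\tt x}\mid x)$.
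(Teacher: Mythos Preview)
Your proof is correct and follows essentially the same route as the paper: the paper's argument (contained in the short paragraph preceding the corollary) is precisely the chain you describe, namely reducing m.m.~maps to canonical m.m.~maps via Observation~\ref{obs:bimap-canonical}, identifying canonical m.m.~maps with predicates, noting that extended Weihrauch reducibility and Weihrauch reducibility on multi-represented spaces coincide on predicates, and then invoking Observation~\ref{obs:W-multi-predicate}. Your added remark that a poset isomorphism between Heyting algebras is automatically a Heyting-algebra isomorphism is a welcome clarification that the paper leaves implicit.
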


\begin{remark}
Under this isomorphism, Bauer \cite{Bau21} named the ones in the extended Weihrauch degrees corresponding to the Weihrauch degrees on represented spaces (i.e., the ordinary Weihrauch degrees) the {\em modest} degrees.
In other words, the modest extended Weihrauch degrees are the ordinary Weihrauch degrees.
This terminology is derived from the fact that a represented space is called a modest set in realizability theory.
\end{remark}

In proving various properties, it is often easier to discuss them if they are transformed into a canonical form via Observation \ref{obs:bimap-canonical}.
However, natural concrete examples often appear in non-canonical form, and it is intuitively easier to understand them if non-canonical forms are also allowed; see also \cite{Kih21}.

\begin{remark}
Several variants of Weihrauch reducibility have been studied in recent years, but they can be treated in a unified manner by using Weihrauch reducibility on multi-represented spaces (or extended predicates).
To see this, for a set $X$, consider the double negation elimination ${\sf DNE}_X\colon\neg\neg X\to X$ defined by ${\sf DNE}_X(x)=x$.
Then, for any partial multimaps $f\pcolon X\tto Y$ and $g\pcolon Z\tto W$, we have the following:
\begin{itemize}
\item $f$ is computably reducible to $g$ in the sense of Dzhafarov \cite{Dzh16} if and only if $f\leq_W{\sf DNE}_{\N}\star g\star{\sf DNE}_{\N}$.
\item $f$ is omnisciently Weihrauch reducible to $g$ in the sense of Dzhafarov-Patey \cite{DzPa20} if and only if $f\leq_W g\circ{\sf DNE}_{Z}$.
\item $f$ is omnisciently computably reducible to $g$ in the sense of Monin-Patey \cite{MoPa19} if and only if $f\leq_W {\sf DNE}_{\N}\star g\circ{\sf DNE}_{Z}$.
\end{itemize}

Here, $\star$ denotes the compositional product (see \cite[Theorem 5.2 and Definition 5.3]{CCA}).
\end{remark}

Next, let us consider the composition of two extended predicates.
For $f\colon X\tto Y$ and $g\colon Y\tto Z$, the composition $g\circ f\colon X\tto Z$ is defined as usual (see Section \ref{sec:notations}).
However, if $f$ and $g$ are presented as extended predicates $\theta_f,\theta_g\pcolon\tpbf{N}\times\Lambda\tto \tpbf{N}$, then the composition $\theta_g\circ\theta_f$ would not be defined, of course.

%

\begin{discussion}
To overcome this difficulty, we introduce a composite-like operation for extended predicates.
It almost adopts the definition of the composition of multifunctions on multi-represented spaces.
Recall that for a given $f\colon X\tto Y$, we first consider its extension $\hat{f}\colon\tpbf{N}\times X\tto Y$, and then take the set of names of solutions, i.e.,  $\underline{\hat{f}}\colon\tpbf{N}\times X\tto\tpbf{N}$, which is the corresponding extended predicate $\theta_f$.
One can see the following:
\[{\tt z}\in \underline{\widehat{g\circ f}}({\tt x}\mid x)\iff(\exists {\tt y},y)\;[{\tt y}\in \underline{\hat{f}}({\tt x}\mid x)\mbox{ and }{\tt z}\in \underline{\hat{g}}({\tt y}\mid y)].\]

In represented spaces, ${\tt y}\mapsto y$ is uniquely determined, but this is not the case in multi-represented spaces.
For this reason, let us always specify a map $\str\colon{\tt y}\mapsto y$ as a secret input (a nonuniform advice).
Then, we declare that $({\tt x}\mid x,\str)$ is in the domain of the composition as long as the process always terminates along this map $\str$.
This discussion leads to the following definition:

\begin{definition}
Let $\alpha\pcolon\tpbf{N}\times\Lambda\tto\tpbf{N}$ and $\beta\pcolon\tpbf{N}\times\Lambda'\tto\tpbf{N}$ be extended predicates.
Define $\beta\circ\alpha\pcolon\tpbf{N}\times(\Lambda\times[{\tpbf{N}}\to\Lambda'])\tto\tpbf{N}$ as follows:
For the domain, $(\beta\circ\alpha)({\tt x}\mid p,\str)$ is defined if $\alpha({\tt x}\mid p)$ is defined and  $\beta({\tt y}\mid \str({\tt y}))$ is defined for any ${\tt y}\in \alpha({\tt x}\mid p)$.
In such a case, 
\[{\tt z}\in(\beta\circ \alpha)({\tt x}\mid p,\str)\iff (\exists{\tt y})\;[{\tt y}\in \alpha({\tt x}\mid p)\mbox{ and }{\tt z}\in\beta({\tt y}\mid \str({\tt y}))].\]
\end{definition}

The definition of the composition is complicated, but the idea comes from the notion of non-uniform advice, or \Nim's strategy in games ending in two rounds \cite{Kih21}.
That is, $({\tt x}\mid p)$ are first moves of \Ar and \Nim, respectively, and $\str$ is \Nim's strategy to decide her next move according to \Mer's reply.
\end{discussion}

\subsection{Universal extended predicate}\label{sec:universal-bimap}
Now, let us generalize the properties for multimaps (Observation \ref{obs:transparent-multmap}) to extended predicates.

\begin{definition}\label{def:transparency-for-mmmap}
Let $U\pcolon\tpbf{N}\times\Lambda\tto\tpbf{N}$ be an extended predicate.
\begin{itemize}
\item $U$ is {\em computably transparent} if and only if
there exists ${\tt u}\in \tplf{N}$ such that for all ${\tt f},{\tt x}\in \tpbf{N}$ and $p\in\Lambda$, whenever ${\tt f}\ast U({\tt x}\mid p)$ is defined,
\[(\exists q\in\Lambda)\quad ({\tt u\ast f\ast x}\mid q)\in{\rm dom}(U)\mbox{ and }U({\tt u\ast f\ast x}\mid q)\subseteq{\tt f}\ast U({\tt x}\mid p).\]

\item $U$ is {\em inflationary} if and only if
there exists $\eta\in \tplf{N}$ such that for all ${\tt x}\in \tpbf{N}$,
\[(\exists p\in\Lambda)\quad(\eta\ast {\tt x}\mid p)\in{\rm dom}(U)\mbox{ and }U(\eta\ast {\tt x}\mid p)\subseteq\{{\tt x}\}.\]

\item $U$ is {\em idempotent} if and only if there exists $\mu\in \tplf{N}$ such that for all ${\tt x}\in{\rm dom}(U\circ U)$ and $p\in\Lambda$,
\[(\exists q\in\Lambda)\quad (\mu\ast{\tt x}\mid q)\in{\rm dom}(U)\mbox{ and }U(\mu\ast {\tt x}\mid q)\subseteq U\circ U({\tt x}\mid p).\]
\end{itemize}
\end{definition}

\begin{remark}
By Corollary \ref{cor:eWeih-equ-W-mrep-sp}, it seems more natural to deal directly with a multimap on multi-represented spaces rather than an extended predicate.
The reason why we still use an extended predicate instead of a multimap on multi-represented spaces is Definition \ref{def:transparency-for-mmmap}.
For example, the definition of idempotence uses self-composition $U\circ U$, which is always defined for an extended predicate, but is generally meaningless for a multimap on multi-represented spaces (since the domain and the codomain might be different).
\end{remark}

The following is the extended predicate version of Definition \ref{def:m-morphi-r-morphi}.

\begin{definition}\label{def:extended-many-one-reducibility}
For extended predicates $f$ and $g$, we say that a computable element $e\in N$ is an {\em extended $m$-morphism from $f$ to $g$} if for any $(x\mid p)\in{\rm dom}(f)$ there exists $q$ such that $(\varphi_e(x)\mid q)\in{\rm dom}(g)$ and $g(\varphi_e(x)\mid q)\subseteq f(x\mid p)$.
We also say that {\em $f$ is extended $m$-reducible to $g$} (written $f\leq_{em}g$) if there exists an extended $m$-morphism from $f$ to $g$.
If $U$ is computably transparent, we often say that $f$ is $U$-computable if $f$ is extended $m$-reducible to $U$.
\end{definition}

One can also think of extended $m$-reducibility for extended predicates as $m$-reducibility for predicates on multi-represented spaces.

\begin{example}[Extended Weihrauch oracle]\label{exa:oracle-Weihrauch-reducibility-bimap}
For an extended predicate $g$, the universal computation relative to the extended Weihrauch oracle $g$, ${\tt EWeih}_g$, is defined as the following extended predicate:
\begin{align*}
{\rm dom}({\tt EWeih}_g)&=\{(h,k,x\mid p,\str):(\varphi_h(x)\mid\str(x,p))\in{\rm dom}(g)\\
&\qquad\mbox{ and }\varphi_k(x,y)\downarrow\mbox{ for all $y\in g(\varphi_h(x)\mid\str(x,p))$}\}\\
{\tt EWeih}_g(h,k,x\mid p,\str)&=\{\varphi_k(x,y):y\in g(\varphi_h(x)\mid\str(x,p))\}
\end{align*}

One can think of ${\tt EWeih}_g$ as an extended predicate that can simulate all the plays in the extended Weihrauch reduction game mentioned in Remark above, where $(x\mid p)$ is \Mer's first move, $(h,k)$ is \Art's strategy, and $\str$ is \Nim's strategy.

Note that ${\tt EWeih}_g$ is computably transparent.
This is because we have $\varphi_\ell[{\tt EWeih}_g(h,k,x\mid p,\str)]=\{\varphi_\ell\circ\varphi_k(x,y):y\in g(\varphi_h(x)\mid\str(x,p))\}={\tt EWeih}_g(h,b_{\ell,k},x\mid p,\str)$, where $b_{\ell,k}$ is a code of $\varphi_\ell\circ\varphi_k$.

Moreover, an extended predicate $f$ is ${\tt EWeih}_g$-computable if and only if $f\leq_{eW}g$.
This is because $f\leq_{eW}g$ via $h,k,\str$ if and only if $(h,k)$ and $\str$ are \Ar and \Nim's winning strategies for the reduction game in the above Remark if and only if ${\tt EWeih}_g(h,k,x\mid p,\str)\subseteq f(x\mid p)$, so $f$ is ${\tt EWeih}$-computable.
The converse direction can be shown by an argument similar to Example \ref{exa:oracle-Weihrauch-reducibility}
\end{example}

We will discuss the exact relationship between computable transparency and universal computation in the context of extended predicate.

\begin{definition}
Let $ex\mathcal{MR}ed$ be the set of all extended predicates on $\tpbf{N}$ preordered by extended many-one reducibility $\leq_{em}$.
The restriction of this preordered set to those that are computably transparent, inflationary, and idempotent, respectively, is expressed by decorating it with superscripts ${\rm ct}$, $\eta$, and $\mu$, respectively.
\end{definition}

We next see an extension of Proposition \ref{prop:computably-transparent-many-one-Weih} stating that the notions of extended many-one reducibility and extended Weihrauch reducibility coincide on computably transparent extended predicates.
As we have seen in Example \ref{exa:oracle-Weihrauch-reducibility-bimap}, the construction ${\tt EWeih}\colon g\mapsto{\tt EWeih}_g$ yields a computably transparent map from a given extended predicate.
Moreover, it is easy to see that $f\leq_mg$ implies ${\tt EWeih}_f\leq_{em}{\tt EWeih}_g$.
Hence, ${\tt EWeih}$ can be viewed as an order-preserving map from $ ex\mathcal{MR}ed$ to $ ex\mathcal{MR}ed^{\rm ct}$.
The following guarantees that ${\tt EWeih}(f)$ is the $\leq_{em}$-least computably transparent map which can compute $f$.

\begin{prop}\label{thm:computably-transparent-many-one-eWeihx}
The order-preserving map ${\tt EWeih}\colon ex\mathcal{MR}ed\to ex\mathcal{MR}ed^{\rm ct}$ is left adjoint to the inclusion $i\colon ex\mathcal{MR}ed^{\rm ct}\mono ex\mathcal{MR}ed$.
In other words, for any extended predicates $f$ and $g$, if $g$ is computably transparent, then $f\leq_{em}g$ if and only if ${\tt EWeih}(f)\leq_{em}g$.
\end{prop}

\begin{proof}
Obviously, ${\tt EWeih}(f)\leq_{em}g$ implies $f\leq_{em}g$.
For the converse direction, if $f\leq_{em}g$ then ${\tt EWeih}(f)\leq_{em}{\tt EWeih}(g)$ by monotonicity; hence it suffices to show that ${\tt EWeih}(g)\leq_{em}g$.
Given ${\tt h},{\tt k},{\tt x}\in \tpbf{N}$ and $p,\str$, note that ${\tt y}\in g({\tt h}\ast{\tt x}\mid\str({\tt x},p))$ implies ${\tt k}\ast\langle{\tt x},{\tt y}\rangle\in {\tt EWeih}_g({\tt h},{\tt k},{\tt x}\mid p,\str)$.
Put ${\tt k'}=\lambda xy.{\tt k}\ast\langle x,y\rangle$.
As $g$ is computably transparent, there exists ${\tt u}\in \tplf{N}$ and $q$ such that $g({\tt u}\ast ({\tt k'\ast x})\ast ({\tt h\ast x})\mid q)\subseteq ({\tt k'\ast x})\ast g({\tt h\ast x}\mid \str({\tt x},p))$.
Note that if ${\tt y}\in g({\tt h\ast x}\mid\str({\tt x},p))$ then  ${\tt k'\ast x\ast y}={\tt k}\ast \langle{\tt x},{\tt y}\rangle\in {\tt EWeih}_g({\tt h},{\tt k},{\tt x}\mid p,\str)$.
Hence, we get $g({\tt u}\ast ({\tt k'\ast x})\ast ({\tt h\ast x})\mid q)\subseteq {\tt EWeih}_g({\tt h},{\tt k},{\tt x}\mid p,\str)$, so the term $\lambda hkx.{\tt u}\ast (({\tt a}\ast k)\ast x)\ast (h\ast x)\in \tplf{N}$ witnesses ${\tt EWeih}_g\leq_{em}g$, where ${\tt a}\in \tplf{N}$ is a computable element such that ${\tt a}\ast{\tt k}={\tt k}'$.
\end{proof}

This implies that the notions of extended many-one reducibility and extentded Weihrauch reducibility coincide on computably transparent extended predicates.


\begin{cor}\label{prop:computably-transparent-many-one-eWeih}
The Heyting algebra of extended many-one degrees of computably transparent extended predicates on $\tpbf{N}$ is isomorphic to the extended Weihrauch degrees.
\end{cor}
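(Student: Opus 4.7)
The plan is to follow the template of Corollary \ref{prop:computably-transparent-many-one-Weih} verbatim, with $({\tt Weih},\leq_W,\leq_m)$ replaced throughout by $({\tt EWeih},\leq_{eW},\leq_{em})$. The central claim to prove is that on $\mathcal{MMR}ed^{\rm ct}$ the two reducibilities $\leq_{em}$ and $\leq_{eW}$ agree: if $g$ is computably transparent, then for every m.m.~map $f$,
\[
f\leq_{em}g\iff f\leq_{eW}g.
\]
The forward implication is immediate, since every extended $m$-reduction is a degenerate case of an extended Weihrauch reduction (take the outer reduction to project onto the realizer of the output). For the converse, Example \ref{exa:oracle-Weihrauch-reducibility-bimap} tells us that $f\leq_{eW}g$ is equivalent to $f$ being ${\tt EWeih}_g$-computable, i.e.\ $f\leq_{em}{\tt EWeih}_g$; then Proposition \ref{thm:computably-transparent-many-one-eWeihx}, applied to the trivial reduction $g\leq_{em}g$, yields ${\tt EWeih}_g\leq_{em}g$. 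Transitivity of $\leq_{em}$ closes the loop.

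From this claim I would conclude the poset part of the statement exactly as in the proof of Corollary \ref{prop:computably-transparent-many-one-Weih}: the identity map on m.m.~maps descends to an order embedding of the $\equiv_{em}$-quotient of $\mathcal{MMR}ed^{\rm ct}$ into the extended Weihrauch degrees, because on the source both $\leq_{em}$ and $\leq_{eW}$ coincide with the target ordering. For surjectivity, for every m.m.~map $f$ the construction ${\tt EWeih}_f$ is a computably transparent representative of the same extended Weihrauch degree (Example \ref{exa:oracle-Weihrauch-reducibility-bimap} gives $f\equiv_{eW}{\tt EWeih}_f$, and it is noted there that ${\tt EWeih}_f\in\mathcal{MMR}ed^{\rm ct}$).

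The remaining content of the corollary is the Heyting algebra upgrade. Bauer \cite{Bau21} has already established that the extended Weihrauch degrees form a Heyting algebra, so the order isomorphism just produced transports this Heyting structure onto the $\equiv_{em}$-quotient of $\mathcal{MMR}ed^{\rm ct}$. No fresh construction of meets, joins, or implications is required inside $\mathcal{MMR}ed^{\rm ct}$, because each extended Weihrauch degree has a computably transparent representative of the form ${\tt EWeih}_h$.

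The main obstacle, such as it is, is the bookkeeping with secret inputs and Nimue's strategy $\str$ when invoking computable transparency of $g$ in the proof of the key claim. One has to check that the witness ${\tt u}$ and the secret input $q$ supplied by transparency can be threaded correctly through the extended Weihrauch reduction game, so that the composition of the inner reduction, the $g$-query, and the outer reduction is actually matched by a single $g$-query via ${\tt u\ast f\ast x}$ and an appropriate $q\in\Lambda$. However, precisely this threading has already been carried out in the proof of Proposition \ref{thm:computably-transparent-many-one-eWeihx}, so the present corollary is a formal consequence of that adjunction together with Example \ref{exa:oracle-Weihrauch-reducibility-bimap}, and no new combinatorial work is needed.
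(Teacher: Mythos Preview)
Your proposal is correct and follows essentially the same approach as the paper: the paper proves the same key claim (that $\leq_{em}$ and $\leq_{eW}$ coincide for computably transparent $g$) by the same route through Example \ref{exa:oracle-Weihrauch-reducibility-bimap} and Proposition \ref{thm:computably-transparent-many-one-eWeihx}, and then argues embedding and surjectivity exactly as you do. Your additional remarks on transporting the Heyting structure from Bauer's result and on the bookkeeping with secret inputs are reasonable elaborations of points the paper leaves implicit.
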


\begin{proof}
We first claim that if $g$ is computably transparent, then $f\leq_{em}g$ if and only if $f\leq_{eW}g$.
The forward direction is obvious.
For the backward direction, first note that $f\leq_{eW}g$ if and only if $f\leq_{em}{\tt EWeih}_g$ as seen in Example \ref{exa:oracle-Weihrauch-reducibility-bimap}.
By Proposition \ref{thm:computably-transparent-many-one-eWeihx}, ${\tt EWeih}_g\leq_{em}g$ since $g$ is computably transparent.
Hence, $f\leq_{em}g$.
This claim ensures that the identity map is an embedding from the poset reflection of $ ex\mathcal{MR}ed^{\rm ct}$ into the extended Weihrauch degrees.
For surjectivity, we have $f\equiv_{eW}{\tt EWeih}_f$ as seen in Example \ref{exa:oracle-Weihrauch-reducibility-bimap}, and ${\tt EWeih}_f$ is computably transparent.
\end{proof}

\begin{remark}
There are several candidates for the definition of Weihrauch reducibility on multi-represented spaces, and it is debatable which one to adopt.
Of course, all definitions seem valuable in the context for which they are appropriate, and indeed, Proposition \ref{thm:computably-transparent-many-one-eWeihx} supports the validity of our definition from at least one perspective.
The key point is that our notion of Weihrauch reducibility automatically appears when we have the notions of computable transparency and many-one reducibility as before, so in fact, there is no need to define Weihrauch reducibility:
Any left-adjoint ${\tt EW}$ of the inclusion $i\colon ex\mathcal{MR}ed^{\rm ct}\mono ex\mathcal{MR}ed$ recovers the notion of extended Weihrauch reducibility as $f\leq_{eW}g$ if and only if $f\leq_{em}{\tt EW}(g)$ since ${\tt EW}(g)\equiv_{em}{\tt EWeih}(g)$.
By Corollary \ref{cor:eWeih-equ-W-mrep-sp}, this means that our definition of Weihrauch reducibility on multi-represented spaces is automatically obtained from any ${\tt EW}\dashv i$ without explicitly defining it.
\end{remark}


One can also introduce an inflationary version of extended Weihrauch reducibility.
For extended predicates $f$ and $g$, we say that $f$ is {\em pointed extended Weihrauch reducible to $g$} (written $f\leq_{peW}g$) if $f$ is extended Weihrauch reducible to ${\rm id}\sqcup g$, where $({\rm id}\sqcup g)(\underline{\tt 0},x\mid p)=x$ and $({\rm id}\sqcup g)(\underline{\tt 1},x\mid p)=g(x\mid p)$.
It is straightforward to show analogues of Proposition \ref{prop:computably-transparent-many-one-pWeih} and Corollary \ref{thm:computably-transparent-many-one-pWeihx}.

\begin{prop}
The order-preserving map ${\tt pEWeih}\colon ex\mathcal{MR}ed\to ex\mathcal{MR}ed^{{\rm ct},\eta}$ is left adjoint to the inclusion $i\colon ex\mathcal{MR}ed^{{\rm ct},\eta}\mono ex\mathcal{MR}ed$.
In other words, for any extended predicates $f$ and $g$, if $g$ is computably transparent and inflationary, then $f\leq_{em}g$ if and only if ${\tt pEWeih}(f)\leq_{em}g$.
\qed
\end{prop}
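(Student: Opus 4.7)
The plan is to mirror the proof of Proposition~\ref{thm:computably-transparent-many-one-pWeihx} in the m.m.~map setting, using Proposition~\ref{thm:computably-transparent-many-one-eWeihx} wherever that earlier proof used Proposition~\ref{thm:computably-transparent-many-one-Weihx}. The easy direction ${\tt pEWeih}(f)\leq_{em}g\Rightarrow f\leq_{em}g$ is immediate, since $f$ always reduces to ${\tt pEWeih}(f)$. For the converse, by monotonicity of ${\tt pEWeih}$ it suffices to show ${\tt pEWeih}(g)\leq_{em}g$ for every computably transparent and inflationary m.m.~map $g$.

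Unfolding ${\tt pEWeih}(g)={\tt EWeih}({\rm id}\sqcup g)$, Proposition~\ref{thm:computably-transparent-many-one-eWeihx} already gives ${\tt EWeih}({\rm id}\sqcup g)\leq_{em}{\rm id}\sqcup g$, so the remaining task is to exhibit an extended $m$-reduction ${\rm id}\sqcup g\leq_{em}g$. Here ${\rm id}\sqcup g$ is the m.m.~map with $({\rm id}\sqcup g)(\underline{\tt 0},x\mid p)=\{x\}$ and $({\rm id}\sqcup g)(\underline{\tt 1},x\mid p)=g(x\mid p)$. Let $\eta\in N$ witness inflationariness of $g$, so that for each $x\in\tpbf{N}$ there exists a secret $q_x$ with $(\eta\ast x\mid q_x)\in{\rm dom}(g)$ and $g(\eta\ast x\mid q_x)\subseteq\{x\}$. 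Define $\varphi(\underline{\tt 0},x)=\eta\ast x$ and $\varphi(\underline{\tt 1},x)=x$; this is clearly computable. On the $\underline{\tt 0}$-branch the secret $q_x$ supplied by inflationariness yields $g(\varphi(\underline{\tt 0},x)\mid q_x)\subseteq\{x\}=({\rm id}\sqcup g)(\underline{\tt 0},x\mid p)$, while on the $\underline{\tt 1}$-branch one forwards the input secret $p$ unchanged to obtain $g(\varphi(\underline{\tt 1},x)\mid p)=g(x\mid p)=({\rm id}\sqcup g)(\underline{\tt 1},x\mid p)$. This exhibits the desired extended $m$-reduction.

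The only genuine subtlety compared with the multimap proof is that one must also produce appropriate secret witnesses at each branch of $\varphi$, not merely a public image. On the $\underline{\tt 1}$-branch this is trivial (simply forward $p$); on the $\underline{\tt 0}$-branch the existential clause in the m.m.~map definition of inflationariness in Section~\ref{sec:universal-bimap} supplies exactly the $q_x$ one needs. Hence there is no real obstacle: the argument goes through by a careful but routine translation of the earlier proof, and the stated adjunction follows.
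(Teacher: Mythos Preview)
Your argument is correct and is exactly the analogue of the paper's proof of Proposition~\ref{thm:computably-transparent-many-one-pWeihx} that the paper indicates (the paper omits the proof with a \qed, declaring it ``straightforward''). One small point: invoking Proposition~\ref{thm:computably-transparent-many-one-eWeihx} with target ${\rm id}\sqcup g$ to obtain ${\tt EWeih}({\rm id}\sqcup g)\leq_{em}{\rm id}\sqcup g$ tacitly uses that ${\rm id}\sqcup g$ is itself computably transparent when $g$ is; this is true and easy, and the paper makes the same implicit step in the multimap case, but you could alternatively sidestep it by applying Proposition~\ref{thm:computably-transparent-many-one-eWeihx} directly with target $g$ once you have ${\rm id}\sqcup g\leq_{em}g$.
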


\begin{cor}\label{cor:computably-transparent-inflationary-many-one-peWeih}
The Heyting algebra of extended many-one degrees of computably transparent, inflationary, extended predicates on $\tpbf{N}$ is isomorphic to the pointed extended Weihrauch degrees.
\qed
\end{cor}


The notion of extended Weihrauch reducibility is given by a relative computation that makes exactly one query to an oracle, so it is not idempotent.
Like Turing-Weihrauch reducibility corresponding to the idempotent version of Weihrauch reducibility, the idempotent version of extended Weihrauch reducibility (i.e., extended Turing-Weihrauch reducibility) was introduced in \cite{Kih21} (almost equivalent one had been studied in \cite{LvO}; see \cite[Remark 2.15]{Kih21}) and named LT-reducibility.

The definition of LT-reducibility can be described using an imperfect information game between three players, \Mer, \Art, and \Nim.

\begin{definition}[{\cite[Definition 2.13]{Kih21}}]\label{def:game-LT-reducibility}
For extended predicates $f$ and $g$, let us consider the following imperfect information three-player game $\mathfrak{G}(f,g)$:
\[
\begin{array}{|ccc|}\hline 
 \Me & \Ar & \Ni \\[0.5em]
 ({{\tt p}_0}\mid {x_0})\in{\rm dom}(f)	& & \\
				& {\tt Query}\colon \;({{\tt q}_0} \mid & {z_0})\in{\rm dom}(g)	\\
{{\tt p}_1}\in g({\tt q}_0\mid {z_0})	& & \\	 
				& {\tt Query}\colon \;({{\tt q}_1} \mid & {z_1})\in{\rm dom}(g)	\\
{{\tt p}_2}\in g({\tt q}_1\mid {z_1})	& & \\	 
			\vdots	& \vdots & \vdots	\\
				& {\tt Query}\colon \;({{\tt q}_n} \mid & {z_n})\in{\rm dom}(g)	\\
{{\tt p}_{n+1}}\in g({\tt q}_n\mid {z_n})	& & \\	 
 		& {{\tt Halt}}\colon\; {{\tt q}_{n+1}}  &\in f({\tt p}_0\mid{x_0})  \\[0.5em] \hline
\end{array}
\]


\noindent
{\it Game rules:}
Here, the players need to obey the following rules.
\begin{itemize}
\item First, \Me chooses a pair $({\tt p}_0\mid x_0)\in{\rm dom}(f)$.
\item At the $n$th round, \Ar reacts with $y_n=\langle {\tt A}_n,{\tt q}_n\rangle$.
\begin{itemize}
\item The choice ${\tt A}_n={\tt Query}$ (coded by $\underline{\tt 0}$) indicates that \Ar makes a new query ${\tt q}_n$ to $g$.
\item The choice ${\tt A}_n={\tt Halt}$ (coded by $\underline{\tt 1}$) indicates that \Ar declares termination of the game with ${\tt q}_n$.
\end{itemize}
\item At the $n$th round, \Ni makes an advice parameter $z_n$ such that $({\tt q}_n\mid z_n)\in{\rm dom}(g)$.
\item At the $(n+1)$st round, \Me responds to the query made by \Ar and \Ni at the previous stage.
This means that ${\tt p}_{n+1}\in g({\tt q}_n\mid z_n)$.
\end{itemize}

Then, {\em \Ar and \Ni win the game $\mathfrak{G}(f,g)$} if either \Me violates the rule before \Ar or \Ni violates the rule, or both \Ar and \Ni obey the rule and \Ar declares termination with ${\tt q}_n\in f({\tt p}_0\mid x_0)$.

\medskip

\noindent
{\it Strategies:}
As noted above, \Ar can only read the moves ${\tt p}_0,{\tt p}_1,{\tt p}_2,\dots$, and the other players can see all the moves.
In other words, \Art's strategy is a partial map $\tau\pcolon \tpbf{N}^{<\om}\to\tpbf{N}$.
\Art's strategy is always computable or continuous, coded in the lightface part $\tplf{N}$ for the former case and in $\tpbf{N}$ for the latter.
On the other hand, \Me and \Nim's strategies are any partial functions (which are not necessarily computable).

If $\sigma$, $\tau$, and $\eta$ are strategies of \Mer, \Art, and \Nim, respectively, then the play that follow these strategies are defined as follows:
\Mer's first move is $({\tt p}_0\mid x_0):=\sigma()$, and ($n+1$)th move is ${\tt p}_{n+1}:=\sigma({\tt q_0},z_0,\dots,{\tt q}_n,z_n)$.
\Art's $n$th move is $\langle{\tt A}_n,{\tt q}_n\rangle:=\tau({\tt p}_0,\dots,{\tt p}_n)$.
\Nim's $n$th move is $z_n:=\eta(x_0,{\tt p_0},{\tt q_0},\dots,{\tt p}_n,{\tt q}_n)$.
Here, ``{\em undefined}'' counts as a rule violation.

A pair $(\tau\mid \eta)$ of \Art's strategy $\tau$ and \Nim's strategy $\eta$ is called an \Art-\Ni strategy.
If \Art's strategy $\tau$ is computable (continuous, resp.)~then the pair $(\tau\mid\eta)$ is called a computable (continuous, resp.)~\Art-\Ni strategy.
An \Art-\Ni strategy $(\tau\mid\eta)$ is {\em winning} if, as long as \Ar and \Ni follow the strategy $(\tau\mid\eta)$, \Ar and \Ni win the game, no matter what \Mer's strategy $\sigma$ is.
\end{definition}

\begin{definition}[{\cite[Definition 2.14]{Kih21}}]\label{def:LT-reducibility}
Let $f$ and $g$ be extended predicates.
We say that {\em $f$ is LT-reducible to $g$} (written $f\leq_{LT} g$) if there exists a computable winning \Art-\Ni strategy for $\mathfrak{G}(f,g)$.
\end{definition}

Note that the rule of the game $\mathfrak{G}(f,g)$ does not mention $f$ except for Player I's first move.
Hence, if we skip Player I's first move, we can judge if a given play follows the rule without specifying $f$.
Such a restricted game is denoted by $\mathfrak{G}(g)$.

\begin{definition}[{\cite[Definition 2.20]{Kih21}}]
Given an extended predicate $h$, let us define the new extended predicate $h^\Game$ as follows:
An input for $h^\Game$ is a continuous \Art-\Ni strategy $(\tau\mid\eta)$, where \Art's strategy $\tau$ is a public input, and \Nim's strategy $\eta$ is a secret input.
\begin{itemize}
\item $h^\Game(\tau\mid\eta)$ is defined only if, along any play in $\mathfrak{G}(h)$ following the strategy $(\tau\mid\eta)$, either \Me violates the rule before \Ar or \Ni violates the rule, or both \Ar and \Ni obey the rule and \Ar declares termination, whatever \Mer's strategy is.
\item $u\in h^\Game(\tau\mid\eta)$ if and only if there is a play in $\mathfrak{G}(h)$ that follows the strategy $(\tau\mid\eta)$ such that \Ar declares termination with $u$ at some round, where all players obey the rule.
\end{itemize}
\end{definition}

Note that $h^\Game$ can be thought of as the extended predicate version of the diamond operator (recall a few paragraphs before Fact \ref{fact:Westrick}), which plays a role of the universal computation relative to the extended oracle $h$.
Observing the proof of \cite[Proposition 2.22]{Kih21}, one can see that an extended predicate $f$ is $g^\Game$-computable (i.e., $f\leq_{em}g^{\Game}$) if and only if $f$ is LT-reducible to $g$.

\begin{obs}
For any extended predicate $h$, $h^\Game$ is computably transparent, inflationary, and idempotent.
\end{obs}

\begin{proof}
For computable transparency, let $f$ be a partial continuous map.
For \Art's strategy $\tau$ in $\mathfrak{G}(h)$, if $\tau(\alpha)=\langle{\tt Halt},y\rangle$ then put $\tau^f(\alpha)=\langle{\tt Halt},f(y)\rangle$, and if $\tau(\alpha)=\langle{\tt Query},y\rangle$ then put $\tau^f(\alpha)=\tau(\alpha)$.
Then we have $f\circ h^\Game(\tau\mid\eta)=h^\Game(\tau^f\mid\eta)$, and the transformation $\tau\mapsto\tau^f$ is computable.
To see that $h^\Game$ is inflationary, consider \Art's strategy $i_x()=\langle{\tt Halt},x\rangle$.
Then $h^\Game(i_x\mid\eta)=x$, and $x\mapsto i_x$ is computable.

For idempotence, given \Art's strategy $\tau$ in $\mathfrak{G}(h)$, define the new strategy $\tau^\star$ in $\mathfrak{G}(h)$ as follows:
Given a sequence ${\tt p}_1,\dots,{\tt p}_n$, if there exists $i\leq n$ such that $\tau({\tt p}_1,\dots,{\tt p}_i)=\langle{\tt halt},\tau'\rangle$, define $\tau^\star({\tt p}_1,\dots,{\tt p}_n)=\tau'({\tt p}_{i+1},\dots,{\tt p}_n)$ for the least such $i$.
If there is no such $i$, put $\tau^\star({\tt p}_1,\dots,{\tt p}_n)=\tau({\tt p}_1,\dots,{\tt p}_n)$.
If $\tau'\in h^\Game(\tau\mid\eta)$ and $y\in h^\Game(\tau'\mid\eta')$ then, by letting $\zeta^\star$ be the concatenation of $\eta$ and $\eta'$ as above, we get $y\in h^\Game(\tau^\star\mid\eta^\star)$.
\end{proof}

In particular, the operator $\Game\colon g\mapsto g^\Game$ can be viewed as an order-preserving map from $ ex\mathcal{MR}ed$ to $ ex\mathcal{MR}ed^{{\rm ct},\eta,\mu}$.
Conversely, as in \cite{Wes21}, one can also see that:
\begin{prop}
If an extended predicate $g$ is computably transparent, inflationary, and idempotent, then $g^\Game\leq_{eW}g$.
\end{prop}

\begin{proof}
(Sketch)
The proof is almost the same as Westrick's proof \cite{Wes21} of Fact \ref{fact:Westrick}.
To prove the assertion, we construct a computable function $\Phi^\ast$, where note that, by the recursion theorem, we can refer to  $\Phi^\ast$ itself during the construction of  $\Phi^\ast$.
Since $g$ is computationally transparent and idempotent, a computation that makes two queries to $g$ can be simulated by a computation that makes one query to $g$.
Note that a given \Art-strategy $\tau$ in $\mathfrak{G}(g)$ is of the form $\tau(\alpha)=\langle\tau_0(\alpha),\tau_1(\alpha)\rangle$, where $\tau_0(\alpha)$ is either {\tt Query} or {\tt Halt}, and consider the following two-step computation.
\[
\begin{array}{|ccc|}\hline 
 \Me & \Ar & \Ni \\[0.5em]
				& (\tau_1({{\tt p}_1},\dots,{\tt p}_n) \mid & {z_n})\in{\rm dom}(g) \\
{{\tt p}_{n+1}}\in g(\tau({{\tt p}_1},\dots,{\tt p}_n) \mid {z_n})	& & \\	 
				& (\Phi^\ast(\tau,{{\tt p}_1},\dots,{\tt p}_n,{\tt p}_{n+1}) \mid & {z_{n+1}})\in{\rm dom}(g) \\
\alpha\in g(\Phi^\ast(\tau,{{\tt p}_1},\dots,{\tt p}_{n+1})  \mid {z_{n+1}})	& & \\	 	 
 		& \langle {\tt p}_{n+1},\alpha\rangle &  \\[0.5em] \hline
\end{array}
\]

Here, if $\tau_0({\tt p}_1,\dots,{\tt p}_n,{\tt p}_{n+1})={\tt Halt}$ then $\Phi^\ast(\tau,{{\tt p}_1},\dots,{\tt p}_n,{\tt p}_{n+1})$ above is replaced with a fixed computable input in the domain of $g$.
Such an input exists since $g$ is inflationary.
In this case, we further replace the last output $\langle {\tt p}_{n+1},\alpha\rangle$ above with $\langle {\tt p}_{n+1},{\tt Halt}\rangle$.
Note that the query-making part of \Art's strategy above consists of $\tau_1({\tt p}_1,\dots,{\tt p}_n)$ and $\lambda y.\Phi^\ast(\tau,{\tt p}_1,\dots,{\tt p}_n,y)$.
Therefore, the only public information needed to make the two queries is $(\tau,{\tt p}_1,\dots,{\tt p}_n)$.
Since there is an effective way to convert this into a single query, we write $\Phi^\ast(\tau,{\tt p}_1,\dots,{\tt p}_n)$ for the public input part of the query.

Let us see how this one-query computation works.
If we ask for a solution of $g$ with respect to the pair of $\Phi^\ast(\tau,\ep)$ (where $\ep$ is an empty sequence) and an appropriate secret input, oracle should answer ${\tt p}_1$ and a solution $\alpha$ of $g$ with respect to the pair of $\Phi^\ast(\tau,{\tt p}_1)$ and a secret input.
If this secret input is appropriate, this value $\alpha$ is the pair of ${\tt p}_2$ and a solution $\alpha'$ of $g$ with respect to the pair of $\Phi^\ast(\tau,{\tt p}_1,{\tt p}_2)$ and a secret input.
Repeating this process, we obtain a sequence $\langle{\tt p}_1,{\tt p}_2,\dots,{\tt p}_n,{\tt Halt}\rangle$.
Applying \Art's strategy $\tau$ to this sequence, since $\tau({\tt p}_1,\dots,{\tt p}_n)=\langle{\tt Halt},{\tt q}_n\rangle$, one can compute a solution ${\tt q}_n\in g^\Game(\tau)$ with a single query to $g$.

The question that remains is whether appropriate secret inputs always appear in this disassembly process.
Now, for secret input, assuming that $z_n$ depends on some $z'_1,\dots,z'_{n-1}$, \Nim's strategy above depends only on $(\tau,{\tt p}_1,\dots,{\tt p}_n,z'_1,\dots,z'_{n-1})$, the first move is to take an appropriate $z_n$, and the next move is an appropriate map ${\tt p}_{n+1}\mapsto z_{n+1}$.
Thus, the $z_n'$ guaranteed by the inner secret reduction for $g\circ g\leq_{eW}g$ depends only on $(\tau,{\tt p}_1,\dots,{\tt p}_n,z'_1,\dots,z'_{n-1})$.
The $p_{n+1}$ and $\alpha$ obtained as solutions to $g$ with respect to this secret input are for the appropriate $z_n$ and $z_{n+1}$ as shown in the above table.
\end{proof}

By Proposition \ref{thm:computably-transparent-many-one-eWeihx}, this is equivalent to $g^\Game\leq_{em}g$.
Therefore:

\begin{cor}\label{cor:adjoint-eTW}
The operator $\Game\colon ex\mathcal{MR}ed\to ex\mathcal{MR}ed^{{\rm ct},\eta,\mu}$ is left adjoint to the inclusion $i\colon ex\mathcal{MR}ed^{{\rm ct},\eta,\mu}\mono ex\mathcal{MR}ed$.
\qed
\end{cor}

\begin{center}
\includegraphics[width=15cm]{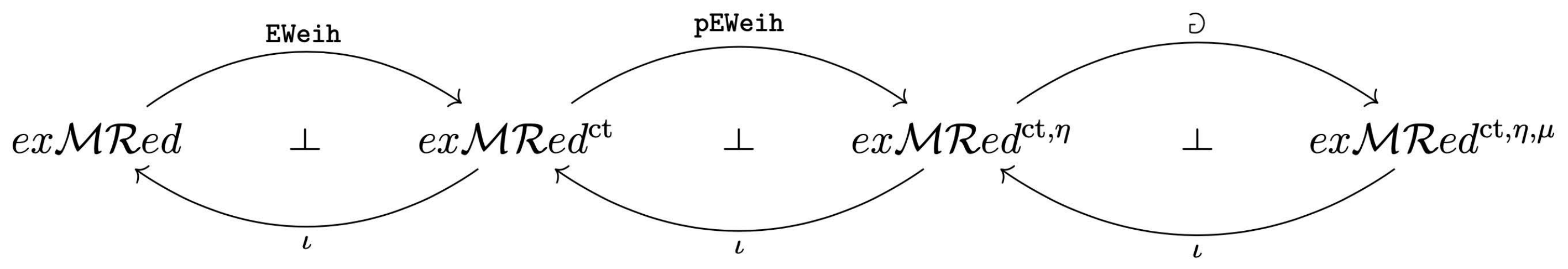}
\end{center}

\begin{cor}\label{prop:computably-transparent-many-one-TWeih-mm}
The Heyting algebra of many-one degrees of computably transparent, inflationary, idempotent, extended predicates on $\tpbf{N}$ is isomorphic to the LT-degrees (the extended Turing-Weihrauch degrees).
\qed
\end{cor}



\subsection{Modality}\label{sec:mmmap-modality}

Using this notion, operations on $\Omega$ that do not necessarily preserve intersections can also be understood as properties related to oracle.
The discussion that follows is a further elaboration of the arguments in \cite{LvO,Kih21}.
Lee-van Oosten \cite{LvO} constructed a canonical extended predicate $U_j\pcolon\tpbf{N}\times\Omega\tto\tpbf{N}$ from a given operator $j\colon\Omega\to\Omega$ as follows:
\begin{align*}
{\rm dom}(U_j)=\{(x\mid p)\in\tpbf{N}\times\Omega:x\in j(p)\},
& &
U_j(x\mid p)=p.
\end{align*}

Compare also with Definition \ref{def:operation-to-pamulti}.
Since the intuitive meaning of this definition is difficult to grasp, we will analyze it with a slight restriction on $j$.
For an operation $j\colon\Omega\to\Omega$, we say that $j$ is {\em $\subseteq$-monotone} if, for any $p,q\in\Omega$, $q\subseteq p$ implies $j(q)\subseteq j(p)$.
If $j$ is $\subseteq$-monotone, it is easy to verify that the following holds.
\[x\in j(p)\iff (\exists q\in\Omega)\;U_j(x\mid q)\subseteq p\]

The forward direction is obvious since $x\in j(p)$ implies $U_j(x\mid p)=p$.
For the backward direction, by definition, $U_j(x\mid q)\subseteq p$ implies $x\in j(q)$ and $q\subseteq p$, so $\subseteq$-monotonicity ensures $x\in j(p)$.

Compare also with the equivalence (\ref{equ:from-modality-to-oracle-modest-case}) in Theorem \ref{thm:modality-to-oracle-various-preserve}.
The idea is to read the expression $x\in j(p)$ as ``If $x$ is given as input to the universal oracle computation corresponding to $j$, then $p$ can be solved under some secret advice''.
Let us use this to give the inverse construction.
Given an extended predicate $U\pcolon\tpbf{N}\times\Lambda\tto\tpbf{N}$, define a $\subseteq$-monotone operator $j_U\colon\Omega\to\Omega$ as follows:
\[j_U(p)=\{x\in\tpbf{N}:(\exists q\in\Lambda)\;[(x\mid q)\in{\rm dom}(U)\mbox{ and }U(x\mid q)\subseteq p]\}.\]

Note that this inverse construction differs from that used in \cite{LvO,Kih21}.
This simplification in this article is made possible by our use of the notion of computable transparency.

\begin{prop}\label{prop:transform-equ-bimap}
For any extended predicate $U\pcolon\tpbf{N}\times\Lambda\to\tpbf{N}$, $U_{j_U}$ is equivalent to $U$.
Conversely, for any $\subseteq$-monotone operation $j\colon \Omega\to\Omega$, we have $j_{U_j}=j$.
Moreover, any monotone operation is $r$-equivalent to a $\subseteq$-monotone operation.
\end{prop}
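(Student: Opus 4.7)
The plan is to verify each clause by direct unwinding of the definitions, with a small rectification construction needed for the third. In the first clause I read ``equivalent'' as extended $m$-equivalence $\equiv_{em}$, which is the notion relevant to the degree-theoretic development of this section; note that literal set-equality of output ranges fails in general (the image of $U_{j_U}(x\mid -)$ is the upward closure, under $\supseteq$, of that of $U(x\mid -)$), but both directions of reduction are tracked by the identity code.

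For $U \equiv_{em} U_{j_U}$ I would exhibit extended $m$-morphisms in both directions, each tracked by the identity. From $U$ to $U_{j_U}$: given $(x\mid q) \in \mathrm{dom}(U)$, take $p := U(x\mid q)$ as the secret advice for $U_{j_U}$; by construction $x \in j_U(p)$ with $q$ as internal witness, so $(x\mid p) \in \mathrm{dom}(U_{j_U})$ and $U_{j_U}(x\mid p) = p = U(x\mid q)$. Conversely, if $(x\mid p) \in \mathrm{dom}(U_{j_U})$ then $x \in j_U(p)$, which unpacks to some $q$ with $(x\mid q) \in \mathrm{dom}(U)$ and $U(x\mid q) \subseteq p = U_{j_U}(x\mid p)$; that same $q$ serves as advice on the $U$-side.

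For $j_{U_j} = j$ when $j$ is $\subseteq$-monotone, direct computation gives
\[
j_{U_j}(p) = \{x : \exists q\, (x \in j(q)\ \text{and}\ q \subseteq p)\}.
\]
Taking $q := p$ shows $j(p) \subseteq j_{U_j}(p)$, and $\subseteq$-monotonicity supplies the reverse inclusion.

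Finally, given any monotone $j$, define $\hat{j}(p) := \bigcup_{q \subseteq p} j(q)$, which is $\subseteq$-monotone by construction. The reduction $j \leq_r \hat{j}$ is witnessed by the identity (take $q := p$). For $\hat{j} \leq_r j$, fix a realizer ${\tt u}$ of monotonicity of $j$, so that whenever ${\tt d}$ realizes $q \arr p$ the term ${\tt u} \ast {\tt d}$ realizes $j(q) \arr j(p)$. When $q \subseteq p$, the identity ${\tt id} \in N$ realizes $q \arr p$; hence $\lambda x.\, {\tt u} \ast {\tt id} \ast x$ realizes $\hat{j}(p) \arr j(p)$ uniformly in $p$. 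The delicate point here is non-uniformity: the existential witness $q$ for $x \in \hat{j}(p)$ need not be known by the realizer because the action of ${\tt u} \ast {\tt id}$ is the same for every $q \subseteq p$ at once, which is precisely what makes the argument go through.
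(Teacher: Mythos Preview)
For the second and third clauses your argument coincides with the paper's: the paper also computes $j_{U_j}(p)=\{x:(\exists q\subseteq p)\;x\in j(q)\}$ and uses $\subseteq$-monotonicity for the equality, and for the last clause it shows $j\equiv_r j_{U_j}$ via the identity in one direction and $\lambda x.\,{\tt u}\ast{\tt i}\ast x$ in the other --- your $\hat{j}$ is literally $j_{U_j}$.

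For the first clause the paper does \emph{not} use $\equiv_{em}$: it intends the strict notion defined earlier in the section, namely $\{U(x\mid q):q\in\Lambda\}=\{U_{j_U}(x\mid p):p\in\Omega\}$ as sets for every $x$. Your observation that this fails (the right-hand side is the $\supseteq$-upward closure of the left) is correct, and indeed the paper's own argument only ever produces elements of the form $U_{j_U}(x\mid U(x\mid q))=U(x\mid q)$, which establishes the inclusion $\{U(x\mid q)\}_q\subseteq\{U_{j_U}(x\mid p)\}_p$ but not the reverse; the reverse is simply false (take $\Lambda=\{*\}$ and $U(x\mid *)=\{0\}$, so that $\{0,1\}$ lies in the right-hand set but not the left). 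Your $\equiv_{em}$ reading with identity-tracked reductions is the right repair, and it is all that the later correspondence results require, since they only access $U$ through $j_U$, and $j_U=j_{U_{j_U}}$.
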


\begin{proof}
Given $x\in\tpbf{N}$ and $p\in\Omega$, $U_{j_U}(x\mid p)$ is defined if and only if $x\in j_U(p)$ if and only if $U(x\mid q)\subseteq p$ for some $q\in\Lambda$ by definition.
For such $q$, we have $U(x\mid q)\subseteq p=U_{j_U}(x\mid p)$; hence $\{U(x\mid p)\}_{p\in\Lambda}\subseteq\{U_{j_U}(x\mid p)\}_{p\in\Omega}$.
Moreover, as $U(x\mid q)\subseteq U(x\mid q)$, by the above equivalence, $U_{j_U}(x\mid U(x\mid q))$ is defined, and equal to $U(x\mid q)$ by canonicity.
This implies that $\{U_{j_U}(x\mid p)\}_{p\in\Omega}\subseteq\{U(x\mid p)\}_{p\in\Lambda}$; hence $U_{j_U}$ is equivalent to $U$.

Next, let $j\colon\Omega\to\Omega$ be an operation.
Then one can easily see the following equivalences:
\begin{align*}
x\in j_{U_j}(p)
&\iff (\exists q\in\Omega)\;U_j(x\mid q)\subseteq p\\
&\iff (\exists q\subseteq p)\;(x\mid q)\in{\rm dom}(U_j)
\iff (\exists q\subseteq p)\;x\in j(q)
\end{align*}

If $j$ is $\subseteq$-monotone, the last condition is equivalent to $x\in j(p)$.
Hence, $j_{U_j}=j$.
To verify the last assertion, note that $j_{U_j}$ is always $\subseteq$-monotone.
Thus, it suffices to show $j_{U_j}\equiv_r j$ for any monotone operation $j$.
By the above equivalence (with $q=p$), $x\in j(p)$ implies $x\in j_{U_j}(p)$, so the identity map witnesses $j\leq_r j_{U_j}$.
Conversely, if $x\in j_{U_j}(p)$ then $x\in j(q)$ for some $q\subseteq p$ by the above equivalence.
Assume that monotonicity of $j$ is witnessed by ${\tt u}\in \tplf{N}$.
As the identity map ${\tt i}$ realizes $q\arr p$, ${\tt u}\ast{\tt i}$ realizes $j(q)\arr j(p)$.
Hence, ${\tt u}\ast{\tt i}\ast x\in j(p)$.
Thus, $j_{U_j}\leq_r j$ is witnessed by $\lambda x.{\tt u}\ast{\tt i}\ast x$.
\end{proof}

\begin{lemma}\label{prop:compos-bifunction}
For any extended predicate $U$, $j_{U\circ U}=j_U\circ j_U$.
\end{lemma}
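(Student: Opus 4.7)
The plan is to prove both inclusions by directly unfolding the definitions of the two operations. For any $p \in \Omega$, the statement $x \in j_{U \circ U}(p)$ expands to: there exist $q \in \Lambda$ and $\str \colon \tpbf{N} \to \Lambda$ such that $(x \mid q, \str) \in \mathrm{dom}(U \circ U)$ and $(U \circ U)(x \mid q, \str) \subseteq p$. By the definition of composition of m.m.~maps given just above, this is equivalent to the conjunction: $(x \mid q) \in \mathrm{dom}(U)$; for every $y \in U(x \mid q)$, $(y \mid \str(y)) \in \mathrm{dom}(U)$; and $\bigcup_{y \in U(x \mid q)} U(y \mid \str(y)) \subseteq p$. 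On the other side, $x \in j_U(j_U(p))$ unfolds to: there exists $q \in \Lambda$ with $(x \mid q) \in \mathrm{dom}(U)$ such that $U(x \mid q) \subseteq j_U(p)$, i.e., for every $y \in U(x \mid q)$ there exists $q_y \in \Lambda$ with $(y \mid q_y) \in \mathrm{dom}(U)$ and $U(y \mid q_y) \subseteq p$.

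For the forward inclusion, starting from a witness $(q, \str)$ for $x \in j_{U \circ U}(p)$, I would use the same $q$ and, for each $y \in U(x \mid q)$, take $q_y := \str(y)$; the containment $\bigcup_y U(y \mid \str(y)) \subseteq p$ immediately yields $U(y \mid q_y) \subseteq p$, which is exactly the witness for $x \in j_U(j_U(p))$. For the reverse inclusion, given $q$ and a family $(q_y)_{y \in U(x \mid q)}$ witnessing $x \in j_U(j_U(p))$, I would (in the meta-theory) choose some fixed $q_\star \in \Lambda$ and define $\str \colon \tpbf{N} \to \Lambda$ by $\str(y) = q_y$ for $y \in U(x \mid q)$ and $\str(y) = q_\star$ otherwise. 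Then $(x \mid q, \str)$ satisfies the clauses of $\mathrm{dom}(U \circ U)$, and $(U \circ U)(x \mid q, \str) = \bigcup_{y \in U(x \mid q)} U(y \mid \str(y)) \subseteq p$, giving $x \in j_{U \circ U}(p)$.

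The only subtlety, which I expect to be the main (albeit minor) obstacle, is the need to extend a partial choice function to a total map $\str \in [\tpbf{N} \to \Lambda]$; this requires $\Lambda$ to be nonempty, but the existence of the witness $q$ already guarantees this, so the extension is harmless. Everything else is a mechanical match of the two unfolded conditions, so no use of monotonicity or computable transparency of $U$ is needed, and the identity $j_{U \circ U} = j_U \circ j_U$ holds on the nose.
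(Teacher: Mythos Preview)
Your proposal is correct and follows essentially the same approach as the paper's proof: both directions are obtained by directly unfolding the definitions of $j_U$ and of the composition $U\circ U$, and matching the witnesses. The only difference is that you are slightly more careful than the paper about extending the partial choice $y\mapsto q_y$ to a total map $\str\colon\tpbf{N}\to\Lambda$ (using a default value $q_\star$), whereas the paper simply writes ``put $\eta(y)=p_y''$'' without commenting on totality; your observation that the existence of the witness $q$ guarantees $\Lambda\neq\emptyset$ is exactly what justifies this step.
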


\begin{proof}
First, one can see that $x\in j_{U\circ U}(p)$ if and only if there exist $p'$ and $\eta$ such that $U\circ U(x\mid p',\eta)\subseteq p$.
By the definition of the composition, the last condition is equivalent to being $U(y\mid \eta(y))\subseteq p$ for any $y\in U(x\mid p')$.
By the definition of $j_U$, this implies that $y\in j_U(p)$ for any $y\in U(x\mid p')$; hence $U(x\mid p')\subseteq j_U(p)$.
By the definition of $j_U$ again, we get $x\in j_U(j_U(p))$.
Therefore, $j_{U\circ U}(p)\subseteq j_U\circ j_U(p)$.

Conversely, by applying the definition of $j_U$ twice, one can see that $x\in j_U\circ j_U(p)$ if and only if there exists $p'$ such that for any $y\in U(x\mid p')$ there exists $p''_y$ such that $U(y\mid p_y'')\subseteq p$.
Then put $\eta(y)=p_y''$.
By the definition of the composition, it is straightforward to see that $U\circ U(x\mid p',\eta)\subseteq p$ for such $p'$ and $\eta$.
By the definition of $j_{U\circ U}$, we get $x\in j_{U\circ U}(p)$; hence $j_U\circ j_U(p)\subseteq j_{U\circ U}(p)$.
\end{proof}

The following is a combination of the analogue of Theorem \ref{prop:transparent-to-modality} and the extended predicate version of Theorem \ref{thm:modality-to-oracle-various-preserve}.

\begin{theorem}\label{thm:modality-to-oracle-various-preserve-bimap}
Let $U\pcolon\tpbf{N}\times\Lambda\tto\tpbf{N}$ be an extended predicate.
If $U$ is computably transparent, then $j_U$ is monotone.
Moreover, if $U$ is inflationary, so is $j_U$; and if $U$ is idempotent, so is $j_U$.

Conversely, let $j\colon\Omega\to\Omega$ be an operation.
If $j$ is monotone, $U_j$ is computably transparent; if $j$ is inflationary, so is $U_j$; and if $j$ is monotone and idempotent, so is $U_j$.
\end{theorem}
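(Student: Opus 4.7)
My plan is to adapt the arguments of Theorems \ref{prop:transparent-to-modality} and \ref{thm:modality-to-oracle-various-preserve} to the m.m.~map setting; the only novelty is that whenever one asserts $x \in j_U(p)$ or $x \in j(q)$, the existentially quantified secret input from $\Lambda$ or $\Omega$ must be tracked carefully.

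For the forward direction I will first handle monotonicity: given $x \in j_U(p)$, pick a witness $p' \in \Lambda$ with $U(x \mid p') \subseteq p$; if ${\tt a}$ realizes $p \arr q$, then computable transparency of $U$ via ${\tt u}$ supplies some $q' \in \Lambda$ making $U({\tt u} \ast {\tt a} \ast x \mid q') \subseteq {\tt a} \ast U(x \mid p') \subseteq q$, so ${\tt u} \ast {\tt a} \ast x \in j_U(q)$, and $\lambda a x.\, {\tt u} \ast a \ast x$ realizes monotonicity. Inflationarity of $j_U$ is immediate from the realizer $\eta$ of $U$'s inflationarity. For idempotency I will take $x \in j_U(j_U(p))$, extract $p' \in \Lambda$ with $U(x \mid p') \subseteq j_U(p)$, and for each $y \in U(x \mid p')$ choose (using choice) a secret $\str(y) \in \Lambda$ witnessing $U(y \mid \str(y)) \subseteq p$; the definition of composition then yields $(x \mid p', \str) \in {\rm dom}(U \circ U)$ with $U \circ U(x \mid p', \str) \subseteq p$, and idempotency of $U$ via $\mu$ produces a secret $q$ witnessing $\mu \ast x \in j_U(p)$.

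For the converse, the strategy is to promote the given realizers of properties of $j$ to realizers of the corresponding properties of $U_j$. To establish computable transparency, given ${\tt f}$ and $(x \mid p) \in {\rm dom}(U_j)$ with ${\tt f} \ast y$ defined for every $y \in p$, I will take $q := \varphi_{\tt f}[p]$; since ${\tt f}$ realizes $p \arr q$, monotonicity of $j$ (via ${\tt u}$) delivers ${\tt u} \ast {\tt f} \ast x \in j(q)$, so $U_j({\tt u} \ast {\tt f} \ast x \mid q) = q = {\tt f} \ast U_j(x \mid p)$. For inflationarity of $U_j$, substitute $p = \{x\}$ into the realizer of $p \arr j(p)$, giving $\eta \ast x \in j(\{x\})$ and hence $U_j(\eta \ast x \mid \{x\}) = \{x\}$. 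For idempotency I will compute $q := U_j \circ U_j(x \mid p, \str) = \bigcup_{y \in p} \str(y)$ from the composition definition, then use monotonicity of $j$ to transfer the identity realizer of $\str(y) \arr q$ (valid for each $y \in p$) into a single realizer of $p \arr j(q)$, apply monotonicity of $j$ once more to get $j(p) \arr j(j(q))$, and finally compose with $\mu$ to land in $j(q)$, yielding a realizer of idempotency for $U_j$.

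The main obstacle is the idempotency clause: one has to correctly shuffle the secret-input witnesses between the two sides of the correspondence, essentially reproducing the content of Lemma \ref{prop:compos-bifunction} inside the realizer calculus, and it is here that the hypothesis of monotonicity (not just idempotency) of $j$ becomes indispensable for transferring the pointwise realizers $\str(y) \arr q$ into a uniform realizer of $p \arr j(q)$. The remaining cases are straightforward translations of the single-layer argument in Theorems \ref{prop:transparent-to-modality} and \ref{thm:modality-to-oracle-various-preserve}, with the secret-input bookkeeping being the only new ingredient.
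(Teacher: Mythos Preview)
Your proposal is correct and follows essentially the same approach as the paper's proof. The only cosmetic differences are that the paper invokes Lemma~\ref{prop:compos-bifunction} for the forward idempotency step (whereas you unfold its content inline), and the paper leaves the secret witnesses in the converse direction implicit (writing ``for some $q$'') while you name them explicitly; in particular your realizer $\lambda x.\,\mu\ast({\tt u}\ast({\tt u}\ast{\tt i})\ast x)$ for the idempotency of $U_j$ coincides exactly with the paper's.
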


\begin{proof}
Let $U$ be computably transparent.
To see that $j_U$ is monotone, assume that ${\tt a}$ realizes $p\arr q$.
Given ${\tt x}\in j_U(p)$, by the definition of $j_U$, we have $U({\tt x}\mid p')\subseteq p$ for some $p'$.
Hence, ${\tt a}\ast U({\tt x}\mid p')\subseteq q$.
By computable transparence, there exists $p''$ such that $U({\tt u}\ast{\tt a}\ast{\tt x}\mid p'')\subseteq {\tt a}\ast U({\tt x}\mid p')\subseteq q$.
Again, by the definition of $j_U$, we have ${\tt u}\ast{\tt a}\ast{\tt x}\in j_U(q)$.
Hence, $\lambda ax.{\tt u}\ast a\ast x\in \tplf{N}$ realizes $(p\arr q)\arr (j_U(p)\arr j_U(q))$.

Let $U$ be inflationary.
In this case, ${\tt x}\in p$ implies $U(\eta\ast{\tt x}\mid p')\subseteq\{{\tt x}\}\subseteq p$ for some $p'$.
By the definition of $j_U$, we have $\eta \ast{\tt x}\in j_U(p)$.
Therefore, $\lambda x. \eta\ast x\in \tplf{N}$ realizes $p\arr U(p)$.

Let $U$ be idempotent.
Assume $x\in j_U(j_U(p))$.
By Lemma \ref{prop:compos-bifunction}, we have $x\in j_{U\circ U}(p)$, which means $U\circ U(x\mid p')\subseteq p$ for some $p'$.
By idempotence, there exists $p''$ such that $U(\mu\ast x\mid p'')\subseteq U\circ U(x\mid p')\subseteq p$.
Thus, by the definition of $j_U$, we get $\mu\ast x\in j_U(p)$.
Hence, $\mu\in \tplf{N}$ realizes $j_U(j_U(p))\arr j_U(p)$.

Next, assume that $j$ is monotone, realized by ${\tt u}\in \tplf{N}$.
To show that $U_j$ is computably transparent, let ${\tt f},{\tt x}\in\tpbf{N}$ and $p\in\Omega$ be such that $f\ast U_j({\tt x}\mid p)$ is defined.
In particular, $({\tt x}\mid p)\in {\rm dom}(U_j)$, so ${\tt x}\in j(p)$ by the definition of $U_j$.
Next, note that ${\tt f}$ clearly realizes $U_j({\tt x}\mid p)\arr {\tt f}\ast U_j({\tt x}\mid p)$.
By monotonicity, ${\tt u}\ast{\tt f}$ realizes $j(U_j({\tt x}\mid p))\arr j({\tt f}\ast U_j({\tt x}\mid p))$.
As $U_j$ is canonical, we have ${\tt x}\in j(p)=j(U_j({\tt x}\mid p))$; hence ${\tt u}\ast {\tt f}\ast {\tt x}\in j({\tt f}\ast U_j({\tt x}\mid p))$.
In particular, $U_j({\tt u}\ast {\tt f}\ast {\tt x}\mid q)\subseteq {\tt f}\ast U_j({\tt x}\mid p)$ for some $q$, which means that $U_j$ is computably transparent.

Assume that $j$ is inflationary, realized by $\eta\in \tplf{N}$.
In particular, $\eta$ realizes $\{{\tt x}\}\arr j(\{{\tt x}\})$, so we have $\eta\ast{\tt x}\in j(\{\tt x\})$.
Then, by the definition of $U_j$, in particular, we have $p\in\Omega$ such that $U_j(\eta\ast{\tt x}\mid p)\subseteq\{\tt x\}$, which means that $U_j$ is inflationary.

Assume that $j$ is idempotent, realized by $\mu\in \tplf{N}$.
In particular, $\mu$ realizes $j\circ j(U_j\circ U_j({\tt x}\mid p,\eta))\arr j(U_j\circ U_j({\tt x}\mid p,\eta))$.
Note that from the definition of composition of multifunctions, if $({\tt x}\mid p,\eta)\in{\rm dom}(U_j\circ U_j)$ then ${\tt z}\in U_j({\tt x}\mid p)$ implies $({\tt z}\mid \eta({\tt z}))\in{\rm dom}(U_j)$.
Then, for such ${\tt x}$ and ${\tt z}$ we have $U_j({\tt z}\mid\eta({\tt z}))\subseteq U_j\circ U_j({\tt x}\mid p,\eta)$ by the definition of composition.
As $({\tt z}\mid\eta({\tt z}))\in{\rm dom}(U_j)$, we have ${\tt z}\in j(\eta({\tt z}))$ by the definition of $U_j$ and $\eta({\tt z})\subseteq U_j\circ U_j({\tt x}\mid p,\eta)$ by canonicity.
The identity map ${\tt i}$ realizes $\eta({\tt z})\arr U_j\circ U_j({\tt x}\mid p,\eta)$, so by monotonicity, ${\tt u}\ast{\tt i}\ast{\tt z}\in j(U_j\circ U_j({\tt x}\mid p,\eta))$.
Hence, ${\tt u}\ast{\tt i}$ realizes $U_j({\tt x}\mid p)\arr j(U_j\circ U_j({\tt x}\mid p,\eta))$.
By monotonicity, ${\tt u}\ast({\tt u}\ast{\tt i})$ realizes $j(U_j({\tt x}\mid p))\arr j\circ j(U_j\circ U_j({\tt x}\mid p,\eta))$.
Recall that $({\tt x}\mid p)\in{\rm dom}(U_j)$ implies $x\in j(p)=j(U_j({\tt x}\mid p))$, and thus ${\tt u}\ast({\tt u}\ast{\tt i})\ast{\tt x}\in j\circ j(U_j\circ U_j({\tt x}\mid p,\eta))$.
Hence, we obtain $\mu\ast({\tt u}\ast({\tt u}\ast{\tt i})\ast{\tt x})\in j(U_j\circ U_j({\tt x}\mid p,\eta))$.
By the definition of $U_j$, in particular, we have $U_j(\mu\ast({\tt u}\ast({\tt u}\ast{\tt i})\ast{\tt x})\mid q)\subseteq U_j\circ U_j({\tt x}\mid p,\eta)$ for some $q$.
This verifies that $U_j$ is idempotent.
\end{proof}

\subsection{Correspondence}

Let us now characterize reducibility notions on extended predicates as properties for operations on truth values.
First, we extend the correspondence between $m$-reducibility and $r$-reducibility (Proposition \ref{prop:two-category-isomorphic}) to extended predicates.

\begin{prop}\label{prop:two-category-isomorphic-bimap}
For any extended predicates $f$ and $g$, an extended $m$-morphism $e\colon f\to g$ can be thought of as an $r$-morphism $e\colon j_f\to j_g$.
Conversely, for any operations $j,k\colon\Omega\to\Omega$, an $r$-morphism $e\colon j\to k$ can be thought of as an extended $m$-morphism $e\colon U_j\to U_k$.
\end{prop}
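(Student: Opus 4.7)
The plan is to verify the two directions by essentially unwrapping definitions, since the heavy lifting has already been done in Theorem \ref{thm:modality-to-oracle-various-preserve-bimap} and in the analogous single-layered Proposition \ref{prop:two-category-isomorphic}. The central observation is the characterization $x \in j_U(p) \iff (\exists q)\,U(x \mid q) \subseteq p$ built into the definition of $j_U$, together with the canonicity $U_j(x \mid p) = p$ whenever $x \in j(p)$.

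For the forward direction, I would assume $e \in N$ is an extended $m$-morphism from $f$ to $g$ and show it realizes $\forall p\in\Omega.\,j_f(p) \arr j_g(p)$. Fix $p \in \Omega$ and suppose $x \in j_f(p)$. By definition, pick $q \in \Lambda$ such that $(x \mid q) \in \mathrm{dom}(f)$ and $f(x \mid q) \subseteq p$. Applying the extended $m$-morphism property to this particular $q$, one gets $q'$ with $(\varphi_e(x) \mid q') \in \mathrm{dom}(g)$ and $g(\varphi_e(x) \mid q') \subseteq f(x \mid q) \subseteq p$, which is precisely $\varphi_e(x) \in j_g(p)$. Since the chain of inclusions is independent of the choice of $p$, the same $e$ works uniformly, so $e$ is an $r$-morphism.

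For the converse direction, I would assume $e$ realizes $\forall p \in \Omega.\,j(p) \arr k(p)$ and check the extended $m$-morphism condition from $U_j$ to $U_k$. Let $(x \mid p) \in \mathrm{dom}(U_j)$, which by definition means $x \in j(p)$. Applying the $r$-morphism property with this $p$, we obtain $\varphi_e(x) \in k(p)$, i.e., $(\varphi_e(x) \mid p) \in \mathrm{dom}(U_k)$. By canonicity of $U_j$ and $U_k$, we have $U_k(\varphi_e(x) \mid p) = p = U_j(x \mid p)$. Choosing $q = p$ as the required witness, the extended $m$-morphism condition holds.

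There is no real obstacle here; the proposition is essentially a bookkeeping lemma stating that the passage between m.m.~maps and operations on truth values is functorial on morphisms. The only subtle point worth emphasizing is that the existential quantifier over secret inputs in the definitions of $j_U$ and of extended $m$-morphism matches perfectly with the existential ``there exists $q$'' needed to certify $x \in j_U(p)$; in the converse direction, canonicity of $U_j$ bypasses the need to search for a secret input, since $p$ itself always serves as the required witness.
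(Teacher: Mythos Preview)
Your proposal is correct and follows essentially the same route as the paper's proof: both directions are handled by unwinding the definitions of $j_U$ and $U_j$, using the existential witness for the secret input in the forward direction and canonicity of $U_j$ in the converse. Your write-up is in fact slightly cleaner than the paper's in the converse direction, since you spell out that the required witness is simply $q=p$, whereas the paper phrases the same step as ``setting $p=U_j({\tt x}\mid p)$''.
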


\begin{proof}
Assume that $e$ is an $m$-morphism from $f$ to $g$.
Then for any $(x\mid p)\in{\rm dom}(f)$ there exists $q$ such that $g(\varphi_e(x)\mid q)\subseteq f(x\mid p)$.
Therefore, by definition, we have $x\in j_f(p)$ iff $f(x\mid p')\subseteq p$ for some $p'$, which implies $g(\varphi_e(x)\mid q')\subseteq p$ for some $q'$, iff $\varphi_e(x)\in j_g(p)$
for any $p\in\Omega$.
This means that $e$ realizes $j_f(p)\arr j_g(p)$, so $e$ is an $r$-morphism from $j_f$ to $j_g$.

Conversely, let $e$ be an $r$-morphism from $j$ to $k$.
Then, for any $p\in\Omega$ and ${\tt x}\in\tpbf{N}$, ${\tt x}\in j(p)$ implies $e\ast {\tt x}\in k(p)$.
By definition, this means that if $U_j({\tt x}\mid p)\downarrow=p$ implies $U_k(e\ast{\tt x}\mid p)\downarrow=p$.
In particular, by setting $p=U_j({\tt x}\mid p)$, we get $U_k(e\ast {\tt x}\mid p)\subseteq U_j({\tt x}\mid p)$.
This means that $e$ is an $m$-morphism from $U_j$ to $U_k$.
\end{proof}

\begin{cor}
The extended $m$-degrees (i.e., the $m$-degrees of predicates on multi-represented spaces) and the $r$-degrees of operations on $\Omega$ are isomorphic.
\qed
\end{cor}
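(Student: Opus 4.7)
The plan is to derive this corollary as a direct consequence of Proposition~\ref{prop:two-category-isomorphic-bimap}, combined with the inversion results established just above Theorem~\ref{thm:modality-to-oracle-various-preserve-bimap} (namely that $U_{j_U}$ is equivalent to $U$, that $j_{U_j}=j$ whenever $j$ is $\subseteq$-monotone, and that every monotone operation is $r$-equivalent to a $\subseteq$-monotone one). Since an m.m.~map can be viewed as a predicate on a multi-represented space (by Observation~\ref{obs:bimap-canonical} and the identification in Section~\ref{sec:secret-input}), it suffices to exhibit mutually inverse order-preserving maps on the quotient posets.

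First, I will define the candidate maps on degrees. Given an m.m.~map $f$, send its extended $m$-degree $[f]_{em}$ to the $r$-degree $[j_f]_r$; and given an operation $j\colon\Omega\to\Omega$, send its $r$-degree $[j]_r$ to the extended $m$-degree $[U_j]_{em}$. The content of Proposition~\ref{prop:two-category-isomorphic-bimap} is that an extended $m$-morphism $e\colon f\to g$ yields an $r$-morphism $e\colon j_f\to j_g$, and conversely; in particular both assignments are monotone, so $f\leq_{em}g$ iff $j_f\leq_r j_g$, and $j\leq_r k$ iff $U_j\leq_{em}U_k$. This immediately shows that the two assignments are well-defined on degrees and preserve and reflect the orderings, which is where I expect the heavy lifting to sit.

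Next I will verify that the two assignments are mutually inverse on degrees. In one direction, for any m.m.~map $U$ we have that $U_{j_U}$ is equivalent to $U$; since equivalent m.m.~maps have the same extended $m$-degree, $[U_{j_U}]_{em}=[U]_{em}$. In the other direction, for a $\subseteq$-monotone $j$ we have $j_{U_j}=j$ on the nose; for a general monotone $j$ we have $j\equiv_r j_{U_j}$ by the last assertion quoted above, so $[j_{U_j}]_r=[j]_r$. Combining, the two assignments compose to the identity on both sides.

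The main obstacle, and the only subtlety, is keeping track of which class of operations on $\Omega$ is meant by ``$r$-degrees of operations.'' Since $j_U$ is always $\subseteq$-monotone (hence in particular monotone) by inspection of its definition, the image of the m.m.~map side lands inside $\subseteq$-monotone operations, and any operation $j$ that is $r$-equivalent to some $j_U$ must itself be $r$-equivalent to a $\subseteq$-monotone one; this is exactly the last clause of the proposition preceding Theorem~\ref{thm:modality-to-oracle-various-preserve-bimap}. So the isomorphism statement should be interpreted (as in Section~\ref{sec:truth-value-reducibility}) after passing to $r$-equivalence classes, where the distinction between monotone and $\subseteq$-monotone operations disappears. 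Once this is pointed out, the proof is a one-line assembly of the ingredients above.
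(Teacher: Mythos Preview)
Your approach matches the paper's: the corollary carries a \qed with no explicit proof, and is intended as an immediate consequence of Proposition~\ref{prop:two-category-isomorphic-bimap} together with the inversion results you cite, exactly as you assemble them.

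You are right to flag the issue about the class of operations. Since every $j_f$ is $\subseteq$-monotone and the identity $j_{U_j}=j$ is only established for $\subseteq$-monotone $j$, the clean isomorphism is with the $r$-degrees of $\subseteq$-monotone operations. One small correction to your last paragraph: the relevant distinction is between \emph{arbitrary} operations and $\subseteq$-monotone ones, not merely between monotone and $\subseteq$-monotone. Indeed, not every operation is $r$-equivalent to a $\subseteq$-monotone one (take $j(p)=\tpbf{N}$ if $p=\{0\}$ and $j(p)=\emptyset$ otherwise; then $j_{U_j}(p)=\tpbf{N}$ whenever $0\in p$, and $j_{U_j}\not\leq_r j$ since $j(\tpbf{N})=\emptyset$). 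So ``operations on $\Omega$'' in the statement should be read as $\subseteq$-monotone operations; this is harmless for the downstream corollaries, which all concern monotone operations anyway.
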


Combining the results of this section and Section \ref{sec:universal-bimap}, various degree notions can be characterized using operations on truth values.
As we have seen that monotone operations on $\Omega$ correspond to computably transparent extended predicates (Proposition \ref{prop:transform-equ-bimap} and Theorem \ref{thm:modality-to-oracle-various-preserve-bimap}), which correspond to extended Weihrauch degrees (Corollary \ref{prop:computably-transparent-many-one-eWeih}), we get the following:


\begin{cor}
The Heyting algebra of the Weihrauch degrees on multi-represented spaces is isomorphic to the $r$-degrees of monotone operations on $\Omega$.
\qed
\end{cor}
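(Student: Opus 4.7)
The plan is to chain together the correspondences developed in this section. By Theorem~\ref{thm:modality-to-oracle-various-preserve-bimap}, the assignments $j\mapsto U_j$ and $U\mapsto j_U$ restrict, respectively, to the class of monotone operations on $\Omega$ and the class of computably transparent m.m.~maps on $\tpbf{N}$. First I would invoke the last clause of the theorem to replace any monotone $j$ by the $r$-equivalent $\subseteq$-monotone representative $j_{U_j}$, so that the round-trip identities $j_{U_j}=j$ and $U_{j_U}\equiv U$ apply on the nose. Then Proposition~\ref{prop:two-category-isomorphic-bimap} shows that extended $m$-morphisms between two such m.m.~maps are the same syntactic data as $r$-morphisms between the corresponding operations. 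Passing to degrees, this gives an order isomorphism between the $r$-degrees of monotone operations on $\Omega$ and the extended $m$-degrees of computably transparent m.m.~maps on $\tpbf{N}$.

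Next I would apply Corollary~\ref{prop:computably-transparent-many-one-eWeih}, which identifies the extended $m$-degrees of computably transparent m.m.~maps with the extended Weihrauch degrees. Finally, the preceding unnumbered corollary (that the Heyting algebra of extended Weihrauch degrees coincides with the Weihrauch degrees on multi-represented spaces, obtained via Observations~\ref{obs:W-multi-predicate} and~\ref{obs:bimap-canonical}) closes the chain. Composing the three isomorphisms yields the desired isomorphism of posets.

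The only genuine obstacle is upgrading the poset isomorphism to a \emph{Heyting algebra} isomorphism. The last two links of the chain are already Heyting algebra isomorphisms by their statements, so the work reduces to the first link $j\leftrightarrow U_j$. For this step I would check that meets and joins are preserved: the meet of two operations on $\Omega$ computed pointwise corresponds, via the constructions, to the coproduct-style m.m.~map $U_j\sqcap U_k$ obtained by presenting the two secret inputs as a disjoint union, and dually for joins; the Heyting implication transfers because both sides inherit their implication from the internal implication $\arr$ on $\Omega=\mathcal{P}(\tpbf{N})$, which is tracked uniformly in a realizer using the evaluation combinator of the underlying relative PCA. The hardest technical check will be the preservation of implication, and for this I expect to rely on an explicit realizer built from ${\tt ev}$, the computable-transparency witness ${\tt u}$ of $U_j$, and the canonicity of the m.m.~maps $U_j, U_k$ produced by Definition~\ref{def:operation-to-pamulti}.
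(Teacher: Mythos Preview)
Your first two paragraphs reproduce exactly the chain of correspondences the paper has in mind: Theorem~\ref{thm:modality-to-oracle-various-preserve-bimap} together with Proposition~\ref{prop:two-category-isomorphic-bimap} gives the order isomorphism between the $r$-degrees of monotone operations and the extended $m$-degrees of computably transparent m.m.~maps; Corollary~\ref{prop:computably-transparent-many-one-eWeih} then identifies the latter with the extended Weihrauch degrees; and the corollary following Observation~\ref{obs:bimap-canonical} identifies these with the Weihrauch degrees on multi-represented spaces. That is precisely why the paper marks the corollary with \qed and no further argument.

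Your third paragraph, however, is based on a misconception. There is no ``genuine obstacle'' in upgrading a poset isomorphism to a Heyting algebra isomorphism: meets, joins, and Heyting implication are all \emph{order-theoretically determined} (the meet is the infimum, the join the supremum, and $a\to b$ is the largest $c$ with $a\wedge c\leq b$). Hence any order isomorphism between two Heyting algebras automatically preserves the entire Heyting structure. The phrase ``Heyting algebra'' in the statement is just naming the structure that the Weihrauch degrees on multi-represented spaces carry (by Bauer's result); once you have the poset isomorphism, the Heyting-algebra isomorphism is free. So the explicit realizer-chasing you propose for meets, joins, and implication is unnecessary, and you should simply drop that paragraph.
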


Similarly, we have seen that monotone, inflationary, operations on $\Omega$ correspond to computably transparent, inflationary, extended predicates (Proposition \ref{prop:transform-equ-bimap} and Theorem \ref{thm:modality-to-oracle-various-preserve-bimap}), which correspond to pointed extended Weihrauch degrees (Corollary \ref{cor:computably-transparent-inflationary-many-one-peWeih}), so we get the following:

\begin{cor}
The Heyting algebra of the pointed Weihrauch degrees on multi-represented spaces is isomorphic to the $r$-degrees of monotone, inflationary, operations on $\Omega$.
\qed
\end{cor}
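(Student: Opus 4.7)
The plan is to chain together three correspondences already in place. First, by the discussion of Section \ref{sec:secret-input} (in particular Observations \ref{obs:W-multi-predicate} and \ref{obs:bimap-canonical}), pointed Weihrauch reducibility on multi-represented spaces is an alternate presentation of pointed extended Weihrauch reducibility on m.m.~maps: every multimap on multi-represented spaces is equivalent to its predicate part, and the pointed variant $\leq_{pW}$ translates directly into $\leq_{peW}$ via $g \mapsto {\rm id} \sqcup g$. So the pointed Weihrauch degrees on multi-represented spaces form the same Heyting algebra as the pointed extended Weihrauch degrees. Second, by the ${\tt pEWeih}$ adjunction just stated (the inflationary analogue of Corollary \ref{prop:computably-transparent-many-one-eWeih}), this Heyting algebra is isomorphic to the extended $m$-degrees on $\mathcal{MMR}ed^{{\rm ct},\eta}$, via $f \mapsto {\tt pEWeih}(f)$.

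It then remains to identify the extended $m$-degrees on $\mathcal{MMR}ed^{{\rm ct},\eta}$ with the $r$-degrees of monotone, inflationary operations on $\Omega$. This is what Theorem \ref{thm:modality-to-oracle-various-preserve-bimap} together with Proposition \ref{prop:two-category-isomorphic-bimap} deliver: the assignment $U \mapsto j_U$ sends computably transparent, inflationary m.m.~maps to monotone, inflationary operations on $\Omega$ and converts extended $m$-morphisms into $r$-morphisms; while $j \mapsto U_j$ sends monotone, inflationary operations back to computably transparent, inflationary m.m.~maps and converts $r$-morphisms into extended $m$-morphisms. The identity $j_{U_j}=j$ (for $\subseteq$-monotone $j$, and up to $r$-equivalence for arbitrary monotone $j$, already noted in the preceding subsection) and the equivalence $U \sim U_{j_U}$ guarantee that this is a bijection of degrees, and surjectivity onto the class of monotone inflationary operations follows because $U_j$ is computably transparent and inflationary whenever $j$ is monotone and inflationary.

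Combining these three steps in order yields an order-isomorphism between the pointed Weihrauch degrees on multi-represented spaces and the $r$-degrees of monotone, inflationary operations on $\Omega$. The main bookkeeping point, rather than any real obstacle, is to verify that the Heyting-algebra structure carried by the pointed extended Weihrauch degrees (inherited from Bauer's Heyting structure on the extended Weihrauch degrees \cite{Bau21}) transports along this chain of isomorphisms. Since Heyting operations on a Heyting algebra are determined entirely by the underlying order, the order-isomorphism suffices, once one observes that $\mathcal{MMR}ed^{{\rm ct},\eta}$ embeds as a reflective sub-preorder of $\mathcal{MMR}ed$ via the left adjoint ${\tt pEWeih}$: binary meets, binary joins, and relative pseudo-complements computed in the target therefore agree with those induced from the ambient Heyting algebra, and the final $U \leftrightarrow j$ passage preserves these order-theoretic operations by naturality.
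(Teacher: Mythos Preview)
Your proposal is correct and follows precisely the chain of correspondences the paper has in mind; indeed, the paper omits the proof entirely (marking it \qed), relying on the sentence just before the corollary that monotone, inflationary operations on $\Omega$ correspond to computably transparent, inflationary m.m.~maps, which in turn correspond to pointed extended Weihrauch degrees. Your three-step decomposition (multi-represented $\leftrightarrow$ m.m.~maps via Observations~\ref{obs:W-multi-predicate} and~\ref{obs:bimap-canonical}; the ${\tt pEWeih}$ adjunction; and the $U\leftrightarrow j$ correspondence of Theorem~\ref{thm:modality-to-oracle-various-preserve-bimap} with Proposition~\ref{prop:two-category-isomorphic-bimap}) is exactly this, spelled out in full, and your final observation that Heyting operations are determined by the order alone is the right way to handle the ``Heyting algebra'' qualifier.
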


We have also seen that Lawvere-Tierney topologies on $\Omega$ correspond to computably transparent, inflationary, idempotent, extended predicates (Proposition \ref{prop:transform-equ-bimap} and Theorem \ref{thm:modality-to-oracle-various-preserve-bimap}), which correspond to LT-degrees (extended Turing-Weihrauch degrees; Corollary \ref{cor:adjoint-eTW}).
This argument gives an alternative proof of the characterization in \cite{Kih21} that the Heyting algebra of LT-degrees is isomorphic to the $r$-degrees of Lawvere-Tierney topologies.
In other words:

\begin{cor}
The Heyting algebra of the Turing-Weihrauch degrees on multi-represented spaces is isomorphic to the $r$-degrees of Lawvere-Tierney topologies.
\qed
\end{cor}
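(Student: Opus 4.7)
The plan is to chain together three isomorphisms that have been assembled in the preceding two sections, obtaining the final statement as a routine composition. First I would identify the Turing-Weihrauch degrees on multi-represented spaces with the extended Turing-Weihrauch (= LT-) degrees: by Observation \ref{obs:W-multi-predicate} (the argument adapts verbatim from Weihrauch to Turing-Weihrauch reducibility) every multimap on multi-represented spaces is Turing-Weihrauch equivalent to its predicate, and by Observation \ref{obs:bimap-canonical} every m.m.~map is in turn equivalent to a canonical one, i.e.\ to a predicate. Combined with the identification of extended Turing-Weihrauch reducibility with LT-reducibility recorded in Example \ref{example:LT-universal-oracle}, this shows that the two Heyting algebras in question are literally the same.

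Next I would invoke the m.m.~map version of the Westrick adjunction stated just before Corollary \ref{prop:computably-transparent-many-one-TWeih} in Section \ref{sec:universal-bimap}, which gives an isomorphism between the LT-degrees and the extended $m$-degrees of computably transparent, inflationary, idempotent m.m.~maps on $\tpbf{N}$. Finally, Theorem \ref{thm:modality-to-oracle-various-preserve-bimap} shows that the transformations $j\mapsto U_j$ and $U\mapsto j_U$ match the triple of conditions (computably transparent, inflationary, idempotent) on m.m.~maps with the defining triple (monotone, inflationary, idempotent) of a Lawvere-Tierney topology on $\Omega$, and Proposition \ref{prop:two-category-isomorphic-bimap} promotes this bijective correspondence to an isomorphism of preorders, since an extended $m$-morphism between $U_j$ and $U_k$ is exactly the data of an $r$-morphism between $j$ and $k$. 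Composing the three yields the desired isomorphism.

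The main obstacle is upgrading this bare order isomorphism to an isomorphism of Heyting algebras: one has to check that the Heyting operations on LT-degrees (built from the pairing, disjoint coproduct, and function-space constructions used in Definition \ref{def:Kleene-realizability}, following the scheme Bauer introduced on extended Weihrauch degrees) are transported by $U\mapsto j_U$ to the pointwise meet, join, and Heyting implication of operations on $\Omega$. This amounts to unfolding the concrete formulas for $U_j$ and $j_U$ on each of $\land,\lor,\arr$ and verifying compatibility up to $r$-equivalence, which is routine but the most notationally delicate part of the argument.
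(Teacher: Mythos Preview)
Your proposal is correct and follows essentially the same chain as the paper: the text immediately preceding the corollary assembles exactly the three ingredients you list (Corollary \ref{prop:computably-transparent-many-one-TWeih}, Theorem \ref{thm:modality-to-oracle-various-preserve-bimap}, and Proposition \ref{prop:two-category-isomorphic-bimap}), and the paper regards the result as proved once these are in hand.

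One remark: what you flag as the ``main obstacle'' is not actually an obstacle. In any Heyting algebra the operations $\land$, $\lor$, $\arr$ are uniquely determined by the underlying partial order (as binary infimum, binary supremum, and the right adjoint to meet, respectively). Hence any order isomorphism between two posets that both happen to carry Heyting structures is automatically a Heyting algebra isomorphism; there is no need to verify by hand that $U\mapsto j_U$ carries each concrete construction on m.m.~maps to the corresponding pointwise operation on $\Omega$. The paper tacitly relies on this, which is why the \qed appears with no further comment.
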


The above extends the correspondence between the notions of degrees and operations on truth values.
In summary, when considering reducibility on (represented spaces over) a relative PCA, we could only obtain correspondence for operations that preserve intersection, but by considering reducibility on multi-represented spaces, we have obtained the complete correspondence (see also Table \ref{table}).

\begin{remark}
The above results give a more precise analysis of the previous correspondence theorems \cite{Kih21} that the extended Weihraruch degrees in the sense of Bauer \cite{Bau21} is isomorphic to the $r$-degrees of monotone operations on $\Omega$, and that the extended Turing-Weihrauch degrees (also called the LT-degrees in \cite{Kih21}) is isomorphic to the $r$-degrees of Lawvere-Tierney topologies.
\end{remark}

\begin{remark}[Joyal's theorem]
Satoshi Nakata (in private communication) pointed out that our construction of $j_U$ can be explained in terms of topos theory.
Joyal showed that any Lawvere-Tierney topology arises as a $\leq_r$-least topology that makes a subobject $m\colon R\mono X$ dense; see e.g.~Lee \cite[Section 2.2]{Lee}.

In a realizability topos, a subobject of $(X,\sim_X)$ can be expressed as a predicate $R\colon X\to\Omega$ which is strict and extensional with respect to $\sim_X$.
It is known that to obtain a Lawvere-Tierney topology on a realizability topos, we only need to consider a subobject of a multi-represented space.
That is, $X$ above can be assumed to be a multi-represented space, in which case $R$ is a realizability predicate in the sense of this section.
Our construction of $j_U$ can be regarded as an interpretation of the procedure for constructing a Lawvere-Tierney topology from a subobject $R\mono X$ as a relative computation with a predicate $R\colon X\to\Omega$ as an oracle.

To be explicit, recall from Definition \ref{def:universal-closure-operator-b} that a subobject $A\mono X$ is $j$-dense if ${\sf cl}_j(A)\equiv X$, where ${\sf cl}_j$ is the universal closure operator given by $j$.
In particular, $X\leq{\sf cl}_j(A)$ means that given a name of $x\in X$ one can compute a name of $x\in{\sf cl}_j(A)$; that is, one can compute a name of $x\in A$ with the help of $j$.
A name of $x\in A$ is nothing but a witness for $x\in A$, so this states that one can solve a witness-search problem for $x\in A$ with the help of $j$.
That $j$ is a least such topology means that $j$ is exactly an oracle just solving a witness-search problem for $x\in A$ (uniformly in $x\in X$) but nothing more.

As a concrete example, let us consider a topology obtained from a subterminal (i.e., a subobject of the terminal $\mathbf{1}$).
A subterminal is expressed as $R\colon\mathbf{1}\to\Omega$, i.e., it is a mass problem $R\in\Omega=\mathcal{P}(\tpbf{N})$, and therefore, it is predictably related to a Medvedev degree.
And it is known that an open topology can be defined as the least topology that makes a subterminal dense.
Compare this observation with Corollary \ref{cor:Medvedev-deg}.
\end{remark}

\subsection{Realizability relative to oracle}\label{sec:realizability-relative-to-oracle}

As an application of the correspondence results in Sections \ref{sec:modest-modality} and \ref{sec:mmmap-modality}, various notions of ``realizability relative to an oracle'' can be organized in a unified manner.
A specific example of such a realizability is Lifschitz realizability (see \cite[Section 4.4]{vOBook}), which can be thought of as realizability relative to the multivalued oracle ${\tt LLPO}$ (the lessor limited principle of omniscience).
As suggested in the last Remark in Section \ref{sec:sdst-notation}, the three fundamental properties (computable transparency, being inflationary, and idempotence) of multimaps are obtained by abstracting the conditions necessary for Lifschitz realizability to realize all the axioms of fundamental theories such as ${\sf HA}$ and ${\sf IZF}$.
Based on this observation, the author \cite{Kih20} has generalized the idea of Lifschitz realizability  to deal with realizability relative to any multifunctions on $\tpbf{N}$ which is computably transparent, inflationary, and idempotent (i.e., realizability relative to any Turing-Weihrauch oracle). 
By our correspondence theorem, this idea is absorbed by the notion of realizability relative to Lawvere-Tierney topologies.

One benefit of the correspondence between Lawvere-Tierney topologies $j$ and oracles $U_j$ is that the internal logic of the $j$-sheaf topos can be analyzed by realizability relative to the corresponding oracle.
The realizability notion corresponding to a Lawvere-Tierney topology $j$ on the effective topos is described in Lee-van Oosten \cite{LvO}, but it is not written in the context of oracle, so we shall rewrite it here in the terms of oracle.
As explained in Section \ref{sec:universal-bimap}, the universal computation relative to an extended predicate $\theta$ is given as $\theta^\Game$.
Here, we understand the expression $\theta^\Game(e\mid z)\subseteq p$ to mean ``When the program $e$ is executed with the oracle $\theta$, all paths of nondeterministic computation that follow the advisor $z$ terminates and any output is a solution to $p$.''
Or, using the terminology of games (see Definition \ref{def:game-LT-reducibility} and also \cite{Kih21}), it could be read to mean ``$(e\mid z)$ is a winning \Art-\Ni strategy in the reduction game to $\theta$, and no matter what strategy \Me follows, \Art's final move always gives a solution to $p$''.

Let us now rewrite $\theta$-realizability for arithmetical formulas introduced in Lee-van Oosten \cite{LvO} using the notion of oracle.
Recalling the notation in Definition \ref{def:Kleene-realizability}, we define $p^\theta\subseteq\tpbf{N}$ for an arithmetical formula $p$ as follows:
\begin{align*}
(p\land q)^\theta&=p^\theta\land q^\theta,
\qquad
(p\lor q)^\theta=p^\theta\lor q^\theta,
\qquad
(\exists n. p(n))^\theta=\{\langle{\tt m},a\rangle:a\in (p({\tt m}))^\theta\}\\
(p\arr q)^\theta&=\{e\in\tpbf{N}:(\forall x)\;[x\in{p}^\theta\;\rightarrow\;(\exists z)\;\theta^\Game(e\ast x\mid z)\subseteq{q}^\theta]\}\\
(\forall n. p(n))^\theta&=\{e\in\tpbf{N}:(\forall n)\;(\exists z)\;\theta^\Game(e\ast {\tt n}\mid z)\subseteq(p({\tt n}))^\theta\}
\end{align*}

Then, we say that $p$ is $\theta$-realizable if $p$ has a computable $\theta$-realizer, i.e., $p^\theta\cap N\not=\emptyset$.
This is exactly the same as the definition in Lee-van Oosten \cite{LvO}, but considering the meaning of $\theta^\Game(e\mid z)\subseteq p$ mentioned above, it is persuasive that this is a very natural definition as a realizability relative to the oracle $\theta$.

As we saw in Section \ref{sec:universal-bimap}, if $j$ is the Lawvere-Tierney topology corresponding to $\theta^\Game$, then
\begin{align}\label{equ:corresp-LT-bimap}
x\in j(p)\iff(\exists z)\;\theta^\Game(x\mid z)\subseteq p
\end{align}
holds, so the $j$-transformation $(-)_j$ corresponding to Lee-van Oosten's $\theta$-realizability can be written as follows:
\begin{align*}
(p\land q)_j&=p_j\land q_j,
\qquad
(p\lor q)_j=p_j\lor q_j,
\qquad
(\exists n. p(n))_j=\exists n. (p(n))_j\\
(p\arr q)_j&=p_j\arr j(q_j)
\qquad
(\forall n. p(n))_j=\forall n.j((p(n))_j)
\end{align*}

On the other hand, generalized Lifschitz realizability in \cite{Kih20}, expressed in terms of the corresponding Lawvere-Tierney topology, is given as follows:
\begin{align*}
(p\land q)^j&=p^j\land q^j,
\qquad
(p\lor q)^j=j(p^j\lor q^j),
\qquad
(\exists n. p(n))^j=j(\exists n. (p(n))^j)\\
(p\arr q)^j&=p^j\arr q^j
\qquad
(\forall n. p(n))^j=\forall n.(p(n))^j
\end{align*}

That a generalization of Lifschitz realizability would result in such a definition is also pointed out in Rathjen-Swan \cite{RaSw20}.
Either way, they are known to behave very well, the latter being the G\"odel-Gentzen-style $j$-translation and the former closer to the Kuroda-style $j$-translation; see \cite{vdB18}.
Realizability relative to any extended predicate oracle based on the latter can also be easily defined via the equivalence (\ref{equ:corresp-LT-bimap}).

Some of the many advantages of treating $j$-translation as realizability relative to oracle are that oracles are far easier to make concrete examples of than logical operations, and also allow the use of ideas and techniques from computability theory.
One trivial example is hyperarithmetical realizability (and its relatives), but another, slightly more nontrivial examples are realizability relative to multimaps (i.e., Turing-Weihrauch oracles), which are useful for separating various nonconstructive principles in constructive reverse mathematics \cite{Kih20}.

Indeed, the author's goal in \cite{Kih20} was to solve a problem of separating the strengths of specific nonconstructive principles in constructive reverse mathematics presented by Ishihara-Nemoto (a problem of separating some principles that are equivalent under the countable choice in the absence of the countable choice), which was actually accomplished using the idea mentioned above.
In this sense, our work is not just an abstract theory, but has concrete applications.

And one of the benefits of generalizing realizability to any extended predicate oracle (not just multivalued oracle) is that it can handle realizability that cannot be expressed in the framework of a multivalued oracle, such as random realizability.
For example, the oracle ${\tt ProbError}_\ep$ introduced in Kihara \cite{Kih21}, which corresponds to probabilistic computation, is an extended predicate but not a multivalued oracle.
One of the notions of random realizability has also been introduced in Carl et al.~\cite{CGP21}.
The exact connection between this and ${\tt ProbError}_\ep$-realizability is not clear, since Carl et al.'s random realizability has a slightly convoluted form, but they seem to be deeply related.

\begin{question}
Find out the exact relationship between ${\tt ProbError}_\ep$-realizability and Carl et al.'s random realizability.
\end{question}

Also, extended oracles, rather than multivalued oracles, encompass topologies that induce classical logic, such as the double negation topology.
This may allow us to interpret some of the activities in classical reverse mathematics \cite{SOSOA:Simpson} in our context.
The familiar definition of Turing-Weihrauch reducibility is due to Hirschfeld-Jockusch \cite{HiJo16}, whose aim was to give a game-theoretic characterization of the implication relation between $\Pi^1_2$ principles over the base system ${\sf RCA}$ of classical reverse mathematics; see also \cite{DHR20}.
As a special case, one of the objects of their consideration is the preorder ${\sf P}\leq_\om {\sf Q}$ for $\Pi^1_2$ principles ${\sf P}$ and ${\sf Q}$, which means that every $\om$-model of ${\sf RCA}+{\sf Q}$ is a model of ${\sf P}$.
Note that, to characterize the $\om$-model reducibility ordering $\leq_\om$, the Turing-Weihrauch reduction game cannot actually be used as is, but instead its non-uniform version was used (\cite[Proposition 4.2]{HiJo16}).
As mentioned in the last Remark in Section \ref{sec:secret-input}, the use of extended oracle makes it possible to deal with the notion of non-uniform computability.
In our terminology, the $\om$-model reducibility ordering can be expressed as follows:

\begin{obs}
${\sf P}\leq_\om{\sf Q}$ if and only if ${\sf P}\leq_{LT}{\sf Q}\sqcup{\sf DNE}_\N$, where $\sqcup$ is the join in the Weihrauch lattice (over the Kleene-Vesley algebra).
\end{obs}

This also gives an embedding of the $\om$-model degrees of $\Pi^1_2$ principles into the poset of the Lawvere-Tierney topologies on the Kleene-Vesley topos.
However, the topology corresponding to ${\sf DNE}_\N$ is not Boolean, so there does not seem to be a perfect correspondence in the context of internal logic.

There are several other notable recent studies on the application of Lawvere-Tierney topology to logic.
On the relation between Lawvere-Tierney topology and intuitionistic arithmetic, a significant work has recently been done by Nakata \cite{Nak24}, which deserves serious attention.

\section{Oracles in descriptive set theory}\label{sec:DST}

In this section, let us give rich concrete examples of computably transparent maps in the Kleene-Vesley algebra or Kleene's second algebra.
Such concrete examples are obtained from the theory of hierarchies.
One of the main aims of classical computability and descriptive set theory is to analyze the hierarchical structure of definability;
for example, the lowest level of the hierarchy is definability through arithmetical quantification (i.e., iteration of Turing jumps), but the idea was extended to infinitary logic (with well-founded syntax trees), and then to infinite nesting of quantifiers (which deals with formulas with ill-founded syntax trees), which gave rise to a mechanism to transform a given generalized quantifier into a more complex one (see e.g.~\cite{Kan88}), etc.; see also \cite{HinBook}.

Of course, these studies dealt with infinitary objects, but one of the major achievements of classical computability and descriptive set theory was the uncovering of a series of hidden computability-like structures of such infinitary notions.
This field, called {\em generalized recursion theory}, emerged as a synthesis of such infinitary computation theories and flourished in the last century (see e.g.~Hinman \cite[Part C]{HinBook} and Sacks \cite[Parts B--D]{sacks2}).
For example, admissible recursion theory (Kripke-Platek set theory) is a major part of the theory.
A related concept is Spector pointclass, around which the theory of pointclass was developed in descriptive set theory (see e.g.~\cite{MosBook}).

In these classical theories, concrete pointclasses corresponding to various types of definability have been studied, providing a wealth of concrete examples.
In this section, we shall see that from each pointclass that is good enough, we can obtain a computably transparent map and a relative pca.


\subsection{Pointclass and measurability}

From now on, we connect the notions we have dealt with so far with those that have been studied in depth in classical descriptive set theory.
The key idea here is to create a function concept from a set concept.
For example, the notion of a continuous function is automatically obtained by defining a topology on a set $X$, i.e., a family of open sets $\mathcal{O}_X\subseteq\mathcal{P}(X)$.
When a family $\mathcal{O}_X$ of open sets in $X$ is specified, the family $\mathcal{B}_X\subseteq\mathcal{P}(X)$ of Borel sets is also defined as the smallest $\sigma$-algebra including $\mathcal{O}_X$, which yields Borel measurable functions.
In descriptive set theory, this kind of idea is generalized as follows.

\begin{definition}[see e.g.~\cite{MosBook}]
Let $\mathcal{C}$ be a class of sets.
If an assignment $X\mapsto \Gamma_X\subseteq\mathcal{P}(X)$ for each $X\in\mathcal{C}$ is given, such an assignment $\Gamma=(\Gamma_X)_{X\in\mathcal{C}}$ is called a {\em pointclass}.
\end{definition}

The most commonly used pointclasses in descriptive set theory and computability theory are $\Sigma^i_n$, $\Pi^i_n$, $\Delta^i_n$, and their boldface versions (see \cite{MosBook} and also below).
However, it should be noted that there are a number of other natural pointclasses that have been studied in great depth as mentioned above.

In descriptive set theory, $\mathcal{C}$ usually consists of (quasi-)Polish spaces and its subspaces.
Under this setting, each $X\in\mathcal{C}$ can be thought of as either a topological space or a represented space (see Example \ref{exa:coding-topological-space}).
Many natural pointclasses have good closure properties.

\begin{definition}
A {\em boldface} pointclass is a pointclass $\Gamma$ which is closed under continuous substitution; that is, for any continuous map $f\colon X\to Y$, $U\in \Gamma_Y$ implies $f^{-1}[U]\in\Gamma_X$.
Similarly, a {\em lightface} pointclass is a pointclass $\Gamma$ which is closed under computable substitution; that is, for any computable map $f\colon X\to Y$, $U\in \Gamma_Y$ implies $f^{-1}[U]\in\Gamma_X$.
\end{definition}

In other words:
\begin{itemize}
\item A boldface pointclass over $\mathcal{C}$ is a subfunctor of the powerset functor $\mathcal{P}\colon\mathcal{C}^{\rm op}\to{\bf Set}$, where $\mathcal{C}$ is a full subcategory of the category ${\bf Top}$ of topological spaces and continuous maps.
\end{itemize}

The above $\mathcal{C}$ can also be a full subcategory of the category of multi-represented spaces and {\em continuous} maps.
A similar characterization for a lightface pointclass is also given:
\begin{itemize}
\item A lightface pointclass over $\mathcal{C}$ is a subfunctor of the powerset functor $\mathcal{P}\colon\mathcal{C}^{\rm op}\to{\bf Set}$, where $\mathcal{C}$ is a full subcategory  of the category of multi-represented spaces and {\em computable} maps.
\end{itemize}

Most pointclasses considered in descriptive set theory are either lightface or boldface.
The following representable pointclasses (representable subfunctors) are also important in synthetic descriptive set theory.

\begin{example}[Representable pointclass]\label{exa:representable-pointclass}
Recall from Example \ref{exa:coding-continuous functions} that the hyperspace $\mathcal{O}(X)$ of open sets in $X$ can be identified with the function space $C(X,\mathbb{S})$, where $\mathbb{S}$ is the Sierpi\'nski space.
Thus, the pointclass of open sets is exactly the representable functor $C(-,\mathbb{S})$.

In general, let $U$ be a transparent map.
Then $A\subseteq X$ is said to be {\em $U$-open} if $A\in \mathcal{O}_X^U$; that is, $A\in C(X,U(\mathbb{S}))$ (see Example \ref{exa:synthetic-DST}).
Then $\mathcal{O}^U=(\mathcal{O}_X^U)_{X\in {\bf Rep}}$ forms a boldface pointclass.
The core idea of synthetic descriptive set theory seems to attempt to reformulate the theory based on the observation that most natural pointclasses in descriptive set theory are representable.
For the details, see \cite{PaBr15,dBPa17}.

%
%

Perhaps this idea goes back at least to Longley \cite[Section 4.1.2]{LoPhD95}, which is to think the representable functor $C(-,\Sigma)$ for any pre-dominance $\Sigma$, where a pre-dominance is a two-point assembly $\Sigma$ with a distinguished computable element $\top\in\Sigma$ \cite{LoPhD95,Nak23}.
In this case, a subset $A\subseteq X$ with $A\in C(X,\Sigma)$ is usually called a $\Sigma$-subobject.
If $\mathcal{O}^\Sigma_X$ is the set of all $\Sigma$-subobjects of $X$, then $\mathcal{O}^\Sigma=(\mathcal{O}_X^\Sigma)_{X\in {\bf MultRep}}$ forms a boldface pointclass.
\end{example}

Hereafter, we assume that $\mathcal{C}$ is closed under binary product.
For $A\subseteq I\times X$, by $A_e$ we denote the $e$-th section $\{x\in X:(e,x)\in A\}$ for each $e\in I$.
An important pointclass in descriptive set theory is a pointclass that can be coded or represented.

\begin{definition}[see e.g.~\cite{MosBook}]
Let $(\Gamma_X)_{X\in\mathcal{C}}$ be a pointclass.
Then a set $G\in\Gamma_{I\times X}$ is {\em $\Gamma_X$-universal} if for any $A\in\Gamma_X$ there exists $e\in I$ such that $A=G_e$ (the $e$-th section of $G$).
A pointclass $\Gamma$ is {\em $I$-parametrized} if for any $X\in\tau$ there exists a $\Gamma_X$-universal set $G^X\in\Gamma_{I\times X}$.
\end{definition}

In descriptive set theory, $\N$- and $\N^\N$-parametrized pointclasses play important roles.
For instance, $\Sigma^i_n$ and $\Pi^i_n$ are $\N$-parametrized, and $\tpbf{\Sigma}^i_n$ and $\tpbf{\Pi}^i_n$ are $\N^\N$-parametrized.

\begin{obs}
If $\Gamma$ is $I$-parametrized, then for each $X\in\mathcal{C}$ one can consider $\Gamma_X$ as an $I$-represented space.
\end{obs}

\begin{proof}
The map $G^X_\bullet\colon I\to\Gamma_X$ defined by $p\mapsto G^X_p$ is a total $I$-representation of $\Gamma_X$.
\end{proof}

Thus, when parametrized, one can think of a pointclass as an indexed family of (total) represented spaces, and in many cases, indices are also represented spaces; that is, it often yields a functor ${\bf Rep}^{\rm op}\to{\bf Rep}$.
See also Gregoriades et al.~\cite{GKP} for an explanation of descriptive set theoretic notions using represented spaces.

For such represented spaces, we can discuss computability and continuity for functions on them, but of particular importance are computability and continuity of evaluation maps, ${\rm eval}_\Gamma\colon \Gamma_{X\times Y}\times X\to\Gamma_Y$, defined by $(A,x)\mapsto A_x$, where recall that $A_x$ is the $x$-th section of $A$.
Note that if $\Gamma$ is a representable pointclass $\mathcal{O}^\Sigma$ as in Example \ref{exa:representable-pointclass} then ${\rm eval}_\Gamma$ is computable by the property of the exponential object $\Sigma^X\simeq\mathcal{O}^\Sigma_X$.
However,
in general, evaluation maps are not always continuous, in which case we change representations by replacing a universal set with another one.

Before going into the details of this, we introduce some basic notions.
The first is the relativization of pointclass (see also \cite[Section 3H]{MosBook}).

\begin{definition}
For a pointclass $\Gamma$ and an oracle $\ep\in\N^\N$, define $\Gamma^\ep_X$ as the collection of $\ep$-th sections (i.e., sets of the form $P_\ep$) of some $P\in\Gamma_{\N^\N\times X}$.
Then define $\tpbf{\Gamma}_X=\bigcup_{\ep\in\N^\N}\Gamma_X^\ep$.
\end{definition}

For instance, $\tpbf{\Sigma}^0_1$ is the pointclass consisting of open sets, and $\tpbf{\Delta}^1_1$ is the pointclass consisting of Borel sets.

Below let $\mathcal{C}_0$ be the smallest collection of sets that contains $\N$ and $\N^\N$, and is closed under binary product, and let $\mathcal{C}$ be the collection of all Polish spaces.
The following fact is known as {\em smn theorem} or {\em good parameterization lemma}:

\begin{fact}[see Moschovakis {\cite[Lemma 3H.1]{MosBook}}]\label{thm:PCA-parametrization-theorem}
Let $\Gamma$ be an $\N$-parametrized lightface pointclass.
Then there is an $\N^\N$-parametrization $G$ for $\Gamma$ satisfying the following:
\begin{enumerate}
\item For any set $P\in\Gamma_X$ there exists a computable element $\ep\in \N^\N$ such that $P=G_\ep$.
\item For any $X\in\mathcal{C}_0$ and $Y\in\mathcal{C}$, the evaluation map ${\rm eval}_\Gamma\colon\tpbf{\Gamma}_{X\times Y}\times X\to\tpbf{\Gamma}_Y$ is computable with respect to the representations induced from $G$.
\end{enumerate}
\end{fact}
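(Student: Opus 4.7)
The plan is to build the desired $\N^\N$-parametrization directly from the $\N$-parametrization by packing an integer index and an auxiliary stream into one $\N^\N$-parameter, so that both (1) and (2) reduce to instances of $\N$-universality combined with closure under computable substitution. Throughout write $\epsilon^+(n) := \epsilon(n+1)$, so that $\epsilon \leftrightarrow (\epsilon(0), \epsilon^+)$ is a computable bijection $\N^\N \simeq \N \times \N^\N$.

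First, fixing $\N$-universal sets $U^Y \in \Gamma_{\N \times Y}$ and $U^{\N^\N \times Y} \in \Gamma_{\N \times \N^\N \times Y}$ for every Polish $Y$, I would put
\[
G^Y(\epsilon, y) :\iff U^{\N^\N \times Y}(\epsilon(0), \epsilon^+, y).
\]
Closure under computable substitution gives $G^Y \in \Gamma_{\N^\N \times Y}$, and as $(\epsilon(0), \epsilon^+)$ ranges over $\N \times \N^\N$, the sections $G^Y_\epsilon$ enumerate exactly the sets in $\tpbf{\Gamma}_Y$. For (1), given $P = U^Y(n, \cdot) \in \Gamma_Y$, the relation $(\alpha, y) \mapsto U^Y(\alpha(0), y)$ is a $\Gamma_{\N^\N \times Y}$ set (again by substitution), hence equals $U^{\N^\N \times Y}(m^*, \cdot, \cdot)$ for some fixed $m^* \in \N$ depending only on $\Gamma$; then any computable $\epsilon$ with $\epsilon(0) = m^*$ and $\epsilon(1) = n$ codes $P$.

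For (2), the key step is to absorb the extra parameter $x$ into the $\N^\N$-slot of $U^{\N^\N \times Y}$. Since $X \in \tau_0$, there is a computable bijection $\N^\N \simeq \N \times \N^\N \times X$; using it, the relation
\[
W(\langle n, \alpha, x \rangle, y) :\iff U^{\N^\N \times X \times Y}(n, \alpha, x, y)
\]
belongs to $\Gamma_{\N^\N \times Y}$ and so equals $U^{\N^\N \times Y}(m^{**}, \cdot, \cdot)$ for one fixed constant $m^{**}$. Evaluation is then realised by sending $(\epsilon, x)$ to the stream $\eta$ with $\eta(0) = m^{**}$ and $\eta^+ = \langle \epsilon(0), \epsilon^+, x \rangle$; this map is computable in the representations induced by $G$ and by the native representation of $X \in \tau_0$, and a direct unwinding yields $G^Y(\eta, y) \iff G^{X \times Y}(\epsilon, x, y)$ as required.

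The main obstacle I anticipate is not conceptual but bookkeeping: one has to pin down the pairing conventions on $\N$, $\N^\N$, and the spaces $X \in \tau_0$ so that every substitution in sight is genuinely computable in the sense required by the closure hypothesis on $\Gamma$, and so that $m^*$ and $m^{**}$ really arise as fixed $\N$-indices independent of the particular $P$ and $(\epsilon, x)$. Once these coding choices are fixed, the rest is a mechanical unfolding of the $\N$-universality of the sets $U^{\N^\N \times Y}$.
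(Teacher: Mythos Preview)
Your construction is correct and is precisely the standard argument behind Moschovakis's Lemma 3H.1. Note, however, that the paper does not give its own proof here: the statement is recorded as a \emph{Fact} with a citation, followed only by the remark that computability of ${\rm eval}$ is equivalent to the existence of a computable smn-function $S$ sending a code $\ep$ of $A\in\tpbf{\Gamma}_{X\times Y}$ and $x\in X$ to a code $S(\ep,x)$ of $A_x\in\tpbf{\Gamma}_Y$. Your packing $\epsilon\leftrightarrow(\epsilon(0),\epsilon^+)$ and the fixed indices $m^\ast,m^{\ast\ast}$ obtained from $\N$-universality are exactly how Moschovakis builds such an $S$, so what you have written supplies the details the paper simply cites.
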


This follows from Moschovakis \cite[Lemma 3H.1]{MosBook} because ${\rm eval}_\Gamma\colon\tpbf{\Gamma}_{X\times Y}\times X\to\tpbf{\Gamma}_Y$ is computable if and only if there exists a computable function $S$ which, given a code $\ep$ of $A\in\tpbf{\Gamma}_{X\times Y}$ and $x\in X$, returns a code $S(\ep,x)$ of $A_x\in\tpbf{\Gamma}_Y$.
Assume that all pointclasses mentioned from here on are given such parametrization.

A pointclass $\Gamma$ is {\em adequate} if it is a lightface pointclass which contains all computable sets, and is closed under $\land$, $\lor$, and bounded quantifiers; see \cite[Section 3E]{MosBook} for the details.
For example, $\Sigma^i_n,\Pi^i_n,\Delta^i_n$ are adequate.
Good parametrization lemma implies the following {\em uniform closure theorem}.

\begin{fact}[see Moschovakis {\cite[Theorem 3H.2]{MosBook}}]\label{fact:uniform-closure-theorem}
Let $\Gamma$ be an $\N$-parametrized adequate pointclass.
If $\Gamma$ is closed under binary intersection, then $\cap\colon\tpbf{\Gamma}_X\times\tpbf{\Gamma}_X\to\tpbf{\Gamma}_X$ is computable for any $X\in\tau$.
The same is true for binary union, projection, etc.
\end{fact}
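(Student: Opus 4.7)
The plan is a standard smn-style argument: for each of the operations $\cap$, $\cup$, and projection, I construct a single auxiliary set in $\Gamma$ over a larger product that uniformly packages the operation, then use Fact 6.5(1) to name it by a computable code, and finally apply the evaluation map from Fact 6.5(2) with this fixed code plugged into the first coordinate. This turns ``closure of $\Gamma$ under the operation'' into ``computable closure of $\tpbf{\Gamma}$ under the operation''.

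Concretely, for binary intersection I would proceed as follows. Let $G \in \Gamma_{\N^\N \times X}$ be the $\N^\N$-parametrization of $\Gamma_X$ given by Fact 6.5, and define
\[
H \;=\; \{(\ep_1,\ep_2,x) \in \N^\N \times \N^\N \times X : (\ep_1,x)\in G \text{ and } (\ep_2,x)\in G\}.
\]
The two conjuncts are pullbacks of $G$ along the computable projections $(\ep_1,\ep_2,x)\mapsto (\ep_i,x)$, so by adequacy (closure under computable substitution) each lies in $\Gamma_{\N^\N\times\N^\N\times X}$; closure under binary intersection then puts $H$ itself in $\Gamma_{\N^\N\times\N^\N\times X}$. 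By Fact 6.5(1) there is a computable $\hat{\ep}\in \N^\N$ with $H = G_{\hat\ep}$, and by Fact 6.5(2) the evaluation map $\mathrm{eval}\colon \tpbf{\Gamma}_{\N^\N\times\N^\N\times X}\times(\N^\N\times\N^\N)\to\tpbf{\Gamma}_X$ is computable, say witnessed by a computable $S$ with $G_{S(\hat\ep,(\ep_1,\ep_2))} = H_{(\ep_1,\ep_2)} = G_{\ep_1}\cap G_{\ep_2}$. Currying, $(\ep_1,\ep_2)\mapsto S(\hat\ep,(\ep_1,\ep_2))$ is the desired computable realizer of $\cap\colon \tpbf{\Gamma}_X\times\tpbf{\Gamma}_X \to \tpbf{\Gamma}_X$.

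The case of binary union is identical with $\cup$ in place of $\cap$ (using closure under binary union); for projection $\pi\colon \tpbf{\Gamma}_{X\times Y}\to\tpbf{\Gamma}_X$ one instead sets $H = \{(\ep,x) : \exists y\,(\ep,x,y)\in G\}$, which belongs to $\Gamma_{\N^\N\times X}$ as soon as $\Gamma$ is closed under the corresponding projection. The bounded-quantifier case from adequacy is similar. The one genuinely nontrivial ingredient is Fact 6.5(1), which guarantees \emph{computable} parameters, not merely some $\N^\N$-parameter; without this strengthening of ordinary parametrization one would obtain only a continuous map instead of a computable one. The other delicate point is to make sure each auxiliary set is constructed from $G$ by operations under which $\Gamma$ (lightface) is closed, which is why adequacy is invoked before intersection/union/projection closure.
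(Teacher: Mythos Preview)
Your argument is correct and is precisely the standard smn-style proof; the paper does not spell out a proof here but merely cites Moschovakis \cite[Theorem 3H.2]{MosBook} and remarks that the good parametrization lemma (Fact~\ref{thm:PCA-parametrization-theorem}) implies it, which is exactly the route you take. The only cosmetic point is that you reuse the symbol $G$ both for the $\Gamma_X$-universal set and (implicitly) for the $\Gamma_{\N^\N\times\N^\N\times X}$-universal set when writing $H=G_{\hat\ep}$; distinguishing them would make the write-up cleaner but does not affect correctness.
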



\begin{definition}
Let $\Gamma$ be a pointclass.
For a set $X\in\mathcal{C}$ and a topological space $Y$, a function $f\colon X\to Y$ is {\em $\Gamma$-measurable} if $f^{-1}[A]\in\Gamma_X$ for any open set $A\subseteq Y$.
\end{definition}

For a transparent map $U$, Pauly-de Brecht {\cite[Corollary 11]{PaBr15}} showed that $f\colon X\to Y$ is $U$-continuous (in the sense of Definitions \ref{def:basic-computably-transparent} and \ref{def:multi-represented-space}) if and only if it is $\mathcal{O}^U$-measurable whenever $Y$ is $U$-admissible.



A {\em $\Sigma$-pointlcass} is a pointclass containing all $\Sigma^0_1$ sets (i.e., c.e.~open sets), and is closed under trivial substitution, $\land$, $\lor$, bounded quantifiers, existential quantifiers on $\N$; see \cite[Section 3E]{MosBook} for the details.
For example, $\Sigma^i_n,\Pi^1_n,\Delta^1_n$ are $\Sigma$-pointclasses.
By the uniform closure theorem, one may assume that it is uniformly closed under these operations.

\begin{prop}\label{prop:Gamma-measurable-space}
Let $\Gamma$ be an $\N$-parametrized adequate $\Sigma$-pointclass, $X\in\tau$, and $Y$ be a Polish space.
Then, a function $f\colon X\to Y$ is $\tpbf{\Gamma}$-measurable if and only if the preimage map $f^{-1}[\cdot]\colon O_Y\to\tpbf{\Gamma}_X$ is continuous; that is, there exists a continuous function which, given a code of an open set $U\subseteq Y$, returns a code of $f^{-1}[U]\in\Gamma_X$.
\end{prop}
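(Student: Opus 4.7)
The plan is as follows. The \emph{if} direction is nearly immediate: a continuous realizer of the preimage map, given any code $u\in\N^\N$ of an open set $U\subseteq Y$, produces continuously a code $\beta_u\in\N^\N$ with $G^X_{\beta_u}=f^{-1}[U]$, so in particular $f^{-1}[U]\in\tpbf{\Gamma}_X$ for every open $U$, which is by definition $\tpbf{\Gamma}$-measurability.

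For the \emph{only if} direction, I would proceed uniformly. Fix a countable basis $(B_n)_{n\in\N}$ of $Y$ compatible with the standard coding of open sets, so that a code $u\in\N^\N$ of $U\subseteq Y$ is an enumeration with $U=\bigcup_n B_{u(n)}$. Using $\tpbf{\Gamma}$-measurability of $f$, choose for each $n$ a parameter $\alpha_n\in\N^\N$ with $f^{-1}[B_n]=G^X_{\alpha_n}$, and pack these into a single $\beta=\langle\alpha_n\rangle_{n\in\N}$ by a fixed pairing.

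Next, I would assemble the ``universal preimage'' set
\[P=\{(n,x)\in\N\times X:(\alpha_n,x)\in G^X\},\]
obtained as the pullback of the $\tpbf{\Gamma}_X$-universal set $G^X\in\tpbf{\Gamma}_{\N^\N\times X}$ along the computable-in-$\beta$ map $(n,x)\mapsto(\alpha_n,x)$. Closure of $\Gamma$ under computable substitution gives $P\in\tpbf{\Gamma}_{\N\times X}$ with a code computable in $\beta$. Then for any code $u\in\N^\N$ of an open $U$,
\[f^{-1}[U]=\bigcup_n f^{-1}[B_{u(n)}]=\{x\in X:\exists n\in\N,\;(u(n),x)\in P\},\]
so $f^{-1}[U]$ is obtained from $P$ by a further computable substitution $(n,x)\mapsto(u(n),x)$ followed by projection on the $\N$-coordinate. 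Because $\Gamma$ is a $\Sigma$-pointclass, and the uniform closure theorem (Fact \ref{fact:uniform-closure-theorem}) makes computable substitution and $\exists^{\N}$-projection computable at the level of codes, I would conclude that $u\mapsto$ (code of $f^{-1}[U]$) is computable from the fixed parameter $\beta$, hence continuous.

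The main obstacle will be the \emph{uniformization} step: from the a priori nonuniform information ``$f^{-1}[B_n]\in\tpbf{\Gamma}_X$ for each $n$'' one must extract a single uniform $\tpbf{\Gamma}$-description of the whole family. The good parameterization lemma (Fact \ref{thm:PCA-parametrization-theorem}) is precisely what allows the countable sequence of $\N^\N$-parameters $(\alpha_n)_n$ to be packaged into one parameter $\beta$ with well-behaved evaluation; once this is in place, the uniform closure theorem routinely propagates computability on codes through the substitution and projection needed to build $f^{-1}[U]$ from $P$.
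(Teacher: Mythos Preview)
Your proposal is correct and follows essentially the same approach as the paper: choose codes for the preimages $f^{-1}[B_n]$ of the basic open sets, pack them into a single parameter $\beta$, and then use closure under computable substitution and $\exists^\N$ (via the uniform closure theorem, Fact~\ref{fact:uniform-closure-theorem}) to transform a code of $U=\bigcup_n B_{u(n)}$ continuously into a code of $f^{-1}[U]$. The paper carries this out slightly more directly, writing $x\in f^{-1}[U]\iff\exists n\,(\beta_{\alpha(n)},x)\in G^{\N^\N\times X}$ without isolating your intermediate set $P$, but the content is identical.
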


\begin{proof}
The forward direction is obvious.
For the backward direction, let $(B_n^Y)_{n\in\N}$ be a countable basis of $Y$.
For each $n\in\N$, since $f$ is $\tpbf{\Gamma}$-measurable, we have $f^{-1}[B_n^Y]\in\tpbf{\Gamma}_X$, so let $\beta_n$ be its code.
Then define $\beta\in\N^\N$ by $\beta(\langle n,m\rangle)=\beta_n(m)$.
Each code of an open set $U\subseteq Y$ is given as $\alpha$ such that $U=\bigcup_{n\in\N}B_{\alpha(n)}^Y$.
Then $x\in f^{-1}[U]$ if and only if there exists $n\in\N$ such that $(\beta_{\alpha(n)},x)\in G^{{\N^\N}\times X}$.
As $\tpbf{\Gamma}$ is closed under substitution by $n\mapsto\beta_{\alpha(n)}$ and existential quantifiers on $\N$, the latter set is in $\tpbf{\Gamma}$.
Thus, by the uniform closure theorem (Fact \ref{fact:uniform-closure-theorem}), one can continuously obtain a code of $f^{-1}[U]$.
\end{proof}

By $\tpbf{\Gamma}(X,Y)$ we denote the set of all $\tpbf{\Gamma}$-measurable functions from $X$ into $Y$.
By Proposition \ref{prop:Gamma-measurable-space}, one can consider $\tpbf{\Gamma}(X,Y)$ as a represented space, where a code of $f\in\tpbf{\Gamma}(X,Y)$ is given by a code of the continuous preimage map $f^{-1}[\cdot]\in C(O_Y,\tpbf{\Gamma}_X)$.
This idea can be extended to partial functions.
A partial function $f\pcolon X\to Y$ is $\tpbf{\Gamma}$-measurable if for any open set $U\subseteq Y$ there exists a $\Gamma$ set $A$ such that $f^{-1}[U]=A\cap{\rm dom}(f)$.
It is easy to see that Proposition \ref{prop:Gamma-measurable-space} also holds for partial functions, by taking a code of $A_n$ such that $f^{-1}[B_n^Y]=A_n\cap{\rm dom}(f)$ instead of $\beta_n$.
Thus the set $\tpbf{\Gamma}(\subseteq X,Y)$ of all partial $\tpbf{\Gamma}$-measurable functions from $X$ into $Y$ can also be (multi-)represented.

\begin{prop}[uniform measurability]\label{prop:intro-uniform-measurable}
Let $\Gamma$ be an $\N$-parametrized adequate $\Sigma$-pointclass.
Then the map, ${\tt preim}\colon\tpbf{\Gamma}(X,Y)\times O_Y\to\tpbf{\Gamma}_X$, defined by $(f,U)\mapsto f^{-1}[U]$ is computable.
\end{prop}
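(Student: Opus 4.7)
The plan is to unfold the representation of $\tpbf{\Gamma}(X,Y)$. By Proposition \ref{prop:Gamma-measurable-space} and the subsequent extension to partial functions, a code of $f \in \tpbf{\Gamma}(X,Y)$ is, by definition, a code $\alpha \in \N^\N$ of the continuous preimage map $f^{-1}[\cdot]\colon O_Y \to \tpbf{\Gamma}_X$. With this representation in hand, the desired computability statement becomes essentially tautological.

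Concretely, given a code $\alpha$ of $f$ and a code $\beta$ of an open set $U \in O_Y$, the output code should be $\varphi_\alpha(\beta)$: by construction of $\alpha$, this is a code of $f^{-1}[U]$ in $\tpbf{\Gamma}_X$. The binary operation $(\alpha,\beta) \mapsto \varphi_\alpha(\beta)$ is exactly the universal application in the Kleene-Vesley algebra (equivalently, in Kleene's second algebra), which is computable by the very structure of a relative partial combinatory algebra (Section \ref{sec:realizability-PCA}). Composing with whatever fixed computable pairing is used on codes yields the required algorithm witnessing computability of ${\rm preim}$.

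There is no genuine obstacle: the representation of $\tpbf{\Gamma}(X,Y)$ was engineered precisely to make this uniform statement hold. The only verification worth spelling out is that the representation behaves as claimed also in the partial case, i.e.~that a code of $f \pcolon X \to Y$ can be consistently given as a code $\alpha$ of a continuous map from $O_Y$ into $\tpbf{\Gamma}_X$ realizing $f^{-1}[\cdot]$. This follows the same proof as Proposition \ref{prop:Gamma-measurable-space}, replacing each $f^{-1}[B_n^Y]$ by a $\tpbf{\Gamma}$-set $A_n$ with $f^{-1}[B_n^Y] = A_n \cap {\rm dom}(f)$, and invoking the uniform closure theorem (Fact \ref{fact:uniform-closure-theorem}) to assemble codes of arbitrary open preimages from codes of basic ones in a uniform computable manner.
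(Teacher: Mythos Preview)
Your proposal is correct and follows essentially the same approach as the paper. The paper phrases it abstractly as ${\rm preim}={\rm ev}\circ({\rm frame}\times{\rm id})$, where ${\rm frame}\colon\tpbf{\Gamma}(X,Y)\to C(O_Y,\tpbf{\Gamma}_X)$ is tracked by the identity on codes and ${\rm ev}$ is the standard evaluation map; your explicit description $(\alpha,\beta)\mapsto\varphi_\alpha(\beta)$ is exactly this composition unfolded at the level of names.
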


\begin{proof}
Define ${\tt frame}\colon\tpbf{\Gamma}(X,Y)\to C(O_Y,\Gamma_X)$ by $f\mapsto f^{-1}[\cdot]$.
By definition, the identity map on the codes witnesses that ${\tt frame}$ is computable.
Consider the evaluation map, ${\rm ev}\colon C(O_Y,\tpbf{\Gamma}_X)\times O_Y\to \tpbf{\Gamma}_X$, which is computable.
Then one can easily check ${\tt preim}={\rm ev}\circ({\tt frame}\times{\rm id})$, which is computable as it is the composition of computable maps.
%
\end{proof}

A partial function $f\pcolon X\to Y$ is {\em $\Gamma$-computable} if $f$ is a computable element in $\tpbf{\Gamma}(\subseteq X,Y)$.
A $\Gamma$-computable function $g\pcolon\N^\N\times X\to Y$ is {\em universal} if for any partial $\tpbf{\Gamma}$-measurable function $f\pcolon X\to Y$ there exists $\ep\in\N^\N$ such that $f(x)=g(\ep,x)$ for any $x\in{\rm dom}(f)$.

\begin{prop}[universal function]\label{prop:universal-func-pointclass}
Let $\Gamma$ be an $\N$-parametrized adequate $\Sigma$-pointclass.
Then, for any $X,Y\in\tau$, there exists a universal $\Gamma$-computable function $\varphi^\Gamma\pcolon\N^\N\times X\to Y$.
\end{prop}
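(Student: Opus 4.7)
My strategy is to make $\varphi^\Gamma$ into an evaluator for multi-codes of partial $\tpbf{\Gamma}$-measurable functions, where a single code $\ep\in\N^\N$ packages $\tpbf{\Gamma}$-codes for the preimages of all basic opens of $Y$. Fix a countable basis $(B^Y_n)_{n\in\N}$ of the Polish space $Y$ and, by Fact \ref{thm:PCA-parametrization-theorem}, a $\tpbf{\Gamma}_X$-universal set $G^X\in\tpbf{\Gamma}_{\N^\N\times X}$ with the stated good substitution property. Interpret each $\ep\in\N^\N$ as coding a sequence $\langle\ep_n\rangle_{n\in\N}$ in $\N^\N$, and write $A_n^{\ep}:=\{x\in X:(\ep_n,x)\in G^X\}\in\tpbf{\Gamma}_X$. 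Define
\[
\varphi^\Gamma(\ep,x):=y \quad \text{when there is } y\in Y \text{ with } \{n:x\in A_n^\ep\}=\{n:y\in B^Y_n\},
\]
and leave $\varphi^\Gamma(\ep,x)$ undefined otherwise. Such a $y$, when it exists, is unique because $Y$ is a second-countable $T_0$ space.

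Next I would check that $\varphi^\Gamma$ is $\Gamma$-computable by describing its preimage map. For each basic open $B^Y_n$ the set $\widetilde A_n:=\{(\ep,x)\in\N^\N\times X:(\ep_n,x)\in G^X\}$ lies in $\tpbf{\Gamma}_{\N^\N\times X}$ uniformly in $n$: the decoding $(\ep,x)\mapsto(\ep_n,x)$ is computable, and $\tpbf{\Gamma}$ is closed under computable substitution (this being the evaluation clause of Fact \ref{thm:PCA-parametrization-theorem}). By construction $(\varphi^\Gamma)^{-1}[B^Y_n]=\widetilde A_n\cap\mathrm{dom}(\varphi^\Gamma)$. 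For a general open $U=\bigcup_kB^Y_{\alpha(k)}$ coded by $\alpha\in\N^\N$, the preimage is the restriction of $\bigcup_k\widetilde A_{\alpha(k)}$, which lies in $\tpbf{\Gamma}$ uniformly in $\alpha$ by closure of the $\Sigma$-pointclass $\tpbf{\Gamma}$ under uniform existential $\N$-quantification (Fact \ref{fact:uniform-closure-theorem}). Invoking (the partial-function version of) Proposition \ref{prop:Gamma-measurable-space} yields that $\varphi^\Gamma$ is $\Gamma$-computable.

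For universality, let $f\pcolon X\to Y$ be partial $\tpbf{\Gamma}$-measurable. By Proposition \ref{prop:intro-uniform-measurable}, extended to partial functions, one obtains a sequence $\beta_n\in\N^\N$ of $\tpbf{\Gamma}$-codes of sets $A_n\subseteq X$ satisfying $f^{-1}[B^Y_n]=A_n\cap\mathrm{dom}(f)$. Package these into $\beta:=\langle\beta_n\rangle_{n\in\N}$. Then for every $x\in\mathrm{dom}(f)$,
\[
\{n:x\in A_n^{\beta}\}=\{n:x\in A_n\}=\{n:x\in f^{-1}[B^Y_n]\}=\{n:f(x)\in B^Y_n\},
\]
and the rightmost set lists exactly the basic neighborhoods of $f(x)$, so by the defining clause $\varphi^\Gamma(\beta,x)=f(x)$, as required.

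The main obstacle I anticipate is bookkeeping: turning the multi-representation of $\tpbf{\Gamma}(\subseteq X,Y)$ into an actual $\N^\N$-parametrization of one universal function, while simultaneously controlling the preimage map of $\varphi^\Gamma$ on all of $O_Y$. Once one commits to the good $\N^\N$-parametrization of Fact \ref{thm:PCA-parametrization-theorem}, which is exactly what guarantees that substitutions like $(\ep,x)\mapsto(\ep_n,x)$ stay uniformly inside $\tpbf{\Gamma}$, both the $\Gamma$-computability of $\varphi^\Gamma$ and its universality reduce to the routine closure properties of adequate $\Sigma$-pointclasses recorded in Fact \ref{fact:uniform-closure-theorem}.
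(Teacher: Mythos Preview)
Your proposal is correct and takes essentially the same approach as the paper. The only cosmetic difference is that you unpack the code of a partial $\tpbf{\Gamma}$-measurable function explicitly as a sequence of $\tpbf{\Gamma}$-codes for preimages of basic opens and verify the neighborhood-filter condition by hand, whereas the paper keeps the coding abstract, invokes Proposition~\ref{prop:intro-uniform-measurable} to obtain a computable $r(u,p)$ returning a code of $(\varphi^\Gamma_p)^{-1}[U]$, and then applies the same closure under computable substitution and good parametrization (Fact~\ref{thm:PCA-parametrization-theorem}) that you use.
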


\begin{proof}
For $p\in\N^\N$, let $\varphi_p^\Gamma$ be the partial $\tpbf{\Gamma}$-measurable function from $X$ to $Y$ coded by $p$.
Then define $\varphi^\Gamma\pcolon\N^\N\times X\to Y$ by $\varphi^\Gamma(p,x)=\varphi^\Gamma_p(x)$.
By Proposition \ref{prop:intro-uniform-measurable}, given a code $u$ of an open set $U\subseteq Y$ and a code $p$ of a $\tpbf{\Gamma}$-measurable function, one can compute a code $r(u,p)$ of $(\varphi^\Gamma_p)^{-1}[U]\in\tpbf{\Gamma}_X$.
Hence, we get $(\varphi^\Gamma)^{-1}[U]=\{(p,x):(r(u,p),x)\in G^X\}$.
As $\Gamma$ is closed under computable substitution, $A=\{(u,p,x):(r(u,p),x)\in G^X\}$ is in $\Gamma_{\N^\N\times\N^\N\times X}$.
By Fact \ref{thm:PCA-parametrization-theorem}, $A$ has a computable code $e$, so by good parametrization lemma (Fact \ref{thm:PCA-parametrization-theorem}), one can compute a code $S(e,u)$ of $A_u=\psi^{-1}[U]$.
Therefore, $u\mapsto S(e,u)$ witnesses that $U\mapsto(\varphi^\Gamma)^{-1}[U]$ is computable.
Hence, $\varphi^\Gamma$ a computable element in $\tpbf{\Gamma}(\subseteq\N^\N\times X,Y)$.
The universality of $\varphi^\Gamma$ is obvious.
\end{proof}

\begin{prop}[computable transparency]\label{prop:computable-transparent-pointclass}
Let $\Gamma$ be an $\N$-parametrized adequate $\Sigma$-pointclass.
Then, the map, ${\tt comp}\colon\tpbf{\Gamma}(X,Y)\times C(Y,Z)\to\tpbf{\Gamma}(X,Z)$, defined by $(f,g)\mapsto g\circ f$ is computable.
\end{prop}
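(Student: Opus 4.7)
The plan is to compute, uniformly from names of $f\in\tpbf{\Gamma}(X,Y)$ and $g\in C(Y,Z)$, a name of $g\circ f$ in $\tpbf{\Gamma}(X,Z)$. Recall that a name of a (partial) $\tpbf{\Gamma}$-measurable function $h\pcolon X\to Z$ is, by definition, a code of the continuous preimage map $h^{-1}[\cdot]\colon\mathcal{O}_Z\to\tpbf{\Gamma}_X$. Since $(g\circ f)^{-1}[V]=f^{-1}[g^{-1}[V]]$ for every open $V\subseteq Z$, it suffices to produce the composite preimage map $V\mapsto f^{-1}[g^{-1}[V]]$ as a continuous (indeed computable) map $\mathcal{O}_Z\to\tpbf{\Gamma}_X$, effectively from $f$ and $g$.

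First, from a name of the continuous map $g\colon Y\to Z$, one computes uniformly a name of the preimage map $g^{-1}[\cdot]\colon\mathcal{O}_Z\to\mathcal{O}_Y$; this is a standard fact about the continuous function space in computable analysis (and in fact a name of $g$ may be identified with a name of this preimage map on basic opens). Second, by Proposition \ref{prop:intro-uniform-measurable}, from a name of $f$ one computes uniformly a name of the preimage map $f^{-1}[\cdot]\colon\mathcal{O}_Y\to\tpbf{\Gamma}_X$; equivalently, the map ${\rm frame}\colon\tpbf{\Gamma}(X,Y)\to C(\mathcal{O}_Y,\tpbf{\Gamma}_X)$ used in the proof of Proposition \ref{prop:intro-uniform-measurable} is computable.

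Composition of continuous maps between represented spaces is computable, so applying $\mathrm{frame}(f)\colon\mathcal{O}_Y\to\tpbf{\Gamma}_X$ after $g^{-1}[\cdot]\colon\mathcal{O}_Z\to\mathcal{O}_Y$ yields, effectively in the names of $f$ and $g$, a code of the continuous map
\[
V\longmapsto f^{-1}[g^{-1}[V]]=(g\circ f)^{-1}[V]\colon\mathcal{O}_Z\to\tpbf{\Gamma}_X,
\]
which is precisely a name of $g\circ f$ as an element of $\tpbf{\Gamma}(X,Z)$ (in particular $g\circ f$ is $\tpbf{\Gamma}$-measurable, since each $(g\circ f)^{-1}[V]$ lies in $\tpbf{\Gamma}_X$). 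The composite procedure $(f,g)\mapsto g\circ f$ is thus computable, as required.

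There is no serious obstacle: the argument is a routine chase through Proposition \ref{prop:intro-uniform-measurable} and the computability of the evaluation/composition operators on continuous-function spaces. The only point worth checking carefully is that the representation of $\tpbf{\Gamma}(X,Y)$ by preimage maps (and its partial analogue) really does make ${\rm frame}$ computable with a computable inverse up to the identification used in the proof of Proposition \ref{prop:intro-uniform-measurable}; this was established there, so we may invoke it directly.
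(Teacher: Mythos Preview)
Your proof is correct and follows essentially the same route as the paper: both arguments use the computability of $(g,U)\mapsto g^{-1}[U]$ for continuous $g$ together with Proposition~\ref{prop:intro-uniform-measurable} for the $\tpbf{\Gamma}$-preimage map, then obtain a code of $U\mapsto (g\circ f)^{-1}[U]$ as the name of $g\circ f$. The only cosmetic difference is that the paper works with the uncurried two-argument preimage maps and curries at the very end, whereas you first pass to the curried preimage maps (via ${\rm frame}$) and then compose; these are the same argument in a different order.
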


\begin{proof}
By Proposition \ref{prop:intro-uniform-measurable}, the maps $(g,U)\mapsto g^{-1}[U]\colon C(Y,Z)\times O_Z\to O_Y$ and $(f,V)\mapsto f^{-1}[V]\colon\tpbf{\Gamma}(X,Y)\times O_Y\to \tpbf{\Gamma}_X$ are computable.
Hence, $(f,g,U)\mapsto (g\circ f)^{-1}[U]$ is also computable.
By currying, the map $(f,g)\mapsto\lambda U.(g\circ f)^{-1}[U]\colon\tpbf{\Gamma}(X,Y)\times C(Y,Z)\to C(O_Z,\tpbf{\Gamma}_X)$ is also computable.
In other words, given a code $(f,g)$, one can effectively find a code of the map $U\mapsto (g\circ f)^{-1}[U]$, but the latter code is the same as the code of $g\circ f$.
\end{proof}

Let us recall that computable transparency is an abstraction of the notion of a universal oracle machine.
Thus, as expected, one can see that the universal function $\varphi^\Gamma$ is computably transparent.

\begin{cor}
Let $\Gamma$ be an $\N$-parametrized adequate $\Sigma$-pointclass.
Then the universal $\Gamma$-computable function $\varphi^\Gamma$ in Proposition \ref{prop:universal-func-pointclass} is computably transparent in the sense of Definition \ref{def:basic-computably-transparent}.
\end{cor}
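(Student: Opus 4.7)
The plan is to combine Proposition \ref{prop:computable-transparent-pointclass} with the s-m-n (parametrization) aspect of Fact \ref{thm:PCA-parametrization-theorem}. Given a continuous partial map $f\pcolon\tpbf{N}\to\tpbf{N}$, I will form the composition $\psi:=f\circ\varphi^\Gamma\pcolon\N^\N\times X\to Y$. By Proposition \ref{prop:computable-transparent-pointclass}, post-composition with a continuous function is a computable operation on codes, so from a code of $f$ one uniformly computes a code of $\psi$ as an element of $\tpbf{\Gamma}(\subseteq\N^\N\times X,Y)$.

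Next I will apply the parametrization principle contained in Fact \ref{thm:PCA-parametrization-theorem}(2): the evaluation map ${\rm eval}\colon\tpbf{\Gamma}_{\N^\N\times X}\times\N^\N\to\tpbf{\Gamma}_X$ is computable. Transporting this from sets to functions via the representation by preimage maps (in the manner used in Propositions \ref{prop:intro-uniform-measurable} and \ref{prop:universal-func-pointclass}), it follows that, given a code of a $\Gamma$-computable function $\psi\colon\N^\N\times X\to Y$ together with a point $p\in\N^\N$, one can compute a code $s(p)\in\N^\N$ of the slice $\psi(p,\cdot)\colon X\to Y$, and the assignment $p\mapsto s(p)$ is itself computable. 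By universality of $\varphi^\Gamma$ we then have $\varphi^\Gamma(s(p),x)=\psi(p,x)=f(\varphi^\Gamma(p,x))$ whenever both sides are defined. Setting $F(p,x):=(s(p),x)$ yields a computable (in particular, continuous) map satisfying $\varphi^\Gamma\circ F=f\circ\varphi^\Gamma$, and the whole assignment $f\mapsto F$ is computable because every step in the chain is.

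The main subtlety to handle will be the type-theoretic bookkeeping: the notion of computable transparency in Definition \ref{def:basic-computably-transparent} is formulated for partial maps on $\tpbf{N}=\N^\N$, whereas $\varphi^\Gamma$ is naturally presented as a map $\N^\N\times X\to Y$ between Polish spaces. One must therefore pass between universal functions at two distinct index spaces (inputs in $X$ versus inputs in $\N^\N\times X$) via their representations, and be careful with domains because $\varphi^\Gamma$ is partial. Provided this is carried out uniformly, which is exactly what the good parametrization lemma is designed for, no further combinatorics is needed; the corollary falls out of plugging the cited propositions into one another.
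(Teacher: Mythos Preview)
Your proposal is correct and follows essentially the same line as the paper's proof: both use Proposition \ref{prop:computable-transparent-pointclass} to pass from a code of $f$ to a code of $f\circ\varphi^\Gamma$, and then invoke universality of $\varphi^\Gamma$ to rewrite this as $\varphi^\Gamma$ applied to a new index. The only difference is cosmetic: the paper identifies $\N^\N\times\N^\N$ with $\N^\N$ and uses the code of $f\circ\varphi^\Gamma$ \emph{directly} as the new index (so $F(x)=(r(p,c),x)$ with $r(p,c)$ independent of $x$), whereas you keep the two coordinates separate and insert an explicit s-m-n/currying step to produce $s(p)$; this extra step is valid but not needed once the type identifications are made.
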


\begin{proof}
First, $\varphi^\Gamma$ has a computable code $p$.
By Proposition \ref{prop:computable-transparent-pointclass}, given a code $c$ of a continuous function $f$, one can effectively find a code $r(p,c)$ of $f\circ\varphi^\Gamma$, which means that $f\circ\varphi^\Gamma(x)=\varphi^\Gamma(r(p,c),x)$.
Clearly, $F\colon x\mapsto (r(p,c),x)$ is continuous, and moreover, one can effectively find its code by using the information of $c$ (where recall that $p$ and $r$ are computable); hence, $f\mapsto F$ is computable.
Consequently, $\varphi^\Gamma$ is computably transparent.
\end{proof}

In conclusion, many pointclasses that appear in descriptive set theory can be thought of as oracles.
Note that $\varphi^\Gamma$ is always inflationary since an adequate pointclass $\Gamma$ always contains all computable sets (so all computable functions are $\Gamma$-computable).
Moreover, idempotence of $\varphi^\Gamma$ corresponds to the substitution property of $\Gamma$ (i.e., being closed under partial $\Gamma$-computable substitution; see \cite[Section 3G]{MosBook} for the precise definition), and an adequate $\Sigma$-pointclass having the substitution property is called a $\Sigma^\ast$-pointclass (see \cite[Section 7A]{MosBook} and also below).



\subsection{Pointclass algebra}\label{sec:pointclass-partial-combinatory-algebra}

In this section, we see that $\Gamma$-measurability for a pointclass $\Gamma$ having a certain closure property always yields a partial combinatory algebra (recall the definition from Section \ref{sec:represented-space}).

Explicitly, put $\tpbf{N}=\N^\N$, and let $\tplf{N}\subseteq\tpbf{N}$ be all computable elements.
Then, define $a\ast_\Gamma b$ by $\varphi^\Gamma_a(b)$, where $\varphi^\Gamma$ is defined as in Proposition \ref{prop:universal-func-pointclass}.
Roughly speaking, the left $\ast_\Gamma$-application by a computable element $e\in \tplf{N}$ corresponds to a partial $\Gamma$-computable function, and the left $\ast_\Gamma$-application by an element $x\in\tpbf{N}$ corresponds to a partial $\tpbf{\Gamma}$-measurable function.

A pointclass $\Gamma$ is said to have the substitution property if it is closed under partial $\Gamma$-computable substitution (see Moschovakis \cite[Section 3G]{MosBook} for the details).
An adequate $\Sigma$-pointclass having the substitution property is called a {\em $\Sigma^\ast$-pointclass} (see also \cite[Section 7A]{MosBook}).

\begin{obs}[see also \cite{Kih20}]\label{prop:Kleene-first-algebra}
Let $\Gamma$ be an $\N$-parametrized $\Sigma^\ast$-pointclass.
Then $(\tplf{N},\tpbf{N},\ast_\Gamma)$ forms a relative PCA.
\end{obs}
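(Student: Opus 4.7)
The plan is to establish combinatory completeness of $(\tpbf{N},\ast_\Gamma)$ by explicitly constructing the two combinators $\mathsf{k}$ and $\mathsf{s}$ inside the lightface part $N$; these will then automatically serve as shared combinators for the relative PCA, since the restriction to $N$ inherits the same application operation.

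First I would construct $\mathsf{k}$. The projection $\pi_1\colon\tpbf{N}\times\tpbf{N}\to\tpbf{N}$ defined by $(x,y)\mapsto x$ is continuous, and its preimage map sends a basic open $U\subseteq\tpbf{N}$ to $U\times\tpbf{N}$, whose $\tpbf{\Sigma}^0_1$-code is computable uniformly in a code of $U$. Since an adequate pointclass contains every $\Sigma^0_1$ set, $\pi_1$ is $\Gamma$-computable. By Proposition \ref{prop:universal-func-pointclass} together with part (1) of Fact \ref{thm:PCA-parametrization-theorem}, there is a computable code $e\in N$ with $\varphi^\Gamma(e,(x,y))=x$. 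Applying the smn-style currying of part (2) of Fact \ref{thm:PCA-parametrization-theorem} once then yields a computable $\mathsf{k}\in N$ such that $\mathsf{k}\ast_\Gamma x\ast_\Gamma y=x$ for every $x,y\in\tpbf{N}$, and with $\mathsf{k}\ast_\Gamma x$ always defined.

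Next I would construct $\mathsf{s}$. Consider the partial function $\sigma\pcolon\tpbf{N}^3\to\tpbf{N}$ defined by $\sigma(x,y,z)=\varphi^\Gamma(\varphi^\Gamma(x,z),\varphi^\Gamma(y,z))$ whenever all three nested applications converge. The maps $(x,z)\mapsto\varphi^\Gamma(x,z)$ and $(y,z)\mapsto\varphi^\Gamma(y,z)$ are partial $\tpbf{\Gamma}$-measurable by Proposition \ref{prop:universal-func-pointclass}, and feeding their outputs back as arguments of $\varphi^\Gamma$ is precisely partial $\Gamma$-computable substitution. This is where the substitution property of a $\Sigma^\ast$-pointclass is essential: it guarantees that $\sigma$ is partial $\tpbf{\Gamma}$-measurable and in fact $\Gamma$-computable, with a uniformly computable code obtainable from Proposition \ref{prop:intro-uniform-measurable}. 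Fact \ref{thm:PCA-parametrization-theorem}(1) then produces a computable code for $\sigma$, and iterated application of the smn currying in Fact \ref{thm:PCA-parametrization-theorem}(2) yields a computable $\mathsf{s}\in N$ with $\mathsf{s}\ast_\Gamma x\ast_\Gamma y\ast_\Gamma z\simeq (xz)(yz)$, and with $\mathsf{s}\ast_\Gamma x$ and $\mathsf{s}\ast_\Gamma x\ast_\Gamma y$ always defined.

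Once $\mathsf{k}$ and $\mathsf{s}$ are in hand, the standard bracket-abstraction algorithm (see e.g.\ \cite[Section 1.3]{vOBook}) builds, for every term $t(x_1,\dots,x_n)$ over the application, an element $a_t$ expressed as a fixed combination of $\mathsf{k}$, $\mathsf{s}$, and the free parameters of $t$, yielding combinatory completeness of $(\tpbf{N},\ast_\Gamma)$. Since $\mathsf{k},\mathsf{s}\in N$ and the construction of $a_t$ is purely applicative, if the parameters of $t$ lie in $N$ then $a_t\in N$, so $(N,\ast_\Gamma\upto N)$ is combinatory complete with the same $\mathsf{k}$ and $\mathsf{s}$; hence the triple is a relative PCA. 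The principal obstacle is the construction of $\mathsf{s}$: without the substitution property, the nested composition defining $\sigma$ need not stay inside $\tpbf{\Gamma}$, so the $\Sigma^\ast$-assumption is exactly what is needed to close the loop.
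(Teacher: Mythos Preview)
Your proposal is correct and follows essentially the same route as the paper's proof: construct $\mathsf{k}$ from the $\Gamma$-computability of projection plus smn, and construct $\mathsf{s}$ by using the substitution property to show that $(x,y,z)\mapsto\varphi^\Gamma(\varphi^\Gamma(x,z),\varphi^\Gamma(y,z))$ is $\Gamma$-computable, then curry via iterated smn. One small imprecision: in your $\mathsf{k}$ step you say you apply the smn currying ``once'', but in fact two steps are needed (as the paper makes explicit): first curry the projection code $e$ to obtain $S(e,x)$ with $S(e,x)\ast_\Gamma y=x$, and then observe that $x\mapsto S(e,x)$ is itself $\Gamma$-computable (being ordinary computable), so it has a code $\mathsf{k}\in N$ with $\mathsf{k}\ast_\Gamma x=S(e,x)$.
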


\begin{proof}
As is well known, it suffices to construct ${\sf k}$ and ${\sf s}$.
Below we abbreviate $a\ast_\Gamma b$ as $a\ast b$.
To construct ${\sf k}$, as the projection is $\Gamma$-computable, so there is $\pi_0$ such that $\pi_0\ast\langle a,b\rangle=a$.
Then, by smn theorem (Fact \ref{thm:PCA-parametrization-theorem}), we have $S(\pi_0,a)\ast b=\pi_0\ast\langle a,b\rangle=a$.
Moreover, $a\mapsto S(\pi_0,a)$ is $\Gamma$-computable, so there exists ${\sf k}\in \tplf{N}$ such that ${\sf k}\ast a=S(\pi_0,a)$.
Hence, ${\sf k}\ast a\ast b=S(\pi_0,a)\ast b=a$.

To construct ${\sf s}$, since $\varphi^\Gamma$ in Proposition \ref{prop:universal-func-pointclass} is $\Gamma$-computable, and $\Gamma$ has the substitution property, the function $(x,y,z)\mapsto\varphi^\Gamma(\varphi^\Gamma(x,z),\varphi^\Gamma(y,z))=x\ast z\ast(y\ast z)$ is $\Gamma$-computable, so let $e\in \tplf{N}$ be an index of such a function.
By smn theorem (Fact \ref{thm:PCA-parametrization-theorem}), we have $S(e,x,y)\ast z=e\ast (x,y,z)$, and $(x,y)\mapsto S(e,x,y)$ is $\Gamma$-computable via some index $d\in\tplf{N}$.
By smn theorem again, we have $S(d,x)\ast y=d\ast(x,y)$, and $x\mapsto S(d,x)$ is computable, so let ${\sf s}\in \tplf{N}$ be its index.
Then ${\sf s}\ast x\ast y\ast z=S(d,x)\ast y\ast z=d\ast (x,y)\ast z=S(e,x,y)\ast z=x\ast z\ast(y\ast z)$.
Consequently, $(\tplf{N},\tpbf{N},\ast_\Gamma)$ is a relative PCA.
\end{proof}

In particular, we obtain a cartesian closed category whose morphisms are $\tpbf{\Gamma}$-measurably realizable functions (and even a topos, usually called a {\em realizability topos}; see e.g.~van Oosten \cite{vOBook}).
If $\Gamma=\Sigma^0_1$, the induced lightface PCA is equivalent to {\em Kleene's first algebra} (associated with Kleene's number realizability), and the boldface PCA is {\em Kleene's second algebra} (associated with Kleene's functional realizability).
The relative PCA induced from $\Gamma=\Sigma^0_1$ is known as Kleene-Vesley's algebra, cf.~\cite{vOBook}.

In general, as a Spector pointclass (see Moschovakis \cite[Lemma 4C]{MosBook}) is a $\Sigma^\ast$-pointclass, many infinitary computation models yield (relative) PCAs.
The pointclass $\Pi^1_1$ is the best-known example of a Spector pointclass, and the induced lightface PCA obviously yields hyperarithmetical realizability; see e.g.~\cite[Example VI.3.3]{Beeson}.
For the boldface PCA, the associated total functions are exactly the Borel measurable functions.
Bauer \cite{Bau} also studied the PCA (and the induced realizability topos) obtained from infinite time Turing machines (ITTMs).

In conclusion, thus, a large number of natural pointclasses and their associated measurable functions that appear in descriptive set theory yield PCAs, and as well as various toposes.


\section{Conclusion}

In this article, we presented new perspectives on oracles.
This makes it possible to understand various oracle computations or reducibility notions, which have been studied in computability theory in the past, in the context of synthetic descriptive set theory and realizability topos theory.
For example, in descriptive set theory, climbing various hierarchies (the Borel hierarchy, the C-hierarchy, the R-hierarchy, the projective hierarchy, etc.) now can be understood as giving oracles in the Kleene second algebra.

For another example, as a benefit of understanding oracles as operations on truth-values, it directly links ``oracle computations'' to ``non-constructive axioms'', and so we can expect new types of applications of oracle computation to (constructive) reverse mathematics.
One important difference from the old applications of oracle computation to classical reverse mathematics is that our new idea allows even formulas outside the framework of second-order arithmetic to be analyzed in the context of oracle computation.
This is because, if an operation on truth-values is a Lawvere-Tierney topology, then it yields a sheaf topos \cite{SGL}, whose internal logic naturally allows one to deal with formulas outside the framework of second-order arithmetic; Bauer's result \cite{Bau} on the ITTM-realizability topos is one such example.

One last point:
A new bridge has now been built connecting degree theory, descriptive set theory, and realizability topos theory.
By crossing this bridge, our future goal is to bring modern ideas and techniques of degree theory to realizability topos theory and related areas (and vice versa).

\begin{ack}
The author is deeply grateful to Satoshi Nakata for his insightful comments and suggestions during our discussion.
The author also wishes to thank Ruiyuan Chen, Matthew de Brecht, S\=oichir\=o Okuda, Jaap van Oosten, Matthias Schr\"oder and Thomas Streicher for valuable comments.
\end{ack}


\bibliographystyle{plain}
\bibliography{references}




\end{document}